\newtheorem{thm}{Theorem}[section]
\theoremstyle{definition}
\newtheorem{lem}[thm]{Lemma}
\newtheorem{cor}[thm]{Corollary}
\newtheorem{prop}[thm]{Proposition}
\newtheorem{dfn}[thm]{Definition}
\newtheorem{prb}[thm]{Problem}
\theoremstyle{remark}
\newcommand{\im}{\operatorname{im}}
\newcommand{\R}{\mathbb{R}}
\newcommand{\N}{\mathbb{N}}
\newcommand{\Z}{\mathbb{Z}}
\newcommand{\bb}[1]{\mathbb{{#1}}}
\newcommand{\inv}{^{-1}}
\newcommand{\ip}[1]{\langle {#1} \rangle} 
\newcommand{\sm}{\smallsetminus}
\newcommand{\dom}{\operatorname{dom}}
\newcommand{\coind}{\operatorname{coind}}
\newcommand{\aut}{\operatorname{Aut}}
\newcommand{\res}{\upharpoonright}
\newcommand{\kmix}{\mathrm{Mix}}
\newcommand{\kmin}{\mathrm{Min}}
\newcommand{\kfree}{\mathrm{Free}}
\newcommand{\kfin}{\mathrm{Fin}}
\newcommand{\lra}{\Leftrightarrow}
\newcommand{\s}[1]{\mathcal{{#1}}} 
\renewcommand{\bf}{\mathbf}
\newcommand{\ax}{\mathbf{a}}
\newcommand{\bx}{\mathbf{b}}
\newcommand{\cay}{\operatorname{Cay}}
\newcommand{\sch}{\operatorname{Sch}}
\newcommand{\Axn}{\operatorname{Act}}
\newcommand{\homeo}{\operatorname{Homeo}}
\newcommand{\id}{\operatorname{id}}
\newcommand{\colim}{\operatorname{colim}}
\newcommand{\mspec}{\operatorname{mspec}}
\newcommand{\wdth}{\operatorname{width}}
\title{Topological weak containment}
\author{Riley Thornton}
\begin{document}

\maketitle

\section{Introduction}

Measure theoretic weak containment is a preorder on probability measure preserving group actions that has several guises: local-global convergence, factors of ultrapowers, approximate conjugacy, and $\Sigma_1$-equivalence to name a few. It has shown these many faces in such diverse areas as geometric group theory, combinatorics, representation theory, and model theory. More recently, a theory of weak containment for topological actions has come into view. This article is a survey and expansion of these recent developments.

The various equivalent definitions of measure theoretic weak containment mentioned above suggest analogous topological definitions. Elek introduced a topological analogue of local-global convergence inspired by finite combinatorics, and Zucker has begun studying topological ultrapowers. The main contributions of this paper are to introduce the perspectives of approximate conjugacy and model theory and to establish some basic features of the weak containment order on Cantor flows. 

In the setting of zero-dimensional spaces, the four approaches to topological weak containment we consider are all equivalent. We'll give much more detail below, but for shifts of finite type, weak containment refines the relation of constant time local reducibility (i.e.~the existence of a continuous equivariant map) and coarsens containment (i.e.~the existence of a factor map). So, weak containment interpolates natural combinatorial and dynamical questions.

For more general spaces, the characterizations in terms of combinatorics, ultrapowers, and model theory all have equivalent translations, but approximate conjugacy is a strictly finer relation. In this more general setting, central coloring arguments extend only to finite dimensional settings and suggest some connection to Bourgin--Yang type questions. For instance, the Borsuk--Ulam theorem gives an example of free actions whose ultraproduct isn't free.

The second half of this paper focuses on actions of general countable groups on Cantor space. We look at various order theoretic features--top and bottom elements, antichains--in various classes of actions and see what dynamical and group theoretic properties they pick out. Top elements of the weak containment order are related to genericity questions. Bottom elements among free actions were studied by Elek and turn out to be intimately related to local algorithms. And, the size of antichains among free and mixing actions detect geometric information about the acting group.

For a class $\s K$ of Cantor flows, an action in $\s K$ is a top element for the class if and only if its conjugacy class approximates all the elements of $\s K$. So, conjugacy is generically ergodic on any $G_\delta$ class with a top element, and studying top elements can help decide genericity questions.

Elek showed that, for any group $\Gamma$, there is always a bottom element among free Cantor $\Gamma$-actions. Bernshteyn and Seward showed that this bottom element maps to a shift of finite type if and only if there is an efficient local algorithm for producing elements of the shift. We show that for, $\Gamma=\Z^2$, this bottom element cannot be a limit of finite actions. (The analogous problem for $F_2$ in the measure theoretic setting is a major open problem).

Lastly, we look at antichains in the weak containment order. We show that the size of antichains among free actions and among free mixing actions detect information about the number of ends of the group. For instance, if $\Gamma$ is 2-ended, then $\Gamma$ has only one free mixing action up to weak equivalence and if $\Gamma$ has infinitely many ends then it has a size $|\R|$ antichain of free mixing actions.

Given the wide range of applications of measure theoretic weak containment, we expect there is much more to learn from studying topological weak containment. We end with some problems suggesting further directions for research.

\subsection{Background}
Before stating our results in detail, we'll give a high-level overview of some of the relevant history and background. Throughout $\Gamma$ is a countable discrete group.

Motivated by graph limit theory, Elek introduced what he calls qualitative convergence of Cantor actions as a topological analogue of local-global convergence \cite{ElekQW}. For an action $\ax:\Gamma\curvearrowright \s C$, a finite window $W\Subset \Gamma$, and a continuous labelling $f:\s C\rightarrow [k]$ (equivalently a labeled clopen partition), the set of $W$-local patterns patterns of $f$ in $\ax$ is 
\[ p_W(f, \ax)=\bigl\{(\gamma\mapsto f(\gamma\cdot x)): x\in \s C\bigr\}\subseteq [k]^W\] For example, if $\ax: \Z\curvearrowright 2^\N$ is the odometer action, $f(x)=x(0)$ gives the first coordinate, and $W=\{0,1\}$, then $p_W(f,\ax)$ contains two patterns:
\[(0\mapsto 0,1\mapsto 1)\quad(0\mapsto 1,1\mapsto 0).\] We can read off from $p_W(f,\ax)$ that $f$ is a 2-coloring of the Schreier graph $\sch(\ax)$.

An action $\ax$ weakly contains $\bx$  (written $\bx\preccurlyeq\ax$) if the local patterns of any labelling of $\bx$ can be simulated by a labelling of $\ax$:
\[\bx\preccurlyeq \ax:\lra \;(\forall W\Subset\Gamma)(\forall f:\s C\rightarrow [k])(\exists g:\s C\rightarrow [k])\;p_W(f,\bx)=p_W(g,\ax).\] And a sequence of actions local-global converges if the set of possible local patterns converges for every fixed $W$ and $k$. Though this wasn't done in Elek's work, one can natural generalize these definitions to closed covers of arbitrarily topological spaces.

So, (at least for Cantor actions) topological weak containment is concerned with questions like whether the Schreier graph of an action has a continuous $3$-coloring or a continuous perfect matching. Actions which are higher up in the weak containment order are easier to decorate. Let us briefly mention that in the measure theoretic setting, one studies the distribution of the pattern seen at a random point rather than yes-no questions about the presence of patterns (so, measure theoretic weak containment is concerned with questions about approximate $3$-coloring and the density of independent sets). Among other things, Elek proved that there is a bottom element among the free actions in analogy to the Abert--Weiss theorem for measure theoretic weak containment. As far as the author can tell, this is the earliest explicit attempt to find topological analogues of results about measure-theoretic weak containment.

Motivated by questions in topological group theory, Zucker introduced a notion of ultra-co-product\footnote{Zucker follows Bankston in calling this construction an ultra-co-product as it is categorically dual to an ultraproduct construction. But, it is completely analogous to what is called the ultraproduct in the ergodic theory. We will also follow Bankston.} for continuous actions \cite{AndyTopWeak}. In the special case of countable discrete groups, this construction has a much simpler presentation which closely parallels the ultrapower construction in ergodic theory. We define ultra-co-powers of actions on spaces via Gelfand duality. That is, we define the ultraproduct of $C^*$-algebra actions and dualize to get the ultra-co-product of topological actions. 

For a nonprincipal ultrafilter $\s U$, elements of $C(\ax^{\s U})$ are limits (along the ultrafilter) of sequences of functions in $C(X)$, and every bounded sequence of functions has an ultralimit. The various algebraic operations--sum, scalar multiple, product, and norm--and the action of $\Gamma$ on $C(X)$ are defined to commute with ultralimits. This means approximate properties of $\ax$ are reflected in exact properties of $\ax^{\s U}$. For instance, if $\ax: \Z\curvearrowright 2^\Z$ is the Bernoulli shift, then there are $e_i,f\in C(2^\Z)$ so that for all $i$
\[e_i^2=e_i, f^2=f, |e_i|_\infty=|f|_\infty=1,\quad (\forall n\in [-i,i])\;\bigl|(n\cdot e_i)f\bigr|_\infty=0.\] So, in $C(\ax^\s U)$, if $[e]=\lim_{\s U} e_i$ and $[f]=\lim_{\s U} f$, we have 
\[[e]^2=[e], [f]^2=[f],\bigl|[f]\bigr|_\infty=\bigl|[e]\bigr|_\infty=1,\quad (\forall n\in \Z)\;\bigl|n\cdot[e][f]\bigr|_\infty=0.\] These idempotents and their products correspond to clopen sets and their intersections; so $\ax^{\s U}$ is not topologically transitive. The measure theoretic analogue of this construction is to take a point realization of an ultrapower of $L^2(X,\mu)$ rather than of $C(X).$

For actions $\ax, \bx$ of $\Gamma$ (on any compact Hausdorff space), $\bx\preccurlyeq\ax$ if $\ax^{\s U}$ factors onto $\bx$. We will show that this agrees with the definition of weak containment in terms of local patterns (this was also shown independently by Zucker during the drafting of this article). So, topological weak containment is concerned with the structure of ultra-co-product flows and with which algebras can simulated up to small error in $C(\ax)$.

Continuous model theory provides a rich theory for understanding the structure of ultraproducts. (For experts, we simply view the group $\Gamma$ as part of the language of our structures; so this approach breaks down in Zucker's work on general topological groups). Note that Gelfand duality tells us $\ax^{\s U}$ factors onto $\bx$ if and only if $C(\bx)$ has an equivariant embedding into $C(\ax^{\s U}).$ A general theorem of model theory tells us $C(\bx)$ embeds into an ultrapower $C(\ax)^{\s U}$ if and only if the $\Sigma_1$ theory of $C(\bx)$ dominates that of $C(\ax)$. So, topological weak containment concerns the $\Sigma_1$ theories of certain metric structures. The analogous observation for measure preserving actions has already been put to work alongside model theoretic techniques to prove some theorems in ergodic theory \cite{TodorTomas}.

Lastly, in the ergodic theory of actions on a standard probability space, weak containment is equivalent to approximate conjugacy in the weak topology. A wrinkle appears in the topological setting: there is only one standard probability space, but there are many metrizable compacta with very different automorphism groups. If we restrict to actions on Cantor space, then approximate conjugacy in the uniform topology \textit{is} equivalent to weak containment. There is a great deal of recent work on conjugacy of Cantor actions and on genericity questions in the space of Cantor actions. The theory of weak containment casts an interesting light on this work.

There are other characterizations of measure theoretic weak containment which one could take inspiration from \cite{BurtonKechrisSurvey}. But, we will content ourselves with the four approaches sketched above. As one might expect from this abundance of characterizations, measure theoretic weak containment appears naturally in a wide variety of contexts \cite{PercolationWeakContainment, TodorTomas, HypergraphWeakContainment, GlobalAspects}. Ideally, we would like to have a comparable list of applications of topological weak containment. Elek, Bernshteyn, and Seward have connected questions about topological weak containment with questions about local algorithms and distributed computing \cite{ContinuousCombo}. And, in this article we expand on that work and connect topological weak containment to questions about generic actions and to ideas from geometric group theory. But, there are great many more avenues to explore.

\subsection{Statement of results}

The simplest definition of weak containment is in terms of local patterns. Fix a countable group $\Gamma$ and a compact Hausdorff space $X$. In general, we can't expect to have clopen partitions of $X$, so we look at local patterns of closed covers. Write $A\Subset B$ to mean $A$ is a finite subset of $B$.
\begin{dfn}
    For $\ax: \Gamma\curvearrowright X$ an action, $\vec C=(C_1,...,C_k)$ a closed cover of $X$, and $W\Subset \Gamma$, the \textbf{$W$-local patterns} of $\vec C$ is the set
    \[p_W(\vec C,\ax)=\bigl\{\sigma\in \s P([k]\times W): \bigcap_{(i,\gamma)\in \sigma} \gamma\cdot C_i\not=\emptyset\bigr\}.\] And, the set of $(W,k)$-local patterns of $\ax$ is
    \[\s P_{W, k}(\ax):=\bigl\{p_W(\vec C,\ax): \vec C\mbox{ a closed cover of }X, |\vec C|=k\bigr\}.\]
\end{dfn}

 The ensemble of local patterns (as $W,k$ varies) determines weak containment. The larger the set of local patterns is for a flow, the easier it is to label the action, and the higher the action is in the weak-containment order.

\begin{dfn}
    For any $\Gamma$-flows $\ax, \bx$, $\ax$ \textbf{weakly contains} $\bx$ (or $\bx\preccurlyeq\ax$) if the local patterns of $\bx$ covers can be simulated by $\ax$ covers, i.e.~
    \[\bx\preccurlyeq\ax\;\lra\;(\forall W\Subset \Gamma,k\in \N ) \; \s P_{W,k}(\bx)\subseteq \s P_{W,k}(\ax).\] If $\ax\preccurlyeq\bx$ and $\bx\preccurlyeq\ax$ we say $\ax$ and $\bx$ are \textbf{weakly equivalent} or $\ax\approx \bx$.
\end{dfn}

\begin{prop}
    The relation $\preccurlyeq$ is a quasi-order on actions of $\Gamma$.
\end{prop}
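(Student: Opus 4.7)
The plan is to observe that the proposition reduces immediately to the reflexivity and transitivity of the set-inclusion relation, since $\preccurlyeq$ is defined by inclusion of local pattern sets indexed by the pairs $(W,k)$.

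First I would dispense with reflexivity: for any action $\ax$ and any pair $(W, k)$, one trivially has $\s P_{W,k}(\ax)\subseteq \s P_{W,k}(\ax)$, so $\ax\preccurlyeq\ax$ directly from the definition. No cover needs to be constructed; the definition is self-witnessing.

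For transitivity, suppose $\ax\preccurlyeq \bx$ and $\bx\preccurlyeq \cx$. Fix $W\Subset \Gamma$ and $k\in\N$. By the first hypothesis, $\s P_{W,k}(\ax)\subseteq \s P_{W,k}(\bx)$, and by the second, $\s P_{W,k}(\bx)\subseteq \s P_{W,k}(\cx)$. Chaining these two inclusions gives $\s P_{W,k}(\ax)\subseteq \s P_{W,k}(\cx)$; since $W$ and $k$ were arbitrary, $\ax\preccurlyeq \cx$.

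There is essentially no obstacle here: the proposition is a formal consequence of the definition of $\preccurlyeq$ together with the fact that $\subseteq$ is itself a quasi-order on subsets of $\s P([k]\times W)$. The only thing to note is that one does not have to argue about realizing patterns on a common space or transporting covers; all the work is already packaged into the statement that local pattern \emph{sets} are compared directly.
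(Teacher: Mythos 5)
Your proof is correct and matches the paper's approach exactly: the paper simply observes that $\preccurlyeq$ compares the set-valued functions $(W,k)\mapsto \s P_{W,k}(\cdot)$ pointwise by inclusion, which is reflexive and transitive. You have merely unpacked that one-line observation into explicit reflexivity and transitivity checks.
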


As suggested in the overview, the ultra-co-power of an action $\ax$ with respect to an ultrafilter $\s U$, denoted $\ax^{\s U}$, is a compactification of $\ax$ with respect to the logic of $C(X)$. The proposition below records the fundamental properties of ultra-co-products:

\begin{prop}[c.f.~{\cite[Chapter 5]{Survey}}]
    Fix an index set $I$, and for each $i\in I$ fix a $\Gamma$-flow $\ax_i: \Gamma\curvearrowright X_i$. Let $X=\colim_{\s U} X_i$ be an ultra-co-product of the $X_i$'s, and let $\ax=\colim_{\s U}\ax_i$ be the ultra-co-product action. There is a partial function $\lim_{\s U}: \prod_{i\in I} C(X_i)\rightharpoonup C(X)$ (called the ultralimit) so that 
    \begin{enumerate}
        \item Any bounded sequence $\ip{f(i):i\in\omega}$ with $f(i)\in C(X_i)$ has an ultralimit $C(X)$, and any $f\in C(X)$ is the ultralimit of some bounded sequence
        \item All algebraic operations in $C(X)$ commute with ultralimits, e.g.:
        \[3g+h(\gamma\cdot_{\ax} f)=\lim_{\s U} 3g_i+h_i(\gamma\cdot_{\ax_i} f_i).\]

        \item Norms commute with ultralimits: $|\lim_{\s U} f_i|_\infty=\lim_{\s U} |f_i|_{\infty}$, where the latter limit is taken in $\R$.
    \end{enumerate}
\end{prop}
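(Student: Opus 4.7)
The plan is to realize $\ax^{\s U}$ via the standard $C^*$-algebra ultraproduct construction and then invoke Gelfand duality. First, I form the algebra $A := \prod_{\s U} C(X_i)$, defined as the quotient of the bounded product $\{(f_i) : \sup_i |f_i|_\infty < \infty\}$ by the ideal $\s{N} := \{(f_i) : \lim_{\s U} |f_i|_\infty = 0\}$. The componentwise algebraic operations, involution, and diagonal $\Gamma$-action descend through the quotient. Equipped with the quotient norm, $A$ is a unital commutative $\Gamma$-$C^*$-algebra, and Gelfand duality gives $A \cong C(X)$ for a unique compact Hausdorff space $X$ carrying a $\Gamma$-action $\ax$; this $\ax$ is my $\colim_{\s U}\ax_i$. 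The ultralimit $\lim_{\s U}\colon \prod_i C(X_i) \rightharpoonup C(X)$ is then defined on bounded sequences as the composition of the quotient map onto $A$ with the Gelfand isomorphism.

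Verification of the three properties is largely mechanical. Property (1) is immediate: the domain of $\lim_{\s U}$ is the bounded product by construction, and surjectivity onto $C(X)$ follows because the quotient map to $A$ is surjective. For property (2), each algebraic operation on $C(X)$ corresponds via Gelfand duality to the analogous operation on $A$, which is defined componentwise on the bounded product and hence commutes with the quotient map; the same holds for the $\Gamma$-action, which was defined diagonally.

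The one genuinely nontrivial step is property (3), which also underlies the verification that the quotient norm makes $A$ a $C^*$-algebra in the first place. The key claim is that for any bounded sequence $(f_i)$, $|[f_i]|_A = \lim_{\s U} |f_i|_\infty$. For the lower bound, if $(g_i) \in \s{N}$ then $\sup_i |f_i - g_i|_\infty \geq \lim_{\s U}|f_i - g_i|_\infty \geq \lim_{\s U}|f_i|_\infty - \lim_{\s U}|g_i|_\infty = \lim_{\s U}|f_i|_\infty$. For the upper bound, given $\epsilon > 0$, the set $S := \{i : |f_i|_\infty \leq \lim_{\s U}|f_i|_\infty + \epsilon\}$ lies in $\s U$, so setting $g_i := f_i$ for $i \notin S$ and $g_i := 0$ for $i \in S$ produces $(g_i) \in \s{N}$ with $\sup_i |f_i - g_i|_\infty \leq \lim_{\s U}|f_i|_\infty + \epsilon$. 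This is the main obstacle: it requires carefully exploiting the filter property of $\s U$ and the behavior of the sup-norm under componentwise modification, but it is standard operator-algebra fare once the framework is in place.
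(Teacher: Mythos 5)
Your proposal is correct and takes essentially the same approach as the paper: the paper treats this proposition as "essentially the definition of ultraproduct," defines the algebra $\lim_{\s U} C(\ax_i)$ as the bounded product modulo the null ideal with norm \emph{defined} by $|[f]|_\infty = \lim_{\s U}|f(i)|_\infty$, and takes $X=\mspec$ of the result; you carry out exactly this construction via Gelfand duality. The one place you do genuine work that the paper elides is in proving that the $C^*$-quotient norm agrees with $\lim_{\s U}|f_i|_\infty$ (the paper simply takes the latter as the definition and cites the survey for the verification that this is coherent); your two-sided estimate for that is correct, and it also discharges the well-definedness of the paper's norm on $\sim_{\s U}$-classes.
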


(This proposition is essentially the definition of ultraproduct. We mention it here to ground the following discussion). Various natural properties pass to ultralimits and ultra-co-products:

\begin{thm}
    Suppose for each $i$, $\ax_i: \Gamma\curvearrowright X_i$ is a free flow and the covering dimension of $X_i$ is at most $n$. Then, $\colim_{\s U} \ax_i$ is also free, and $\dim(\lim_{\s U} X_i)\leq n$.
\end{thm}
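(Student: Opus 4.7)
The plan is to dualize via Gelfand and work with the $C^*$-algebra $A = \lim_{\s U} C(X_i)$, whose Gelfand spectrum is the ultra-co-product $Y = \colim_{\s U} X_i$, exploiting that polynomial identities among bounded functions pass through the ultralimit. Each of the two conclusions reduces to producing, on every $X_i$, a finite family of continuous functions satisfying a fixed closed-form algebraic identity that witnesses the property in question; the termwise ultralimits then satisfy the same identity in $A = C(Y)$.

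For the dimension bound I use the characterization that $\dim X \leq n$ iff every finite open cover of $X$ admits a subordinate partition of unity $(\phi_1, \ldots, \phi_k)$ satisfying the multiplicity identity $\phi_{j_1} \cdots \phi_{j_{n+2}} = 0$ for every $(n+2)$-tuple of indices (equivalently, the nerve has dimension at most $n$). Given a finite open cover of $Y$ and a subordinate POU $F_j = [F_j^{(i)}]$, on each $X_i$ refine to a POU $(\phi_j^{(i)})$ subordinate to $(\{F_j^{(i)} > 0\})$ satisfying the $(n+2)$-fold vanishing relation; then $\Phi_j := [\phi_j^{(i)}]$ satisfies the same polynomial identities in $A$, giving $\dim Y \leq n$. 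A small technicality is ensuring the subordination condition survives the ultralimit; one addresses this by arranging a quantitative bound $\phi_j^{(i)} \leq C F_j^{(i)}$ in the refinement, or by appealing to the stability of nuclear dimension of $C^*$-algebras under ultraproducts.

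For freeness the essential input is the following uniform bound: for each $n$ and each $\gamma \in \Gamma \sm \{e\}$ there exists $k = k(n, \gamma)$ so that every free $\Gamma$-flow on a compact Hausdorff space of covering dimension $\leq n$ admits a closed cover $(C_1, \ldots, C_k)$ with $C_j \cap \gamma C_j = \emptyset$ for each $j$. Granting this, on each $X_i$ take such a cover, enlarge to an open cover of pieces $U_j^{(i)}$ with $U_j^{(i)} \cap \gamma U_j^{(i)} = \emptyset$ (normality), and choose a subordinate POU $(\phi_j^{(i)})$; these satisfy the polynomial identity $\phi_j^{(i)} \cdot (\gamma \cdot \phi_j^{(i)}) = 0$. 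Taking ultralimits, $\Phi_j := [\phi_j^{(i)}]$ satisfies $\sum_j \Phi_j = 1$ and $\Phi_j \cdot \gamma \Phi_j = 0$ in $C(Y)$; the closed sets $D_j := \{\Phi_j \geq 1/(2k)\}$ then form a cover of $Y$ (since $\max_j \Phi_j \geq 1/k$ pointwise) with $D_j \cap \gamma D_j = \emptyset$, witnessing freeness of $\gamma$ on $Y$. Running this for each $\gamma \neq e$ gives freeness of $\colim \ax_i$.

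The hard part is establishing the uniform bound for the freeness cover---a dimension-theoretic statement in the spirit of the Lusternik--Schnirelmann--Borsuk and Bourgin--Yang theorems. The strategy is to take any open cover witnessing freeness of $\gamma$ via $U_x \cap \gamma U_x = \emptyset$, refine to multiplicity $\leq n+1$ using $\dim X \leq n$, and then consolidate the refinement by a coloring argument whose palette size depends only on $n$ and $\gamma$. This is precisely the step that breaks down in infinite dimensions, as the introduction's Borsuk--Ulam example shows, which is why the theorem requires bounded covering dimension in the first place.
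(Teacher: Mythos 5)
Your proof follows the paper's architecture: dualize to $C^*$-algebras, encode the property as a fixed finite polynomial-plus-inequality statement involving boundedly many functions, and observe that such statements pass through the ultralimit. For freeness this is exactly the paper's route (Proposition~\ref{prop:freeness}), and you have correctly identified the crux: the uniform bound $k = k(n,\gamma)$ on the size of an independent closed cover. This is precisely Lemma~\ref{lem: finite dimensional coloring} in the paper, which gives $k = 2n+3$ (in fact independent of $\gamma$, not just of $i$). The paper proves it by an inductive consolidation argument: given an independent cover by $k+1 > 2n+3$ sets, shrink to open sets $V_i$ of multiplicity $\le n+1$ and closed $D_i' \subset V_i$, then absorb the points of $D_{k+1}'$ into the first $k$ pieces using the multiplicity bound to guarantee room. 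Your sketch ("refine to multiplicity $\le n+1$, consolidate by a coloring argument") is the right idea, but you have labeled it the hard part and left it unproved, so this is a genuine gap in your write-up even though the approach is sound.

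For the dimension bound you take a different route from the paper. The paper uses the characterization (Lemma~\ref{lem: dimension lemma}) that $\dim X \le n$ iff every map $X \to \R^{n+1}$ can be uniformly $\epsilon$-approximated by a map avoiding the origin. This is an inherently quantitative statement with a single parameter $\epsilon$ and no reference to open covers, so it transfers through the ultralimit with no friction at all: approximate coordinatewise on each $X_i$ and take ultralimits. Your route via partitions of unity of multiplicity $\le n+1$ is also viable, but the subordination condition is not a priori a \L o\'s-transferable statement, and you rightly flag this. Your proposed fix---arranging a uniform quantitative bound $\phi_j^{(i)} \le C F_j^{(i)}$---does work: if the target cover has $m$ elements, for $\s U$-large $i$ the sets $\{F_j^{(i)} > 1/(2m)\}$ already cover $X_i$, and subordinating $\phi_j^{(i)}$ to these smaller open sets forces $\phi_j^{(i)} \le 2m\, F_j^{(i)}$, which is uniform in $i$ and passes to the limit. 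So your argument closes, but it takes more care than the paper's, which is precisely calibrated to avoid the subordination issue. The appeal to stability of nuclear dimension under ultraproducts is a valid (if heavier) alternative.

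One small slip: the vanishing identity for the POU should be over \emph{distinct} indices $j_1, \ldots, j_{n+2}$; as written it would force each $\phi_j^{n+2} = 0$.
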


But, the dimension assumption is necessary.

\begin{thm}
    There is a sequence of free actions $\ax_i: \Z\curvearrowright X_i$ so that $\colim_{\s U} \ax_i$ is not free.
\end{thm}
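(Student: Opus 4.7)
The plan is to exhibit the counter-example on spheres $X_i = S^{d_i}$ of unboundedly growing dimension, equipped with free $\Z$-actions $T_i$ constructed so that some fixed power $T_i^n$ approaches the identity uniformly while $T_i$ itself remains fixed-point-free. A concrete candidate: take $T_i$ to be the composition of the antipodal map on $S^{d_i}$ with a small irrational rotation of angle $\theta_i \to 0$, chosen (say, rotating in each coordinate 2-plane) so that no $T_i^m$ has a fixed point for $m\neq 0$, while $T_i^2$ converges uniformly to the identity as $i\to\infty$.

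By Gelfand duality, $T^2$ has a fixed point on $\colim_{\s U}X_i$ if and only if the constant $1\in C(\colim_{\s U}X_i)=\prod_{\s U}C(X_i)$ does \emph{not} lie in the ideal $I$ generated by $\{T^2\varphi-\varphi:\varphi\in C(\colim_{\s U}X_i)\}$. Suppose for contradiction that $1=\sum_{k=1}^N[g_k]\bigl(T^2[f_k]-[f_k]\bigr)$ with $N$ fixed and $\|[g_k]\|_\infty,\|[f_k]\|_\infty\le M$. Pulling back to each factor yields, for $\s U$-almost every $i$, continuous $g_{k,i},f_{k,i}$ on $S^{d_i}$ of norm at most $M$ such that $\sum_k g_{k,i}(T_i^2 f_{k,i}-f_{k,i})\to 1$ uniformly. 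The goal is to rule out such uniformly bounded representations in the limit.

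The Borsuk--Ulam step enters at this point. Package the $2N$ functions $(f_{k,i},g_{k,i})$ into a continuous map $\Phi_i:S^{d_i}\to\C^{2N}$. In the regime where $T_i$ is nearly an involution (because $T_i^2\to\id$), the approximate identity $\sum_k g_{k,i}(T_i^2 f_{k,i}-f_{k,i})\approx 1$ translates, after small perturbation, into an approximately $\Z/2$-equivariant map from $S^{d_i}$ equipped with the latent (nearly antipodal) $\Z/2$-symmetry of $T_i$ into a fixed-dimensional target $\C^{2N}$ that must avoid a specific "bad locus" (the locus where the formal expression for $1$ fails). Borsuk--Ulam, applied to this near-antipodal symmetry, forbids such maps once $d_i$ exceeds a threshold depending on $N$ and $M$. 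So no fixed $N,M$ can work for all $i$, giving $1\notin I$ and hence a fixed point of $T^2$ in the ultra-co-product.

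The main obstacle is making the last step rigorous: rigorously extracting the $\Z/2$-equivariance from the $\Z$-action $T_i$ (by exploiting that $T_i$ is topologically close to an order-two homeomorphism), handling the approximation errors coming from $T_i^2$ being close to but not equal to the identity, and identifying the correct "bad locus" in $\C^{2N}$ that $\Phi_i$ must avoid. Once these are set up, the classical Borsuk--Ulam dimension obstruction closes the argument.
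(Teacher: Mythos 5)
Your high-level instinct is right—the obstruction is Borsuk--Ulam/Bourgin--Yang type, the spaces must have unbounded dimension (by Proposition \ref{prop:freeness}), and the clean strategy is to show that the number of functions needed to witness fixed-point-freeness of some $n\in\Z$ grows without bound along the sequence. But the specific example you propose does not support the Borsuk--Ulam argument you sketch, and the gap you flag at the end is a genuine one rather than a technicality to be patched.

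The problem is that Borsuk--Ulam, and its Yang/Bourgin--Yang generalizations, apply to \emph{actual} $\Z/2$ (or $\Z/p$) actions. Your $T_i$ is the antipodal map composed with a small irrational rotation, so $T_i^2$ is a small rotation, not the identity, and $T_i$ generates a dense subgroup of $S^1$, not a finite cyclic group. There is no $\Z/2$-structure to extract: ``topologically close to an involution'' is not an involution, and the coincidence you would want---$\Phi_i(x)=\Phi_i(T_i^2 x)$ for some $x$---is \emph{not} a Borsuk--Ulam coincidence, because $T_i^2 x$ is not the antipode of $x$. To use equivariant index theory for the ambient $S^1$-action you would instead need the functions $f_{k,i},g_{k,i}$ to be $S^1$-equivariant, which they are not, and there is no averaging device in sight that preserves the independence conditions you need. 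So the key step---showing the number of functions in a representation $1=\sum g_k(T^2f_k-f_k)$ must grow with dimension---has no rigorous engine behind it.

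The paper's proof is engineered precisely to avoid this. It works with the invariant $\chi(\gamma,\ax)$ (the minimal number of functions witnessing that $\ax(\gamma)$ is fixed-point free), for which it has already proved $\chi(\gamma,\colim_{\s U}\ax_i)=\lim_{\s U}\chi(\gamma,\ax_i)$; so it suffices to produce free $\Z$-actions $\ax_k$ with $\chi(n,\ax_k)\to\infty$ uniformly in $n$. To get a rigorous lower bound on $\chi$, the paper takes $\Z$-actions that genuinely \emph{factor through} $\Z/p\Z$, on the iterated joins $E_n(\Z/p\Z)$---the universal free $\Z/p\Z$-spaces---where the classical index of free $\Z/p\Z$-actions (Bourgin--Yang theory, as in \cite{BorsukUlam}) applies verbatim. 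It also controls $\chi(m,\cdot)$ for every generator power $m=1,\dots,p-1$ simultaneously (by conjugacy of the generators) and then passes $p\to\infty$ in an inner ultralimit before taking the outer one, so that $\chi(n,\ax_k)$ is bounded for \emph{all} $n\neq 0$ at each $k$ while the bound escapes as $k\to\infty$. Your single-parameter family of sphere rotations provides neither a finite group action to anchor the index argument nor this uniform-in-$n$ control, and I do not see how to repair it without essentially reproducing the paper's construction.
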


Roughly, the issue is that properties involving only finitely many functions pass to the limit, but that for general spaces there is no uniform bound on the number of functions we need to witness free-ness. Similarly, properties that require quantification over arbitrary group elements are not likely to pass to the limit. For instance,
\begin{prop}
    If $\Gamma$ is infinite and $\ax_i$ is free for each $i$, then $\colim_{\s U} \ax_i$ is not topologically transitive.
\end{prop}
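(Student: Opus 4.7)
My plan is to produce, inside the ultra-co-power $X = \colim_{\s U} X_i$, two nonempty open sets with disjoint $\Gamma$-orbits; this immediately defeats topological transitivity in any reasonable sense. Concretely, I aim to build $f, g \in C(X)$ with $|f|_\infty = |g|_\infty = 1$ and $(\gamma \cdot f)\, g = 0$ for every $\gamma \in \Gamma$, then take $U = \{f>0\}$ and $V = \{g>0\}$. The relation $(\gamma \cdot f)\, g = 0$ says exactly that $\gamma \cdot U$ misses $V$ for every $\gamma$.

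The functions $f, g$ will be ultralimits of functions $f_i, g_i \in C(X_i)$ satisfying the analogous relation for all $\gamma$ in a finite window $W_i \Subset \Gamma$, where the $W_i$ are chosen to exhaust $\Gamma$ (e.g.\ $W_i = \{\gamma_0, \ldots, \gamma_i\}$ for some enumeration of $\Gamma$ with $\gamma_0 = e$). To build $f_i, g_i$, pick any $x \in X_i$; since $\ax_i$ is free and $\Gamma$ is infinite, the orbit $\Gamma x$ is infinite, so I may choose $y \in \Gamma x \setminus W_i x$. Freeness then forces the points $\{x\} \cup \{\gamma^{-1} y : \gamma \in W_i\}$ to be pairwise distinct, and a routine application of Hausdorffness yields disjoint open $U_i \ni x$ and $V_i \ni y$ with $\gamma U_i \cap V_i = \emptyset$ for all $\gamma \in W_i$ (after intersecting suitably chosen separating neighborhoods and translating). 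Urysohn's lemma then produces $f_i, g_i$ supported in $U_i, V_i$ respectively, each attaining the value $1$. Passing to ultralimits and using items (2) and (3) of the ultra-co-product proposition gives the desired $f, g$: for any fixed $\gamma_n \in \Gamma$ one has $\gamma_n \in W_i$ once $i \geq n$, so $|(\gamma_n \cdot f_i)\, g_i|_\infty = 0$ eventually, which forces $|(\gamma_n \cdot f)\, g|_\infty = 0$.

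The main obstacle is ensuring the stage-$i$ separation works for the growing window $W_i$, and it is precisely here that both hypotheses do real work: infiniteness of $\Gamma$ combined with freeness of $\ax_i$ guarantees that the orbit $\Gamma x$ is big enough to contain a target $y$ outside any prescribed translate $W_i x$. Without this, one could not keep pushing the support of $g_i$ away from all $W_i$-translates of the support of $f_i$ as $W_i$ swells, and the ultralimit argument would collapse. Everything else---Urysohn separation on each $X_i$, and the fact that vanishing and norm conditions pass through the ultralimit---is routine.
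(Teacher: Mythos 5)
Your proof is correct and is essentially the same argument the paper sketches for the Bernoulli shift $\Z\curvearrowright 2^\Z$ in the introduction: build functions on each $X_i$ whose supports are disjoint from all translates by a growing window $W_i$ exhausting $\Gamma$, then pass to the ultralimit so that the approximate disjointness (finite window at each stage) becomes exact disjointness for every $\gamma\in\Gamma$, yielding two nonempty open sets with disjoint orbits. The only cosmetic difference is that you replace the paper's clopen idempotents with Urysohn bump functions, which is the right move since the $X_i$ here are general compact Hausdorff spaces rather than Cantor sets.
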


Ultraproducts (with respect to appropriate ultrafilters) are saturated in the sense that any countable set of equations has a solution in an ultraproduct if and only if each finite subset of the equations has a solution. The next proposition gives an instance of this phenomenon. We write $\ax ^{\s U}$ for the ultra-co-product of the constant sequence at $\ax$. 

\begin{prop} \label{prop:evocative}
    Suppose $X$ is separable, $\s U$ is a nonprincipal ultrafilter on $\N$, $\bx:\Gamma\curvearrowright X$ is a $\Gamma$-flow, and for any finite list of open sets $U_1,...,U_n\subseteq X$ there is a map $f\in \hom(\ax, \bx)$ whose image meets each $U_i$. Then, there is a factor map in $\hom(\ax^{\s U},\bx).$
\end{prop}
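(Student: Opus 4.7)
The plan is to produce the factor map via Gelfand duality by constructing an injective equivariant unital $*$-homomorphism $\Phi:C(X)\to C(\ax^{\s U})$; injectivity of $\Phi$ corresponds exactly to surjectivity of the dual continuous equivariant map $\ax^{\s U}\to \bx$. Write $Y$ for the compact space on which $\ax$ acts.

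First I would use separability of $X$ to fix a countable family $\{V_n\}_{n\in\N}$ of nonempty open subsets of $X$ with the property that every nonempty open $U\subseteq X$ contains some $V_n$ (e.g., a countable basis). For each $n$, the hypothesis supplies some $f_n\in \hom(\ax,\bx)$ whose image meets each of $V_1,\dots,V_n$. Dualizing, each $f_n$ induces an equivariant unital $*$-homomorphism $\phi_n:C(X)\to C(Y)$ given by $\phi_n(g)=g\circ f_n$, and each $\phi_n$ is a sup-norm contraction.

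Now define $\Phi(g):=\lim_{\s U}\phi_n(g)$. The sequence $\ip{\phi_n(g):n\in \N}$ is uniformly bounded by $|g|_\infty$, so the ultralimit exists in $C(\ax^{\s U})$ by the ultra-co-product proposition above; and because sums, scalar products, products, norms, and the $\Gamma$-action all commute with $\lim_{\s U}$, the map $\Phi$ is an equivariant unital $*$-homomorphism.

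The main step is to verify that $\Phi$ is injective. Given $g\in C(X)$ with $|g|_\infty=c>0$, the open set $\{x\in X : |g(x)|>c/2\}$ is nonempty and so contains some $V_k$. For every $n\geq k$ the image $f_n(Y)$ meets $V_k$, so $|\phi_n(g)|_\infty > c/2$. The set of such $n$ is cofinite and hence $\s U$-large, giving $|\Phi(g)|_\infty=\lim_{\s U}|\phi_n(g)|_\infty\geq c/2>0$. Gelfand duality then yields the required factor map $\ax^{\s U}\to \bx$. The only subtle point is really bookkeeping: separability lets us repackage the finite data of the hypothesis as a single $\N$-indexed sequence, after which the saturation of the ultra-co-product does the work.
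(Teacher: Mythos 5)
Your proof is correct, and it is a genuinely dualized version of the paper's argument. The paper works entirely on the space side: it first encodes the target into $[0,1]^{\N\times\Gamma}$, defines the limit map coordinate-by-coordinate, and then uses the auxiliary predicates $\phi_W(x)=d(x,\pi_W(Y))$ and $\psi_{p,W}$ (together with Proposition~\ref{prop:pointwise}) to verify that the limit actually lands in the target and is onto. You instead pass to $C^*$-algebras immediately, take the ultralimit of the dual homomorphisms $\phi_n=C(f_n)$, and check injectivity by the level-set argument. The two verifications correspond under Gelfand duality: your injectivity check is exactly the dual of the paper's $\psi_{p,W}$ (``onto'') argument, while the paper's $\phi_W$ (``lands in $Y$'') argument is replaced in your version by the automatic fact that a unital equivariant ring homomorphism dualizes to a map into the spectrum of $C(X)$. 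The algebraic route is a bit cleaner because it sidesteps the explicit cube embedding and the predicates entirely; the price is that you must invoke the (standard) fact that injective unital homomorphisms between commutative $C^*$-algebras dualize to surjections, which is implicit in Proposition~\ref{prop:factor duality}. Both approaches use the same diagonal trick to extract a single $\N$-indexed sequence $f_n$ from the finite hypothesis, and both ultimately rest on the norm commuting with the ultralimit.
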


So we can break the problem of building a factor map down into the problem of building maps which meet any finite list of opens. If $\bx$ is a shift of finite type, the converse of the above proposition is also true.

We can define weak containment using the ultra-co-product.

\begin{thm}
For any $\Gamma$-flows, $\ax$ {weakly contains} $\bx$ iff $\ax^{\s U}$ factors onto $\bx$.
\end{thm}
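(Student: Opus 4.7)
The plan is to prove the two directions separately, using Urysohn on the ultra-co-product and ultralimit representatives to move patterns between $X$ and $X^{\s U}$.

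For the $(\Leftarrow)$ direction, fix a factor map $\pi:\ax^{\s U}\to\bx$, a closed $k$-cover $\vec C=(C_1,\ldots,C_k)$ of $Y$, and $W\Subset\Gamma$. Set $D_i=\pi^{-1}(C_i)$, so surjectivity and equivariance of $\pi$ give $p_W(\vec D,\ax^{\s U})=p_W(\vec C,\bx)$. For each ``bad'' pattern $\sigma\notin p_W(\vec D,\ax^{\s U})$, compactness lets me enlarge the $D_i$ to opens $V_i^\sigma\supseteq D_i$ keeping $\bigcap_{(i,\gamma)\in\sigma}\gamma\cdot V_i^\sigma=\emptyset$; since there are only finitely many patterns, each $V_i:=\bigcap_\sigma V_i^\sigma$ is an open neighborhood of $D_i$, and the $V_i$'s form an open cover of $X^{\s U}$ with $p_W(\vec V,\ax^{\s U})=p_W(\vec D,\ax^{\s U})$. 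By Urysohn pick $\eta_i\in C(X^{\s U},[0,1])$ with $\eta_i\equiv 1$ on $D_i$ and $\eta_i\equiv 0$ off $V_i$; since the $D_i$ cover, $\max_i\eta_i\equiv 1$. Write $\eta_i=\lim_{\s U}\eta_i^{(n)}$ with $\eta_i^{(n)}\in C(X,[0,1])$ and set $E_i^{(n)}=\{\eta_i^{(n)}\geq 1/2\}$. The function $h_\sigma:=\min_{(i,\gamma)\in\sigma}\gamma\cdot\eta_i$ satisfies $\|h_\sigma\|_\infty=1$ when $\sigma\in p_W(\vec D,\ax^{\s U})$ (pick $z$ in the corresponding intersection) and $\|h_\sigma\|_\infty=0$ otherwise (since then $\bigcap\gamma\cdot V_i=\emptyset$). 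Because $\|\cdot\|_\infty$ and lattice operations commute with ultralimits, $\|h_\sigma^{(n)}\|_\infty\to\|h_\sigma\|_\infty$ along $\s U$; since $\sigma\in p_W(\vec E^{(n)},\ax)$ iff $\|h_\sigma^{(n)}\|_\infty\geq 1/2$, a common $n$ over the finitely many patterns forces $p_W(\vec E^{(n)},\ax)=p_W(\vec C,\bx)$, placing $p_W(\vec C,\bx)$ in $\s P_{W,k}(\ax)$.

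For the $(\Rightarrow)$ direction, Gelfand duality reduces the task to constructing a $\Gamma$-equivariant unital $^*$-homomorphism $\iota:C(Y)\to C(X^{\s U})$. Fix an ultrafilter $\s U$ on a directed index set of finite triples $(F,W,\epsilon)$ with $F\Subset C(Y)$, $W\Subset\Gamma$, $\epsilon>0$ --- any nonprincipal ultrafilter on $\N$ in the metrizable case, and a sufficiently saturated one in general. For each index, cover $Y$ by preimages of an $\epsilon$-grid in $\R^{|F|}$ under the tuple map built from $F$, obtaining a closed cover $\vec C^{F,\epsilon}$ whose $p_W$-patterns encode the joint behavior of the elements of $F$ and their $W$-translates up to $\epsilon$. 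Use $\bx\preccurlyeq\ax$ to find a matching closed cover $\vec E^{F,\epsilon}$ of $X$, and build continuous step-function approximations $\hat f^{F,\epsilon}\in C(X)$ to each $f\in F$ via a partition of unity subordinate to a slight thickening of $\vec E^{F,\epsilon}$. Setting $\iota(f):=\lim_{\s U}\hat f^{F,\epsilon}$ --- well-defined by the directedness of the index and the saturation of $\s U$ --- gives the desired embedding.

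The main technical obstacle is verifying, in this reverse direction, that the step-function approximations cohere in the ultralimit into a genuine equivariant $^*$-homomorphism. The key observation is that every algebraic or metric relation among finitely many elements of $C(Y)$ --- sum, product, norm, $\gamma$-translate --- is visible in the $p_W$-patterns of a sufficiently fine grid cover, because within a grid cell of diameter $\epsilon$ the functions are pinned down up to $\epsilon$. A matching cover of $X$ therefore inherits each such relation up to $O(\epsilon)$, and the ultralimit erases the error; the directed-system bookkeeping handles coherence as $F$, $W$, and $\epsilon$ vary.
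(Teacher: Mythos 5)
Your argument is correct in both directions, but it takes a genuinely different route from the paper's. The paper does not prove this biconditional directly: it instead establishes a three-way equivalence between weak containment, $\Sigma_1$-theory domination, and ultra-co-power factoring, and the implication ``$\ax^{\s U}$ factors onto $\bx$ implies $\bx\preccurlyeq\ax$'' is obtained by routing through $\Sigma_1$-theory twice, first applying \L o\'s's theorem together with monotonicity of existential predicates under factor maps, and then separately arguing that $\Sigma_1$-domination lets one transport closed covers via $\Sigma_1$-definable ``representing families'' of $[0,1]$-valued functions. Your $(\Leftarrow)$ direction bypasses the model theory entirely: pull the cover back along $\pi$, swell the closed pieces $D_i$ to opens $V_i$ preserving the set of realized patterns (this requires a swelling argument --- for each bad $\sigma$ apply the shrinking lemma to the open cover $\{(\gamma\cdot D_i)^c\}_{(i,\gamma)\in\sigma}$ and take complements, then intersect over $\sigma$; ``compactness'' alone undersells the step), interpolate with Urysohn functions, pass to ultralimit representatives, and read off a closed cover of $X$ from a level set at one fixed index. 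This is more elementary and, I think, more transparent about where the finitary content of each pattern lives; the trade-off is that it only gives you the two-way equivalence, whereas the paper's detour earns the $\Sigma_1$ characterization as a byproduct, which is needed elsewhere. Your $(\Rightarrow)$ direction is in the same spirit as the paper's layer-cake construction: you use joint $\epsilon$-grid covers of $\R^{|F|}$ where the paper uses level sets of each function separately and relies on the $p_W$-patterns of the combined family to record joint behavior, so the two are essentially equivalent once one unwinds what the pattern set of a multi-function cover records. One organizational difference worth flagging: the paper first invokes L\"owenheim--Skolem to reduce to separable $Y$, which lets it run the diagonalization over a countable chain $D_n\Subset C(\bx)$ with an ultrafilter on $\N$; you instead work with a directed index set of triples and a ``sufficiently saturated'' ultrafilter. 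Both are fine since the theorem is existential in $\s U$, but your phrasing about saturation is a bit loose --- what you actually need is an ultrafilter on the directed index set containing all upward cones, which exists by Zorn and has nothing to do with saturation of the resulting ultraproduct.
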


Continuous model theory provides (among other things) a rich language of $\R$-valued functions on $C^*$-algebras which commute with ultralimits. The fragment of this language with only existential quantifiers is denoted $\Sigma_1$. A general result of model theory characterizes weak containment in terms of $\Sigma_1$.

\begin{thm}
   For any $\ax,\bx$, $\bx\preccurlyeq \ax$ if and only, for all $\phi\in \Sigma_1$, $\phi^\ax\leq \phi^\bx.$
\end{thm}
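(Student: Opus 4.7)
The plan is to chain together the previous theorem (weak containment equals factoring of an ultra-co-power), Gelfand duality (factor maps correspond to equivariant $C^*$-embeddings), and a standard characterization of existential embeddings from continuous model theory.

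First I would reduce to the algebraic side: by the preceding theorem, $\bx \preccurlyeq \ax$ is equivalent to the existence of a factor map $\ax^{\s U} \to \bx$ for some nonprincipal $\s U$. By Gelfand duality this factor map corresponds to an equivariant unital $*$-embedding $C(\bx) \hookrightarrow C(\ax^{\s U})$, and by definition of the ultra-co-product via $C^*$-ultraproducts we have $C(\ax^{\s U}) = C(\ax)^{\s U}$ as $\Gamma$-$C^*$-algebras. So the task becomes: show that $C(\bx)$ embeds into some ultrapower $C(\ax)^{\s U}$ as a $\Gamma$-$C^*$-algebra if and only if $\phi^\ax \leq \phi^\bx$ for every $\Sigma_1$-formula $\phi$ in the language of $\Gamma$-$C^*$-algebras.

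Next I would invoke the standard equivalence from continuous model theory: for metric structures in a fixed continuous language, an embedding $A \hookrightarrow B^{\s U}$ exists (for a suitable $\s U$) iff $A$ is existentially embeddable into $B$, iff $\phi^B \leq \phi^A$ for every $\Sigma_1$ sentence $\phi$ (see e.g.\ the Keisler--Shelah style theorem for continuous logic). The forward direction is Łoś: $\Sigma_1$ sentences are of the form $\inf_{\bar x} \psi(\bar x)$ with $\psi$ quantifier-free, and such infima can only decrease when we pass to a superstructure, and equal their value in $B$ when computed in $B^{\s U}$. The reverse direction is a Henkin-style saturation argument: enumerate the quantifier-free type of generators of $A$ and find realizations in $B^{\s U}$ one finite chunk at a time, using the $\Sigma_1$-inequalities to make each finite approximation consistent.

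The main obstacle is handling the non-separable case cleanly: when $C(\bx)$ has large density character, one needs $\s U$ to be sufficiently incomplete / the ultrapower sufficiently saturated for the type-realization step to work. I would address this either by replacing $\N$ with a large enough index set and taking $\s U$ to be countably incomplete of cardinality witnessing the needed saturation, or by arguing on the topological side that weak containment only needs to be tested on finite tuples of functions (as the first definition via local patterns makes plain), so a countably incomplete $\s U$ on $\N$ suffices once one restricts attention to the countably generated subalgebras detecting each $\phi \in \Sigma_1$. A minor bookkeeping point is to confirm that the $\Gamma$-action is absorbed into the language as the paper indicates, so that equivariance of the embedding comes for free from the model-theoretic statement; this is straightforward since $\Gamma$ is countable and discrete and acts by $C^*$-automorphisms, so each $\gamma \in \Gamma$ becomes a unary function symbol interpreted as a $1$-Lipschitz map.
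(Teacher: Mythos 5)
Your approach is essentially the same as the paper's: go through the ultra-co-power characterization. The paper delays the proof of both this theorem and the theorem ``$\bx\preccurlyeq\ax$ iff $\ax^{\s U}$ factors onto $\bx$'' to the same later section, where it proves all three characterizations---local patterns, $\Sigma_1$-domination, and ultrapower factoring---simultaneously as a cycle $(2)\Rightarrow(1)\Rightarrow(3)\Rightarrow(2)$. Two points are worth flagging. First, invoking the ultra-co-power characterization as ``the preceding theorem'' is circular: at the point where the $\Sigma_1$ theorem is stated, the ultra-co-power theorem has been asserted but its proof also deferred, so your argument presupposes exactly the harder half of the work ($(1)\Rightarrow(3)$, which the paper handles via a layer-cake decomposition of closed covers into $(W,\epsilon)$-approximate embeddings and then diagonalizes along the ultrafilter). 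You would need to supply that argument independently before your proof goes through. Second, the paper explicitly remarks that a continuous-logic version of the Keisler--Shelah-style ``existential domination iff embeds into an ultrapower'' theorem ``doesn't seem to be readily accessible in the literature,'' which is precisely why it proves the relevant special case by hand; treating it as a citable black box is optimistic, though your sketch (\L o\'s for one direction, a Henkin-style saturation over the quantifier-free type for the other) is exactly the argument the paper runs, just phrased abstractly rather than via explicit closed covers and layer-cake functions. Your handling of the density-character issue (restrict to countably generated subalgebras, or enlarge the index set) matches the paper's L\"owenheim--Skolem reduction to separable $C(\bx)$. So the outline is sound and the methods coincide with the paper's, but the two ingredients you quote as ``preceding theorem'' and ``standard model theory'' carry almost all of the content and are not independently available in the surrounding text; a complete proof would need to include them.
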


It follows (either from this theorem or the definition of weak containment in terms of local patterns) that there is only set of weak equivalence classes, and not a proper class. The L\"owenheim--Skolem theorem refines this observation. Any weak equivalence class can be realized on a metrizable space:

\begin{thm}
    For any $\Gamma$-flow $\ax$, there is a flow $\ax'$ on a compact metric space so that $\ax\approx \ax'.$
\end{thm}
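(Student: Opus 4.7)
The plan is to combine downward Löwenheim--Skolem in continuous logic with Gelfand duality, using the $\Sigma_1$-characterization of weak containment proved immediately above.

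First, I would view $C(X)$ as a metric structure in the language of unital commutative $C^*$-algebras, enriched with a unary function symbol $\hat\gamma$ for each $\gamma\in\Gamma$ implementing the dual action by $*$-automorphisms. Since $\Gamma$ is countable, the enriched language is still countable. Continuous downward Löwenheim--Skolem (in the multisorted, bounded-ball formulation of \cite{Survey}) then produces a separable elementary substructure $A\preceq C(X)$ closed under the operations and under every $\hat\gamma$. Because elementary substructures of unital commutative $C^*$-algebras are again unital commutative $C^*$-algebras, Gelfand duality gives $A\cong C(X')$ for some compact Hausdorff $X'$, and separability of $A$ forces $X'$ to be metrizable. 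The $\Gamma$-action on $A$ dualizes to a flow $\ax'\colon\Gamma\curvearrowright X'$, and the inclusion $A\hookrightarrow C(X)$ dualizes to an equivariant factor map $X\twoheadrightarrow X'$.

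Finally, since $A\preceq C(X)$, the two structures satisfy precisely the same sentences; in particular their $\Sigma_1$ theories coincide. By the preceding theorem characterizing $\preccurlyeq$ in terms of $\Sigma_1$, this gives $\ax\approx\ax'$.

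The main technical point is checking that $(C(X),\Gamma)$ fits cleanly into the continuous-logic framework so that the Löwenheim--Skolem theorem applies and yields a $\Gamma$-invariant substructure. This is routine: $\Gamma$ acts by $*$-automorphisms, hence by isometries on each norm-bounded ball, so adding the symbols $\hat\gamma$ does not disturb the standard metric-logic setup, and countability of $\Gamma$ keeps the language countable. Everything else is Gelfand duality together with the $\Sigma_1$-characterization already in hand.
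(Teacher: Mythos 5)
Your proof is correct and follows essentially the same route as the paper: the paper states the Continuous Löwenheim--Skolem theorem (producing a separable elementary substructure of $C(\ax)$, equivalently a metrizable factor) and combines it with the $\Sigma_1$-characterization of weak containment, exactly as you do. The only cosmetic remark is that elementary substructure is stronger than needed — matching $\Sigma_1$ theories already suffices — but using the full elementary equivalence is perfectly fine and is also how the paper phrases it.
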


We endow the space of actions with the uniform topology.

\begin{dfn}
    For a compact metric space $X$, $\homeo(X)$ is the group of homeomorphisms of $X$ with the topology of uniform convergence. The space of actions of $\Gamma$ on $X$ is 
    \[\Axn(\Gamma, X):=\{\ax\in \homeo(X)^{\Gamma}: (\forall \gamma,\delta\in \Gamma)\;\ax(\gamma\delta)=\ax(\gamma)\circ \ax(\delta).\}\] 
\end{dfn} 

Note that $\homeo(X)$ is a Polish space with metric $\sup_{x\in X} d_X(f(x),g(x)),$ and $\Axn(\Gamma, X)$ is a closed subset of $\homeo(X)^{\Gamma}$ (so also a Polish space).

The action of $\homeo(X)$ on $\Axn(\Gamma, X)$ by conjugation has been intensively studied in the past few decades.

\begin{dfn}
    For $\rho\in \homeo(X)$ and $\ax\in \Axn(\Gamma, X)$, $(\rho\cdot \ax)(\gamma)=\rho\inv\circ \ax(\gamma)\circ \rho$.
\end{dfn}

The conjugacy action is usually too complicated to consider in generality. (Its orbit equivalence relation is complete analytic even for $\Gamma=\Z$ and $X=\s C$ \cite{GaoCamerlo}). There is a general process--the topological Scott analysis--for approximating an action of a Polish group with more tractable data \cite[Chapter 6]{hjorth}. The coarsest data we can attach to a point is the closure of its orbit. In the case of $\s C$-actions, this gives yet another characterization of weak containment:

\begin{thm}
    For $\ax, \bx\in \Axn(\Gamma,\s C)$, $\bx\in \overline{\homeo(\s C)\cdot \ax}$ iff $\bx\preccurlyeq\ax$.
\end{thm}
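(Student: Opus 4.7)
The plan is to prove both implications by exploiting zero-dimensionality to work with clopen covers and partitions throughout.

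For $(\Rightarrow)$, suppose $\rho_n \cdot \ax \to \bx$ uniformly. Given a closed cover $\vec C = (C_1,\ldots,C_k)$ of $\s C$ and $W \Subset \Gamma$, I first approximate $\vec C$ by a clopen cover $\vec U \supseteq \vec C$ with $p_W(\vec U, \bx) = p_W(\vec C, \bx)$: write each $C_i$ as a decreasing intersection of clopens, and use compactness on the finitely many forbidden patterns $\tau \notin p_W(\vec C, \bx)$ to eventually make each $\bigcap_{(i,\gamma) \in \tau} \bx(\gamma) U_i$ empty. Then set $\vec D_n := (\rho_n(U_1), \ldots, \rho_n(U_k))$. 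Using the elementary fact that a uniformly convergent sequence of self-homeomorphisms of $\s C$ preserves any fixed clopen set eventually (since any clopen $V$ has $d(V, V^c) > 0$), for large $n$ we have $(\rho_n \cdot \ax)(\gamma)(U_i) = \bx(\gamma)(U_i)$ for every $(i, \gamma) \in [k] \times W$, whence $p_W(\vec D_n, \ax) = p_W(\vec U, \bx) = p_W(\vec C, \bx)$.

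For $(\Leftarrow)$, suppose $\bx \preccurlyeq \ax$. Fix an exhaustion $\{e\} \subseteq W_1 \subseteq W_2 \subseteq \ldots$ of $\Gamma$ and clopen partitions $\vec P_n = (P_{n,1}, \ldots, P_{n,k_n})$ of $\s C$ with $\mathrm{mesh}(\vec P_n) < 1/n$. By weak containment, there is a closed cover $\vec Q_n$ of $\s C$ with $p_{W_n}(\vec Q_n, \ax) = p_{W_n}(\vec P_n, \bx)$; since $\vec P_n$ is a partition, the patterns $\{(i,e),(j,e)\}$ with $i \neq j$ are absent, forcing the $Q_{n,i}$ to be pairwise disjoint, so $\vec Q_n$ is also a clopen partition. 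Consider the continuous type maps $c_\bx^n, c_\ax^n : \s C \to [k_n]^{W_n}$ where $c^n(x)(\gamma)$ records the atom containing $\gamma \cdot x$. Their images both equal $\Sigma_n$, the common set of maximal patterns. For $\tau \in \Sigma_n$, the preimages $B_\tau := (c_\bx^n)^{-1}(\tau)$ and $A_\tau := (c_\ax^n)^{-1}(\tau)$ are nonempty clopen subsets of $\s C$, each homeomorphic to $\s C$; pick homeomorphisms $\rho_\tau: B_\tau \to A_\tau$ and glue to obtain $\rho_n \in \homeo(\s C)$ with $c_\ax^n \circ \rho_n = c_\bx^n$. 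For $x \in \s C$ and $\gamma \in W_n$, setting $i = c_\bx^n(x)(\gamma) = c_\ax^n(\rho_n(x))(\gamma)$ gives $\bx(\gamma)(x) \in P_{n,i}$ and $\ax(\gamma)(\rho_n(x)) \in Q_{n,i}$; the identity-coordinate of $c_\ax^n(\ax(\gamma)(\rho_n(x)))$ is thus $i$, so its image under $\rho_n^{-1}$ lies in some $B_{\tau'}$ with $\tau'(e) = i$, which is contained in $P_{n,i}$. Hence both $(\rho_n \cdot \ax)(\gamma)(x)$ and $\bx(\gamma)(x)$ sit in $P_{n,i}$, a set of diameter $< 1/n$, so $\rho_n \cdot \ax \to \bx$.

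The main obstacle I expect is the final combinatorial tracking of types through $\rho_n^{-1} \ax(\gamma) \rho_n$, especially the observation that the pattern-match forces $\vec Q_n$ to be a partition and that types at the identity pin down atoms. The rest is bookkeeping combined with two standard Cantor-space facts: every nonempty clopen subset of $\s C$ is homeomorphic to $\s C$, and every closed set is a decreasing intersection of clopens.
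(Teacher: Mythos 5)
Your proposal is correct, and the underlying strategy matches the paper's: fine clopen partitions pin the actions down to $\epsilon$, and the Cantor space's homogeneity supplies the conjugating homeomorphism. There are two presentational differences worth noting. For $(\Rightarrow)$, the paper routes through Theorem~\ref{thm: one direction approximate conjugacy}, which proves openness of the sets $U(P)$ for arbitrary compact metric $X$ by fattening closed covers to $\delta$-neighborhoods and using Hausdorff distance; you instead replace the closed cover by a clopen one and use the Cantor-specific fact that uniformly close homeomorphisms agree on any fixed clopen set. Your version is cleaner in this setting but does not generalize beyond the zero-dimensional case, which the paper's lemma does. For $(\Leftarrow)$, the paper first establishes (Proposition preceding Theorem~\ref{thm: approximate conjugacy}) that the sets $U(f,W,\bx)$ form a neighborhood basis and then cites ultrahomogeneity of $\s C$ as a black box to produce $\rho$ with $g=f\circ\rho^{-1}$; you unwind that black box into an explicit gluing of homeomorphisms between clopen type classes $B_\tau \to A_\tau$, and you fold the neighborhood-basis computation directly into the convergence estimate via mesh $< 1/n$. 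Your observation that matching patterns at the identity forces $\vec Q_n$ to be a partition is a nice detail the paper sidesteps by working with labellings (hence partitions) from the outset. One small bookkeeping point either way: the metric on $\homeo(\s C)$ controls both $f$ and $f^{-1}$, so one should take the exhaustion $W_n$ symmetric (or note that $\gamma^{-1}\in W_n$ eventually) to conclude convergence in $\homeo(\s C)^\Gamma$.
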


For spaces with non-trivial connected components, approximate conjugacy can be strictly stronger than weak containment. See Proposition \ref{prop:badfactor} for an example. 

We also explain how to extend some of these definitions and results to non-compact flows. 

After setting up all of these definitions, the rest of this paper investigates the shape of the weak-containment order for actions of general countable groups on zero-dimensional spaces. We focus on a handful of classes: free actions, mixing actions, minimal actions, and ultra-co-products of finite actions.

It is straightforward to check that all of these classes have top elements. In $\s C$, top elements correspond to dense conjugacy classes, so we have the following:
\begin{prop}
    If $\s K\subseteq \Axn(\Gamma, \s C)$ is a $G_\delta$ class of actions which is closed under conjugacy, then $\s K$ has a top element with respect to weak containment if and only if the conjugacy action on $\s K$ is generically ergodic. 
\end{prop}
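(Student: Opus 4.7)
The plan is to reduce the statement to a standard dichotomy for continuous Polish group actions. The theorem immediately preceding the proposition characterizes weak containment on $\s C$-flows via conjugacy orbit closures: $\bx\preccurlyeq\ax$ iff $\bx\in\overline{\homeo(\s C)\cdot\ax}$. Applied pointwise, this shows that $\ax\in\s K$ is a top element of $(\s K,\preccurlyeq)$ precisely when $\s K\subseteq\overline{\homeo(\s C)\cdot\ax}$, that is, when the conjugacy orbit of $\ax$ is dense in $\s K$. Since $\s K$ is a $G_\delta$, conjugation-invariant subset of the Polish space $\Axn(\Gamma,\s C)$, it is itself Polish and carries the restriction of the continuous conjugation action of the Polish group $\homeo(\s C)$.

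It therefore suffices to prove the following standard fact about continuous actions of a Polish group $G$ on a Polish space $Y$: the action is generically ergodic iff some orbit is dense. I would either sketch both directions or cite the relevant results in Kechris or Gao. For the easy direction, if the action is generically ergodic, fix a countable basis $\{U_n\}$ for $Y$; each saturation $G\cdot U_n$ is a nonempty, invariant, open set, hence non-meager, hence (by generic ergodicity) comeager; its intersection over $n$ is comeager by Baire and is exactly the set of points with dense orbit, so is nonempty. Conversely, if some orbit is dense then every nonempty invariant open set is dense. For an invariant Baire-measurable set $A$, write $A=U\triangle M$ with $U$ open and $M$ meager; if $U=\emptyset$ then $A$ is meager, otherwise pick a countable dense $\{g_n\}\subseteq G$ and use continuity of the action to get $G\cdot U=\bigcup_n g_n U$, so $(G\cdot U)\setminus A=\bigcup_n g_n(U\setminus A)$ is meager (each $U\setminus A\subseteq M$ is meager), and thus $A$ is comeager in the dense open set $G\cdot U$, hence comeager in $Y$.

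Combining, $\s K$ has a top element iff some conjugacy orbit is dense in $\s K$ iff the conjugation action on $\s K$ is generically ergodic. There is no substantial obstacle. The only small point worth flagging is the use of separability of $\homeo(\s C)$ to reduce an uncountable union of group translates to a countable one; this is automatic since $\homeo(\s C)$ is Polish.
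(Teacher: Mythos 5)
Your proof is correct and follows the same route as the paper: it reduces the claim to the approximate-conjugacy characterization of weak containment on Cantor flows (Theorem \ref{thm: approximate conjugacy}) and then invokes the standard Polish-dynamics equivalence between having a dense orbit and generic ergodicity. The paper simply cites the approximate-conjugacy theorem and leaves the second step implicit, whereas you spell it out, but the underlying argument is the same.
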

\begin{cor}[Folklore]
    Conjugacy is generically ergodic on all Cantor actions of $\Gamma$, on free Cantor actions, and on minimal Cantor actions.
\end{cor}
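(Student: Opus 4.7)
The plan is to deduce each of the three genericity statements from the immediately preceding proposition. Since the previous remark asserts that each of the listed classes has a top element in the weak-containment order, it suffices to verify that $\Axn(\Gamma,\s C)$, the free actions, and the minimal actions each form a conjugacy-invariant $G_\delta$ subset of $\Axn(\Gamma,\s C)$.

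Conjugacy-invariance is immediate in all three cases: the full action space is obviously invariant; freeness is preserved by conjugation because $\ax(\gamma)$ has a fixed point iff $\rho\inv\ax(\gamma)\rho$ does; and minimality is preserved because a homeomorphism carries dense orbits to dense orbits. For the $G_\delta$ conditions, I would first note that $\Axn(\Gamma,\s C)$ is itself closed in $\homeo(\s C)^\Gamma$, hence trivially $G_\delta$. For freeness, observe that for a fixed $\gamma\in\Gamma\setminus\{1\}$ the set $\{f\in\homeo(\s C):\inf_x d(f(x),x)>0\}$ is open in $\homeo(\s C)$ (by compactness of $\s C$ and continuity of $f$), so the free actions are the countable intersection, over $\gamma\neq 1$, of these open conditions pulled back via the evaluation map $\ax\mapsto \ax(\gamma)$, hence $G_\delta$.

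For minimality, fix a countable basis $\{U_n\}_{n\in\N}$ of clopen sets for $\s C$. An action $\ax$ is minimal iff every orbit is dense iff for each $n$ there is a finite $F\Subset \Gamma$ with $\bigcup_{\gamma\in F}\ax(\gamma)\inv\cdot U_n=\s C$. For fixed $n$ and $F$, this last condition is open in $\Axn(\Gamma,\s C)$ (it says the finite collection of clopen sets $\ax(\gamma)\inv U_n$ covers $\s C$, which is preserved under small uniform perturbations of the $\ax(\gamma)$'s since the $U_n$ are clopen). Hence the minimal actions form a countable intersection of open sets.

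Having verified the hypotheses, the previous proposition applies to each class and yields generic ergodicity of the conjugacy action on the full action space, on the free actions, and on the minimal actions. The main obstacle is really just bookkeeping on the $G_\delta$ descriptions — especially taking care that the openness in the minimality clause uses the clopen-ness of basis elements in $\s C$; there is no deeper content, which justifies the \emph{Folklore} attribution.
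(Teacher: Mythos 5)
Your proof is correct and follows the paper's own route: cite the proposition equating generic ergodicity on a conjugacy-invariant $G_\delta$ class with the existence of a top element, note that the classes in question have top elements, and check the $G_\delta$ and invariance hypotheses. The only difference is that you spell out the $G_\delta$ verifications (including the crucial use of clopen-ness for the minimality clause), which the paper leaves implicit.
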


Some properties of generic actions follow easily. The following proposition is not new (except in observing many of the same proofs apply to minimal actions), but the theory of weak containment gives a flexible framework for the proofs.

\begin{prop}
    A generic Cantor action or minimal Cantor action is free. If $\Gamma$ is finitely generated, a generic Cantor action is not topologically transitive. And, if $\Gamma$ has finite asymptotic dimension, then a generic action or minimal Cantor action is hyperfinite.
\end{prop}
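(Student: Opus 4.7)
The general framework is: by Theorem~1.12, on $\Axn(\Gamma,\s C)$ the closure $\overline{\homeo(\s C)\cdot\ax}$ equals $\{\bx:\bx\preccurlyeq\ax\}$, so for each class $\s K$ in the statement, the preceding corollary produces a weak-containment top $\ax^*\in\s K$ whose conjugacy class is dense in $\s K$. To prove a property $P$ is comeager in $\s K$ it then suffices to find an open-in-$\s K$ (or dense $G_\delta$) conjugacy-invariant witness containing $\ax^*$.

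For freeness, $\{\ax:\fix_\ax(\gamma)=\emptyset\}$ is open for each $\gamma\neq e$ on compact $\s C$, and freeness passes up $\preccurlyeq$ via a pattern argument: if $\bx$ is free at $\gamma$ through a cover $(C_i)$ with $\gamma C_i\cap C_i=\emptyset$, then any $\ax\succcurlyeq\bx$ realizes that pattern set with a cover $(D_i)$ satisfying $\gamma D_i\cap D_i=\emptyset$. Hjorth--Molberg produces a free minimal Cantor action of every countable group, so both wc-tops of interest are free. The hyperfiniteness clause is subsumed by Naryshkin's theorem that every Borel action of a finite-asymptotic-dimension group is hyperfinite, so every Cantor or minimal Cantor $\Gamma$-action is automatically hyperfinite.

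The substantive case is non-topological transitivity for $\Gamma$ finitely generated. First, a wc-top $\ax^*$ of $\Axn(\Gamma,\s C)$ must admit a proper nonempty $\Gamma$-invariant clopen. Indeed $\ax^*\sqcup\ax^*$ lives on $\s C\sqcup\s C\cong\s C$ and weakly contains $\ax^*$, forcing $\ax^*\succcurlyeq\ax^*\sqcup\ax^*$. For each symmetric $W\Subset\Gamma$, the coordinate cover of $\ax^*\sqcup\ax^*$ has $W$-pattern set equal to exactly the two homogeneous patterns; realizing this in $\ax^*$ with a cover $(D_1^W,D_2^W)$ forces $D_1^W\cap D_2^W=\emptyset$ (no mixed pattern at an overlap point) and makes each $D_i^W$ a proper nonempty clopen that is $W$-invariant (no mixed pattern from a crossing orbit). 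Since $\s C$ has only countably many clopens, pigeonhole along a symmetric exhaustion $W_n\nearrow\Gamma$ yields a single clopen $D_1$ that is $W_n$-invariant for infinitely many $n$, hence $\Gamma$-invariant. Second, for a finite generating set $S$ and a basic clopen $U\subseteq\s C$, the set $\{\ax:\ax(s)U=U\ \forall s\in S\}$ is open, since the positive distance $d(U,\s C\setminus U)$ forces any sufficiently close uniform perturbation to preserve $U$. The countable union over proper nonempty basic clopens $U$ is thus an open set of non-transitive actions, dense because it contains the conjugacy class $[\ax^*]$, and so comeager.

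The main obstacle is the pigeonhole-plus-pattern step showing the wc-top really admits a $\Gamma$-invariant clopen partition; the rest is bookkeeping. Finite generation is precisely what upgrades ``invariance of a basic clopen'' from a $G_\delta$ condition (intersection over all of $\Gamma$) to an open one (finite intersection), enabling the clean dense-open-equals-comeager conclusion instead of forcing a more delicate generic-ergodicity argument on the full $F_\sigma$ set of non-transitive actions.
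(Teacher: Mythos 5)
The freeness clause follows the paper's route: freeness is upward-closed under weak containment (the paper's Theorem~\ref{thm: free closed up}), both $\Axn(\Gamma,\s C)$ and $\kmin$ contain a free action, so the weak-containment top is free and lies in a comeager weak equivalence class. For non-transitivity you arrive at the same conclusion by a longer path than the paper's: the paper simply observes that some action admits a clopen $2$-labelling invariant under a finite generating set $E$, transports that labelling to $\ax_\top$ by weak containment, and reads off two $\Gamma$-invariant proper clopens; your $\ax^*\sqcup\ax^*$ pattern analysis plus the pigeonhole over clopens does produce a $\Gamma$-invariant clopen and works for arbitrary countable $\Gamma$, but you then need finite generation anyway in the dense-open step, so the extra generality buys nothing here.

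The genuine gap is the hyperfiniteness clause. You dismiss it by invoking a theorem of Naryshkin that ``every Borel action of a finite-asymptotic-dimension group is hyperfinite,'' so that every Cantor action of such a group is automatically hyperfinite. No such theorem exists, and the statement is false: $F_2$ has asymptotic dimension $1$, yet the orbit equivalence relation of the Bernoulli shift $2^{F_2}$ (a Cantor $F_2$-action) is not hyperfinite, since a hyperfinite Borel equivalence relation restricted to an invariant probability measure would give an amenable measured equivalence relation, contradicting nonamenability of $F_2$. Naryshkin's results concern groups of polynomial (or subexponential) growth, which is a much stronger hypothesis than finite asymptotic dimension and in particular forces amenability. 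So genericity is essential: the paper's Iyer--Shinko argument shows that the \emph{top} element (equivalently, the generic action) factors onto certain mixing shifts of finite type $\bx_{r,k}$ which witness bounded Borel asymptotic dimension of the orbit equivalence relation, and then appeals to Conley--Jackson--Marks--Seward--Tucker-Drob to conclude hyperfiniteness. That factoring step is the substance of this clause and cannot be replaced by a blanket statement about all actions of the group.
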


Free-ness for generic actions seems to be folklore. Hyperfiniteness for generic actions is due to Iyer and Shinko \cite{IyerShinko}.

Elek showed that there is also always a bottom free element \cite{ElekQW}. We improve this slightly:

\begin{thm}
    There is a free action $\bx_\bot\in \Axn(\Gamma, \s C)$ so that, for any free action $\ax$, $\ax\times \bx_\bot\approx \ax$. 
\end{thm}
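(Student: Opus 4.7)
My plan is to upgrade Elek's bottom-element construction. Let $\bx_0 \in \Axn(\Gamma, \s C)$ be Elek's bottom free action, so $\bx_0 \preccurlyeq \ax$ for every free $\ax$, and arrange in addition that $\bx_0$ is minimal (by passing to any minimal subsystem, which remains free and still witnesses bottom-ness below any free action). I take $\bx_\bot := \bx_0^{\N}$, the coordinatewise $\Gamma$-action on $\s C^\N \cong \s C$; this is a free Cantor action satisfying $\bx_\bot \times \bx_\bot \approx \bx_\bot$ (via the bijection $\N \sqcup \N \cong \N$), which is the point of taking the countable power.

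The direction $\ax \preccurlyeq \ax \times \bx_\bot$ is immediate from the coordinate projection $\ax \times \bx_\bot \twoheadrightarrow \ax$.

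For the reverse direction $\ax \times \bx_\bot \preccurlyeq \ax$ I use the ultra-co-product characterization: it suffices to construct an equivariant surjection $\ax^{\s U} \twoheadrightarrow \ax \times \bx_\bot$ for some nonprincipal ultrafilter $\s U$ on $\N$. By Proposition \ref{prop:evocative} applied to $\bx = \ax \times \bx_\bot$, this further reduces to showing that for every finite family $\{U_i \times V_i\}_{i=1}^n$ of basic opens in $X \times \s C^\N$ there is an equivariant $f\colon \ax \to \ax \times \bx_\bot$ whose image meets each $U_i \times V_i$. Setting $f = (\id_X, g)$ for an equivariant $g\colon \ax \to \bx_\bot$ to be built, the task becomes $g^{-1}(V_i) \cap U_i \neq \emptyset$ for each $i$.

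Here I exploit the self-similarity $\bx_\bot \cong \bx_\bot^{\N}$: by identifying $\N$ with $\N \sqcup \N \sqcup \cdots$ (countably many copies), I view $\bx_\bot$ as a countable product of copies of itself, assign a distinct copy to each of the $n$ constraints, and treat each constraint within its own copy. This decouples the constraints and reduces the problem to a single-constraint version: given a free $\ax$, a nonempty open $U \subseteq X$, and a nonempty basic open $V \subseteq \s C^k$ for some $k$, produce an equivariant $g'\colon \ax \to \bx_0^k$ with $g'(U) \cap V \neq \emptyset$.

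The main obstacle is this single-constraint step, which strengthens $\bx_0^k \preccurlyeq \ax$ by demanding a preimage inside a prescribed open set. I plan to handle it by induction on $k$. The base $k=1$ combines minimality of $\bx_0$ with freeness of $\ax$: starting from any equivariant $h\colon \ax \to \bx_0$ (which exists because $\bx_0 \preccurlyeq \ax$), minimality gives $\gamma \in \Gamma$ and $x \in U$ with $\gamma h(x) \in V$, and freeness lets us modify $h$ along the orbit $\Gamma \cdot x$---which embeds as a discrete copy of $\Gamma$ by freeness---and re-extend continuously to a global equivariant map landing in $V$ at $x$. The inductive step reduces $\bx_0^k$ to $\bx_0^{k-1}$ by applying the case $k=1$ to the free action $\ax \times \bx_0^{k-1}$, whose first projection lets us transport the constraint on $U \subseteq X$ to a constraint on $U \times X_{\bx_0^{k-1}}$ in the extended action.
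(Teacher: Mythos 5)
There are two genuine gaps in this proposal.

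First, the claim that ``passing to any minimal subsystem\ldots{} still witnesses bottom-ness below any free action'' is not justified and is not an automatic fact. A subflow $Y\subseteq X$ dualizes under Gelfand duality to a \emph{quotient} algebra $C(X)\twoheadrightarrow C(Y)$, not a subalgebra; so $Y\preccurlyeq X$ is not automatic from $Y$ being a subflow, and a fortiori $Y\preccurlyeq\ax$ for all free $\ax$ does not follow from $\bx_0\preccurlyeq\ax$ for all free $\ax$. More concretely, there is no reason an arbitrary free flow should admit a $\hom$ into a prescribed minimal subflow $Y$: any equivariant $f:\ax\rightarrow\bx_0$ has closed invariant image, and there is nothing forcing that image to be contained in $Y$. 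The paper records that a minimal mixing bottom exists as a theorem of Elek; it is a real result, not a reduction you can take for free. (In fact you do not need minimality at all if you use Elek's explicit $\bx_\bot$, which is what the paper does.)

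Second, and more seriously, the base case $k=1$ is exactly where the whole combinatorial content of the theorem lives, and ``modify $h$ along the orbit $\Gamma\cdot x$\ldots{} and re-extend continuously'' is an assertion, not an argument. An equivariant continuous map into a subshift is, by Curtis--Hedlund--Lyndon, determined by a local rule; you cannot modify it at a single orbit and then ``re-extend'' without rebuilding the local rule globally, which is precisely the difficulty. The paper's proof does this rebuilding explicitly: it uses Proposition \ref{prop:SFT containment}, fixes a finite list of clopen rectangles to be hit, defines a partial coloring $f_0$ on a well-separated clopen set of ``seeds,'' and then extends $f_0$ to a total continuous coloring by a greedy algorithm that exploits both freeness of $\ax$ and the fact that Elek's $\bx_\bot$ is a product of proper-coloring shifts, which have the extension property that any partial proper coloring extends. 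Your abstract minimal $\bx_0$ has no such extension property available, and minimality does not supply a substitute. So the reduction machinery you set up (ultra-co-power via Proposition \ref{prop:evocative}, self-similarity of $\bx_\bot\cong\bx_\bot^{\N}$, and the induction on $k$) is all plausible scaffolding, but the load-bearing step at the bottom is exactly the step you leave unproved. You should either reproduce the greedy-coloring construction (at which point the outer machinery is unnecessary, as in the paper), or find a genuinely different mechanism for the single-constraint extension.

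One smaller remark: the decoupling via $\bx_\bot\cong\bx_\bot^{\N}$ needs a bit more care, since the $V_i$'s are given basic opens that may constrain overlapping coordinates; the cleaner move is to note that all constraints together involve only finitely many $\bx_0$-coordinates, so one may work directly in $\bx_0^k$ with $n$ constraints simultaneously and not try to force one constraint per factor.
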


Bernshteyn and Seward (independently) showed that $\bx_\bot$ admits a local pattern if and only if there is an efficient deterministic local algorithm for producing the pattern. We show that, for $\Gamma=\Z^2$, there is no bottom element among ultra-co-products of finite actions

\begin{thm}
    For any sequence of actions $\ax_i$ of actions of $\Z^2$ on finite sets with $\ax:=\colim_{\s U} \ax_i$ free, there is a sequence of actions on finite sets, $\bx_i$ so that $\bx:=\colim_{\s U} \bx_i$ is free and $\ax\not\preccurlyeq\bx$.
\end{thm}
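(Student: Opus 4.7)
The plan is to exhibit, for the given $\ax = \colim_{\s U} \ax_i$, a local pattern realized in $\ax$ that can be killed by a suitably constructed $\bx$. Recall that each finite $\Z^2$-action $\ax_i$ decomposes as a disjoint union of orbits $\Z^2 / \Lambda_{i,j}$ for rank-2 sublattices $\Lambda_{i,j} \subseteq \Z^2$, and freeness of $\colim_{\s U} \ax_i$ is equivalent to the shortest nonzero vector of each $\Lambda_{i,j}$ tending to infinity along $\s U$. Since the $\ax_i$ are finite (hence discrete), local patterns of the ultra-co-product correspond to ultralimits of pattern sets realized in the $\ax_i$, so it suffices to find a pattern $P$ realized in $\s U$-most $\ax_i$ but in none of a chosen sequence $\bx_i$.

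I would split on whether $\ax$ admits a continuous proper 2-coloring of its Schreier graph. Encode this as the $(W,2)$-local pattern $P$ on $W = \{0, e_1, e_2\}$ realized exactly by clopen partitions $(A,B)$ with both cells nonempty and $e_1 \cdot A \cap A = e_2 \cdot A \cap A = \emptyset$. In the first case, $P \in \s P_{W,2}(\ax)$, take $\bx_i = (\Z/(2i+1)\Z)^2$ with the standard translation action. Freeness of $\bx = \colim_\s U \bx_i$ is immediate, since for any nonzero $(a,b) \in \Z^2$ one has $(a,b) \in (2i+1)\Z^2$ only when $2i+1 \leq \max(|a|,|b|)$. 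On the other hand the Schreier graph of $\bx_i$ contains the odd cycle $(0,0), (1,0), \dots, (2i,0), (0,0)$, so $\bx_i$ admits no proper 2-coloring, $P \notin \s P_{W,2}(\bx_i)$ for every $i$, and therefore $\ax \not\preccurlyeq \bx$.

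In the second case, $P \notin \s P_{W,2}(\ax)$, so $\s U$-most $\ax_i$ contain an orbit whose sublattice is not contained in the even-sum sublattice $\{(a,b) : 2 \mid a+b\}$. I would then extract a finer distinguishing pattern from the arithmetic of the $\Lambda_{i,j}$. Concretely, by a pigeonhole argument over Minkowski-reduced bases along the ultrafilter, one locates some $m \geq 2$ and a homomorphism $\phi: \Z^2 \to \Z/m\Z$ with $\phi(\Lambda_{i,j}) = 0$ for $\s U$-most $(i,j)$; the corresponding $\phi$-cocycle pattern, asserting existence of continuous $c: X \to \Z/m\Z$ with $c(\gamma x) = c(x) + \phi(\gamma)$, is then realized in $\ax$ and can be defeated by taking $\bx_i = (\Z/n_i\Z)^2$ with $n_i$ coprime to $m$ and $n_i \to \infty$, which remains free by the same computation as above.

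The main obstacle is the second case, in particular the subcase where the sequence of orbit sublattices is geometrically ``generic''—sharing no common nontrivial modular quotient. For such $\ax$ the distinguishing pattern must be read off from finer data, namely the asymptotic shape of the Minkowski-reduced basis in the moduli space of rank-2 lattices, and translated into a local pattern using that patterns on finite orbits correspond to periodic labellings of $\Z^2$ by the relevant sublattice. This is where the two-dimensional structure of $\Z^2$ is essential: the rich reduction theory for rank-2 sublattices supplies enough invariants to distinguish any ultra-co-product from some other ultra-co-product of finite actions, whereas the absence of a comparably rigid theory of ``shapes'' of finite-index subgroups of $F_2$ is what leaves the analogous measure-theoretic problem open in that setting.
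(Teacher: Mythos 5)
Your Case 1 is sound: if $\ax$ realizes the proper $2$-coloring pattern, the ultra-co-product of odd-sided tori $\bx_i = (\Z/(2i+1)\Z)^2$ cannot (odd cycles), so $\ax \not\preccurlyeq \bx$, and $\bx$ is free. But your Case 2 has a genuine gap, which you identify yourself and then do not close. The homomorphism $\phi : \Z^2 \to \Z/m\Z$ annihilating $\s U$-most orbit lattices need not exist: take $\ax_i = \mathbf{c}_{p_i,p_i}$ with $p_i$ running over the odd primes. For any $m \geq 2$ and any $\phi : \Z^2 \to \Z/m\Z$, once $p_i > m$ the condition $\phi(p_i\Z^2)=0$ forces $\phi=0$ because $p_i$ is invertible modulo $m$; since $\s U$ is nonprincipal, $\s U$-most $i$ satisfy $p_i > m$. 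This $\ax$ lies squarely in your Case 2, since odd cycles kill the $2$-coloring pattern. Your fallback---reading a pattern off ``the asymptotic shape of the Minkowski-reduced basis in the moduli space of rank-$2$ lattices''---is not an argument: it is not clear how to convert such a shape invariant into a local pattern that $\ax$ realizes and some $\bx$ does not, which is exactly what the theorem requires.

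The paper handles this generic case with a simpler arithmetic device that sidesteps the moduli space entirely. After reducing to the case where $\ax$ is a $\s U$-limit of tori $\mathbf{c}_{n_i,m_i}$ (a bottom would have to be weakly contained in an ultra-co-product of transitive finite actions, which rules out any ``orbit-constant but globally nonconstant'' pattern), the paper pigeonholes along $\s U$ to fix the residue $j \equiv n_i \mod 3$ and exhibits one explicit local pattern $f$ (the tiling in the paper's figure) that is realized on every $\mathbf{c}_{n_i,m_i}$ but that forces any periodic labelling realizing the same $W$-pattern set to have horizontal period congruent to $j$ modulo $3$. Taking $\bx = \colim_{\s U}\mathbf{c}_{3^n,3^n}$, or $\colim_{\s U}\mathbf{c}_{7^n,7^n}$ when $j\equiv 0$, whose labelling periods are necessarily prime powers of $3$ (resp.\ $7$), gives $\ax \not\preccurlyeq \bx$. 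The point you are missing is that the residue of the $e_1$-period modulo a small fixed prime is a $\s U$-stable invariant that \emph{does} encode as a local pattern even when the lattices $\Lambda_{i,j}$ share no common modular quotient; this residue-class trick does in general what your modular-quotient $\phi$ can only do in special cases.
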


This means that, for any sequence of finite actions $\ip{\ax_i:i\in \omega}$, there is some pattern that can decorate almost every $\ax_i$ but that cannot be produced by an efficient local algorithm.

Lastly, the width of the weak containment order on $\s C$-flow records geometric information about the group. Suppose $\Gamma$ is finitely generated. If $\Gamma$ is finite, then all free actions of $\Gamma$ on $\s C$ are weakly equivalent. If $\Gamma$ is not a Burnside group or finite, then $\Gamma$ has a perfect antichain of free actions. Similarly, if $\Gamma$ is $0$- or $2$-ended then all free mixing actions of $\Gamma$ are weakly equivalent. But, if $\Gamma$ contains a natural obstruction to $2$-endedness, then $\Gamma$ has a perfect antichain of free mixing actions. 

\begin{dfn}
    For a group $\Gamma$, define 
    \[\kfree(\Gamma)=\{\ax\in \Axn(\Gamma, \s C): \ax\mbox{ is free}\}\]
    \[\kmix(\Gamma)=\{\ax\in \Axn(\Gamma, \s C): \ax\mbox{ is mixing}\}.\]
\end{dfn}

\begin{thm}
    If $\Gamma$ has $\Z$ as a subgroup, then $\wdth\bigl(\kfree(\Gamma)\bigr)=|\R|$. If $\Gamma$ has $\Z^2$ or $F_2$ as a subgroup, then $\wdth\bigl(\kmix(\Gamma)\cap\kfree(\Gamma)\bigr)=|\R|.$
\end{thm}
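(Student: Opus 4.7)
The plan is threefold: (i) observe the upper bound $\wdth \leq |\R|$ from general principles, (ii) construct size-$|\R|$ antichains of free (resp.\ free and mixing) $H$-subshifts for the subgroup $H \in \{\Z, \Z^2, F_2\}$, and (iii) transport these antichains to $\Axn(\Gamma, \s C)$. For (i), the earlier L\"owenheim--Skolem type theorem that every weak equivalence class admits a metric Cantor representative gives at most $|\R|$ weak equivalence classes, whence $\wdth \leq |\R|$.

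For step (ii) with $H = \Z$ and the free case, take the family of Sturmian subshifts $\{X_\alpha : \alpha \in (0,1) \sm \Q\}$. Each $X_\alpha$ is minimal and free (its $\Z$-action is conjugate to an irrational rotation), with word-complexity $n+1$. For distinct $\alpha, \beta$, the languages are incomparable: any inclusion would force equality of length-$n$ word counts, hence equality of the two languages, hence $X_\alpha = X_\beta$ and $\alpha = \beta$. Choosing a word present in one but not the other and using the cylinder cover by the alphabet witnesses $X_\alpha \not\preccurlyeq X_\beta$, producing the antichain.

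For step (ii) with $H \in \{\Z^2, F_2\}$ in the free-mixing case, fix an almost disjoint family $\{A_r : r \in \R\}$ of infinite subsets of $\N$ of size $|\R|$, and a sequence of rigid finite $H$-patterns $p_n$ of growing diameter, chosen so that the appearance of $p_n$ in a configuration is not forced by any ``generic'' pattern. For each $r$, produce a free mixing $H$-subshift $X_r \subseteq k^H$ whose language contains $p_n$ iff $n \in A_r$. A concrete route: start from a rich mixing SFT on $H$ (e.g., a full shift on enough symbols), then add sparse marker regions encoding $A_r$ and forbidden-pattern rules suppressing $p_n$ for $n \notin A_r$. In dimension $\geq 2$ or on a free group there is enough room for such sparse insertions without losing mixing or introducing unwanted accidental copies of the $p_n$ (Hochman--Meyerovitch-style flexibility). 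If $n \in A_r \sm A_s$ then $p_n$ witnesses $X_r \not\preccurlyeq X_s$.

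For step (iii), given an $H$-subshift $X \subseteq k^H$, define the lift $\tilde X \subseteq k^\Gamma$ by requiring $f|_{\gamma H} \in \gamma \cdot X$ for every coset $\gamma H$, with the shift $\Gamma$-action. Then $\tilde X|_H = X$, so the pulled-back cylinder cover shows $X \not\preccurlyeq Y$ implies $\tilde X \not\preccurlyeq \tilde Y$. Freeness and mixing of $\tilde X$ reduce to those of $X$: elements of $H \sm\{e\}$ cannot fix any $f \in \tilde X$ by freeness of $X$, while $\gamma \notin H$ permutes cosets nontrivially and the cosetwise constraints block diagonal fixed points, and mixing is witnessed coset-by-coset on finite windows. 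The main obstacle is the construction of step (ii) in the mixing case: reconciling topological mixing (pattern-rich languages) with a distinguishing fingerprint encoding the almost disjoint family. This succeeds for $\Z^2$ and $F_2$ precisely because they admit flexible mixing subshifts, while it fails for $\Z$, where mixing subshifts satisfy strong specification forcing all sufficiently large patterns to appear densely and ruling out a continuum of distinct fingerprint languages.
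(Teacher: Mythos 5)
Your proposal has a fundamental gap that recurs in both antichain constructions: you conflate language inclusion of subshifts with weak containment. The relation $X_\alpha \preccurlyeq X_\beta$ does \emph{not} mean $\mathcal{L}(X_\alpha) \subseteq \mathcal{L}(X_\beta)$; it means that for every finite window $W$ and every continuous labelling $f$ of $X_\alpha$, there is \emph{some} continuous labelling $g$ of $X_\beta$ (i.e.\ some sliding block code, landing in a possibly quite different subshift) with $p_W(g,X_\beta) = p_W(f,X_\alpha)$. So when you pick a word $p_n$ present in $\mathcal L(X_\alpha)$ but absent from $\mathcal L(X_\beta)$, this does not by itself obstruct weak containment: you would have to rule out every factor of $X_\beta$ simulating the identity labelling of $X_\alpha$ on the window $W$. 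Your Sturmian argument (``any inclusion would force equality of length-$n$ word counts'') and your $\Z^2/F_2$ fingerprint argument (``$p_n$ witnesses $X_r \not\preccurlyeq X_s$'') both fail at this point. In fact, for the Sturmian family it is genuinely unclear that the shifts are pairwise weakly incomparable: they have no nontrivial finite factors, so the only obvious tool (a separating shift of finite type) is unavailable, and $\bx_\bot$ is a minimal free $\Z$-action weakly below all of them.

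The paper sidesteps this by always producing an explicit separating \emph{shift of finite type} $X$ with $\hom(\ax, X)\neq\emptyset$ and $\hom(\bx, X)=\emptyset$; by Proposition \ref{prop:SFT containment}, the non-existence of \emph{any} equivariant continuous map into an SFT genuinely obstructs weak containment. For $\Z$ the separating SFTs are the finite cyclic actions $\mathbf c_p$ (Proposition \ref{prop: z antichain}); for $\Z^2$ and $F_2$ the separating SFTs are tiling shifts $T_p$, and the existence or non-existence of maps into them is verified by passing to ultraproducts of finite actions. Your $\Z^2/F_2$ construction is vague at the point that matters most: you gesture at Hochman--Meyerovitch-style flexibility to get a mixing subshift with a prescribed fingerprint of patterns, but you never supply a separating SFT or any other concrete obstruction.

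Your step (iii) is also incomplete. Coinduction of a free $H$-action need not be free: an element $\gamma \notin H$ can fix a configuration that is ``constant'' across $H$-cosets. The paper handles this via Lemma \ref{lemma:coinduction map}, which crosses with $\bx_\bot^2$ and constructs a map into the \emph{free part} of the coinduced action; your claim that ``the cosetwise constraints block diagonal fixed points'' is not justified. The assertion that mixing of the coinduced action ``is witnessed coset-by-coset'' is similarly unargued. If you want to salvage the overall plan, the indispensable ingredient is a collection of mutually separating SFTs (or an equivalent concrete obstruction compatible with Proposition \ref{prop:SFT containment}), plus the freeness correction from Lemma \ref{lemma:coinduction map} when passing from the subgroup to $\Gamma$.
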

\begin{cor}
    If $\Gamma$ has infinitely many ends, then $\wdth\bigl(\kmix(\Gamma)\cap\kfree(\Gamma)\bigr)=|\R|.$
\end{cor}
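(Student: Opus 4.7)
The plan is to reduce the corollary to the previous theorem via Stallings' theorem on ends of groups. The key fact is that any (finitely generated) group with infinitely many ends contains a copy of $F_2$ as a subgroup, so the width bound follows immediately from the statement about $F_2$.

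First, I would note that having infinitely many ends implicitly requires $\Gamma$ to be finitely generated (since ends are a coarse geometric invariant typically defined via the Cayley graph of a finite generating set). Then, Stallings' theorem on ends asserts that a finitely generated group with more than one end splits as a nontrivial amalgamated free product $A *_C B$ or HNN extension $A *_C$ over a finite subgroup $C$. When $\Gamma$ has infinitely many ends, this splitting must be ``sufficiently rich'' that a standard ping-pong argument (applied to the Bass--Serre tree of the splitting) produces two elements of $\Gamma$ that generate a free subgroup of rank $2$. This is classical and I would just cite it.

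Once $F_2 \leq \Gamma$ is in hand, the previous theorem gives $\wdth\bigl(\kmix(\Gamma)\cap\kfree(\Gamma)\bigr)=|\R|$ directly. The upper bound $\leq |\R|$ is automatic from the L\"owenheim--Skolem-style theorem established earlier in the paper (any weak equivalence class is realized by an action on a compact metric space, and there are at most $|\R|$ such actions up to weak equivalence of $\s C$-flows).

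I don't foresee a real obstacle here; the only thing to double check is that the extraction of $F_2$ from Stallings' splitting genuinely requires the infinite-ends hypothesis rather than just $\geq 2$ ends (which would fail, e.g., for $\Z$, which is $2$-ended and contains no $F_2$). The standard statement is that a finitely generated group with infinitely many ends is either virtually $F_2$ in a strong sense or contains $F_2$ outright, and in either case $F_2 \leq \Gamma$, so this is safe. The corollary is thus essentially a one-line deduction from Stallings plus the preceding theorem.
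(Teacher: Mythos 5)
Your proposal is correct and takes essentially the same route as the paper: both reduce the corollary to the preceding result on groups containing $F_2$ by invoking Stallings' theorem and a ping-pong argument on the Bass--Serre tree to extract a free subgroup of rank $2$. The paper additionally sketches the explicit choice of generators in the amalgamated-product and HNN cases, but that is the folklore argument you correctly defer to.
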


\begin{prop}
    If $\Gamma$ is $2$-ended, then $\wdth\bigl(\kfree(\Gamma)\cap\kmix(\Gamma)\bigr)=1$.
\end{prop}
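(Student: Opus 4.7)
The plan is to show that any two free mixing Cantor $\Gamma$-actions $\ax$ and $\bx$ are weakly equivalent, which forces $\wdth(\kfree(\Gamma)\cap\kmix(\Gamma))=1$. By the pattern characterization of weak containment, I need to show that for any finite $W\Subset\Gamma$, $k\in\N$, and cover $\vec D$ of the $\bx$-space, the set $B:=p_W(\vec D,\bx)$ is realized by some cover of the $\ax$-space. By Stallings, the $2$-ended group $\Gamma$ has a finite-index normal subgroup $\Gamma_0\cong\Z$. Restriction to $\Gamma_0$ preserves freeness and mixing (the latter because $[\Gamma:\Gamma_0]<\infty$ makes $\Gamma$-cofinite and $\Gamma_0$-cofinite sets coincide up to finitely many cosets), and $W$-pattern conditions over $\Gamma$ translate into $W'$-pattern conditions over a corresponding finite subset of $\Gamma_0$ after fixing coset representatives for $\Gamma/\Gamma_0$. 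This reduces the problem to $\Gamma=\Z$.

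For $\Gamma=\Z$: WLOG $\vec D$ is a clopen partition of the $\bx$-space (refine using zero-dimensionality). Then $B$ is the set of $W$-blocks of the factor subshift $Y\subseteq[k]^\Z$ of $\bx$ determined by $\vec D$, which is mixing because $\bx$ is. I would construct a clopen partition $\vec E$ of the $\ax$-space with $p_W(\vec E,\ax)=B$ as follows. Since $\ax$ is free on a Cantor space, there are clopen Rokhlin towers of arbitrary height (any clopen neighborhood chosen via freeness and Hausdorff separation works). For each maximal pattern $P\in B$, choose a base $F_P$ for a tall such tower, with all chosen bases pairwise disjoint, and color the levels of $F_P$'s tower by the letters of a long word in the language of $Y$ that displays $P$ at its centre. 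Fill the remaining clopen region of the $\ax$-space with a single $Y$-compatible coloring, using the block specification property of $Y$ to ensure that tower colorings and residual coloring glue consistently along their common boundaries.

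The main obstacle is this gluing step, where we must guarantee that every orbit of $\ax$ reads off a point of $Y$, so that no $W$-block outside $B$ appears. Block specification of the mixing subshift $Y$---that for any $u,v$ in the language of $Y$ and all sufficiently large $n$ there is a word $c$ of length $n$ with $ucv$ in the language---is precisely what lets us concatenate any finite family of $Y$-words into a single longer $Y$-word. Applying this iteratively across the chosen tower colorings and the residual filling produces a clopen partition $\vec E$ whose factor map into $[k]^\Z$ has image contained in $Y$. Then every $W$-window lies in $Y|_W=B$, and each $P\in B$ appears at its designated tower by construction, so $p_W(\vec E,\ax)=B$. Hence $\bx\preccurlyeq\ax$, and by symmetry $\ax\preccurlyeq\bx$, giving $\ax\approx\bx$.
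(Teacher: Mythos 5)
Your reduction to $\Gamma_0 \cong \Z$ via Stallings has a genuine gap, and it is the crux of the argument. Suppose you encode a $\Gamma$-labelling $g:X'\to A$ of $\ax$ as a $\Gamma_0$-labelling $\tilde g:X'\to A^T$ by $\tilde g(y)(t)=g(t\cdot y)$ for coset representatives $t\in T$. In the forward direction this is fine: $W$-local patterns of $g$ are read off from $W_0$-local patterns of $\tilde g$ for an appropriate $W_0\Subset\Gamma_0$. But the simulation step produces only a $\Gamma_0$-labelling $\tilde h:X'\to A^T$ of $\ax\res\Gamma_0$ with the right $W_0$-patterns; to recover an honest $\Gamma$-labelling you set $h(y)=\tilde h(y)(\id)$, and then you need the consistency condition $\tilde h(t\cdot y)(\id)=\tilde h(y)(t)$ for each $t\in T\setminus\{\id\}$. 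This constraint links the value of $\tilde h$ at $y$ with its value at $t\cdot y$, and since $t\notin\Gamma_0$ this is \emph{not} a $\Gamma_0$-local constraint: nothing in $p_{W_0}(\tilde h,\ax\res\Gamma_0)=p_{W_0}(\tilde g,\bx\res\Gamma_0)$ enforces it. Without it, the $W$-local patterns of $h$ for $\gamma\in W\setminus\Gamma_0$ are completely uncontrolled, so you cannot conclude $p_W(h,\ax)=p_W(g,\bx)$. More generally, weak containment of the restricted $\Gamma_0$-actions does not imply weak containment of the $\Gamma$-actions, so the problem does not reduce to $\Z$.

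The paper avoids this issue by never leaving $\Gamma$: it exploits $2$-endedness directly to find a finite $F\Subset\Gamma$ such that $\Gamma\setminus F$ has exactly two infinite components, builds a clopen, well-spaced system of $F$-cuts in the Schreier graph of the target free action, paints an arbitrary prescribed pattern on each cut, and then uses mixing of $\ax$ (more precisely, mixing of the factor subshift generated by the pattern set) to interpolate a legal labelling across each gap between consecutive cuts. Because the construction is carried out on the Schreier graph of the $\Gamma$-action it is automatically $\Gamma$-equivariant, which is exactly what your reduction loses. Note also that the paper proves the stronger statement $\ax\approx\bx_\bot$ for any free mixing $\ax$ and deduces the width computation, rather than comparing two free mixing actions directly. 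Your Rokhlin-tower idea for $\Gamma=\Z$ is in the same spirit as the paper's cut-and-interpolate scheme, but even there the gluing across tower boundaries needs more care than ``block specification'': the subshift $Y$ you factor onto need not have a periodic point, so a fixed marker word $m$ with $mm$ in the language may not exist, and you must instead interpolate inside towers using mixing in the way the paper does between cuts.
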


\subsection{Notation}

Throughout, $\Gamma$ will denote a countable discrete group. We denote the identity element of $\Gamma$ by $\id_\Gamma$, or $\id$ when $\Gamma$ is understood. Formally, we view an action $\ax:\Gamma\curvearrowright X$ as a group homomorphism $\ax: \Gamma\rightarrow S_X.$ So, $\ax(\gamma):X\rightarrow X$ is a bijection (typically a homeomorphism). We write $\gamma\cdot_\ax x$ for $\ax(\gamma)(x),$ and we drop the subscript when the action is understood.


When $\ax:\Gamma\curvearrowright X$ is any action and $E\subseteq \Gamma$, we can form the associated Schreier graph:

\[\sch(\ax, E):=\bigl(X, \{\{x,\gamma\cdot_{\ax} x\}: x\in X, \gamma\in E\}
\bigr).\] In the special case of the right-multiplication action $\rho:\Gamma\curvearrowright\Gamma$, we call the Schreier graph a Cayley graph:
\[\cay(\Gamma, E)=\sch(\rho, E)=\bigl(\Gamma, \{\{\delta,\delta\gamma\}: \gamma\in E, \delta\in \Gamma\}\bigr).\] In particular, $\Gamma$ acts by automorphisms on $\cay(\Gamma, E)$ by left-multiplication.  

For any action $\ax:\Gamma\curvearrowright X$ and any set $A$, we say $\Gamma$ acts by shifting indices on $A^X$ to mean the action
\[\mathbf{s}: \Gamma\curvearrowright A^X\]
\[(\gamma\cdot f)(x)=f(\gamma\inv\cdot x).\] We implicitly view $\Gamma$ as acting by automorphisms on its Cayley graph, so the shift action of $\Gamma$ on $A^\Gamma$ is given by  
\[(\gamma\cdot x)(\delta)=x(\gamma\inv\delta).\]
For any action $\ax:\Gamma\curvearrowright X$ and sets $A\subseteq \Gamma$ and $B\subseteq X$, we write
\[A\cdot_\ax B:=\{\gamma\cdot_\ax x: \gamma\in A, x\in B\}.\] Again, we drop the subscript when $\ax$ is understood.

An action $\ax:\Gamma\curvearrowright X$ is free if no $\gamma\in \Gamma\setminus\{\id\}$ fixes any point. So, if $\ax$ is free, then every component of $\sch (\ax, E)$ is isomorphic to $\cay(\Gamma, E).$

We will mostly be concerned with actions on topological spaces by homeomorphisms. In this case, we call $\ax: \Gamma\curvearrowright X$ a continuous action or topological action. (For actions by a more general topological group $G$, a continuous action should be continuous as a function from $G$ to $\homeo(X)$. But since $\Gamma$ is countable and discrete, this definition is equivalent). If $X$ is a topological space and we write $\ax:\Gamma\curvearrowright X$, we mean a continuous action unless otherwise stated. 

By a $\Gamma$-flow, we mean a continuous action of $\Gamma$ on a compact Hausdorff space. When $A$ is any compact space (such as a finite set), the full $A$-shift is $\mathbf{s}: \Gamma\curvearrowright A^\Gamma$ where $A^\Gamma$ is given the product topology.

Given two continuous actions of $\Gamma$, $\ax:\Gamma\curvearrowright X$ and $\bx:\Gamma\curvearrowright Y$, we write $\hom(\ax, \bx)$ for the set of continuous equivariant maps from $\ax$ to $\bx$:
\[\hom(\ax,\bx)=\bigl\{f\in Y^X: f\mbox{ is continuous, and }(\forall\gamma\in \Gamma, x\in X)\;f(\gamma\cdot_{\ax} x)=\gamma\cdot_{\bx} f(x)\bigr\}. \]

By a subshift of $A^\Gamma$ we mean a closed and shift invariant subset of the full shift. Note that we don't assume $A$ is finite. If $A$ is finite, and $X\subseteq A^\Gamma$ is a subshift, we call $X$ a symbolic subshift. If $X$ is a subshift, we identify $X$ with $\mathbf{s}\res X: \Gamma\curvearrowright X$. So, if $X,Y$ are subshifts, $\hom(X, Y)$ is the set of continuous maps from $X$ to $Y$ that commute with shifting indices. The Curtis--Hedlund--Lyndon theorem characterizes these maps for symbolic subshifts:

For a set $A$, we write $B\Subset A$ to mean that $B$ is a finite subset of $A$.

\begin{prop}[Curtis--Hedlund--Lyndon]
    For subshifts $X, Y\subseteq A^\Gamma$, for any $f\in \hom(X, Y)$, there is a finite window $W\Subset\Gamma$ so that $f(x)(\gamma)$ only depends on ${x\res (\gamma W)}$. In particular, there is some local rule $f_0: A^W\rightarrow A$ so that \[f(x)(\gamma)=f_0\bigl((\gamma\inv\cdot x)\res W\bigr).\]
\end{prop}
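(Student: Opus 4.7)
The plan is to reduce the question to understanding the value of $f(x)$ at the single coordinate $\id$, and then bootstrap using equivariance. Let $e: X \to A$ be the composition of $f$ with evaluation at the identity; explicitly, $e(x) := f(x)(\id)$. Both $f$ and the projection $y\mapsto y(\id)$ are continuous, so $e$ is continuous. Since $A$ is finite and discrete, the image $\im(e)$ is a finite subset of $A$, and each fiber $e\inv(a)$ is a clopen subset of $X$.

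Next I would extract a single finite window $W$ that controls $e$. Basic clopens of $X \subseteq A^\Gamma$ have the form $\{x\in X: x\res F = p\}$ for some $F\Subset \Gamma$ and $p\in A^F$. Since each fiber $e\inv(a)$ is clopen in the compact space $X$, it can be written as a finite union of such basic clopens. Let $W\Subset \Gamma$ be the (finite) union of all the finite sets $F$ appearing in these decompositions, as $a$ ranges over the finite set $\im(e)$. Then every fiber $e\inv(a)$ is a union of cylinders indexed by $W$, so $e(x)$ depends only on $x\res W$. Define $f_0: A^W \to A$ by setting $f_0(p) := e(x)$ for any $x\in X$ with $x\res W = p$, and arbitrarily on patterns in $A^W$ that do not arise from $X$.

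Finally, equivariance delivers the formula at an arbitrary coordinate $\gamma\in \Gamma$. Because $f$ commutes with the shift,
\[
f(x)(\gamma) = (\gamma\inv \cdot f(x))(\id) = f(\gamma\inv \cdot x)(\id) = f_0\bigl((\gamma\inv \cdot x)\res W\bigr),
\]
and since $(\gamma\inv \cdot x)(\delta) = x(\gamma\delta)$, the pattern $(\gamma\inv \cdot x)\res W$ is determined by $x\res (\gamma W)$, which is exactly what the proposition requires. There is no real obstacle here: the theorem is essentially a tidy packaging of the product topology on $A^\Gamma$, the compactness of $X$, and the equivariance of $f$. The only mildly delicate point is that one must collect the windows across all finitely many fibers of $e$ to produce a single $W$ that works uniformly.
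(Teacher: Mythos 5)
Your proof is correct and is the standard argument for the Curtis--Hedlund--Lyndon theorem: reduce to the evaluation-at-identity map $e(x)=f(x)(\id)$, use finiteness of $A$ together with compactness of the symbolic subshift $X$ to extract a single finite window $W$ controlling $e$, and then propagate to all coordinates via equivariance. The paper states this proposition without proof (it is invoked as a classical fact), so there is no internal argument to compare against; your reasoning is sound, each step is justified, and in particular you correctly handle the two points that are easy to elide, namely that a clopen set in a compact space is a \emph{finite} union of cylinders, and that $(\gamma\inv\cdot x)\res W$ is read off from $x\res(\gamma W)$ under the convention $(\gamma\cdot x)(\delta)=x(\gamma\inv\delta)$.
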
 

A shift of finite type is a symbolic subshift $X\subseteq A^\Gamma$ where there is a finite window $W\Subset\Gamma$ (called the window of definition of $X$) and list of allowed patterns $P\subseteq A^W$ so that
\[x\in X\;\lra\; (\forall \gamma) \;(\gamma\cdot x)\res W \in P.\] Of course, we could just as easily define $X$ in terms of the forbidden patterns $A^W\setminus P$.

For two actions $\ax:\Gamma\curvearrowright X$ and $\bx:\Gamma\curvearrowright Y$, their product is $\ax\times \bx:\Gamma\curvearrowright X\times Y$
\[\gamma\cdot_{\ax\times \bx} (x,y)=(\gamma\cdot_{\ax}x,\gamma\cdot_{\bx} y).\] The main point of this construction is that mapping to a product is the same as mapping to both $\ax$ and $\bx$: for any $\mathbf c$,
\[\hom(\mathbf c,\ax\times \bx)\cong\hom(\mathbf c, \ax)\times\hom(\mathbf c,\bx).\]

For $X$ a compact Hausdorff space, $C(X)$ is the set of continuous functions from $X$ to $\R$. We view $X$ as a real commutative $C^*$-algebra, i.e.~as an $\R$-vector space equipped with pointwise multiplication and the infinity norm:
\[|f|_\infty=\sup_{x\in X} f(x).\] If $X$ is not compact, then we may still consider $C_b(X)$, the space of bounded continuous functions, as a $C^*$-algebra. Of course, if $\ax:\Gamma\curvearrowright X$ is a continuous action, then $\Gamma$ acts on $C(X)$ by shifting indices: $(\gamma\cdot f)(x)=f(\gamma\inv\cdot x).$ And, so $\Gamma$ acts on the spectrum of $C(X)$ by shifting indices after identifying $\s I$ with its characteristic function:
\[f\in \gamma\cdot \s I\;:\lra \;\gamma\inv\cdot f\in \s I\] so $\gamma\cdot \s I=\{\gamma\cdot f: f\in \s I\}.$

We write $\bf 1$ and $\bf 0$ for the multiplicative and additive identities respectively in an algebra. So, in $C(X)$, $\bf 1$ is the constant $1$ function, and $\bf 0$ is the constant $0$ function.

\subsection{Acknowledgments}
Thank you to Clinton Conley, Sumun Iyer, and Andrew Zucker for helpful conversations. This work was partially supported by NSF MSPRF grant DMS-2202827.

\section{Definitions}

To begin with, let's fill in the technical details of the various definitions sketched in the introduction.  Again, throughout $\Gamma$ is a countable discrete group. While we are primarily interested in flows on compact spaces, we'll give some definitions in greater generality.

\subsection{Local patterns}

We'll start with local patterns in the setting of zero-dimensional spaces. Symbolic subshifts are a useful case to keep in mind. We can think of a subshift, $X\subseteq A^\Gamma$, as a labelling problem-- we want to label vertices of $\Gamma$ by elements of $A$ while avoiding some forbidden patterns. Problems of this kind include coloring Cayley graphs, finding perfect matchings, and building acyclic subgraphs.

The Curtis--Hedlund--Lyndon theorem says that a map $f\in \hom(Y,X)$ is a constant time algorithm that we can run independently at each vertex of $\Gamma$ to convert a solution to the $Y$-labelling problem into a solution to the $X$-labelling problem.

\begin{dfn}[Elek \cite{ElekQW}]
    For a continuous action $\ax:\Gamma\curvearrowright X$ with $X$ a zero-dimensional space (not necessarily compact), $W\Subset \Gamma$, a finite alphabet $A$, and $f: X\rightarrow A$ continuous, the set of \textbf{$W$-local patterns of $f$} in $\ax$ is the set
    \[p_W(f,\ax):=\{p\in A^W: (\exists x\in X)(\forall \gamma\in W)\; f(\gamma\cdot_{\ax} x)=p(\gamma)\}.\]

    The \textbf{$(A,W)$-local patterns} of a zero-dimensional flow $\ax$ is the set \begin{align*}\s P_{A,W}(\ax):= &\, \{p_W(f,\ax): f\in A^X\mbox{ is continuous}\} \\ = &\, \bigl\{\{f(x)\res W: x\in X\}: f\in \hom(\ax, A^\Gamma)\bigr\} .\end{align*} 
\end{dfn}

Among other things, the $(A,W)$-local patterns of a flow $\ax$ tell us if $\hom(\ax, X)=\emptyset$ or not for any shift of finite type $X$ (with alphabet $A$ and window of definition $W$).

Local patterns also tell us what patterns we can make appear in the image of a map. As an example, consider the full shift $A^\Gamma$. Every flow can map into $A^\Gamma$, since it has a fixed point. But, it is a much subtler question to ask which flows have the same local patterns as $A^\Gamma$. 

We could generalize beyond zero-dimensional spaces by linearizing-- replace clopen sets with $[0,1]$-valued continuous functions and intersections with pointwise products. We do essentially this in Section \ref{sec: ultraproducts}. A more combinatorial generalization is to consider closed covers rather than clopen partitions.

\begin{dfn}
    For an action $\ax:\Gamma\curvearrowright X$, a closed cover $\vec C=(C_1,..., C_n)$ of $X$, and a finite window $W\Subset \Gamma$
    \[p_W(\vec C,\ax)=\left\{\sigma\subseteq [n]\times W: \bigcap_{(i,\gamma)\in \sigma} \gamma\inv\cdot A_i\not=\emptyset\right\}.\]
    And $\s P_{W,n}(\ax)=\{p_W(\vec A,\ax): |\vec A|=n\}.$
\end{dfn}

It's an exercise in point-set topology to check that if $X$ is zero-dimensional and normal then for any closed cover $\vec A$ there is a clopen cover $\vec C$ so that $p_W(\vec A,\ax)=p_W(\vec C,\ax)$. Then, $p_W(\vec C,\ax)$ can be recovered from the $W$-local patterns of any function $f$ where the partition $\{f\inv[i]:i\in [n]\}$ generates $\vec C$. Conversely, $p_W(f,\ax)$ can be recovered from $p_W(\vec C, \ax)$, where $\vec C$ contains $f\inv[i]$ and $X\setminus f\inv[i]$ for all $i\in [n]$. So, for zero-dimensional spaces, the families $\s P_{W,n}(\ax)$ and $\s P_{A,W}(\ax)$ determine each other.

Our first definition of weak containment is that $\bx\prec\ax$ if and only if every local pattern of $\bx$ can be simulated by a continuous labelling or closed cover $\ax$:

\begin{dfn}
    For $\ax,\bx$ continuous actions of $\Gamma$ on normal spaces, $\ax$ \textbf{weakly contains} $\bx$, or $\bx\preccurlyeq\ax$, if for all $W\Subset \Gamma$ and $n\in \N$,
    \[\s P_{W,n}(\bx)\subseteq \s P_{W,n}(\ax).\]
    We say $\ax$ and $\bx$ are \textbf{weakly equivalent}, or $\ax\approx\bx$, if $\ax\preccurlyeq\bx$ and $\bx\preccurlyeq\ax$.
\end{dfn}

Observe that $\preccurlyeq$ is a quasi-order since it is comparing the values of some set-valued functions pointwise. So, $\approx$ is an equivalence relation.

When $\ax$ and $\bx$ are flows on zero-dimensional spaces, $\bx\preccurlyeq\ax$ if and only if the local patterns of continuous labellings of $\bx$ can be simulated by labellings of $\ax$.

The following proposition gives the exact relationships between local patterns and shifts of finite type in the zero-dimensional setting. This is essentially a translation of the definition of $\s P_{A,W}(X)$ for $X$ a subshift.

\begin{prop}\label{prop:SFT containment}
    For $X$ a subshift of $A^\Gamma$ with $A$ finite and $\ax$ a flow on $Y$, $\ax$ weakly contains $X$ iff, for any shift of finite type $\tilde X\supset X$ and clopen sets $U_1,..., U_n\subseteq A^\Gamma$ with $U_i\cap X\not=\emptyset$ there is a map $f\in\hom(\ax,\tilde X)$ so that $\im(f)\cap U_i\not=\emptyset$.
\end{prop}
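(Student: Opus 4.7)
The plan is to use Curtis--Hedlund--Lyndon to reduce arbitrary finite-alphabet continuous labellings of $X$ to labellings in the native alphabet $A$, and then to recognize the right-hand condition as an exact restatement of the resulting pattern-realizability condition. For the forward direction, assume $X\preccurlyeq\ax$. Fix a SFT $\tilde X\supseteq X$ with window $W_0$ and allowed patterns $P\subseteq A^{W_0}$, and clopen sets $U_1,\dots,U_n\subseteq A^\Gamma$ each meeting $X$. Each $U_i$ is a finite union of cylinders, so is determined by coordinates in some finite window $W_i$. Enlarge to a symmetric finite $W\Subset\Gamma$ containing $W_0$ and all $W_i$, and pick witnesses $x_i\in U_i\cap X$. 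Apply weak containment to the canonical labelling $f(x)=x(\id)$: the set $p_W(f,X)$ is exactly the set of $W$-windows appearing in $X$, a subset of $P$ that contains each $x_i\res W$. Pulling back gives a continuous $g\colon Y\to A$ with $p_W(g,\ax)=p_W(f,X)$; form the equivariant extension $\hat g\colon Y\to A^\Gamma$, $\hat g(y)(\delta)=g(\delta\inv\cdot y)$. Every $W_0$-window of every $\hat g(y)$ is then a $W_0$-window of some point of $X$, so $\hat g(y)\in\tilde X$; and by choice of witnesses, $\im(\hat g)$ meets each $U_i$.

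For the reverse direction, assume the right-hand condition and fix $W\Subset\Gamma$ (symmetric, without loss of generality) and a continuous $h\colon X\to B$ with $B$ finite. Curtis--Hedlund--Lyndon gives a local rule $h_0\colon A^{W_h}\to B$ with $h(x)=h_0(x\res W_h)$ for some finite $W_h$. Set $W^+=W\cdot W_h$ and let $\tilde X\subseteq A^\Gamma$ be the SFT with window $W^+$ whose allowed patterns are exactly the $W^+$-windows of $X$; by construction $X\subseteq\tilde X$, and the $W^+$-windows of $\tilde X$ coincide with those of $X$. For each $p\in p_W(h,X)$, fix a witness $x_p\in X$ and the clopen cylinder $U_p=\{z\in A^\Gamma: z\res W^+=x_p\res W^+\}$, which meets $X$. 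The hypothesis then produces $f\in\hom(\ax,\tilde X)$ with $\im(f)\cap U_p\neq\emptyset$ for each $p$. Define $h'\colon Y\to B$ by $h'(y)=h_0(f(y)\res W_h)$; continuity is immediate. Since $\tilde X$ and $X$ share $W^+$-windows, each $W$-pattern of $h'$ at a point $y$ coincides with the $W$-pattern of $h$ at some $x\in X$ whose $W^+$-window matches $f(y)\res W^+$; and the chosen witnesses ensure every $p\in p_W(h,X)$ is realized. Hence $p_W(h',\ax)=p_W(h,X)$.

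The main obstacle is careful bookkeeping: symmetrizing $W$, distinguishing the window of definition $W_h$ of a local rule from the witnessing window $W$ for local patterns, and verifying that the SFT defined from finite pattern data genuinely realizes the same $W^+$-patterns as $X$. The only genuinely conceptual move is the Curtis--Hedlund--Lyndon reduction, which is what forces the right-hand condition to live entirely inside $A^\Gamma$ rather than requiring auxiliary alphabets.
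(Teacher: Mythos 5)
Your proof is correct, and it is a careful fleshing-out of what the paper dismisses as ``essentially a translation of the definition,'' so the approaches coincide. The two key moves---using the canonical observable $x\mapsto x(\id)$ plus the equivariant extension $\hat g(y)(\delta)=g(\delta^{-1}\cdot y)$ for the forward direction, and using the Curtis--Hedlund--Lyndon reduction to a local rule, the auxiliary SFT defined by $W^+$-windows of $X$, and per-pattern witness cylinders for the reverse---are exactly the right ones. One small bookkeeping slip: since the paper's local patterns record $\gamma\mapsto f(\gamma\cdot x)$ while $W$-windows of a point record $\gamma\mapsto x(\gamma)$, the statement ``$p_W(f,X)$ is exactly the set of $W$-windows appearing in $X$'' is only true after inverting the index; your subsequent computations use the $\hat g(y)(\delta)=g(\delta^{-1}\cdot y)$ convention that absorbs this inversion consistently, so the argument is unaffected, but it is worth flagging that the identification of $p_W(f,X)$ with $\{x\res W : x\in X\}$ implicitly uses the symmetry of $W$. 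You should also make explicit the standing assumption (visible from the paper's surrounding remarks) that $Y$ is zero-dimensional, as otherwise $\hom(\ax,\tilde X)$ could be trivial while $X\preccurlyeq\ax$ is defined via closed covers rather than finite labellings.
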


The local patterns of an action encode a great deal of dynamical information. For example

\begin{thm}\label{thm: free closed up}
    If $\bx$ is a free $\Gamma$-flow (i.e.~a free continuous action on a compact Hausdorff space $X$), and $\bx\preccurlyeq\ax$, then $\ax$ is free.
\end{thm}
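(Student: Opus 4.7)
The plan is to argue the contrapositive: if $\ax$ is not free, then I will exhibit a finite closed-cover pattern that $\bx$ can realize but $\ax$ cannot. Let $Y$ denote the underlying space of $\ax$, and suppose $\gamma_0 \in \Gamma \setminus \{\id\}$ and $y_0 \in Y$ satisfy $\gamma_0 \cdot_\ax y_0 = y_0$. I would fix the window $W = \{\id, \gamma_0\}$ and look for a pattern on two symbols paired with these two group elements that separates $\bx$ from $\ax$.

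The first step is to convert the pointwise freeness of $\bx$ into a uniform cover-level statement on $X$. For each $x \in X$ we have $\gamma_0 \cdot_\bx x \neq x$, so by compact Hausdorffness (hence regularity) and continuity of $\bx(\gamma_0)$, there is an open neighborhood $U_x \ni x$ with $\overline{U_x} \cap \gamma_0 \cdot_\bx \overline{U_x} = \emptyset$. Taking a finite subcover $U_{x_1}, \ldots, U_{x_n}$ and setting $C_i = \overline{U_{x_i}}$ produces a finite closed cover $\vec C = (C_1, \ldots, C_n)$ of $X$ with $C_i \cap \gamma_0^{-1} \cdot_\bx C_i = \emptyset$ for every $i$.

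With $\vec C$ in hand, the contradiction is immediate. By construction $\{(i, \id), (i, \gamma_0)\} \notin p_W(\vec C, \bx)$ for any $i \in [n]$. On the other hand, any closed cover $\vec D = (D_1, \ldots, D_n)$ of $Y$ must place $y_0$ in some $D_j$, and then $y_0 \in D_j \cap \gamma_0^{-1} \cdot_\ax D_j$ (using the fixed-point equation), so $\{(j, \id), (j, \gamma_0)\}$ always lies in $p_W(\vec D, \ax)$. Hence $p_W(\vec C, \bx) \in \s P_{W,n}(\bx)$ but $p_W(\vec C, \bx) \notin \s P_{W,n}(\ax)$, contradicting $\bx \preccurlyeq \ax$.

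The only substantive step is the compactness argument producing $\vec C$; promoting pointwise $\gamma_0$-freeness to a single cover on which no cell is $\gamma_0$-self-overlapping is essential, because a pattern is a property of one cover at one window, and is the main reason the hypothesis that $X$ is a flow (compact Hausdorff) is used. Once available, the ultra-co-product characterization offers a slicker alternative -- a factor map $\ax^{\s U} \twoheadrightarrow \bx$ with $\bx$ free forces $\ax^{\s U}$ to be free, and the diagonal embedding $\ax \hookrightarrow \ax^{\s U}$ pushes freeness back down to $\ax$ -- but I would prefer the local-pattern proof here since it uses only the primary definition and exhibits an explicit finite obstruction.
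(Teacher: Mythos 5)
Your proof is correct and takes essentially the same approach as the paper: both use compactness and normality to extract a finite closed cover of $X$ whose cells are $\gamma_0$-independent, then observe that any cover of $Y$ matching these $W$-local patterns forces $\ax(\gamma_0)$ to be fixed-point free. The only cosmetic difference is that you phrase the argument as a contrapositive with an explicit obstruction $\{(j,\id),(j,\gamma_0)\}$, whereas the paper argues directly, but the content is identical.
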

\begin{proof}
   Fix $\gamma\in \Gamma$. By free-ness (and normality), for any $x$ there is some open neighborhood $U_x$ of $x$ so that $\gamma\cdot_{\bx} \overline U_x\cap \overline U_x=\emptyset$. By compactness, the cover $\{\overline U_x: x\in X\}$ contains a finite subcover $\vec C=(C_1,..., C_n)$.

   Let $W=\{\id,\gamma\}.$ Since $\bx\preccurlyeq\ax$, we have some closed cover $\vec D=(D_1,...,D_n)$ of $\ax$ so that
   \[p_W(\vec C, \bx)=p_W(\vec D, \ax).\] In particular, for each $i$, $\gamma\cdot_{\ax} D_i\cap D_i=\emptyset$. But this means that $\ax$ must be free.
\end{proof}

There are limits to what can be expressed by local patterns, of course. We will see later that properties like topological transitivity and mixing can be ruled out by certain patterns but cannot be enforced by any local patterns (see Proposition \ref{prop: generic not top transitive}.) And, local patterns alone give very little topological information.

\begin{prop}
    Suppose $\ax,\bx$ are trivial actions on $X, Y$ respectively (i.e.~for all $\gamma\in \Gamma$ and $x\in X$, $\gamma\cdot_{\ax} x=x$, and likewise for $\bx$), and $X, Y$ have infinitely many connected components. Then, $\ax\approx \bx.$
\end{prop}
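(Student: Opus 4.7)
By symmetry it suffices to show $\ax\preccurlyeq\bx$. Fix $W\Subset\Gamma$ and a closed cover $\vec C=(C_1,\dots,C_n)$ of $X$. Because $\ax$ is trivial, $\gamma\cdot_\ax C_i=C_i$ for every $i$ and every $\gamma\in W$, so
\[p_W(\vec C,\ax)=\bigl\{\sigma\subseteq [n]\times W:\textstyle\bigcap_{i\in\pi_1(\sigma)}C_i\neq\emptyset\bigr\},\]
where $\pi_1(\sigma)\subseteq[n]$ is the first coordinate projection. Hence $p_W(\vec C,\ax)$ is determined entirely by the abstract simplicial complex
\[\mathcal F(\vec C):=\bigl\{I\subseteq[n]:\textstyle\bigcap_{i\in I}C_i\neq\emptyset\bigr\},\]
which is downward closed. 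The plan is therefore to realize the same complex by a closed cover of $Y$; by the same identity for $\bx$, the local pattern will then match.

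To do this, let $I_1,\dots,I_m$ be the maximal faces of $\mathcal F(\vec C)$. Using that $Y$ has infinitely many connected components, I would partition $Y$ into pairwise disjoint nonempty clopen pieces $V_1,\dots,V_m$; then set
\[D_i:=\bigcup\{V_j:i\in I_j\},\]
with $D_i:=\emptyset$ whenever $i$ belongs to no $I_j$ (equivalently, whenever $C_i=\emptyset$). Since the $V_j$ are pairwise disjoint, for any $I\subseteq[n]$ one computes
\[\textstyle\bigcap_{i\in I} D_i=\bigcup\{V_j:I\subseteq I_j\},\]
which is nonempty exactly when $I\subseteq I_j$ for some $j$, i.e.~when $I\in\mathcal F(\vec C)$. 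Thus $\mathcal F(\vec D)=\mathcal F(\vec C)$, and since the $V_j$ cover $Y$ so do the $D_i$. Applying the first paragraph to $\bx$ gives $p_W(\vec D,\bx)=p_W(\vec C,\ax)$, as needed.

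The only nontrivial step is the existence of the clopen partition $V_1,\dots,V_m$. In the compact Hausdorff setting this is standard: quasi-components coincide with components, any two distinct components can be separated by a clopen set, and iterating this fact $m-1$ times (possible because there are at least $m$ components) produces the required partition. I expect this to be the main obstacle only to the extent that one wants the argument to go through for more general normal spaces; in that case one needs the hypothesis to be read as ``$Y$ admits a partition into infinitely many nonempty clopen sets'', which is how the family of closed covers sees connectedness anyway, so no real loss of generality occurs.
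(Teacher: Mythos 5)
Correct, and essentially the same argument the paper gives in two sentences: for trivial actions the $W$-local pattern of a cover records exactly the nerve of the cover, and both spaces realize every relevant nerve as the nerve of a closed cover. Your clopen-partition construction of $\vec D$ from the maximal faces $I_1,\dots,I_m$, together with the identity $\bigcap_{i\in I} D_i=\bigcup\{V_j : I\subseteq I_j\}$, is precisely the detail the paper leaves implicit, and the caveat you flag about quasi-components versus components in the non-compact normal setting applies equally to the paper's own proof.
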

\begin{proof}
    For a closed cover $\vec C$, $p_W(\vec C, \ax)$ just records the nerve of $\vec C$. Under these assumptions, $X$ and $Y$ have all finite simplicial complexes as nerves of closed covers.
\end{proof}

\subsection{Gelfand duality}

 To define ultra-co-products and (and later set up the model theory of actions), we will view $\Gamma$-flows through a functional-analytic duality. 

\begin{dfn}
    For $X$ compact and Hausdorff, let $C(X)$ be the algebra of continuous real-valued functions on $X$ with the $\infty$-norm.

    And for $\ax:\Gamma\curvearrowright X$, let $C(\ax)$ be $C(X)$ equipped with the corresponding action of $\Gamma$ by automorphisms:
    \[(\gamma\cdot f)(x)=f(\gamma\inv\cdot x)\]
\end{dfn} 

 The map $\ax \mapsto C(\ax)$ is the object map of a categorical duality between $\Gamma$-flows and $\Gamma$ actions on commutative $C^*$-algebras, namely Gelfand duality\footnote{Gelfand duality more commonly refers to the theory of commutative complex $C^*$-algebras. Of course $C(X)$ and its complexification are interdefinable, so this difference is not important for our purposes.}. The details of this duality can be found in any good textbook on $C^*$-algebras, for instance \cite{GelfandDuality}. We record the main points below.

 For some readers, it might help to recall one construction of the \v Cech-Stone compactification: $\beta X$ is homeomorphic the prime spectrum of the algebra of continuous bounded functions on $X$.

\begin{dfn} For any algeba $A$, define
\[\mspec(A)=\{\s I\subseteq A: \s I\mbox{ is a maximal proper ideal}\}.\]
    For $x\in X$, define \[\s I_x=\{f: f(x)=0\}\in \mspec(C(X)).\] For $\s A$ a commutative $C^*$-algebra and $f\in \s A$, define $\Phi_f\in C(\mspec(\s A))$ by \[\Phi_f(\s I)=r\;:\lra\; f-r\bf 1\in \s I.\]
    For $f: X\rightarrow Y$ continuous, define $C(f): C(Y)\rightarrow C(X)$ by $C(f)(g)=g\circ f.$
\end{dfn}

\begin{prop}
    For any compact Hausdorff $X$, $(x\mapsto \s I_x)$ is a homeomorphism between $X$ and $\mspec (C(X)).$ For any $\s A$, $(f\mapsto \Phi_f)$ is an isomorphism between $\s A$ and $C(\mspec(\s A))$. For any $X,Y$, $C: \hom(X,Y)\rightarrow \hom(C(Y),C(X))$ is a bijection.
\end{prop}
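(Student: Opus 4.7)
The statement is the content of (real) Gelfand duality, so my plan is to package a standard proof into three independent steps, one for each claim, and just flag which technical inputs from $C^*$-algebra theory I would assume.

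For the first claim, that $x\mapsto \s I_x$ is a homeomorphism $X\to \mspec(C(X))$, I would first observe that each $\s I_x$ is maximal because $C(X)/\s I_x\cong \R$ via evaluation at $x$. Injectivity follows from Urysohn's lemma: if $x\ne y$, normality of the compact Hausdorff space $X$ produces an $f\in C(X)$ with $f(x)=0$ and $f(y)=1$, separating $\s I_x$ from $\s I_y$. For surjectivity, given a maximal $\s I$, I want an $x$ at which every $f\in \s I$ vanishes. If no such $x$ existed, then for each $x$ some $f_x\in\s I$ has $f_x(x)\ne 0$; by compactness and a partition-of-unity-style argument, a finite family $f_{x_1},\dots,f_{x_n}$ satisfies $g:=\sum f_{x_i}^2>0$ everywhere, so $g$ is invertible in $C(X)$, contradicting $g\in \s I$. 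Continuity is immediate once the spectral topology is generated by the basic closed sets $V(f)=\{\s I:f\in\s I\}$, since $(x\mapsto \s I_x)^{-1}V(f)=f^{-1}(0)$. Since $X$ is compact and $\mspec(C(X))$ is Hausdorff, a continuous bijection is automatically a homeomorphism.

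For the second claim, I would first show $\Phi_f$ is well-defined: for any maximal ideal $\s I\subseteq \s A$, the quotient $\s A/\s I$ is a real Banach field, so by (the real version of) Gelfand--Mazur it equals $\R\bf 1$, which picks out a unique $r$ with $f-r\bf 1\in\s I$. Checking $\Phi_f$ is continuous on $\mspec(\s A)$ and that $f\mapsto \Phi_f$ is an $\R$-algebra homomorphism is routine from the construction. Isometry $|\Phi_f|_\infty=|f|$ follows from the $C^*$-identity and the spectral-radius formula in the commutative setting. Surjectivity is then Stone--Weierstrass: the image of $\Phi$ is a point-separating (by the first claim) unital subalgebra of $C(\mspec(\s A))$, hence dense, and isometry plus completeness promotes density to equality.

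For the third claim, given $\phi\in\hom(C(Y),C(X))$, I would define a candidate $f:X\to Y$ by pulling back maximal ideals: $\s I_x\mapsto \phi^{-1}(\s I_x)$, which is a maximal ideal of $C(Y)$ and hence, by the first claim, corresponds to a unique point of $Y$. Continuity of $f$ follows because the $C(Y)$-pullback of a zero set of $g\in C(Y)$ is the zero set of $\phi(g)\in C(X)$. Then $C(f)(g)=g\circ f=\phi(g)$ by construction, so $f\mapsto C(f)$ and $\phi\mapsto f$ are mutually inverse, giving the bijection.

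The main obstacle is really the second claim: it silently uses Gelfand--Mazur, the spectral-radius identity, and Stone--Weierstrass, each of which would take real work to prove from scratch. Since the paper explicitly punts to a $C^*$-algebra textbook, I would simply cite these ingredients rather than reprove them, and present the three steps above as the skeleton that assembles them into the stated duality.
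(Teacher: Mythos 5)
The paper does not actually prove this proposition; it is stated as background and the reader is referred to a textbook reference for Gelfand duality. So there is no ``paper's route'' to compare against. Your sketch is the standard textbook argument, correctly decomposed into the three claims, and each step is the right idea: evaluation ideals are maximal with residue field $\R$, Urysohn gives injectivity, the compactness/partition-of-unity trick gives surjectivity, continuous bijection of compact Hausdorff spaces gives homeomorphism; Gelfand--Mazur plus the $C^*$-identity and Stone--Weierstrass give the second claim; pulling back maximal ideals gives the third.

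One point worth tightening, since the paper deliberately works with \emph{real} $C^*$-algebras: the real Gelfand--Mazur theorem only says a commutative real Banach division algebra is isomorphic to $\R$ \emph{or} $\C$. To conclude $\s A/\s I \cong \R\bf 1$ you need to rule out the complex case, which requires using the $C^*$-axioms (e.g.~that $\bf 1 + a^2$ is invertible for all $a$, so no element can satisfy $a^2 \equiv -\bf 1$ modulo a maximal ideal). Your proof as written quietly skips this, and it is exactly the place where the real theory genuinely diverges from the complex one. Since you have already flagged that you would cite rather than reprove the analytic inputs, the fix is just to cite the real Gelfand--Mazur theorem together with this extra step rather than the bare division-algebra classification. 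With that caveat, the sketch is correct and is the argument the paper's reference supplies.
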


And, the duality extends to dynamics

\begin{prop}
    For any compact Hausdorff $X$, \[\homeo(X)\cong \aut(C(X)).\] For $\Gamma$-flows $\ax: \Gamma\curvearrowright X$, $\bx: \Gamma\curvearrowright Y$, and for $f:X\rightarrow Y$ continuous, $f$ is equivariant if and only if $C(f)$ is equivariant.
\end{prop}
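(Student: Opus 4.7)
The plan is to derive both assertions as immediate consequences of the previous proposition (Gelfand duality at the level of maps) together with the contravariant functoriality of $C$. Because the statement is essentially unpacking a functor on hom-sets, there is no real content beyond bookkeeping, and the only place to be careful is that $\Gamma$ acts on $C(X)$ via $\gamma^{-1}$, which interacts with the contravariance.

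First I would handle the isomorphism $\homeo(X)\cong \aut(C(X))$. From the previous proposition, $C$ is a bijection $\hom(X,X)\to \hom(C(X),C(X))$. Functoriality of $C$ (i.e.\ $C(f\circ g)=C(g)\circ C(f)$ and $C(\id_X)=\id_{C(X)}$) implies that if $f$ is a homeomorphism with inverse $f^{-1}$, then $C(f)$ is invertible with inverse $C(f^{-1})$, so $C(f)\in \aut(C(X))$. Conversely, any $\phi\in\aut(C(X))$ is, via the bijection on hom-sets, of the form $C(f)$ for a unique continuous $f:X\to X$; the inverse $\phi^{-1}=C(g)$ for some continuous $g$, and the identities $C(f\circ g)=C(g)\circ C(f)=\phi^{-1}\phi=\id=C(\id_X)$ together with injectivity of $C$ force $f\circ g=\id_X$, and symmetrically $g\circ f=\id_X$, so $f\in\homeo(X)$. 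This gives a bijection $\homeo(X)\to \aut(C(X))$; to make it a group \emph{iso}morphism, use $f\mapsto C(f^{-1})$ (or equivalently equip one side with the opposite multiplication) so that the map becomes multiplicative rather than anti-multiplicative. Continuity of this bijection in both directions is standard and can be invoked from the cited $C^*$-algebra reference.

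Next I would handle equivariance. Equivariance of $f:X\to Y$ means the diagram $f\circ \ax(\gamma)=\bx(\gamma)\circ f$ commutes for every $\gamma\in \Gamma$. Applying the contravariant functor $C$ yields
\[ C(\ax(\gamma))\circ C(f) = C(f)\circ C(\bx(\gamma)) \qquad (\forall \gamma\in \Gamma). \]
By definition, the action of $\gamma$ on $C(X)$ is the operator $h\mapsto h\circ \ax(\gamma^{-1})=C(\ax(\gamma^{-1}))(h)$, and similarly on $C(Y)$. Substituting $\gamma\leftrightarrow \gamma^{-1}$ in the displayed equation (which is allowed since it holds for all $\gamma$), we obtain
\[ C(f)\bigl(\gamma\cdot h\bigr) = \gamma\cdot C(f)(h), \]
which is exactly the statement that $C(f)$ is $\Gamma$-equivariant. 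The converse runs identically using that $C$ is a bijection on hom-sets (so the chain of implications is reversible).

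The main (and only) obstacle is keeping the inverses consistent between the contravariance of $C$ and the $\gamma^{-1}$ appearing in the induced $\Gamma$-action on $C(X)$; once this bookkeeping is in place both statements are formal. No new analysis is needed beyond what was recorded in the preceding proposition.
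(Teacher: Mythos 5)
The paper states this proposition without proof, deferring to a textbook on $C^*$-algebras; your derivation from the preceding proposition (the bijection $C\colon \hom(X,Y)\to\hom(C(Y),C(X))$) together with contravariant functoriality is correct and is exactly the intended bookkeeping. You also correctly resolve the one subtle point — the interaction of contravariance with the $\gamma^{-1}$ in $(\gamma\cdot h)(x)=h(\gamma^{-1}\cdot x)$ — and your choice $\rho\mapsto C(\rho^{-1})$ for the isomorphism $\homeo(X)\cong\aut(C(X))$ is the one consistent with how the paper transports $\Gamma$-actions to $C(X)$.
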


In particular, factor maps are dual to equivariant embeddings.

\begin{prop}\label{prop:factor duality}
   For $\Gamma$ flows, $\ax$ factors on to $\bx$ if and only if $C(\bx)$ admits an equivariant embedding into $C(\ax)$.
\end{prop}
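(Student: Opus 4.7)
The plan is to leverage the (non-equivariant) Gelfand duality already recorded in the preceding proposition together with the standard dictionary between surjectivity of a continuous map between compact Hausdorff spaces and injectivity of the induced $C^*$-algebra map. Since the previous proposition gives that $C: \hom(X,Y) \to \hom(C(Y), C(X))$ is a bijection, and that $f$ is equivariant iff $C(f)$ is, only the surjective-iff-injective correspondence is new work.

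For the forward direction, suppose $f: X \to Y$ is an equivariant factor map, i.e.~a continuous, equivariant surjection. Then $C(f)(g) = g \circ f$ is already known to be an equivariant $*$-homomorphism. To see it is injective, suppose $g \in C(Y)$ satisfies $g \circ f = \bf 0$. Surjectivity of $f$ gives $g(y) = 0$ for all $y \in Y$, so $g = \bf 0$. Hence $C(f)$ is an equivariant embedding of $C(\bx)$ into $C(\ax)$.

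For the converse, start with an equivariant embedding $\phi: C(\bx) \hookrightarrow C(\ax)$. By the bijectivity part of the previous proposition, $\phi = C(f)$ for a unique continuous $f: X \to Y$, and the equivariance clause of the previous proposition makes $f$ equivariant. It remains to show $f$ is surjective. Suppose for contradiction that $f(X) \subsetneq Y$. Pick $y_0 \in Y \setminus f(X)$; since $Y$ is compact Hausdorff and hence normal, and $f(X)$ is closed (image of a compact set), Urysohn's lemma produces a nonzero $g \in C(Y)$ vanishing on $f(X)$. Then $C(f)(g) = g \circ f = \bf 0$ with $g \neq \bf 0$, contradicting injectivity of $\phi = C(f)$.

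The only mildly subtle step is the Urysohn-lemma argument witnessing non-injectivity from non-surjectivity; everything else is transport along the already-stated duality. I don't anticipate a real obstacle: the result is essentially bookkeeping on top of Gelfand duality, and the equivariance plays no role beyond being preserved automatically in both directions of the correspondence $f \leftrightarrow C(f)$.
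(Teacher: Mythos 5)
Your proof is correct and is exactly the routine argument the paper leaves implicit: the paper simply asserts this proposition as an immediate consequence of the preceding Gelfand-duality facts (the bijection $C:\hom(X,Y)\to\hom(C(Y),C(X))$ together with preservation of equivariance), and your Urysohn-lemma step supplying the surjective-iff-injective dictionary is the standard way to fill in the one piece not already stated.
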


In practice, this all boils down to the fact we can translate any topological statement about $\ax$ into an algebraic or analytic statement about $C(\ax)$. For example:

\begin{lem}\label{lem: dimension lemma}
    For a normal space $X$, $\dim(X)\leq n$ iff any map from $X$ to $\R^{n+1}$ can be uniformly approximated by maps avoiding the origin.
\end{lem}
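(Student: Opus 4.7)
The plan is to verify both directions using finite partitions of unity, which exist by normality.

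For the forward direction, suppose $\dim(X)\leq n$ and fix continuous $f:X\to \R^{n+1}$ together with $\epsilon>0$. I would first pull back an open cover of $\R^{n+1}$ by balls of diameter $\epsilon/3$ along $f$ to get an open cover of $X$, then use $\dim(X)\leq n$ to pass to a finite open refinement $\vec V=(V_1,\dots,V_k)$ of order $\leq n+1$. Picking a subordinate partition of unity $\phi_1,\dots,\phi_k$ and a point $x_i\in V_i$, the map $g_0(x):=\sum_i \phi_i(x)\, f(x_i)$ satisfies $|f-g_0|_\infty<2\epsilon/3$ and has image in the geometric realization of the nerve of $\vec V$ inside $\R^{n+1}$, which is a union of simplices of dimension $\leq n$. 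The final step is to perturb each vertex $f(x_i)\mapsto f(x_i)+v_i$ with $|v_i|<\epsilon/3$ so that each of the finitely many simplices of the nerve misses $0$; then $g(x)=\sum_i\phi_i(x)\bigl(f(x_i)+v_i\bigr)$ is the desired approximation.

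For the converse, suppose every continuous map to $\R^{n+1}$ can be uniformly approximated by maps avoiding $0$, and let $\{U_1,\dots,U_{n+2}\}$ be a finite open cover of $X$. Choose a subordinate partition of unity $\phi_1,\dots,\phi_{n+2}$ and define $f:X\to \R^{n+2}$ by $f_i(x)=\phi_i(x)-\tfrac{1}{n+2}$. Since $\sum_i\phi_i=1$, the image of $f$ lies in the hyperplane $H=\{y\in \R^{n+2}:\sum_i y_i=0\}\cong \R^{n+1}$, so the hypothesis yields $g:X\to H\smallsetminus\{0\}$ with $|f-g|_\infty<\tfrac{1}{n+2}$. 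Setting $V_i=\{x:g_i(x)>0\}$ I would verify three things: first, $V_i\subseteq U_i$, because $x\notin U_i$ forces $\phi_i(x)=0$, hence $f_i(x)=-\tfrac{1}{n+2}$ and $g_i(x)<0$; second, the $V_i$ cover $X$, because $\sum_i g_i=0$ and $g(x)\neq 0$ force at least one positive coordinate; third, each $x$ lies in at most $n+1$ of the $V_i$'s, because the same two conditions force at least one strictly negative coordinate. So $\{V_i\}$ refines $\{U_i\}$ with order $\leq n+1$, witnessing $\dim(X)\leq n$.

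The main obstacle is the perturbation in the forward direction: we need to choose the displacements $v_i$ coherently so that every simplex of the nerve—finitely many affine pieces of dimension $\leq n$ in $\R^{n+1}$—misses $0$. This is a codimension-$\geq 1$ transversality condition on finitely many parameters in a finite-dimensional Euclidean space, so the set of bad perturbation tuples is a finite union of proper algebraic subsets and a generic small choice works. A minor annoyance, if one wants the lemma for non-compact normal $X$, is that the cover pulled back from $\R^{n+1}$ is only locally finite rather than finite; but since we only need to worry about the region where $f$ comes near $0$, we can freeze $f$ outside a slightly larger neighborhood of $f^{-1}(B(0,\epsilon))$ using a Urysohn cutoff before applying the argument above.
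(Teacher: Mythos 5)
Your proof takes a genuinely different route from the paper's. The paper proves the forward direction by invoking Engelking's extension theorem for maps into $S^n$ (extend $g/|g|$ from the closed set where $|g|\geq\epsilon$ to all of $X$, then radially glue), and proves the converse by translating directly into the partition characterization of dimension (Engelking 3.2.6): produce separating sets $L_i=f_i^{-1}[0]$ with empty intersection. You instead go through partitions of unity, the nerve of a finite refinement, and a generic perturbation argument for the forward direction, and through the $(n+2)$-cover characterization plus a clever hyperplane trick for the converse. Both are legitimate; the paper's version is shorter because it outsources the hard work to Engelking's theorems, while yours is more self-contained and geometric. Your converse is clean and correct, though you should cite the fact that for normal spaces it suffices to refine $(n+2)$-element covers (this is standard, e.g.~Engelking 3.1.5, but it is a theorem, not the definition).

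The forward direction has a genuine gap for non-compact normal $X$, and you've only waved at it. Covering dimension for normal spaces is defined via \emph{finite} open covers, and the cover of $X$ pulled back from an $\epsilon/3$-ball cover of $\R^{n+1}$ is finite only when $f(X)$ is bounded (e.g.~when $X$ is compact). For unbounded $f$, your remark about ``freezing $f$ outside a neighborhood of $f^{-1}(B(0,\epsilon))$ using a Urysohn cutoff'' does not immediately repair this: simply making $f$ eventually constant changes $f$ by a large amount where you don't control it, and adding $f^{-1}(\R^{n+1}\setminus\overline{B(0,\epsilon)})$ as a single extra cover element destroys the ``pieces have small diameter under $f$'' property that your nerve approximation relies on. The correct repair is to restrict to the closed (hence normal, and of dimension $\leq n$) subspace $X_0=f^{-1}(\overline{B(0,\epsilon)})$, run the nerve-plus-perturbation argument there to get $g_0:X_0\to\R^{n+1}\setminus\{0\}$ with $|g_0-f|<\epsilon/3$, and then interpolate $g_0$ with $f$ on the annulus $\epsilon/3<|f|<2\epsilon/3$ via Urysohn, noting that $f$ itself is bounded away from $0$ there so the convex interpolation stays nonzero. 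This is precisely the kind of bookkeeping the paper avoids by appealing to the extension theorem for $S^n$-valued maps.
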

This is likely known, but we'll include a sketch of a proof.

\begin{proof}
    Suppose that $\dim(X)\leq n$.  According to \cite[3.2.10]{Engelking}, for any closed $A\subseteq X$ and continuous $f: A\rightarrow S^n$, there is a continuous extension $\tilde f: X\rightarrow S^n$. To approximate a function $g:X\rightarrow \R^{n+1}$, we can set $A=\{x: |g(x)|\geq\epsilon\}$ and consider $f:A\rightarrow S^n$ given by $g(x)/|g(x)|.$ Then if $\tilde f$ is an extension, we can set $\tilde g(x)=g(x)$ if $x\in A$ and $\epsilon \tilde f(x)$ otherwise.

    For the converse, given disjoint pairs of closed sets $(A_1,B_1),..., (A_{n+1},B_{n+1})$ as in \cite[3.2.6]{Engelking}, it suffices to find $L_i$ a partition between $A_i$ and $B_i$ (i.e.~$X\setminus L_i= A\cup B$ with $A, B$ relatively open and $A_i\subseteq A, B_i\subseteq B)$ with $\bigcap_{i=1}^{n+1} L_i=\emptyset$.
    
    Consider $g=(g_1,...,g_{n+1}):X\rightarrow [-1,1]^{n+1}$ with $A_i=g_i\inv[-1]$ and $B_i=g_i\inv[1]$. Then we can build the partition by taking $f$ with $|g-f|_\infty<1/1000$ so that $f$ avoids the origin and taking $L_i=f_i\inv[0]$.
\end{proof}

And as a dynamical example:

\begin{lem} \label{lem: finite dimensional coloring}
    If $X$ is compact and Hausdorff, $\rho\in \homeo(X)$ has no fixed points, and $X$ has covering dimension $n$, then there is a closed cover \[X=E_1\,\cup ...\cup\, E_{2n+3}\] so that for each $i$, $\rho[E_i]\cap E_i=\emptyset.$
\end{lem}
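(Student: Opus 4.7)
The plan is to reduce the topological problem to a combinatorial coloring question via standard dimension-theoretic refinement.

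First, using fixed-point-freeness of $\rho$ together with normality and compactness of $X$, I would build a finite open cover $\{U_1,\dots,U_m\}$ of $X$ with $\rho(\overline{U_i})\cap\overline{U_i}=\emptyset$ for each $i$. For each $x$, the disjoint compact sets $\{x\}$ and $\{\rho x\}$ can be separated by disjoint open sets; shrinking by normality gives an open neighborhood $U\ni x$ with $\overline U$ disjoint from $\rho(\overline U)$. Compactness extracts the finite subcover.

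Second, I would use $\dim X\leq n$ to refine $\{U_i\}$ to an open cover $\{V_j\}_{j=1}^k$ of order at most $n+1$ (each point of $X$ lies in at most $n+1$ of the $V_j$). Each $V_j$ is contained in some $U_{\sigma(j)}$, so still satisfies $\rho(\overline{V_j})\cap\overline{V_j}=\emptyset$. Invoking Ostrand's decomposition theorem, I would further arrange the refinement as a disjoint union of $n+1$ families $\mathcal{F}_0,\dots,\mathcal{F}_n$, each consisting of pairwise disjoint open sets.

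Third, I would color within and across the families to produce $2n+3=2(n+1)+1$ closed pieces. Within a family $\mathcal{F}_i$ of pairwise disjoint sets, the obstruction is that $\rho$ can map one member's closure into another's. Choosing the refinement fine enough (so that each $\overline{V_j}$ is small relative to the displacement of $\rho$ on its containing $U_l$), I expect the ``$\rho$-crossing graph'' on $\mathcal{F}_i$—edges between distinct $V,V'\in\mathcal{F}_i$ with $\rho(\overline V)\cap\overline{V'}\neq\emptyset$ or vice versa—to become bipartite up to a controllable set of exceptional pieces. This gives two closed $\rho$-free pieces per family, and the leftover pieces from all $n+1$ families can be aggregated into one additional closed $\rho$-free piece, yielding the $2(n+1)+1$ count.

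The main obstacle is the last step: making the bipartite-plus-exception decomposition precise enough to achieve exactly $2n+3$ pieces rather than the naive $3(n+1)$ one gets from independently $3$-coloring each family's crossing graph. If this refinement route does not yield the sharp bound cleanly, an alternative is to use Lemma \ref{lem: dimension lemma} to produce a continuous $f:X\to S^n$ and pull back a carefully chosen closed cover of $S^n$ by $2n+3$ pieces arranged to ensure the preimages are $\rho$-free; the delicate point there is that a map $f$ with $f(x)\neq f(\rho x)$ for all $x$ need not exist at dimension exactly $n$, so the single ``exceptional'' piece absorbs the coincidence set $\{x:f(x)=f(\rho x)\}$, which one can arrange to be small by perturbing $f$.
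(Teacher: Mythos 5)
Your proposal takes a genuinely different route from the paper, but the route has a real gap. Your first step (finite cover by closed independent sets via normality and compactness) agrees with the paper's opening move. After that you diverge: you invoke Ostrand's decomposition to split a refined cover into $n+1$ disjoint families and then try to two-color (plus a leftover class) the ``$\rho$-crossing graph'' on each family. The claim that this crossing graph ``becomes bipartite up to a controllable set of exceptional pieces'' by taking the refinement fine enough is not justified and is not true in general: the directed relation $\rho(\overline V)\cap\overline{V'}\neq\emptyset$ on the members of a disjoint family can have odd cycles (already for a finite cyclic system), and refining does not remove them. There is also no a priori bound on the degree of the crossing graph, since $\rho(\overline V)$ may meet arbitrarily many small members of the same family. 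So the claimed $2$-coloring-plus-leftover does not fall out, and you correctly flag this as the main obstacle. Your suggested alternative (push to $S^n$ via Lemma~\ref{lem: dimension lemma} and pull back a cover) runs into the Borsuk--Ulam/Bourgin--Yang obstructions that the paper itself highlights in Proposition~\ref{prop:nonfree limit}: a coincidence-free map $f$ with $f(x)\neq f(\rho x)$ need not exist, and you do not give a mechanism to control the coincidence set, so this path is also incomplete.

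The paper's argument avoids Ostrand entirely and is a clean downward induction. Starting from \emph{any} finite closed independent cover $C_1,\dots,C_{k+1}$ with $k+1>2n+3$, one fattens to independent open sets $U_i\supseteq C_i$, uses $\dim X\leq n$ to shrink to an open cover $V_i\subseteq U_i$ of order $\leq n+1$, shrinks again to closed $D_i'\subseteq V_i$, and then absorbs the last piece: set $D_i := D_i'\cup\{x\in D_{k+1}':\rho(x),\rho^{-1}(x)\notin V_i\}$ for $i\leq k$. Each $x\in D_{k+1}'$ is blocked from at most $2(n+1)=2n+2$ indices (those $i$ with $\rho(x)\in V_i$ or $\rho^{-1}(x)\in V_i$), so since $k\geq 2n+3$ there is always a legal $D_j$ to receive $x$; independence of each $D_i$ is then a short case check. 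Iterating gives the cover by $2n+3$ independent closed sets. The combinatorial content here—distributing points of the last piece subject to the order bound—is exactly what your bipartiteness heuristic was reaching for, but the induction makes it precise and avoids any graph-coloring claim that would need separate proof. If you want to salvage your framework, the thing to prove is not bipartiteness of crossing graphs but precisely the ``at most $2(n+1)$ forbidden targets'' count, which is where the sharp $2n+3$ comes from.
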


Note that this lemma is a generalization of the familiar greedy bound for coloring zero-dimensional spaces.\footnote{One of the first instances of the zero-dimensional coloring argument is in a paper by Krawczyk and Steprans on free-ness of Cech-Stone compactifications. They actually mention higher-dimensional generalizations in their paper; but, their reference is the PhD thesis of Kim, which is the author has been unable to access. \cite{KrawczykSteprans}}
\begin{proof}
    Call a set $S\subseteq X$ independent if $S\cap \rho[S]=\emptyset$. Note that if $S$ is independent then so is any subset of $S$. By compactness and normality, there is finite closed cover of $X$ by independent sets, say $C_1,..., C_k.$ 

    We will show by induction on $k$, that if there is a cover by $k$ independent closed sets, then there is a cover by at most $(2n+3)$-independent sets. The base case is trivial. 
    
    Suppose that any independent cover of $X$ by $k$ sets can be reduced to an independent cover by $2n+3$ sets. And, suppose that $X$ has and independent cover $C_1,...,C_{k+1}$. We just have to get a cover with one set fewer.
    
    Again appealing to normality, can find an independent open sets $U_i\supseteq C_i$ for each $i$. By the definition of dimension, there is an open cover $X=V_1\cup... \cup V_{k+1}$ with $V_i\subseteq U_i$ so that $\bigcap_{i\in A} V_i=\emptyset$ for $A\in \binom{k}{n+2}$.  And, with one last appeal to normality, we can find a closed cover $D'_1,..., D'_{k+1}$ with $D'_i\subseteq V_i$.
    
    Finally, define $D_i$ for $i\leq k$ by \[D_i:=D'_i\cup\{ x\in D'_{k+1}: \rho(x), \rho\inv(x)\not\in V_i\}.\] Clearly each $D_i$ is closed. Because no point is in more than $n+1$ of the $V_i$, for any $x\in D'_{k+1}$ there is some $j\leq 2n+3$ so that $x\in D_j$. So, $\bigcup_{i} D_i=X$. Let's check independence: if $x\in D_i$ then either $x\in D'_i$ or $x\in D'_{k+1}$. In the former case, $\rho(x)\not\in D'_i$ by independence and $\rho(x)\not\in D_i\sm D'_i$ since $x\in V_i.$ In the latter case, $\rho(x)\not\in D'_{k+1}$ by independence and $\rho(x)\not\in V_i$ so $\rho(x)\not\in D_i.$
\end{proof}
\begin{cor}
    If $\dim(X)\leq n$, then $\rho\in \homeo(X)$ is fixed-point free iff there are $f_1,...,f_{2n+3}\in C(X)$ so that $\max(f_1,...,f_{2n+3})(x)\geq 1$ for all $x$, $f_i$ is $[0,1]$-valued, and $(f_i\circ \rho)f_i=0$.
\end{cor}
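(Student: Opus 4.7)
The plan is to prove the forward direction by applying Lemma \ref{lem: finite dimensional coloring} to obtain a closed cover of size $2n+3$ and then upgrading this combinatorial data to continuous functions via normality and Urysohn's lemma, while the reverse direction is a one-line pointwise computation.

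For the forward direction, assume $\rho$ is fixed-point free. Lemma \ref{lem: finite dimensional coloring} supplies a closed cover $X = E_1 \cup \cdots \cup E_{2n+3}$ with $\rho[E_i] \cap E_i = \emptyset$ for each $i$. The key step is to thicken each $E_i$ to an open set $U_i$ still satisfying $\rho[U_i] \cap U_i = \emptyset$: since $X$ is compact Hausdorff hence normal, pick disjoint open $V_i \supseteq E_i$ and $W_i \supseteq \rho[E_i]$, and set $U_i := V_i \cap \rho\inv[W_i]$, so $\rho[U_i] \subseteq W_i$ is disjoint from $V_i \supseteq U_i$. Then apply Urysohn's lemma to the disjoint closed pair $E_i$ and $X \setminus U_i$ to obtain $f_i \in C(X)$ with values in $[0,1]$, equal to $1$ on $E_i$, and vanishing off $U_i$. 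Since the $E_i$ cover $X$, we have $\max_i f_i(x) \geq 1$ for all $x$. Since $\{f_i > 0\} \subseteq U_i$ and $\rho[U_i] \cap U_i = \emptyset$, the supports of $f_i$ and $f_i \circ \rho$ are disjoint, so $(f_i \circ \rho) f_i \equiv 0$.

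For the reverse direction, suppose such $f_1, \ldots, f_{2n+3}$ exist and that $\rho(x) = x$ for some $x \in X$. Then for every $i$, $(f_i \circ \rho)(x) \cdot f_i(x) = f_i(x)^2 = 0$, forcing $f_i(x) = 0$ for all $i$. This contradicts $\max_i f_i(x) \geq 1$, so $\rho$ must be fixed-point free.

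The real work is already encapsulated in Lemma \ref{lem: finite dimensional coloring}; this corollary is a routine Gelfand-dual reformulation. The only potentially delicate point is ensuring that the neighborhoods $U_i$ retain the independence property $\rho[U_i] \cap U_i = \emptyset$ rather than merely $\rho[E_i] \cap U_i = \emptyset$, which is handled by intersecting with $\rho\inv[W_i]$ as above. Once that is in place, Urysohn and the covering property finish both directions.
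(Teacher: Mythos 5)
Your proof is correct and is essentially the argument the paper leaves implicit by stating this as a corollary of Lemma~\ref{lem: finite dimensional coloring}. The forward direction correctly thickens the independent closed sets $E_i$ to independent open sets $U_i$ (the intersection $V_i \cap \rho\inv[W_i]$ is the right device, since merely taking $V_i$ would not control $\rho[V_i]$), and Urysohn's lemma then produces the desired functions; the reverse direction is the immediate pointwise computation.
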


Using this kind of translation, we will apply the well-developed model theory of $C^*$-algebras to study actions.

\subsection{Ultra(co)products} \label{sec: ultraproducts}

Ultra-co-products are a way to compactify flows with respect to the logic of $C(\ax)$. By this we mean that a list of equations in $C(\colim_{\s U} \ax_i)$ has a solution iff there is a sequence of approximate solutions in the $C(\ax_i)$'s. 

The definition of the ultraproduct of a family of algebras is standard. See the survey of Ben Yaacov, Berenstein, Ward Henson, and Usvyatov for the general definition \cite{Survey}. The ultra-co-product of a sequence of flows is the dual action of the ultraproduct of their algebras.

 Consider any index set $I$. Let $\ip{\ax_i: i\in I}$ be a sequence of $\Gamma$-flows and let $\s U$ be a nonprincipal ultrafilter on $I$. Recall that, for a sequence $\ip{x_i:i\in I}$ in a metric space,
    \[\lim_{\s U} x_i=y \;:\lra\;(\forall \epsilon>0)\; \{i: d(x_i,y)<\epsilon\}\in \s U.\] Every sequence in a compact metric space has a unique ultralimit.

\begin{dfn}
   For an ultrafilter $\s U$ on an index set $I$, The \textbf{ultraproduct of algebras}, $\lim_{\s U} C(\ax_i)$ is the algebra whose domain is
    \[\{\ip{f_i: i\in I} \in \prod_I C(\ax_i):(\exists N)(\forall i) |f|<N\}/ \sim_{\s U}\]
    \[\ip{f_i: i\in I}\sim_{\s U} \ip{g_i:i\in I} \;:\lra \;\lim_{\s U} |f_i-g_i|=0.\] If we need to disambiguate which index we take a limit along, we write $\lim_{\s i\rightarrow \s U} C(\ax_i).$

    We write $[f(i)]$ or $\lim_{\s U} f(i)$ for the equivalence class of the sequence $\ip{f(i) :i\in I}$. Often, we will leave the index $(i)$ implicit. The algebraic operations are defined to commute with ultralimits: for $\lambda\in \R$, $[f],[g],[h]\in \lim_{\s U} C(\ax_i)$
    \[ (\lambda[f]+[g])[h]:=[(\lambda f+g)h]\] and likewise we define the action on the algebra for $\gamma\in \Gamma$ by
    \[\gamma\cdot [f]=[\gamma\cdot f]\] and the norm by
    \[\bigl|[f]\bigr|_\infty=\lim_{\s U} \bigl|f(i)\bigr|_{\infty}.\] (Note, this last limit is taken in $\R$. It always exists since $\ip{f(i):i\in I}$ is always a bounded sequence).
\end{dfn}

 Most basic properties of $\lim_{\s U} C(\ax_i)$ follow from a little symbol pushing. (The continuous \L o\'s's theorem, Theorem \ref{thm:los} below, gives a general metatheorem in this vein). For example:

\begin{prop}
    For any sequence of $\Gamma$-flows, $\ax_i$, the ultraproduct of algebras is a commutative $C^*$-algebra. Each $\gamma\in \Gamma$ acts on $\lim_{\s U}C(\ax_i)$ by an isomorphism. 
\end{prop}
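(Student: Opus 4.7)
My plan is to verify the required structure by a routine transfer of axioms from the factor algebras $C(\ax_i)$ to the ultraproduct, with completeness as the one genuinely nontrivial step. Throughout, I will write $[f]$ for the class of a bounded sequence $\ip{f_i : i \in I}$.

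First I would check that all of the proposed operations and the norm are well-defined on $\sim_{\s U}$-classes. Given representatives $\ip{f_i}, \ip{f_i'}$ with $\lim_{\s U}|f_i - f_i'|_\infty = 0$ and likewise $\ip{g_i},\ip{g_i'}$, the bound
\[|f_i g_i - f_i' g_i'|_\infty \leq \sup_i|f_i|_\infty\,|g_i - g_i'|_\infty + \sup_i|g_i'|_\infty\,|f_i - f_i'|_\infty\]
together with the uniform boundedness of the sequences shows that $[f][g]$ is independent of the choice of representative; the same estimate handles linear combinations and the $\Gamma$-action (using that each $\gamma$ acts by an $|\cdot|_\infty$-isometry on $C(\ax_i)$). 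The norm $|[f]|_\infty := \lim_{\s U}|f_i|_\infty$ is also well-defined because $\bigl||f_i|_\infty - |f_i'|_\infty\bigr| \leq |f_i - f_i'|_\infty$. Once this is done, the commutative ring axioms, the norm inequalities $|[f][g]|_\infty \leq |[f]|_\infty|[g]|_\infty$ and $|[f]+[g]|_\infty \leq |[f]|_\infty + |[g]|_\infty$, and the real $C^*$-identity $|[f]^2|_\infty = |[f]|_\infty^2$ all follow by applying $\lim_{\s U}$ to the corresponding statements holding in every $C(\ax_i)$; the constants $\bf 0$ and $\bf 1$ are the classes of the constant sequences.

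The main obstacle is completeness of $|\cdot|_\infty$ on the ultraproduct. Given a Cauchy sequence $\ip{[F^{(n)}] : n \in \N}$, I would first pass to a subsequence so that $|[F^{(n+1)}] - [F^{(n)}]|_\infty < 2^{-n}$, pick bounded representatives $\ip{F^{(n)}_i : i \in I}$, and set
\[A_n = \bigl\{i \in I : |F^{(n+1)}_i - F^{(n)}_i|_\infty < 2^{-n+1}\bigr\} \in \s U.\]
Letting $B_n = A_1 \cap \cdots \cap A_n$ and defining $n(i) = \max\{n : i \in B_n\}$ (with $n(i) = 0$ otherwise), the diagonal sequence $g_i := F^{(n(i))}_i$ is uniformly bounded because $B_n \in \s U$ for each $n$, and a standard triangle-inequality estimate shows $|[g] - [F^{(n)}]|_\infty \to 0$. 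This is the usual proof that ultraproducts of bounded metric structures with a uniform modulus of continuity are complete.

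Finally, for the second assertion, the map $[f] \mapsto [\gamma \cdot f]$ is well-defined by the isometry observation above, preserves sum, scalar multiple, product, and norm because each of these operations commutes with the action of $\gamma$ on every $C(\ax_i)$, and is invertible with inverse given by the action of $\gamma^{-1}$. Hence each $\gamma \in \Gamma$ acts on $\lim_{\s U} C(\ax_i)$ by a $C^*$-algebra automorphism, as required.
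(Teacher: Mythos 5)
Your proof is correct and follows the same transfer-of-axioms philosophy the paper adopts, but it is substantially more complete. The paper's own proof consists only of the one-line chain $[f][g]=[fg]=[gf]=[g][f]$ for commutativity, together with a declaration that "the other $C^*$-algebra axioms, and the fact that $\Gamma$ acts by isomorphisms are similar." You additionally verify well-definedness of the operations on $\sim_{\s U}$-classes and, more significantly, completeness of the norm on the ultraproduct. That last point is the one genuinely non-\L o\'s-style step in the claim that $\lim_{\s U}C(\ax_i)$ is a $C^*$-algebra rather than a mere pre-$C^*$-algebra, and the paper never addresses it; your diagonal argument is the correct way to handle it.

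One small imprecision in the completeness step: when $i\in\bigcap_n B_n$, the quantity $n(i)=\max\{n:i\in B_n\}$ is not defined. This is harmless and standard to patch. If $\bigcap_n B_n\notin\s U$, simply set $g_i$ arbitrarily for such $i$; since the patched indices form a $\s U$-null set, the estimate $\bigl|[g]-[F^{(n)}]\bigr|_\infty\to 0$ is unaffected, because for each $n$ the set $\{i:n(i)\geq n\}$ still contains $B_n$ up to a null set. If instead $\bigcap_n B_n\in\s U$, then on that set the sequence $\ip{F^{(n)}_i:n\in\N}$ is genuinely Cauchy in the complete space $C(X_i)$ and you can take $g_i$ to be its honest limit. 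Either way the conclusion stands; you should just say which convention you are using so the max is defined everywhere.
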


\begin{proof}
    The commutativity of multiplication goes as follows. Consider any $[f],[g]\in \lim_{\s U}C(\ax_i)$, we have
    \[[f][g]=[fg]=[gf]=[g][f].\] The other $C^*$-algebra axioms, and the fact that $\Gamma$ acts by isomorphisms are similar. 
\end{proof}

The ultra-co-product of a family of flows is a point realization of the ultraproduct of their algebras.

\begin{dfn}
    For an $I$-indexed sequence of $\Gamma$-flows, $\ip{\ax_i: i\in I}$, where $\ax_i: \Gamma\curvearrowright X_i$, and an ultrafilter $\s U$ on $I$, their \textbf{ultra-co-product} is the spectral flow of their ultraproduct of algebras:
    \[\colim_{\s U} X_i:=\mspec(\lim_{\s U} C(X_i))\quad \gamma\cdot_{\colim_{\s U}\ax_i}\s I=\{\gamma\inv\cdot f: f\in \s I\}.\] We also write $\colim_{i\rightarrow \s U} \ax_i$ if we want to be more explicit.

    For $f\in \lim_{\s U} C(\ax_i)$, $\phi(f)\in C(\colim_{i\in \s U} \ax_i)$ is the canonical dual function defined by
    \[\phi(f)(x)=r\; :\lra\; f-r\bf 1\in x\] (recall that $x$ is a maximal ideal here.)

    If $\ax_i=\ax$ for all $i$, we call the ultra-co-product an \textbf{ultra-co-power} and write $\ax^{\s U}=\colim_{\s U} \ax_i.$
\end{dfn}

    We typically conflate $f$ and $\phi(f)$. In particular, for any sequence of functions $\ip{f(i): i \in I}$, we will usually write $[f]$ instead of $\phi([f])$ for the ultralimit as a function on $\colim_{i\in \s U} \ax_i$.

We establish most properties of $\colim_{\s U}\ax_i$ by translating through Gelfand duality and applying \L o\'s-style reasoning as before. For example, our lemmas from the previous section give us:

\begin{prop} \label{prop: covering dimension}
    If $X_i$ has covering dimension at most $n$ for all $n$, then $\colim_{\s U} X_i$ has covering dimension at most $n$.
\end{prop}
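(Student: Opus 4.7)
The plan is to apply Lemma \ref{lem: dimension lemma}, which reduces showing $\dim(\colim_{\s U} X_i) \leq n$ to proving that every continuous map $f : \colim_{\s U} X_i \to \R^{n+1}$ is a uniform limit of maps avoiding the origin. Via Gelfand duality and the definition of the ultra-co-product, the coordinates of $f$ live in $\lim_{\s U} C(X_i)$, so each $f^k$ is an ultralimit $[f^k_i]$ of a bounded sequence of continuous functions on $X_i$.

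Given $\epsilon > 0$, the natural move is to apply the dimension lemma on each $X_i$ to obtain an origin-avoiding approximation $\tilde g_i = (\tilde f^1_i, \dots, \tilde f^{n+1}_i)$ of $g_i = (f^1_i, \dots, f^{n+1}_i)$, within error $2\epsilon$ in the sup norm, and then form the ultralimit $\tilde f^k := [\tilde f^k_i]$. The bound $|\tilde f^k - f^k|_\infty \leq 2\epsilon$ is then automatic because norms commute with ultralimits.

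The main obstacle is the transfer of origin-avoidance: an arbitrary origin-avoiding approximation might have $\min_{x \in X_i} |\tilde g_i(x)|$ tend to zero along $\s U$, letting the ultralimit collapse onto the origin. The remedy is to extract a \emph{uniform} lower bound from the proof of the dimension lemma itself---inspecting the construction, the approximation is defined to be $\epsilon \tilde f$ with $\tilde f \in S^n$ off the locus where $|g_i| \geq \epsilon$, so one obtains the quantitative bound $\sum_k \tilde f^k_i(x)^2 \geq \epsilon^2$ for every $x \in X_i$.

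Finally, I would transfer this polynomial inequality to the ultraproduct: the element $\sum_k (\tilde f^k_i)^2 - \epsilon^2 \bf 1$ is positive in $C(X_i)$ and hence equals a square, and squares pass to the ultraproduct since algebraic operations commute with ultralimits. Thus $\sum_k (\tilde f^k)^2 \geq \epsilon^2 \bf 1$ in $\lim_{\s U} C(X_i)$, and dualizing via Gelfand duality gives $|\tilde f(x)|^2 \geq \epsilon^2$ for every $x \in \colim_{\s U} X_i$, so $\tilde f$ avoids the origin. Letting $\epsilon$ shrink produces the approximations required by Lemma \ref{lem: dimension lemma} (applied in the ultra-co-product, which is compact Hausdorff and hence normal), concluding $\dim(\colim_{\s U} X_i) \leq n$.
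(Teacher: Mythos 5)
Your proposal is correct and follows essentially the same route as the paper's proof: reduce via the dimension lemma to approximation by origin-avoiding maps, apply the lemma coordinatewise on each $X_i$ to get approximations with a uniform quantitative lower bound $\sum_k (\tilde f_i^k)^2 \geq \epsilon^2$, and transfer this positivity to the ultraproduct via the sum-of-squares trick. Your explicit identification of the pitfall (the minimum distance to the origin could vanish along $\s U$) and its resolution by extracting the uniform $\epsilon$-bound from the dimension lemma's construction is exactly the point the paper's proof relies on, just stated more openly.
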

\begin{proof}
   Suppose each $X_i$ has covering dimension at most $n-1$ for each $i$. Fix any $\epsilon>0$ and $[f_1],...,[f_n]\in C(\colim_{\s U} X_i)$. By Lemma \ref{lem: dimension lemma}, there are functions $[g_1],..., [g_n]$ so that for each $i\in I$ and $k\in [n]$, \[\bigl|g_k(i)-f_k(i)\bigr|_\infty<\epsilon \quad \quad (\forall x\in X_i) \; \bigl(g_1(i)+...+g_n(i)\bigr)^2(x)>\epsilon^2/4.\] We have to use some tricks to check that this last inequality passes to the limit. Note that $h(x)\geq r$ for all $x$ iff $h-r\bf 1$ is nonnegative iff $h-r\bf 1=w^2$ for some $w$. So, fix $w(i)$ for all $i$ with \[\bigl(g_1(i)+...+g_n(i)\bigr)^2-(\epsilon^2/4)\bf 1=w_k(i)^2.\] We then have,
   \[\bigl|[f_k]-[g_k]\bigr|_\infty=\lim_{\s U}\bigl|f_k(i)-g_k(i)\bigr|_\infty\leq\epsilon\] and \[\bigl([g_1]+...+[g_n]\bigr)^2-(\epsilon^2/4)\bf 1=[w]^2,\] so $\min_x \bigl([g_1]+...+[g_n]\bigr)^2(x)\geq \epsilon^2/4$.

   This means we can approximate any map from $\colim_{\s U} X_i$ to $\bb R^n$ by a map which avoids the origin as desired.
\end{proof}

And, as a dynamical example, we have the following:

\begin{prop}\label{prop:freeness}
    Suppose we have flows $\ax_i:\Gamma\curvearrowright X_i$ and that for all $i\in I$, $X_i$ has covering dimension at most $n$ and $\ax_i$ is free. Then $\ax=\colim_{\s U} \ax_i$ is free.
\end{prop}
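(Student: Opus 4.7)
The plan is to invoke the Corollary after Lemma \ref{lem: finite dimensional coloring} twice: first in each $C(X_i)$ to extract algebraic witnesses to the fixed-point-freeness of $\ax_i(\gamma)$, and then on the ultra-co-product, using Proposition \ref{prop: covering dimension} to secure the dimension hypothesis in the limit.

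Fix $\gamma \in \Gamma \sm \{\id_\Gamma\}$. Since $\ax_i$ is free, the homeomorphism $\ax_i(\gamma)$ has no fixed points on $X_i$, so the Corollary yields $f_1^{(i)}, \ldots, f_{2n+3}^{(i)} \in C(X_i)$ with each $f_k^{(i)}$ being $[0,1]$-valued, with $(f_k^{(i)} \circ \ax_i(\gamma))\, f_k^{(i)} = \bf 0$, and with $\max_k f_k^{(i)} \geq 1$ pointwise. These three conditions can be recast purely algebraically: $[0,1]$-valuedness as the norm bound $\bigl|f_k^{(i)} - \tfrac{1}{2}\bf 1\bigr|_\infty \leq \tfrac{1}{2}$; the disjointness as the equation $(f_k^{(i)} \circ \ax_i(\gamma))\, f_k^{(i)} = \bf 0$; and $\max_k f_k^{(i)} \geq 1$ as the product equation $\prod_{k=1}^{2n+3} (\bf 1 - f_k^{(i)}) = \bf 0$ (using the $[0,1]$-bound).

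Now set $F_k := [f_k^{(\cdot)}] \in \lim_{\s U} C(X_i) \cong C(\colim_{\s U} X_i)$. Because algebraic operations, the norm, and the $\Gamma$-action all commute with ultralimits, each of the three conditions survives: $F_k$ is $[0,1]$-valued, $(F_k \circ \ax(\gamma))\, F_k = \bf 0$, and $\prod_k (\bf 1 - F_k) = \bf 0$, whence $\max_k F_k \geq 1$ pointwise. By Proposition \ref{prop: covering dimension}, $\dim(\colim_{\s U} X_i) \leq n$, so the reverse direction of the Corollary, applied to $\ax(\gamma)$ with witnesses $F_1, \ldots, F_{2n+3}$, forces $\ax(\gamma)$ to be fixed-point free. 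Since $\gamma$ was arbitrary, $\ax$ is free.

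The only subtlety—and the reason the dimension hypothesis is essential—is that fixed-point-freeness of a homeomorphism is not in general expressible by a bounded number of algebraic equations. The uniform coloring bound $2n+3$ from Lemma \ref{lem: finite dimensional coloring} is precisely what turns fixed-point-freeness into the kind of existentially quantified finite algebraic statement that ultralimits are built to preserve; without it, one expects (as the next theorem in the introduction asserts) that freeness can fail in the limit.
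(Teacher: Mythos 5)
Your proof is correct and follows essentially the same route as the paper's: extract $2n+3$ witnesses from the dimension bound via Lemma \ref{lem: finite dimensional coloring}, carry them through the ultralimit, and conclude freeness from them. The packaging differs only slightly—you enforce the pointwise bounds via a norm condition and a product-of-complements equation rather than the paper's square-root trick, and your appeal to Proposition \ref{prop: covering dimension} is unnecessary, since the reverse implication of the Corollary (that the witnesses force fixed-point-freeness) is elementary and independent of dimension, which is precisely why the paper just derives the contradiction directly without rechecking the dimension of the ultra-co-product.
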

\begin{proof}
Fix $\gamma\in \Gamma$. Using Lemma \ref{lem: finite dimensional coloring}, for each $i\in I$ we can find $f_1(i),..., f_{2n+3}(i)\in C(\ax_i)$ so that \[\bigl(\gamma\cdot_{\ax_i}f_k(i)\bigr)f_k(i)=\bf 0 \quad \quad \bf 0\leq f_k(i)\] \[f_1(i)+...+f_{2n+3}(i))\geq \bf1,\]  where the inequalities are all pointwise. 

So, if $w(i)^2=f_1(i)+...+f_{2n+1}(i)-\bf 1$, $v_k(i)^2=f_k(i)$, and $u_k(i)^2=1-f_k(i)$ we have
\[[f_k](\gamma\cdot [f_k])=[f_k(\gamma\cdot f_k)]=[\bf 0]=\bf 0\] and
\[[f_k]=[v_k^2]=[v_k]^2\] so $[f_k]\geq \bf 0$. Similarly, $[f_k]\leq \bf 1$ and $[f_1]+...+[f_k]\geq \bf 1.$ 

Suppose towards contradiction that $\gamma\cdot x=x$. Then, $[f_1]+...+[f_k]\geq 1$, so $[f_i](x)>0$ for some $x$. But then, \[0=\bigl([f_i](\gamma\cdot [f_i])\bigr)(x)=([f_i](x))([f_i](\gamma\inv\cdot x))=[f_i]^2(x)>0.\] This is absurd, so $\gamma$ has no fixed points under $\colim_{\s U} \ax_i.$  \end{proof}

Note that much the same argument gives that any ultra-co-power of a free flow is free. All we needed was a bound on the number of sets in an independent cover of $X_i$, and if the action is fixed we can use the same cover in every index. We can generalize somewhat:

\begin{dfn}
    For an action $\ax:\Gamma\curvearrowright X$ and $\gamma\in \Gamma$, define $\chi(\gamma,\ax)$ to be the least $n$ so that there are $n$ functions witnessing that $\ax(\gamma)$ has no fixed points, i.e.
    \[\chi(\gamma,\ax)=\min\bigl\{n:(\exists f_1,...,f_n\in C(\ax))\: \bf 0\leq f_i, \sum_{i=1}^n f_i\geq \bf 1, \mbox{ and }f_i(\gamma\cdot f_i)=\bf 0\bigr\}.\] If this set is empty, $\chi(\gamma,\ax)=\infty.$
\end{dfn}
Note that $\ax$ is free if and only if, for all $\gamma\not=\id$, $\chi(\gamma,\ax)<\infty.$

\begin{prop}
    For any $I$-indexed sequence of action $\ip{\ax_i: i\in I}$,
    \[\chi(\gamma,\colim_{\s U}\ax_i)=\lim_{\s U}\chi(\gamma,\ax_i).\]
\end{prop}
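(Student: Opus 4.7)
The plan is to establish the two inequalities separately: $\chi(\gamma, \colim_{\s U} \ax_i) \leq \lim_{\s U} \chi(\gamma, \ax_i)$ and the reverse. Throughout, I would first reduce to $[0,1]$-valued witnesses by truncating via $\min(\cdot, 1)$, which preserves all three defining conditions of $\chi$ (nonnegativity, the pointwise sum $\geq \mathbf{1}$, and the disjointness $f_k(\gamma\cdot f_k)=\mathbf 0$).

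The forward inequality mirrors Proposition \ref{prop:freeness}. Assuming $L := \lim_{\s U} \chi(\gamma, \ax_i) < \infty$, we have $\chi(\gamma, \ax_i) = L$ for $\s U$-most $i$, so pick $[0,1]$-valued witnesses $f_1(i), \dots, f_L(i) \in C(\ax_i)$ and take ultralimits. The conditions $[f_k] \geq \mathbf 0$, $\sum_k [f_k] \geq \mathbf 1$, and $[f_k](\gamma \cdot [f_k]) = \mathbf 0$ all pass to the ultra-co-product by the square-root trick used in Proposition \ref{prop:freeness}: write $\sum_k f_k(i) - \mathbf 1 = w(i)^2$ with $|w(i)|\leq\sqrt{L-1}$, so $\sum_k [f_k] - \mathbf 1 = [w]^2 \geq \mathbf 0$; nonnegativity of each $[f_k]$ is similar, and the equation $[f_k](\gamma\cdot[f_k])=\mathbf 0$ is immediate. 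This gives $\chi(\gamma, \colim_{\s U} \ax_i) \leq L$.

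For the reverse direction, assume $C := \chi(\gamma, \colim_{\s U} \ax_i) < \infty$ and let $[f_1], \dots, [f_C]$ be witnesses with $[0,1]$-valued representatives $f_k(i)$. The facts $\sum_k [f_k] \geq \mathbf 1$ and $[f_k](\gamma \cdot [f_k]) = \mathbf 0$ translate via norm convergence into approximate statements: for any $\epsilon > 0$, $\s U$-most $i$ satisfy $\sum_k f_k(i)(x) \geq 1 - \epsilon$ and $f_k(i)(x) f_k(i)(\gamma^{-1} x) \leq \epsilon$ pointwise in $x$. Fix $\epsilon < 1/(C+1)^2$ and a continuous threshold $\phi : [0,1] \to [0,1]$ with $\phi \equiv 0$ on $[0, \sqrt{\epsilon}]$ and $\phi \equiv 1$ on $[1/(C+1), 1]$ (these intervals are disjoint since $\sqrt\epsilon<1/(C+1)$), and set $\tilde f_k(i) := \phi \circ f_k(i)$. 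At each $x$ some $f_k(i)(x) \geq 1/(C+1)$ (else $\sum_k f_k(i)(x) < C/(C+1) \leq 1 - \epsilon$), hence $\sum_k \tilde f_k(i)(x) \geq 1$; and if $\tilde f_k(i)(x)$ and $\tilde f_k(i)(\gamma^{-1} x)$ were both nonzero, then $f_k(i)(x), f_k(i)(\gamma^{-1}x)>\sqrt\epsilon$ and so their product would exceed $\epsilon$, contradicting the approximate disjointness. So the $\tilde f_k(i)$ exactly witness $\chi(\gamma, \ax_i) \leq C$ for $\s U$-most $i$, and $L \leq C$.

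The main obstacle is precisely this approximate-to-exact conversion in the reverse direction: witnesses in the ultra-co-product only yield approximate witnesses in the approximants, so one must simultaneously achieve an exact lower bound on the sum and exact support-disjointness. The slack $\sqrt\epsilon < 1/(C+1)$ between the two thresholds in $\phi$ is what makes the interpolation possible, and the corner cases (one or both sides infinite) follow by contrapositive from the two finite-side implications.
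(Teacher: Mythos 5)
Your proof is correct and follows the same two-inequality structure as the paper's. The forward direction (lifting witnesses to the ultra-co-product via square-root tricks) is essentially identical. In the reverse direction, the paper extracts the same approximate witnesses but then says only ``multiplying by small bump functions as appropriate, we can find $f_1'(i),\dots,f_n'(i)$''; your composition with the threshold $\phi$ (zero below $\sqrt{\epsilon}$, one above $1/(C+1)$) is a clean, fully explicit way to carry out that repair, and the quantitative choice $\epsilon<1/(C+1)^2$ makes the slack between the two thresholds precise. So the approach is the same, but you have filled in a step the paper leaves as a gesture.
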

\begin{proof}
    The inequality $\chi(\gamma,\colim_{\s U}\ax_i)\leq\lim_{\s U}\chi(\gamma,\ax_i)$ is essentially the content of the proof above. If we have witnesses $f_1(i),...,f_n(i)$ to $\chi(\gamma,\ax_i)\leq n$ for a $\s U$-large set of indices $i$, then $[f_1],...,[f_n]$ witness that $\chi(\gamma,\colim_{\s U} \ax_i)\leq n.$

    For the reverse inequality, suppose that $[f_1],...,[f_n]\in C(\colim_{\s U} \ax_i)$ witness that $\chi(\gamma,\colim_{\s U} \ax_i)\leq n$, i.e.
    \[[f_j]\geq \bf 0\quad \quad [f_j](\gamma\cdot [f_j])=0\]
    \[[f_1]+...+[f_n]\geq \bf 1.\] Then, by considering witnesses $[w_j]$ and $[u]$ with $[f_j]=[w_j^2]$ and $[f_1+...+f_n]=[u^2]$, we have that, for any $\epsilon>0$ and a $\s U$-large set of indices $i$,
    \[f_j(i)\geq -\epsilon \bf 1\quad \quad |f_j(i)(\gamma\cdot f_j(i))|<\epsilon\]\[f_1(i)+...+f_n(i)\geq (1-\epsilon)\bf 1.\] Then, multiplying by small bump functions as appropriate, we can find $f'_1(i),...,f_n'(i)$ witnessing that $\chi(\gamma,\ax_i)\leq n.$
    
\end{proof}

For ultra-co-products where the action can vary, the bounds on dimension in Proposition \ref{prop:freeness} are necessary:
\begin{prop}\label{prop:nonfree limit}
   Fix a nonprincipal ultrafilter $\s U$ on $\N$. Let $\ax_i: \Z/2\Z\curvearrowright S_i$ be the antipodal action:
    \[(-1)\cdot_{\ax_i} \bf x=-\bf x.\] Then, $\colim_{\s U} \ax_i$ is not free.
\end{prop}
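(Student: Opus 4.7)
The plan is to combine the preceding proposition $\chi(\gamma,\colim_{\s U}\ax_i)=\lim_{\s U}\chi(\gamma,\ax_i)$ with the Lyusternik--Schnirelmann (Borsuk--Ulam) theorem. It suffices to show that $\chi(-1,\ax_i)\to\infty$ as $i\to\infty$, since then $\chi(-1,\colim_{\s U}\ax_i)=\infty$ for every nonprincipal ultrafilter on $\N$, meaning $-1$ has a fixed point on $\colim_{\s U}\ax_i$ (it is not ``witnessably fixed-point free''), and in particular the action is not free.

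To bound $\chi(-1,\ax_i)$ from below, suppose $f_1,\dots,f_n\in C(S_i)$ satisfy $f_k\geq \bf 0$, $f_k(x)f_k(-x)=0$ for all $x$, and $\sum_k f_k\geq\bf 1$. Consider the closed sets $C_k:=\{x\in S_i: f_k(x)\geq 1/n\}$. These cover $S_i$: at every $x$ some $f_k(x)\geq 1/n$ since the $f_k$ sum to at least $1$. Moreover, no $C_k$ contains an antipodal pair, because $f_k(x)f_k(-x)=0$ forces one of the two values to vanish. The Lyusternik--Schnirelmann theorem says that any closed cover of the $i$-sphere by sets that each avoid containing an antipodal pair has size at least $i+2$. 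Hence $n\geq i+2$, so $\chi(-1,\ax_i)\geq i+2$.

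Putting the pieces together, $\lim_{\s U}\chi(-1,\ax_i)=\infty$, and by the previous proposition $\chi(-1,\colim_{\s U}\ax_i)=\infty$, so the antipodal action on the ultra-co-product has a fixed point and $\colim_{\s U}\ax_i$ is not free.

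The only subtle point is the translation from the algebraic witnesses defining $\chi$ to a geometric closed cover to which the Lyusternik--Schnirelmann bound applies; everything else is a direct invocation of the machinery already set up. If one prefers to avoid Lyusternik--Schnirelmann directly, one can argue instead via the classical form of Borsuk--Ulam: from witnesses as above, the map $F:=(f_1-(\gamma\cdot f_1),\dots,f_n-(\gamma\cdot f_n)):S_i\to\R^n$ is odd, so if $n\leq i$ it must vanish at some $x\in S_i$, forcing $f_k(x)=f_k(-x)$ for every $k$; combined with $f_k(x)f_k(-x)=0$ this gives $f_k(x)=0$ for all $k$, contradicting $\sum_k f_k(x)\geq 1$.
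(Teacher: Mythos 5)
Your proof is correct and takes essentially the same route as the paper: the paper likewise reduces to showing $\chi(\gamma,\ax_n)>n$ via the proposition $\chi(\gamma,\colim_{\s U}\ax_i)=\lim_{\s U}\chi(\gamma,\ax_i)$, and then applies Borsuk--Ulam to the map $x\mapsto(f_1(x),\dots,f_n(x)):S^n\to\R^n$ to find $x$ with $f_k(x)=f_k(-x)$, forcing $f_k(x)^2=0$ for all $k$ and contradicting $\sum_k f_k\geq\bf 1$. Your Lyusternik--Schnirelmann reformulation via the closed cover $C_k=\{f_k\geq 1/n\}$ is an equivalent repackaging, and your ``classical Borsuk--Ulam'' variant at the end is nearly verbatim the paper's argument.
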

\begin{proof}
    Let $\gamma$ be the nonidentity element of $\Z/2\Z$. It suffices to show that $\chi(\gamma, \ax_n)> n$. Suppose towards contradiction there are $f_1,..., f_n: S_n\rightarrow \R$ witnessing that $\chi(\gamma,\ax_n)\leq n$. Then, for
    \[f: S_n\rightarrow \R\]
    \[f(x)=(f_1(x),...,f_n(x))\] the Borsuk--Ulam theorem says that there is some $x\in S_n$ with $f(x)=f(\gamma\cdot x).$ But then, for each $i$, $0=f_i(x)f_i(\gamma\cdot x)=f_i(x)^2$, so $(f_1+...+f_n)(x)=0<1.$
\end{proof}

Bourgin--Yang theory--a branch of algebraic topology--is concerned with the dimension of the points of coincidence of maps between simplicial $G$-spaces for (typically compact) groups $G$. The arguments above suggest that dynamical properties of ultra-co-products might be tied up with similar concerns. For instance, a bit more advanced algebraic topology gives the following.

\begin{thm}
    Fix a nonprincipal ultrafilter $\s U$ on $\N$. There is a sequence of free $\Z$-flows, $\ax_i$, so that $\colim_{\s U} \ax_i$ is not free.
\end{thm}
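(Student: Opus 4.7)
Plan: We want to construct free $\Z$-flows $\ax_i : \Z \curvearrowright X_i$ with $\chi(1, \ax_i) \to \infty$ along $\s U$. By the proposition relating $\chi$ to ultra-co-products, this forces $\chi(1, \colim_{\s U} \ax_i) = \infty$; the generator $1 \in \Z$ then has a fixed point in $\colim_{\s U} \ax_i$, so the ultra-co-product is not free. Lemma \ref{lem: finite dimensional coloring} tells us this requires $\dim X_i \to \infty$, so the $X_i$'s must be progressively higher-dimensional.

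The construction: take $X_i$ to be the mapping torus of the antipodal map $\sigma_i : S^i \to S^i$, namely $X_i = (S^i \times [0, 1])/(x, 1) \sim (\sigma_i x, 0)$. This is compact Hausdorff of dimension $i+1$ and carries a natural free $S^1$-action by translating the time coordinate. Fix an irrational $\alpha$ and let $T_\alpha$ be the corresponding rotation; the $\Z$-action $\ax_i$ generated by $T_\alpha$ is free, since every nonidentity iterate is again an irrational rotation inside the $S^1$-action.

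The key step is to show $\chi(1, \ax_i) \to \infty$. The structural input is that $X_i$ is a principal $S^1$-bundle over $\mathbb{R}\mathbb{P}^i$: identifying $X_i \cong (S^i \times S^1)/(\Z/2)$ with $\Z/2$ acting diagonally by $(\sigma_i, -1)$, the $S^1$-quotient is $S^i/(\Z/2) = \mathbb{R}\mathbb{P}^i$. This bundle is the one obtained by applying the sign representation $\Z/2 \to U(1)$ to the antipodal double cover $S^i \to \mathbb{R}\mathbb{P}^i$, and for $i \geq 2$ it has Euler class $e = w_1^2 \in H^2(\mathbb{R}\mathbb{P}^i; \Z/2)$; iterating, $e^k \neq 0$ for all $k \leq i/2$. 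Given a closed cover $\{F_1, \ldots, F_m\}$ of $X_i$ with $T_\alpha F_j \cap F_j = \emptyset$, a subordinate partition of unity combined with the $S^1$-structure yields an $S^1$-equivariant map from $X_i$ into a unitary $S^1$-representation of complex dimension at most $m$. The nontriviality of $e^{\lfloor i/2 \rfloor}$ then provides a Bourgin--Yang style cohomological obstruction forcing $m \gtrsim i/2 \to \infty$.

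The main obstacle is bridging the single-element constraint $T_\alpha F_j \cap F_j = \emptyset$ to the full-$S^1$ hypothesis of Bourgin--Yang. The cleanest route is to pass to the ultra-co-product first: along $\s U$, the $\Z$-action generated by $T_\alpha$ embeds into the continuous $S^1$-action that is uniformly available on each $X_i$, so classical equivariant cohomology applies directly to $\colim_{\s U} X_i$. Extracting the quantitative growth of $\chi(1, \ax_i)$ then amounts to carefully pairing the partition of unity with iterated Euler classes, which is the principal technical step of the argument.
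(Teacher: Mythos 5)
Your approach differs from the paper's at a structural level, so it's worth comparing before pointing at the gap. The paper builds $\bx^p_n : \Z \curvearrowright E_n(\Z/p\Z)$ using iterated joins of cyclic groups, where $\Z/p$-Borsuk--Ulam is classical and gives $\chi(1, \bx^p_n) \to \infty$ as $n \to \infty$ for each fixed $p$. But those $\Z$-actions factor through $\Z/p\Z$, so they are \emph{not} free: $\chi(p, \bx^p_n) = \infty$. The paper handles this with a double ultraproduct, first taking $\ax_k = \colim_{p \to \s U}\bx^p_{n_{pk}}$ to wash out the period $p$ and get genuinely free $\Z$-actions with $\chi(n, \ax_k) \in [k-3,k+3]$ for \emph{all} $n$, and only then taking $\colim_{k \to \s U}\ax_k$. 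Your construction is more direct: a single family of free $S^1$-bundles $X_i \to \R\Pj^i$, with $\Z$ acting by a fixed irrational rotation $T_\alpha$, so the freeness of each $\ax_i$ is automatic and only one ultraproduct is needed. If it worked, it would be cleaner.

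The gap is in the claim $\chi(1, \ax_i) \to \infty$, which is the heart of the matter and which you flag yourself but do not close. The constraint on the cover is $T_\alpha F_j \cap F_j = \emptyset$ for a \emph{single} group element $T_\alpha \in S^1$, not an $S^1$-equivariance condition. Your proposed averaging -- take a subordinate partition of unity $\{\phi_j\}$ and extract the first Fourier coefficient $g_j(x) = \int_{S^1} \phi_j(\theta \cdot x)\,e^{-i\theta}\,d\theta$ -- does produce an $S^1$-equivariant map into $\C^m$, but it need not avoid the origin. The support condition only says that $\supp(\phi_j) \cap (\text{orbit})$ is disjoint from its $\alpha$-shift; it does not force the first Fourier mode to be nonzero (a union of two suitably placed short arcs with a symmetric bump already has zero first Fourier coefficient). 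Since $\sum_j \phi_j \equiv 1$, every orbit sees a nonconstant $\phi_j$, but that forces \emph{some} nonzero Fourier mode of \emph{some} $\phi_j$, not necessarily the first mode, and the index used may vary from orbit to orbit; a uniform target representation does not fall out. So the Euler-class obstruction (which is fine once the equivariant map to $S^{2m-1}$ exists, giving $e^m = 0$ and hence $m > i/2$ when $e^k = w_1^{2k} \neq 0$ for $k \leq i/2$) is not reached. The fallback of ``pass to the ultra-co-product first and apply classical equivariant cohomology to $\colim_{\s U}X_i$'' doesn't resolve this either: the $\Z$-orbit-closure in $\colim_{\s U}X_i$ under $T_\alpha$ is not obviously an honest topological circle, and equivariant cohomology of a non-metrizable compactum is not something that applies ``directly.'' The freeness of the $\Z$-action in the ultraproduct is controlled by $\lim_{\s U}\chi(1,\ax_i)$ and nothing else, so the question cannot be deferred. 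You would need a genuine quantitative Bourgin--Yang statement for a single element of $S^1$ acting on these twisted bundles, and that is exactly the technical work the paper sidesteps by working with $\Z/p$-joins, where the relevant index theory is off the shelf.
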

\begin{proof}
It suffices to find a sequence of $\Z$-actions $\ax_i$ with $\chi(n,\ax_i)\in [i-3,i+3]$ for all $n$.

Let $E_n(\Z/p\Z)$ be the $n^{th}$ join of $\Z/p\Z$ (as a discrete space) with itself. Formally, we define $E_n(\Z/p\Z)\subseteq \R^{n+1}$ inductively as follows:
$E_0(\Z/p\Z)=\{1,2,3,...,p\}$

\[E_{n+1}(\Z/p\Z)=\bigl\{t(x,0,0,...,0)+(1-t)(0,y_1,...,y_n): x\in \Z/p\Z,\; \vec y\in E_{n}(\Z/p\Z)\bigr\}.\] We inductively define an action $\bx^p_{n}:\Z\curvearrowright E_n(\Z/p\Z)$ by \[ n\cdot_{\bx^p_0} x=\mbox{ the remainder of }n+x\mbox{ by }p\]\[n\cdot t(x,\vec 0)+(1-t)(0,\vec y)=t(n\cdot x)+(1-t)(0,n\cdot \vec y).\] It's a straightforward induction to check that, for any $k=1,...,p-1$, the homeomorphisms $\bx^p_n(k)$ and $\bx^p_n(1)$ are conjugate. In particular, $\chi(k,\bx_n^p)=\chi(1,\bx^p_n)$. Since deleting $(\Z/p\Z)\times \{\vec 0\}$ from $E_{n}(\Z/p\Z)$ gives a space homotopic to $E_{n-1}(\Z/p\Z)$ (with an equivariant homotopy), we have that
\[\chi(1,\bx^p_{n+1})\leq \chi(1,\bx^p_n)+3.\]

The free part of the action $\ax: \Z/p\Z\curvearrowright \R^{np}$ given by \[k\cdot (\vec x_1,...,\vec x_p)=(\vec x_{1+k},...,\vec x_{p+k})\] (with indices taken mod $p$) has some finite index (in the sense of \cite[Chapter 4]{BorsukUlam}.) So, for all large enough $k$, if we view $\bx^p_{k}$ as a $\Z/p\Z$ action, any map $f\in\hom(\bx^p_k, \ax)$ must have a fixed point of $\ax$ in its image. In particular, $\chi(1,\bx^p_{k})>n$ by the same argument as in the previous proposition. So $\chi(1,\bx^p_{n})$ tends to $\infty$ with $n$. 

Combining these observations, we have that, for any $k\in \N$, there is some $n_{pk}$ so that $\chi(1,\bx^p_{n_{k}}),...,\chi(p-1,\bx^p_{n_k})\in [k-3,k+3].$ Set $\ax^p_{k}=\bx^p_{n_{k}}.$

Then, $\ax_k=\colim_{p\rightarrow \s U} \ax^p_{k}$ has $\chi(n,\ax_k)\in [k-3,k+3]$ for all $k$. Thus, $\ax_k$ is free, but $\ax=\colim_{\s U} \ax_k$ has $\chi^1(\ax)=\infty$, so $\ax$ is not free.

\end{proof}

Ultralimits of functions are defined so that point-wise multiplication (and scaling and so on) commute with ultralimits. It turns out every point-wise operation commutes with ultralimits, as do suprema and infima.

\begin{prop}\label{prop:pointwise}
    Consider any $\rho: \R\rightarrow\R$. Then, for any $[f]\in \lim_{\s U}C( \ax_i)$, 
    \[[\rho\circ f]=\rho\circ[f],\quad \quad \sup_{x\in X} [f](x)=\lim_{\s U} \sup_{x\in X_i} f(x),\] \[ \mbox{ and } \inf_{x\in x}[f](x)=\lim_{\s U} \sup_{x\in X_i} f(x).\]
\end{prop}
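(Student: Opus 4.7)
The plan is to prove the functional calculus identity $[\rho\circ f] = \rho\circ[f]$ first (tacitly assuming $\rho$ is continuous, as required for $\rho\circ[f]$ to belong to $C(\colim_{\s U} X_i)$), and then derive the supremum and infimum identities from the order structure of the ultraproduct.

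For the first identity, pick $M$ with $|f_i|_\infty\le M$ on a $\s U$-large set (e.g.~$M = |[f]|_\infty + 1$), replacing $f_i$ by $0$ off this set so we may assume the bound for all $i$. By Stone--Weierstrass, choose polynomials $p_n$ with $\sup_{t\in[-M,M]}|\rho(t)-p_n(t)|\to 0$. For any polynomial $p$, iterating the fact that sums, scalar multiples, and products commute with ultralimits yields $[p\circ f]=p\circ[f]$. The uniform approximation then gives $|[\rho\circ f]-[p_n\circ f]|_\infty\to 0$ and $|\rho\circ[f]-p_n\circ[f]|_\infty\to 0$, so passing to the limit establishes $[\rho\circ f]=\rho\circ[f]$.

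For the supremum, I would first record an order-theoretic analogue of \L o\'s: $[g]\le[h]$ iff for every $\epsilon>0$, $g_i\le h_i+\epsilon\bf 1$ on a $\s U$-large set. The reverse direction is immediate from norm-continuity. The forward direction writes $[h]-[g]=[w]^2$, which is possible because every nonnegative element of the commutative $C^*$-algebra $\lim_{\s U} C(\ax_i)$ admits a square root---this square root step is really the only subtlety, and it follows from Gelfand duality and the existence of pointwise square roots on positive functions. Then $\lim_{\s U}|h_i-g_i-w_i^2|_\infty=0$ forces $h_i-g_i\ge -\epsilon\bf 1$ on a $\s U$-large set. Setting $r_i:=\sup f_i$ and $r:=\lim_{\s U} r_i$, the bounds $f_i\le r_i\bf 1$ give $[f]\le r\bf 1$, hence $\sup[f]\le r$; conversely $[f]\le(\sup[f])\bf 1$ forces $r_i\le \sup[f]+\epsilon$ on a $\s U$-large set for every $\epsilon>0$, so $r\le\sup[f]$. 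The infimum identity follows by applying the supremum identity to $-f$, using that negation commutes with ultralimits.
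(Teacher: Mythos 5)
Your proof is correct, and for the functional-calculus identity $[\rho\circ f]=\rho\circ[f]$ you follow essentially the same route as the paper (Stone--Weierstrass approximation by polynomials, for which commutation with ultralimits is immediate). You also rightly flag the tacit continuity assumption on $\rho$, which the paper's appeal to Stone--Weierstrass requires as well.

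For the supremum and infimum identities, however, your route is genuinely different from, and heavier than, the paper's. You develop an order-theoretic \L o\'s lemma ($[g]\leq[h]$ iff $g_i\leq h_i+\epsilon\bf 1$ $\s U$-often, for every $\epsilon>0$) whose forward direction needs a square root in the ultraproduct algebra, and then sandwich $\sup[f]$ between $\lim_{\s U}\sup f_i$ from both sides. The paper instead uses the one-line identity
\[\sup_x g(x)=\bigl|g+N\mathbf 1\bigr|_\infty-N\]
(valid once $|g|_\infty\leq N$), which turns the supremum into a norm expression that commutes with ultralimits by definition, with no need for any spectral machinery. Your order-theoretic lemma is a perfectly good and reusable tool---it is essentially the ``definable relation'' point the paper makes a few paragraphs later about $f\geq g$ iff $\inf_w|f-g-w^2|=0$---but for this proposition it's more than is needed, and the paper's trick is worth internalizing because it also shows directly that $\sup$ is a $\Sigma_1$-predicate. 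Deriving the infimum from the supremum by passing to $-f$ is fine (and corrects an evident typo in the statement, whose right-hand side should read $\inf$).
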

\begin{proof}

    Fix $x\in \colim_{\s U} X_i$, and fix $[f]\in C(\colim_{\s U}X_i)$. By the definition of ultraproducts of algebras, there is some $N$ so that $|f(i)|<N$ for all $i$.
    
    Fix any $\epsilon>0$. By Stone--Weierstrass, there is some polynomial $p$ so that ${|p(\zeta)-\rho(\zeta)|<\epsilon}$ for all $\zeta\in [-N,N]$. So, 
    \[\bigl|[\rho\circ f]-p([f])\bigr|_\infty=\bigl|[\rho\circ f]-[p(f)]\bigr|_{\infty}=\bigl|[\rho\circ f-p(f)]\bigr|_\infty<\epsilon.\] So, for our fixed $x$, \[\bigl|[\rho\circ f](x)-\rho([f](x))\bigr|\leq \bigl|[\rho\circ f]-p([f])\bigr|_\infty+\bigl| p([f](x))-\rho([f](x))\bigr|\leq 2\epsilon.\] Sending $\epsilon$ to $0$ shows that $\rho([f](x))=[\rho\circ f](x).$

    And, \[\sup_x [f](x)=\bigl|[f]+N\bf 1\bigr|_\infty-N=\lim_{\s U} \bigl|f(i)+N\bf 1\bigr|_\infty-N=\lim_{\s U} \sup_x f(i)(x).\]
\end{proof}
We'll leave it to the reader to verify that the same proof works for $n$-ary operations.

The ultra-co-product is defined so that we can freely take limits of maps out of $\colim_{\s U} \ax_i$ into compact subsets of $\R$. The finite additivity of $\s U$ lets us diagonalize against any countable list of constraints. The next theorem applies this idea to building factor maps. We can build a factor map $[f]\in \hom(\ax^{\s U} ,\bx)$ as a limit and ensure $[f]$ is onto by working to piecemeal to ensure each $f(i)$ hits enough open sets.

\begin{lem}
    Fix flows $\ax:\Gamma\curvearrowright X$ and $\bx:\Gamma\curvearrowright Y$ with $Y$ a compact metric space, and fix an ultrafilter $\s U$ on $\N$. Suppose that, for any finite list of open sets $U_1,...,U_n\subseteq Y$ there is $f\in \hom(\ax, \bx)$ so that $\im(f)\cap U_i\not=\emptyset$.  Then, $\ax^{\s U}$ factors onto $Y$.
\end{lem}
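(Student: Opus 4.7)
The plan is to build the factor map through Gelfand duality as an ultralimit of maps that meet more and more open sets. Since $Y$ is a compact metric space, it has a countable basis $\{V_n : n \in \N\}$ of nonempty open sets. Using the hypothesis applied to the finite list $V_1, \ldots, V_n$, for each $n$ I would pick some $f_n \in \hom(\ax, \bx)$ whose image meets every $V_i$ with $i \leq n$.

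Next, I would define the pullback $F \colon C(\bx) \to C(\ax^{\s U})$ by
\[ F(g) := \lim_{\s U} (g \circ f_n). \]
Each $g \circ f_n$ lies in $C(\ax)$ and is bounded uniformly in $n$ by $|g|_\infty$, so the ultralimit exists. Because algebraic operations and the $\Gamma$-action commute with ultralimits, and because $g \mapsto g \circ f_n$ is a unital, equivariant $*$-algebra homomorphism for each $n$, the map $F$ is a unital equivariant $*$-algebra homomorphism. By Proposition \ref{prop:factor duality}, to finish it suffices to show $F$ is isometric, equivalently injective, so that it dualizes to a surjective equivariant map $\ax^{\s U} \to \bx$.

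The main step is isometry. One direction is free: since each $f_n$ takes values in $Y$,
\[ |F(g)|_\infty = \lim_{\s U} |g \circ f_n|_\infty \leq |g|_\infty. \]
For the reverse, fix $g \in C(\bx)$ and $\epsilon > 0$. Pick $y_0 \in Y$ with $|g(y_0)| > |g|_\infty - \epsilon$, and choose a basic open $V_i$ containing $y_0$ on which $|g| > |g|_\infty - \epsilon$. Then for every $n \geq i$ we have $\im(f_n) \cap V_i \neq \emptyset$, so $|g \circ f_n|_\infty > |g|_\infty - \epsilon$. Passing to the ultralimit gives $|F(g)|_\infty \geq |g|_\infty - \epsilon$, and sending $\epsilon \to 0$ gives equality.

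The only subtle point is this last lower bound on $|F(g)|_\infty$, which is exactly where the hypothesis on meeting open sets is used; the rest is bookkeeping with the fact that pointwise operations and norms commute with ultralimits (Proposition \ref{prop:pointwise}). Separability of $Y$ (needed to reduce meeting arbitrary opens to meeting a countable basis) is what lets a single sequence $\ip{f_n : n \in \N}$ work against all opens simultaneously in the ultralimit.
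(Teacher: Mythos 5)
Your proof is correct. The underlying idea is the same as the paper's — build a sequence $f_n$ that meets the first $n$ basic opens and pass to the ultralimit — but you execute it on the dual side of Gelfand duality. The paper first codes $Y$ as a shift-invariant subset of $[0,1]^{\N\times\Gamma}$, forms the ultralimit $[f]$ of the coordinate functions, and then uses the fact that $\sup$ and $\inf$ commute with ultralimits to show $[f]$ lands in $Y$ and is onto. You instead define the pullback $F(g)=\lim_{\s U}(g\circ f_n)$ directly, check it is a unital equivariant $*$-homomorphism, and then show it is isometric so that it dualizes to a surjection $\ax^{\s U}\twoheadrightarrow\bx$. Your version avoids the explicit coding step, at the cost of invoking the standard $C^*$-algebra fact that an isometric (equivalently injective) $*$-embedding of commutative $C^*$-algebras dualizes to a continuous surjection of spectra (which is Proposition \ref{prop:factor duality} in the paper). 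The key lower bound $|F(g)|_\infty\geq|g|_\infty-\epsilon$ is exactly dual to the paper's observation that $\inf_x \psi_{p,W}\circ[f]=0$: in both cases, picking a basic open on which $g$ (resp. $\psi_{p,W}$) is controlled and noting that eventually every $f_n$ meets it is where the hypothesis earns its keep. One remark: as written, the lemma says ``an ultrafilter on $\N$,'' but both your proof and the paper's require $\s U$ to be nonprincipal so that the tails $\{n : n\geq i\}$ lie in $\s U$; this is consistent with how $\s U$ is set up in the surrounding discussion (e.g.\ Proposition \ref{prop:evocative}), and is not a gap specific to your argument.
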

\begin{proof}
    By standard coding arguments, we can assume that $Y\subseteq [0,1]^{\N\times \Gamma}$ and that $\Gamma$ acts by shifting indices:
    \[(\gamma\cdot x)(i,\delta)=x(i,\gamma\inv\delta).\] (For instance, we can map $x\in X$ to $\ip{d(\gamma\cdot x,x_n):\gamma\in \Gamma, n\in \N}$, where $\{x_i: i\in\N\}$ is some countable dense set.)

    Since $Y$ is a metric space we have a countable basis for the topology, $\{U_i:i\in\N\}$.  For each $i\in \N$ we can find $f(i)\in \hom(\ax, \bx)$ so that $U_{j}\cap \im f(i)\not=\emptyset$ for all $j<i$. Let $f_{n,\gamma}(i)$ be the $(n,\gamma)$-coordinate function of $f(i)$:
    \[f(i)(x)=\ip{f_{n,\gamma}(i)(x): n\in \N, \gamma\in \Gamma}.\]

    We can define $[f]: X\rightarrow [0,1]^{\N\times \Gamma}$ by $([f](x))(n,\gamma)=f_{n,\gamma}(x)$. By equivariance of $f(i),$
    \[\gamma\cdot [f_{n,\delta}]=[\gamma\cdot f_{n,\delta}]=[f_{n,\gamma\inv\delta}]\] so $f$ is equivariant. 

     For $W\Subset \N\times \Gamma$ let $\pi_W:[0,1]^{\N\times \Gamma}\rightarrow [0,1]^W$ be the projection function. Define $\phi_W:[0,1]^W\rightarrow \R$ by
     \[\phi_W(x)=d(x, \pi_W(Y)).\]
     By the previous, Proposition \ref{prop:pointwise}, \[\sup_{x\in X^{\s U}} \phi_W\circ [f]=\lim_{\s U} \sup_{x\in X} \phi_W \circ f=0.\] So, for all $x\in X^{\s U}$, $d([f](x),Y)=0$. Thus $\im ([f])\subseteq Y$. 

     Similarly, let $\psi_{p,W}(x)=d(\pi_W(x), \pi_W(p))$. Since $f(i)$ eventually meets each open set in $Y$, for each $p\in Y$ and $W\Subset \N\times \Gamma$, $\inf_{x\in X} \psi_{p,W}\circ f(i)(x)$ tends to $0$. So, again by the previous proposition, \[\inf_{x\in X^\s U} \psi_{p,W}\circ [f](x)=\lim_{\s U} \inf_{x\in X} \psi_{p,W} \circ f(x)= 0.\] Thus, for $y\in Y$, $d(y,\im([f]))=0$. This means $f$ is onto.
\end{proof}

The above theorem should remind the reader of Proposition \ref{prop:SFT containment}. In fact, we have

\begin{thm}

    For $\Gamma$-flows $\ax$ and $\bx$, $\ax\preccurlyeq\bx$ iff $\bx^{\s U}$ factors onto $\ax$ for some $\s U$.
\end{thm}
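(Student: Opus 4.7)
The plan is to prove the two directions separately. For $(\Leftarrow)$ I would decompose through the chain $\ax\preccurlyeq\bx^{\s U}\preccurlyeq\bx$; for $(\Rightarrow)$ I would reduce to a compact metric target via L\"owenheim--Skolem and then imitate the proof of the preceding lemma.

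For $(\Leftarrow)$, let $\pi:\bx^{\s U}\to\ax$ be a factor map. The inclusion $\ax\preccurlyeq\bx^{\s U}$ is formal: any closed cover $\vec C$ of $X$ pulls back to $\pi\inv\vec C$ on $Y^{\s U}$, and equivariance plus surjectivity of $\pi$ give $p_W(\pi\inv\vec C,\bx^{\s U})=p_W(\vec C,\ax)$. For $\bx^{\s U}\preccurlyeq\bx$, fix a closed cover $\vec C'=(C_1',\ldots,C_n')$ of $Y^{\s U}$ and $W\Subset\Gamma$. Since every ``forbidden'' intersection $\bigcap_{(i,\gamma)\in\sigma}\gamma\cdot C_i'$ with $\sigma\notin p_W(\vec C',\bx^{\s U})$ is empty in the compact Hausdorff space $Y^{\s U}$, we may shrink to open neighborhoods $V_i\supseteq C_i'$ so that these open intersections remain empty, and by Urysohn pick continuous $\phi_i:Y^{\s U}\to[0,1]$ with $\phi_i\equiv1$ on $C_i'$ and $\phi_i\equiv0$ off $V_i$. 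Then $\vec D=(\phi_i\inv(\{1\}))$ is a closed cover of $Y^{\s U}$ with $p_W(\vec D,\bx^{\s U})=p_W(\vec C',\bx^{\s U})$. Write each $\phi_i=[\phi_i(j)]_j$; by Proposition~\ref{prop:pointwise}, for sufficiently small $\epsilon>0$ and $\s U$-a.e.\ $j$ the sets $D_i(j):=\phi_i(j)\inv([1-\epsilon,1])$ form a closed cover of $Y$ with the same $W$-pattern as $\vec C'$, because the cover condition $\inf_y\max_i\phi_i(y)=1$ and the pattern sups $\sup_y\min_{(i,\gamma)\in\sigma}(\gamma\cdot\phi_i)(y)=1$ (resp.\ $<1$) for $\sigma\in p_W$ (resp.\ $\sigma\notin p_W$) transfer to their approximate counterparts on $Y$ via the pointwise-ultralimit formula.

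For $(\Rightarrow)$, assume $\ax\preccurlyeq\bx$. By L\"owenheim--Skolem I may assume $X$ is compact metric; fix a countable separating family $(\rho_n)\subseteq C(X,[0,1])$ and the resulting equivariant embedding $\iota:X\hookrightarrow[0,1]^{\N\times\Gamma}$, $\iota(x)(n,\gamma)=\rho_n(\gamma\inv\cdot x)$, with image $Z=\iota(X)$. A factor $\bx^{\s U}\to\ax$ is an equivariant continuous surjection $Y^{\s U}\to Z$. Imitating the proof of the preceding lemma, it suffices to produce, for each finite family of basic open sets $V_1,\ldots,V_k\subseteq Z$ and each finite-window closed $\Gamma$-invariant neighborhood $\tilde Z\supseteq Z$ in $[0,1]^{\N\times\Gamma}$, a map $g\in\hom(\bx,\tilde Z)$ meeting every $V_i$; a diagonal-ultrafilter argument over the $\tilde Z$'s, opens, and error scales then yields $\bx^{\s U}\to Z$. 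Such a $g$ I would construct by discretizing a relevant finite-dimensional coordinate projection of $[0,1]^{\N\times\Gamma}$ into an $\epsilon$-grid, pulling back through $\iota$ to obtain a closed cover of $X$, using $\ax\preccurlyeq\bx$ to transfer its $W$-pattern to a closed cover $\vec D$ of $Y$, and smoothing $\vec D$ by a subordinate partition of unity to build the coordinate functions of $g$; matching $W$-patterns should ensure both that the image of $g$ satisfies the finite-window constraints defining $\tilde Z$ and that each $V_i$ is hit. The main obstacle is this last step in $(\Rightarrow)$: showing that matching combinatorial closed-cover patterns forces the smoothed analytic maps to land in $\tilde Z$ up to vanishing error while hitting the designated opens; the delicate bookkeeping balances partition-of-unity smoothing against the finite-window grid constraints, and arranging the $\epsilon$-scales so they match coherently across windows is where the main technical work lies.
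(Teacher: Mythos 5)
Your overall plan is sound and your $(\Leftarrow)$ direction is essentially complete, but the route you take differs from the paper's in a way worth spelling out, and your $(\Rightarrow)$ direction stops exactly at the point where the real work begins --- which you honestly acknowledge.

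The paper does not prove the two directions in isolation; it establishes a three-way equivalence between weak containment, domination of $\Sigma_1$-theories, and existence of a factor map from an ultra-co-power, cycling $2\Rightarrow 1\Rightarrow 3\Rightarrow 2$. Your $(\Leftarrow)$ corresponds to $3\Rightarrow 1$, which the paper does via $3\Rightarrow 2$ (\L o\'s plus monotonicity of $\Sigma_1$ under factors) followed by $2\Rightarrow 1$ (a level-set argument recovering closed covers from $\Sigma_1$-data). Your direct argument --- pull covers back along the factor map, then push covers of $\bx^{\s U}$ down to $\bx$ by Urysohn-shrinking and taking $[1-\epsilon,1]$-superlevel sets of the representing sequences --- is correct and arguably more geometric, but you lose the $\Sigma_1$-characterization, which the paper gets for free from the same triangle.

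For $(\Rightarrow)$, the step you flag as ``where the main technical work lies'' is precisely the step the paper handles differently, and this is a genuine gap in your sketch as written. Rather than embedding $X$ into a cube and trying to build topological factor maps into shrinking finite-window neighborhoods $\tilde Z$ (then diagonalizing), the paper works entirely on the Gelfand-dual side. It defines an approximate-embedding notion: a $(W,\epsilon)$-isomorphism from a finite $D\Subset C(\bx)$ into $C(\ax)$ (preserving sums, products, the $\Gamma$-action on $W$, and norms all up to $\epsilon$), and shows by a standard diagonalization that having these for all $(D,W,\epsilon)$ yields a genuine equivariant isometric embedding $C(\bx)\hookrightarrow C(\ax^{\s U})$, which dualizes to a factor map. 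The substantive step is then to build a single $(W,\epsilon)$-isomorphism from weak containment, and the paper does this with the layer-cake decomposition: for each $f\in D$ take the level sets $C_{f,j}=f\inv[(j-1)/N,j/N]$, transfer the closed-cover $W$-pattern of all the $C_{f,j}$'s jointly to a cover $\vec D$ of the other space, choose $[0,1]$-valued $\ell_{f,j}$ that are $1$ on $\bigcup_{i<j}D_{f,i}$ and $0$ on $\bigcup_{i>j}D_{f,i}$, and set $\phi(f)=-N\mathbf 1+\sum_j\ell_{f,j}$. The monotone, nested structure of the layer-cake ensures by construction that $|\phi(f)-\phi(g)|\approx |f-g|$, $|\gamma\cdot\phi(f)-\phi(g)|\approx|\gamma\cdot f-g|$, etc., directly from the matched cover patterns --- this is exactly the property a generic ``partition of unity subordinate to $\vec D$'' does not automatically give you, since averaging center-values against bump functions need not preserve the joint distributional structure across a window. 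Your plan could be repaired by replacing the partition-of-unity smoothing with the ordered layer-cake functions, but as stated the smoothing step does not obviously land the image inside $\tilde Z$, and the cross-scale bookkeeping you worry about is precisely what the $(W,\epsilon)$-isomorphism framework was designed to sidestep.
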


We'll delay the proof until Section \ref{sec: equivalences}.

\subsection{Model theory}

We said before that the ultra-co-power of a flow is a compactification of the flow with respect to the logic of $C(X)$. In this section, we'll make more precise what we mean by ``the logic of $C(X)$". Continuous model theory provides a rich language of functions and invariants that commute with ultralimits, and studies the geometry of sets and structures definable in this language. Much of our discussion of $\lim_{\s U}C(\ax_i)$ in the previous section is a specialization of this more general theory. We will highlight the main points here and refer the reader to the survey \cite{Survey} for more details.

First we define the nouns of our language, called terms in model theory. These are variable elements of $C(\ax)$ built up from the usual constants and algebraic operations, such as
\[\gamma\cdot(x_1+3x_2)x_3-\bf 1.\]

\begin{dfn}
    The set of \textbf{terms} in the algebra $C(\ax)$ is the set of functions from powers of $C(\ax)$ to $C(\ax)$ defined recursively as follows:
    \begin{itemize}
        \item The function $\pi_i(x_1,...,x_n)=x_i$ are terms
        \item The constant function $\tau(x_1,...,x_n)=\bf 1$ is a terms
        \item If $\tau_1,\tau_2$ are terms, $\gamma\in \Gamma$, and $\alpha\in \R$, then the following are all terms:
        \[\alpha\tau_1,\quad \tau_1+\tau_2,\quad \tau_1\tau_2, \quad\gamma\cdot \tau_1. \]
    \end{itemize}
\end{dfn}
This definition is technically not quite correct. A term should really be a formal symbol with a canonical interpretation as an operation on any algebra $C(\ax)$. 

\begin{dfn}
    For an $\R$-algebra $C$ equipped with a $\Gamma$ action, $f_1,...,f_n\in C$, and an $n$-place term $\tau(x_1,...,x_n)$, we write
    \[\tau^{C}(f_1,...,f_n)\] for the value of the term applied to $f_1$,..., $f_n$ interpreted in $C$. We abbreviate $\tau^{C(\ax)}$ by $\tau^{\ax}$.
\end{dfn}

For example $\tau(x)=x-\gamma\cdot x$ is a term. If $\gamma$ acts trivially in $C$, $\tau^C(x)=\bf 0$ for all $x$. Otherwise $\tau^C$ depends sensitively on $x$. 

The fact we can interpret terms in different algebras is a fussy point, but it's an important one. For instance, it allows us to state the following proposition (whose proof is a trivial induction on the construction of terms).

\begin{prop}
    If $\tau$ is a term, then for any $[f_1],...,[f_n]\in C:=\lim_{\s U}C(\ax_i)$
    \[\tau^C([f_1],...,[f_n])=\lim_{\s U} \tau^{\ax_i}\bigl(f_1(i),...,f_n(i)\bigr).\]
\end{prop}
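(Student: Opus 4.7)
The proof is a structural induction on the construction of $\tau$, just as the statement itself is defined recursively. I plan to handle the two base cases (projections and the constant $\bf 1$), and then the four inductive cases (scalar multiplication, addition, multiplication, and the action of a group element) in turn. In each case, the work is a one-line check that unwinds the definitions given for the ultraproduct algebra.

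For the base cases: if $\tau=\pi_i$, then by definition $\pi_i^C([f_1],\dots,[f_n])=[f_i]=\lim_{\s U} f_i(i)$, and likewise $\pi_i^{\ax_i}(f_1(i),\dots,f_n(i))=f_i(i)$, so the two sides literally agree. If $\tau=\bf 1$, then $\tau^C$ is the multiplicative identity of $C=\lim_{\s U}C(\ax_i)$, which is $[\bf 1]$ (the ultralimit of the constant sequence of identities), matching $\lim_{\s U}\tau^{\ax_i}$.

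For the inductive step, fix $\tau_1,\tau_2$ and abbreviate $F_j(i):=\tau_j^{\ax_i}(f_1(i),\dots,f_n(i))$ so that the inductive hypothesis reads $\tau_j^C([f_1],\dots,[f_n])=[F_j]$. Then each of the four compound terms is resolved by invoking the corresponding clause in the definition of the ultraproduct algebra: for $\tau=\alpha\tau_1$ use $\alpha[F_1]=[\alpha F_1]$; for $\tau=\tau_1+\tau_2$ use $[F_1]+[F_2]=[F_1+F_2]$; for $\tau=\tau_1\tau_2$ use $[F_1][F_2]=[F_1 F_2]$; for $\tau=\gamma\cdot\tau_1$ use $\gamma\cdot[F_1]=[\gamma\cdot F_1]$. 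In each case the right-hand side is by definition the ultralimit of $\tau^{\ax_i}(f_1(i),\dots,f_n(i))$, closing the induction.

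There is no real obstacle here: the ultraproduct algebra was set up precisely so that the generating operations commute with ultralimits, and a term is by definition a composition of these operations. The only mildly delicate point is the boundedness condition appearing in the definition of $\lim_{\s U} C(\ax_i)$; one should note along the way that whenever $\ip{F_j(i):i\in I}$ is uniformly bounded, so is the sequence obtained by applying a single generating operation (boundedness is preserved by scaling, addition, product, and the isometric $\Gamma$-action), so the terms $\tau^{\ax_i}(f_1(i),\dots,f_n(i))$ always have a well-defined ultralimit and the expressions on both sides of the asserted identity are meaningful.
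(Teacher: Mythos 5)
Your proof is correct and is exactly the straightforward structural induction on the construction of terms that the paper has in mind when it calls this "a trivial induction." The attention to the boundedness point at the end, ensuring each ultralimit is well-defined, is a sensible addition but does not change the approach.
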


Note that $\tau^C$ is always a uniformly continuous function when restricted to any bounded set in $C$. We need to restrict to bounded sets to ensure uniform continuity, which will lead to some technical wrinkles in later definitions.\footnote{Another approach would be to only work with the unit ball in $C(\ax)$. This is the approach taken in the survey \cite{Survey}.}

Note that $|\tau|_{\infty}$ is not a term. It couldn't possibly be--it takes values in $\R$, not $C(X)$. We call definable functions into $\R$ predicates. We think of these as relations in our language and their value in $\R$ represents how far they are from holding. The two-variable predicate $|f-g|_\infty$ says how far $f$ and $g$ are from being equal, the one place predicate $\inf_{g} |f-g^2|_\infty$ says how far $f$ is from being non-negative, etc. The universal and existential quantifiers are $\sup$ and $\inf$ respectively.

\begin{dfn}
    The set of \textbf{predicates} in the algebra $C(\ax)$ is the set of function from powers for $C(\ax)$ to $\R$ defined recursively as follows:
    \begin{enumerate}
        \item For any term $\tau$, $|\tau|_\infty$ is a predicate
        \item If $\phi_1,...,\phi_n$ are predicates and $c: \R^n\rightarrow \R$ is uniformly continuous, then $c(\phi_1,...,\phi_n)$
        \item If $\phi(x, \vec y)$ and $\psi(x,\vec y)$ are predicates then so are $\inf_{| x|< \psi(x,\vec y)} \phi(x,\vec y)$ and $\sup_{| x|<\psi(x, \vec y)} \phi(x,\vec y).$
    \end{enumerate}
   An \textbf{existential predicate} is one of the form
   \[\inf_{x_1,...,x_n} \phi(x_1,...,x_n, y_1,...,y_n)\] where $\phi$ is predicate with no occurrence of $\inf$ or $\sup$.

   A\textbf{ sentence }is a 0-place predicate. An \textbf{existential sentence} is a 0-place existential predicate.
\end{dfn}

Existential predicates are also sometimes called $\Sigma_1$ predicates. The definitions are arranged so that every predicate is a uniformly continuous function. For example, the following are all predicates:
\[d(f,g)=|f-g|_\infty\quad \psi(f)=d(f,f^2)\quad \phi(f)=\inf_{|g|\leq \sqrt{|f|_\infty}}d\bigl(f, g^2\bigr).\]
Roughly speaking, $\psi$ measures how close $f$ is to being a characteristic function, and $\phi$ measures how close $f$ is to being nonnegative.

The wording of $\phi$ is a little procrustean to meet the definition of predicate. Note that
\[\phi(f)=\inf_{|g|\leq \sqrt{|f|_\infty}} d\bigl(f, g^2\bigr)=\inf_{g} d\bigl(f,g^2\bigr). \] Sometimes we will write $\inf_{g} \phi(g)$ as a predicate when the infimum of $\phi$ is always attained on a uniformly bounded set when $\phi$ is restricted to a bounded set. 

The following are not predicates:
\[\inf_{\gamma\in \Gamma} d(\gamma\cdot f, g)\quad \inf_{g} \max(1, |fg-\bf 1|).\] Quantifiers have to range over bounded sets in $C(\ax)$, not over $\Gamma$ or unbounded sets.

Again, we have the fussy point that predicates should formally be sequences of terms concatenated with symbols for norms, continuous connectives, $\sup$s, and $\inf$s. We can canonically interpret predicates in any structure. 

\begin{dfn}
    For $f_1,...,f_n\in C(\ax)$ and $\phi$ and $n$-place predicate,
    we write $\phi^{\ax}(f_1,...,f_n)$ for the value of the predicate these functions interpreted in $C(\ax)$.
\end{dfn}

Existential predicates are important because they are the broadest class of predicates which are monotone with respect to embeddings of algebras (or, dually, with respect to factors of flows). The following proposition is an easy induction on the construction of predicates:

\begin{prop} \label{prop:embedding}
    If $\phi(x_1,...,x_n)$ is an existential predicate, and $F:\ax\rightarrow \bx$ is a factor map, then for any $f_1,...,f_n\in C(\bx)$
    \[\phi^{\ax}\bigl(f_1\circ F,...,f_n\circ F\bigr)\leq \phi^{\bx}(f_1,...,f_n). \]
\end{prop}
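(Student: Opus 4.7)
The plan is to first handle the quantifier-free case (predicates built from $|\tau|_\infty$ and uniformly continuous connectives, with no $\inf$ or $\sup$), establishing equality rather than inequality, and then deduce the existential case by a direct witness argument.

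For the quantifier-free case, by Gelfand duality (Proposition \ref{prop:factor duality}), the factor map $F:\ax\to\bx$ corresponds to an equivariant $C^*$-algebra embedding $C(F):C(\bx)\hookrightarrow C(\ax)$ given by $g\mapsto g\circ F$. A routine induction on term construction shows that this embedding commutes with all term operations:
\[\tau^{\ax}(f_1\circ F,\dots,f_n\circ F)=\tau^{\bx}(f_1,\dots,f_n)\circ F,\]
using that $C(F)$ preserves sums, products, scalar multiples, and the $\Gamma$-action. Surjectivity of $F$ then yields $|g\circ F|_\infty=|g|_\infty$ for all $g\in C(\bx)$, so $\bigl||\tau^{\ax}(\vec f\circ F)|\bigr|_\infty=\bigl||\tau^{\bx}(\vec f)|\bigr|_\infty$. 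Since continuous connectives preserve equality, we get $\phi^{\ax}(\vec f\circ F)=\phi^{\bx}(\vec f)$ for every quantifier-free $\phi$.

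For the existential case, write $\phi(\vec y)=\inf_{\vec x}\psi(\vec x,\vec y)$ with $\psi$ quantifier-free. Given any tuple $\vec g\in C(\bx)$ competing in the infimum on the $\bx$ side, the pullback $\vec g\circ F$ is a tuple in $C(\ax)$ with the same coordinate norms, hence a valid competitor on the $\ax$ side (even with the bounded-quantifier convention, since $C(F)$ is norm-preserving). The quantifier-free case applied to $\psi$ gives
\[\psi^{\ax}(\vec g\circ F,\vec f\circ F)=\psi^{\bx}(\vec g,\vec f),\]
so $\phi^{\ax}(\vec f\circ F)\leq \psi^{\bx}(\vec g,\vec f)$. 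Taking the infimum over $\vec g$ yields $\phi^{\ax}(\vec f\circ F)\leq \phi^{\bx}(\vec f)$, as required.

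The main obstacle, such as it is, is bookkeeping: checking inductively that terms commute with pullback and that norms are preserved, and keeping track of the bounded ranges built into the definition of existential predicates. No genuine analytic difficulty arises beyond Gelfand duality and the fact that pullback along a continuous surjection is isometric for the sup norm.
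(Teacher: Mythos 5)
Your proof is correct and carries out exactly the induction on the construction of terms and predicates that the paper alludes to (the paper only remarks that the proposition follows by "an easy induction on the construction of predicates" without spelling it out). The key points you make explicit---that pullback along a factor map is an isometric equivariant $C^*$-embedding, hence quantifier-free predicates are preserved exactly, and that pulled-back witnesses remain valid competitors in the bounded infimum---are precisely the intended argument.
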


Why do we jump through all these hoops to ensure that our predicates are uniformly continuous? It's so that they commute with ultralimits. This is the content of \L o\'s's theorem for continuous logic. The proof is again a simple induction and can be found in \cite[Theorem 5.4]{Survey}. 

\begin{thm}[Continuous \L o\'s's theorem] \label{thm:los}
    For any $I$-indexed family of actions $\ip{\ax_i:i\in I}$, ultrafilter $\s U$ on $I$, elements of the ultraproduct $[f_1],...,[f_n]\in C:=\lim_{\s U} C(\ax)$, and $n$-place predicate $\phi$,
    \[\phi^{C}([f_1],...,[f_n])=\lim_{\s U} \phi^{\ax_i}(f_1,...,f_n).\]
\end{thm}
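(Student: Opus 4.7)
The plan is to prove this by induction on the syntactic construction of the predicate $\phi$, matching the three clauses in the definition of predicate. For the base case $\phi = |\tau|_\infty$, I would combine the proposition that terms commute with ultralimits with clause (3) of the ultraproduct proposition, which encodes that the norm in $\lim_{\s U} C(\ax_i)$ is defined as the ultralimit of norms, so that $|\tau^C([\vec f])|_\infty = \lim_{\s U} |\tau^{\ax_i}(\vec f(i))|_\infty$ falls out immediately. For the case $\phi = c(\phi_1, \ldots, \phi_k)$ with $c$ uniformly continuous, I would push the ultralimit through $c$ using that continuous functions on $\R$ commute with ordinary limits, and then apply the inductive hypothesis to each $\phi_j$; this is a one-line calculation in $\R$.

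The substantive case is the bounded-quantifier step. Suppose $\phi(\vec y) = \inf_{|x| \leq \psi(x, \vec y)} \phi_0(x, \vec y)$, and assume the theorem for $\phi_0$ and $\psi$. Fix $[\vec f] \in C := \lim_{\s U} C(\ax_i)$ and prove the two inequalities separately. For the $\leq$ direction, given $\epsilon > 0$, for each $i$ I would select a witness $g(i) \in C(\ax_i)$ with $|g(i)|_\infty \leq \psi^{\ax_i}(g(i), \vec f(i))$ and $\phi_0^{\ax_i}(g(i), \vec f(i)) \leq \phi^{\ax_i}(\vec f(i)) + \epsilon$. The family $(g(i))_i$ is uniformly bounded on a $\s U$-large set because $\psi$ is a uniformly continuous function on bounded sets and its values on $(g(i), \vec f(i))$ must remain bounded near $\lim_{\s U} \phi^{\ax_i}(\vec f(i))$; this lets me form $[g] \in C$. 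By the inductive hypothesis applied to both $\psi$ and $\phi_0$, we get $|[g]|_\infty \leq \psi^C([g], [\vec f])$ and $\phi_0^C([g], [\vec f]) = \lim_{\s U} \phi_0^{\ax_i}(g(i), \vec f(i))$, yielding $\phi^C([\vec f]) \leq \lim_{\s U} \phi^{\ax_i}(\vec f(i)) + \epsilon$.

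For the $\geq$ direction, given $\epsilon > 0$, I would pick any $[g] \in C$ with $|[g]|_\infty \leq \psi^C([g], [\vec f])$ and $\phi_0^C([g], [\vec f]) \leq \phi^C([\vec f]) + \epsilon$. Applying the inductive hypothesis to $\psi$ and $\phi_0$, the set of indices $i$ at which $g(i)$ approximately satisfies the constraint and realizes the value $\phi_0^C([g], [\vec f])$ up to $O(\epsilon)$ lies in $\s U$. On this set $g(i)$ witnesses $\phi^{\ax_i}(\vec f(i)) \leq \phi_0^{\ax_i}(g(i), \vec f(i)) + O(\epsilon)$, and taking ultralimits gives $\lim_{\s U} \phi^{\ax_i}(\vec f(i)) \leq \phi^C([\vec f]) + O(\epsilon)$. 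The $\sup$ case is dual.

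The main obstacle I anticipate is bookkeeping in the quantifier step: one must keep the witnesses $(g(i))_i$ uniformly bounded so that $[g]$ exists in the ultraproduct, and one must slide strict and non-strict inequalities in the constraint $|x| \leq \psi(x, \vec y)$ by amounts of order $\epsilon$ without losing control. The insistence throughout the paper's definitions that quantifiers be bounded by an ambient predicate $\psi$ and that connectives $c$ be uniformly continuous is precisely what makes these uniform bounds propagate through the induction; once the bookkeeping is organized correctly, no further ideas are required.
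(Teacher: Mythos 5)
The paper does not actually supply a proof here; it defers to \cite[Theorem 5.4]{Survey}, which runs the same induction on the syntactic construction of predicates that you propose, so in broad outline your approach is the standard one and it is the right one. Your base case (norms of terms) and connective case (pushing ordinary ultralimits in $\R$ through a continuous $c$) are both correct and routine.

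The one place where your sketch is not yet sound is the uniform boundedness of the witness sequence $(g(i))_i$ in the $\leq$ direction of the quantifier step. You argue that $|g(i)|_\infty$ is bounded on a $\s U$-large set because $\psi$ is uniformly continuous on bounded sets and ``its values on $(g(i),\vec f(i))$ must remain bounded near $\lim_{\s U}\phi^{\ax_i}(\vec f(i))$''. This conflates $\psi$ and $\phi_0$: the near-minimizers $g(i)$ are chosen so that $\phi_0^{\ax_i}(g(i),\vec f(i))$ is close to the infimum, and that says nothing a priori about the value of $\psi^{\ax_i}(g(i),\vec f(i))$. Because the paper's bounded quantifier $\inf_{|x|<\psi(x,\vec y)}$ allows $\psi$ to depend on $x$, the constraint $|g(i)|_\infty<\psi^{\ax_i}(g(i),\vec f(i))$ is self-referential and does not by itself force $|g(i)|_\infty$ to be bounded (for instance $\psi(x,\vec y)=2|x|_\infty+1$ is a predicate and makes the constraint vacuous). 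The clean way to close this gap, and the route taken in the cited survey, is to quantify over a fixed bounded region (the unit ball) rather than over a $\psi$-defined set, so that boundedness of witnesses is immediate. If you wish to stay with the paper's formulation, you need to make explicit the implicit assumption that the region $\{x:|x|_\infty<\psi(x,\vec y)\}$ is uniformly bounded for bounded $\vec y$ --- which is the content of the paper's remark about writing $\inf_g\phi(g)$ only when the infimum is attained on a uniformly bounded set --- and then your induction goes through as you describe. The $\geq$ direction and the $\epsilon$-sliding between strict and non-strict inequalities are correctly identified and routine once this is in place.
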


\L o\'s's theorem captures most of the arguments we have made about ultraproducts. But, we need to take some care to wrangle questions into the form of predicates. For instance, in the previous section we relied heavily on the observation that $f\geq g$ if and only if $\inf_w(|f-g-w^2|)=0$.

As a consequence, in an ultraproduct $[f]\geq [g]$ if and only if we can find $f'(i)\geq g'(i)$ so that $[f]=[f']$ and $[g]=[g']$. Continuous model theory highlights the importance of this kind of definable relation--relations which pass back and forth between the limit--and gives a straightforward test for definability.

\begin{dfn}\label{dfn:definable}
   Consider a family of sets $P^{\ax}$ where, for each $\Gamma$-flow $\ax$, $P^{\ax}\subseteq C(\ax)^n$. We say that $P^{\ax}$ is \textbf{definable} if there is a first-order formula $\phi$ and uniformly continuous $\alpha, \beta:\R\rightarrow\R$ so that
    \[P^\ax=\{x: \phi^\ax(x)=0\}\]
    \[(\forall\epsilon>0)(\exists \delta>0)(\forall x)\; \phi^\ax(x)<\delta\rightarrow d(x, P^\ax)\leq\epsilon.\]
    We say $P^\ax$ is $\Sigma_1$\textbf{-definable} if it is definable by some $\phi$ which is existential.

    We suppress $\ax$ in notation when the dependence on $\ax$ is clear.
\end{dfn}

For example, the set of non-negative functions
\[P=\{f\in C(\ax): (\forall x)\;f(x)\geq 0\}=\{f\in C(\ax): (\exists g)\; f=g^2\}\] is definable by $\phi(f)=\inf_{|g|\leq \max(|f|,1)} |f-g^2|.$ Clearly $P=\{f: \phi(f)=0\}$. And, if $\phi(f)<\epsilon$, then for some $g$, $|f-g^2|<\epsilon$ and $g\in P$.

The key point of definability is the following proposition. 

\begin{prop}
    A set $P\subseteq C(\ax)$ is definable iff, for any first-order predicate $\psi(x,y)$,
    \[\inf_{x\in P}\psi(x,y)\] is also first-order. In fact, if $P$ is $\Sigma_1$-definable and $\psi$ is existential, then $\inf_{x\in P}\psi(x,y)$ is also existential.
\end{prop}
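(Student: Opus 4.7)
The plan is to prove both directions by the standard continuous-logic device of ``penalizing'' non-membership in $P$ by adding $\phi$ to the objective.

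For the easy direction ($\Leftarrow$), I would apply the hypothesis to $\psi(x,y) := |x-y|_\infty$, obtaining that $\phi(y) := \inf_{x \in P} |x - y|_\infty = d(y, P)$ is a predicate. Assuming (or first reducing to the case that) $P$ is closed, this $\phi$ witnesses definability: $P = \{y : \phi(y) = 0\}$ and the modulus is trivial, since $\phi(y) < \delta$ directly gives $d(y, P) < \delta$.

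For the main direction ($\Rightarrow$), suppose $P$ is definable by $\phi$ with modulus $\epsilon \mapsto \delta(\epsilon)$, and let $\psi(x,y)$ be any predicate. I would define
\[
\theta_N(y) := \inf_{|x| \le R(y)} \bigl[\psi(x,y) + N\phi(x)\bigr],
\]
where $R(y)$ is a predicate-valued bound large enough to capture near-optimizers of $\inf_{x\in P}\psi(x,y)$ (for instance, a $y$-independent constant when $P$ is bounded, or one derived from a rough bound on $\psi(\cdot,y)$ using the modulus of $\phi$). Each $\theta_N$ is itself a predicate: the sum uses the continuous connective $(r,s) \mapsto r+s$, and the bounded infimum is in our language. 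The core estimate is that $\theta_N(y) \to \inf_{x \in P} \psi(x,y)$ uniformly on bounded sets of $y$. One inequality is immediate since $\phi$ vanishes on $P$; for the other, given $\eta > 0$, I pick $\epsilon$ so that the uniform continuity modulus of $\psi$ in $x$ (on the relevant bounded set) converts $\epsilon$-closeness to $\eta$-closeness in $\psi$, then pick $N$ so large that any near-infimizer $x$ of $\theta_N(y)$ must satisfy $\phi(x) < \delta(\epsilon)$, so that $x$ lies within $\epsilon$ of some $x' \in P$. This gives $\inf_{x\in P}\psi(x,y) \le \theta_N(y) + \eta$ for $N$ large. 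Passing to the uniform limit, and using that predicates are closed under uniform limits on bounded sets, yields that $\inf_{x\in P}\psi(x,y)$ is first-order.

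For the $\Sigma_1$ refinement, I would observe that if $\phi$ and $\psi$ are both existential, then $\psi(x,y) + N\phi(x)$ is existential (disjoint-variable infima pull out front and combine via the continuous connective), and $\inf_x$ of an existential is again existential (consecutive infima collapse into one); uniform limits of existential predicates remain existential by the same closure principle. The main obstacle I anticipate is the bookkeeping around the bounded-quantifier convention imposed in the paper's definition of predicate: one must exhibit the search radius $R(y)$ as a predicate growing slowly enough to fit the recursion, and verify that the convergence $\theta_N \to \inf_{x\in P}\psi$ is uniform on bounded sets of $y$ rather than merely pointwise. Once that technicality is arranged, the rest is the clean penalty-function approximation that is standard in continuous model theory.
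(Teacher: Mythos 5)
Your penalty-function idea is the right one and matches the paper's sketch (``reweight by $\phi$''), which in turn defers to the cited Survey. But there is a genuine technical gap in the step where you pass to the uniform limit of the family $\theta_N$. The paper's Definition of predicate only closes under (a) norms of terms, (b) composition with uniformly continuous connectives $c:\R^n\to\R$, and (c) bounded $\inf$/$\sup$ quantifiers; \emph{closure under uniform limits of predicates} is not one of the clauses. In the full continuous-logic framework this closure does hold for ``definable predicates,'' but proving it is itself a nontrivial compactness/density argument, not something you can cite for free from the definitions given here. Your $\Sigma_1$ refinement leans on the same unestablished closure (``uniform limits of existential predicates remain existential''), so the gap propagates.

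The cleaner route, which avoids the limit entirely and is how the cited reference actually argues, is to absorb the whole sequence into a single penalty. Let $\beta$ be the modulus of definability (so $\phi(x)<\delta$ implies $d(x,P)\le\beta(\delta)$ with $\beta(\delta)\to 0$ as $\delta\to 0$), and let $\omega$ be a modulus of uniform continuity for $\psi$ in its first argument on the relevant bounded set. Choose a uniformly continuous, nondecreasing $\alpha:\R_{\ge 0}\to\R_{\ge 0}$ with $\alpha(0)=0$ and $\alpha(t)\ge \omega(\beta(t))$ for small $t$, and consider the single predicate
\[
\theta(y)\;:=\;\inf_{x}\,\bigl(\psi(x,y)+\alpha(\phi(x))\bigr).
\]
One has $\theta(y)\le\inf_{x\in P}\psi(x,y)$ since $\alpha\circ\phi$ vanishes on $P$, and $\theta(y)\ge\inf_{x\in P}\psi(x,y)$ because any $x$ with $\phi(x)=t$ is within $\beta(t)$ of $P$, so $\psi(x,y)\ge\inf_{x'\in P}\psi(x',y)-\omega(\beta(t))\ge\inf_{x'\in P}\psi(x',y)-\alpha(t)$. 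Thus $\theta=\inf_{x\in P}\psi$ exactly, $\theta$ is a predicate by clauses (b) and (c) alone (after the usual bookkeeping to make the $\inf$ range over a bounded set), and if $\phi$ and $\psi$ are existential then so is $\theta$, since the inner $\inf$s combine with the outer $\inf_x$. Also, note that the one-line identity $\inf_{x\in P}\psi=\inf_x(\psi+\phi)$ in the paper's sketch is false as literally written; the corrected form with $\alpha(\phi)$ is what you should prove.

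Your ``$\Leftarrow$'' direction via $\psi(x,y)=|x-y|_\infty$ is fine, modulo the observation that definable sets are closed (or, equivalently, one replaces $P$ by its closure, which has the same $d(\cdot,P)$).
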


So, in particular, if $P$ is definable, and $P^{\lim_{\s U} C(\ax_i)}$  is nonempty, then $P^{\ax_i}$ is nonempty for a $\s U$-large set of indices $i$. The proof can be found in \cite[Theorem 9.17]{Survey}. The basic idea is that for any $\psi$, we can reweight by $\phi$ so that $\inf_{x\in P} \psi(x)=\inf_{x} \psi(x)+\phi(x)$.

A sentence, or zero-place predicate, gives a numerical invariant attached to a flow. We essentially showed in the last section that these invariants are enough to recover the covering dimension of the underlying space and tell whether the action is free. The ensemble of all of these invariants is called the first-order theory of the action.

\begin{dfn}
    Let $\s L$ be the set of first-order sentences. For a $\Gamma$-flow $\ax$, its\textbf{ first-order theory} is 
    \[Th(\ax):=\ip{\phi^{\ax}: \phi\mbox{ a sentence}}\in \R^{\s L}.\]

    We say $\ax, \bx$ are \textbf{elementary equivalent }if $Th(\ax)=Th(\bx)$ and write $\ax\equiv\bx$

    Similarly, let $\Sigma_1$ be the set of existential sentences and define the \textbf{existential theory }of $\ax$ analogously:
    \[Th_{\Sigma_1}(\ax)=\ip{\phi^{\ax}: \phi\in \Sigma_1}\in \R^{\Sigma_1}.\] 
\end{dfn}

Since we allow any continuous connectives, $\s L$ is uncountable. We could restrict our language to use only countable dense set of continuous connectives, so the relations $(\equiv)$ and $(\equiv_{\Sigma_1})$ are in fact smooth Borel equivalence relations. See \cite[Section 3.11]{Survey} for more details.

We'll end this section with two general results from model theory of direct importance to this paper. 

First, the L\"owenheim--Skolem theorem says every structure has a substructure with the same first order theory \cite[Proposition 7.3]{Survey}. The sense of ``same first-properties" is a little stronger than elementary equivalence. In our context, we get the following:

\begin{thm}[Continuous L\"owenheim--Skolem theorem]
    For any flow $\ax$, there is a metrizable factor $\ax_0\leq \ax$ with factor map $q$ so that, for any $f_1,...,f_n\in C(\ax_0)$ and predicate $\phi$,
    \[\phi^{\ax_0}(f_1,...,f_n)=\phi^{\ax}(f_1\circ q,...,f_n\circ q).\]
\end{thm}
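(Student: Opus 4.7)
The plan is to invoke Gelfand duality to reformulate the statement algebraically and then execute the standard Tarski--Vaught construction of an elementary substructure from continuous model theory.

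By Proposition \ref{prop:factor duality}, factor maps $q: \ax \to \ax_0$ onto flows are dual to $\Gamma$-equivariant unital $C^*$-subalgebra inclusions $A = C(\ax_0) \hookrightarrow C(\ax)$, and $\mspec(A)$ is metrizable if and only if $A$ is separable. The desired equation is precisely the statement that the inclusion $A \hookrightarrow C(\ax)$ is \emph{elementary}, i.e., preserves the values of all predicates on parameters drawn from $A$. So it suffices to produce a separable, $\Gamma$-invariant, unital $C^*$-subalgebra $A \subseteq C(\ax)$ whose inclusion is elementary. Moreover, since every uniformly continuous connective is approximated uniformly on bounded intervals by piecewise-linear ones with rational breakpoints, a countable dense subset $\mathcal{F}$ of predicates suffices to compute every $\phi^{\ax}$ up to arbitrary $\epsilon$.

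The construction is an $\omega$-stage inductive closure. Start with $S_0 = \{\bf 1\}$ and build countable $S_n \subseteq S_{n+1} \subseteq C(\ax)$ so that $S_{n+1}$ contains (a) $\gamma \cdot f$, $f+g$, $fg$, and $qf$ for every $\gamma \in \Gamma$, $f,g \in S_n$, $q \in \Q$, and (b) for every existential predicate $\phi(\vec{x},\vec{y}) = \inf_{|\vec{x}|_\infty \leq M}\psi(\vec{x},\vec{y})$ in $\mathcal{F}$, every tuple $\vec{y} \in S_n$, and every $k \in \N$, some $\vec{x} \in C(\ax)$ with $|\vec{x}|_\infty \leq M$ and $\psi^{\ax}(\vec{x},\vec{y}) < \phi^{\ax}(\vec{y}) + 1/k$. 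Set $S = \bigcup_n S_n$ and let $A$ be the norm closure of the real unital $*$-subalgebra generated by $S$. Then $A$ is a separable, $\Gamma$-invariant, unital $C^*$-subalgebra, hence corresponds via Gelfand duality to a metrizable flow $\ax_0$ together with a factor map $q: \ax \to \ax_0$.

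Elementarity is verified by induction on the construction of predicates. The base case (term-norms) and the inductive step for continuous connectives are immediate since $C^*$-subalgebra inclusions preserve term values and norms. For the quantifier step, clause (b) gives the Tarski--Vaught property: the infimum of $\psi^{\ax}(\vec{x},\vec{y})$ over bounded $\vec{x} \in A^{|\vec{x}|}$ agrees with that over bounded $\vec{x} \in C(\ax)^{|\vec{x}|}$, first for $\vec{y} \in S$ by construction, then for $\vec{y} \in A$ by uniform continuity of predicates on bounded sets; density of $\mathcal{F}$ extends this to arbitrary predicates. The main technical obstacle is the bookkeeping between the Tarski--Vaught closure and the algebraic-plus-$\Gamma$ closure: adding approximate witnesses introduces new parameters, and closing under operations introduces still more, so both must be interleaved. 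The $\omega$-stage scheme above resolves this while keeping each $S_n$ countable, and passing to the norm closure at the end is harmless because every predicate is uniformly continuous on bounded sets.
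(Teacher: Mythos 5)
Your overall approach is correct: the paper itself only cites \cite[Proposition 7.3]{Survey} for this theorem, and the standard argument is exactly what you do --- translate via Gelfand duality to the problem of finding a separable, $\Gamma$-invariant, elementary $C^*$-subalgebra, then run a countable Tarski--Vaught closure. The bookkeeping you describe (interleaving algebraic/$\Gamma$-closure with witness-adding, taking the union, and passing to the norm closure at the end) is handled correctly, and the remark that uniform continuity of predicates on bounded sets makes the norm closure harmless is the right way to discharge that step.

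However, there is one genuine gap in clause (b). You only add approximate witnesses for \emph{existential} predicates, which by the paper's definition means $\phi = \inf_{\vec{x}}\psi(\vec{x},\vec{y})$ with $\psi$ quantifier-free. This is not enough to carry out the induction on predicate complexity. At the quantifier step of the induction, you need to show $\inf_{b\in A}\psi^{\ax}(b,\vec{a}) = \inf_{b\in C(\ax)}\psi^{\ax}(b,\vec{a})$ for an \emph{arbitrary} predicate $\psi$ (already possibly containing nested $\inf$s and $\sup$s) and parameters $\vec{a}\in A$, not just for quantifier-free $\psi$. With your clause (b) as stated, the induction already fails for a $\Pi_2$ predicate $\sup_x \inf_z \psi(x,z,\vec{y})$: the Tarski--Vaught step requires approximate witnesses in $A$ for $\inf_x \sup_z(-\psi(x,z,\vec{y}))$, which is not an existential predicate and so is not covered. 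Closing under witnesses for existential predicates only produces an \emph{existentially closed} subalgebra, not an elementary one, and there is no quantifier elimination or model completeness result here to bridge that gap. The appeal at the end to ``density of $\mathcal{F}$ extends this to arbitrary predicates'' does not help, because density of connectives does not change the quantifier depth of a formula.

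The fix is small: in clause (b), range over all predicates $\phi = \inf_{|\vec{x}|_\infty \leq M}\psi(\vec{x},\vec{y})$ with $\psi \in \mathcal{F}$ an arbitrary predicate (equivalently, close under approximate $\inf$-witnesses for every $\psi \in \mathcal{F}$, at every tolerance $1/k$ and every tuple of parameters from $S_n$). Since $\mathcal{F}$ is countable and each $S_n$ is countable, the union $S$ remains countable, and the rest of your argument goes through unchanged.
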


In practice, this theorem lets us reduce to studying flows on metrizable spaces, and also lets us construct interesting dynamics on metric spaces out of more exotic flows.

Finally, to connect all this theory directly to weak containment, we can invoke a general result on substructures of ultra-products. A general theorem says that $Th_{\Sigma_1}(\s M)$ dominates $Th_{\Sigma_1}(\s N)$ if and only if $\s M$ embeds into an ultrapower of $\s  N$. A continuous version of theorem doesn't seem to be readily accessible in the literature. For a proof in the setting of classical model theory, see \cite{Marker}. We'll only prove the special case below, and we delay the proof until Section \ref{sec: equivalences}.

\begin{thm}
    For any $\Gamma$-flows $\ax$ and $\bx$, $\ax$ weakly contains $\bx$ if and only if $Th_{\Sigma_1}(\bx)$ dominates $Th_{\Sigma_1}(\ax)$, that is
    \[\bx\preccurlyeq\ax\;\lra \;(\forall \phi\in \Sigma_1)\; \phi^{\ax}\leq \phi^{\ax}.\]
\end{thm}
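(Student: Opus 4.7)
For the forward direction ($\bx \preccurlyeq \ax \Rightarrow \phi^{\ax} \leq \phi^{\bx}$ for all $\phi \in \Sigma_1$), the plan is to chain together tools already in place. By the preceding theorem, $\bx \preccurlyeq \ax$ gives a factor $\ax^{\s U} \twoheadrightarrow \bx$ for some $\s U$, which Gelfand-dualizes (Proposition~\ref{prop:factor duality}) to an equivariant embedding $C(\bx) \hookrightarrow C(\ax^{\s U}) = C(\ax)^{\s U}$. Proposition~\ref{prop:embedding} then gives $\phi^{\ax^{\s U}} \leq \phi^{\bx}$ for any existential sentence $\phi$, and \L o\'s's theorem (Theorem~\ref{thm:los}) applied to the constant sequence gives $\phi^{\ax^{\s U}} = \phi^{\ax}$.

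For the reverse direction, I would prove $\bx \preccurlyeq \ax$ directly from local patterns. Fix $W \Subset \Gamma$ and a closed cover $\vec C = (C_1, \ldots, C_n)$ of $\bx$ with $P := p_W(\vec C, \bx)$; let $\mathcal{M}$ be the maximal elements of $P$ and $\mathcal{N}$ the minimal elements of $\s P([n] \times W) \setminus P$. The crux is to package ``there is a closed cover of size $n$ with pattern exactly $P$'' as an existential sentence. Using auxiliary variables $u, v_1, \ldots, v_n$ to convert inequalities into equations (the trick used throughout Section~\ref{sec: ultraproducts}), take
\begin{align*}
\phi &:= \inf_{g_1,\ldots,g_n,u,v_1,\ldots,v_n} \biggl( \sum_i \bigl|g_i^2(\bf 1-g_i^2)-v_i^2\bigr|_\infty + \Bigl|\textstyle\sum_i g_i^2-\bf 1-u^2\Bigr|_\infty \\
&\qquad + \sum_{\sigma\in\mathcal{N}}\Bigl|\textstyle\prod_{(i,\gamma)\in\sigma}\gamma\cdot g_i^2\Bigr|_\infty + \sum_{\sigma\in\mathcal{M}}\max\Bigl(0,\,1-\Bigl|\textstyle\prod_{(i,\gamma)\in\sigma}\gamma\cdot g_i^2\Bigr|_\infty\Bigr) \biggr).
\end{align*}
Setting $h_i := g_i^2$, the first summand forces $h_i \in [0,1]$ pointwise (since $h_i(\bf 1 - h_i) = v_i^2 \geq 0$), the second forces $\sum h_i \geq \bf 1$ (coverage), the third forces the forbidden intersections to be empty, and the fourth forces each required intersection to have $|\prod|_\infty \geq 1$. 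To verify $\phi^{\bx}=0$, a normality-plus-compactness argument (handling one $\sigma \in \mathcal{N}$ at a time, since $\mathcal{N}$ is finite) yields opens $U_i \supseteq C_i$ with $\bigcap_{(i,\gamma)\in\sigma}\gamma\cdot U_i=\emptyset$ for every $\sigma \in \mathcal{N}$; Urysohn then supplies $h_i \in [0,1]$ that are $1$ on $C_i$ and vanish off $U_i$, and one plugs in $g_i = \sqrt{h_i}$ together with the square roots for $v_i$ and $u$.

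The hypothesis $\phi^{\ax} \leq \phi^{\bx} = 0$ then yields, for each $k$, approximate witnesses in $C(\ax)$ making every summand $< 1/k$. Setting $h_i^{(k)} := (g_i^{(k)})^2$ and thresholding $D_i^{(k)} := \{x : h_i^{(k)}(x) \geq 1/(2n)\}$, the boundedness summand keeps $h_i^{(k)} \leq 1 + O(1/k)$ pointwise and coverage forces $\vec D^{(k)}$ to be a closed cover for large $k$. For $\sigma \in \mathcal{N}$, a point of $\bigcap \gamma \cdot D_i^{(k)}$ would force $\prod \gamma \cdot h_i^{(k)} \geq (1/(2n))^{|\sigma|}$ there, contradicting the $1/k$ bound once $k > (2n)^{n|W|}$; for $\sigma \in \mathcal{M}$, a near-supremum point of the product (with value $> 1 - 2/k$) has every factor $> 1/(2n)$ by $\prod \leq \min$ up to $O(1/k)$ distortion, hence witnesses $\bigcap \gamma \cdot D_i^{(k)} \neq \emptyset$. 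Downward closure of $P$ (and upward closure of its complement) propagate these conclusions from $\mathcal{M}, \mathcal{N}$ to all of $\s P([n] \times W)$, giving $p_W(\vec D^{(k)}, \ax) = P$. The main obstacle is engineering $\phi$ so that it respects the $\Sigma_1$ syntactic restrictions (inequalities encoded by square-root witnesses, ``$|\cdot|_\infty \geq 1$'' bounds encoded by $\max(0, 1 - |\cdot|_\infty)$) while still detecting $P$ exactly; the Urysohn shrinking that simultaneously honors every constraint in $\mathcal{N}$ is the combinatorial companion step.
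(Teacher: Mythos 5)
Your proposal is correct and, at the level of architecture, matches the paper's: the forward direction chains through the ultra-co-power characterization (factor from $\ax^{\s U}$, then Proposition~\ref{prop:embedding} and \L o\'s), and the reverse direction builds a $\Sigma_1$ sentence whose near-zeros in $C(\ax)$ are thresholded into a closed cover realizing the target pattern set $P$. The paper proves the $\Sigma_1$ characterization, the ultra-co-power characterization, and the pattern definition as a three-cycle $2\Rightarrow 1\Rightarrow 3\Rightarrow 2$, so your invocation of ``the preceding theorem'' for $\bx\preccurlyeq\ax\Rightarrow\ax^{\s U}\twoheadrightarrow\bx$ is not circular (that implication is the paper's $1\Rightarrow 3$ step, proved independently of $\Sigma_1$ by the $(W,\epsilon)$-isomorphism argument), but it does mean your forward direction outsources what the paper considers the heaviest lifting in the whole equivalence.

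Where you genuinely diverge is the $\Sigma_1$ formula used in $2\Rightarrow 1$. The paper works with a notion of ``representation'': functions $f_i$ valued in $[0,1]$ with $f_i^{-1}[\{1\}]=C_i$, where the witnessing trick is to raise the $f_i$ to a large power $N$ so that the products over forbidden patterns are pushed uniformly below a threshold $\epsilon$ while products over allowed patterns stay exactly $1$ at the witnessing point; the cover is then recovered as $B_i=f_i^{-1}[\{1\}]$. Your formula instead uses slack variables $u,v_i$ to encode $h_i\in[0,1]$ and $\sum h_i\geq \bf 1$ as equations, penalizes forbidden products with $|\cdot|_\infty$ and required products with $\max(0,1-|\cdot|_\infty)$, and recovers the cover by thresholding at $1/(2n)$. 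The two approaches buy slightly different things: your formula makes the coverage condition $\sum h_i\geq \bf 1$ an explicit summand (the paper's representation notion does not directly enforce that $\{f_i^{-1}[\{1\}]\}$ covers, leaving that to the reader), and your thresholding argument with the $(2n)^{n|W|}$-bound is more self-contained than the paper's ``choose $N$ large'' step. On the other hand, the paper's version is closer to the general definability machinery it sets up in Definition~\ref{dfn:definable}. Minor caveat: the paper's syntax formally requires bounded quantification, so to make your $\phi$ a bona fide existential sentence you should note (as you implicitly do via the first summand) that near-witnesses necessarily satisfy $|g_i|_\infty\leq 1+O(1/k)$, $|v_i|_\infty\leq 1/2+O(1/k)$, $|u|_\infty\leq\sqrt{n}+O(1/k)$, which licenses the paper's convention of writing an unbounded $\inf$.
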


So, if $\ax$ weakly contains $\bx$, then $\ax$ has more witnesses to existential facts, and $\phi^\ax$ is closer to true than $\phi^\bx$ whenever $\phi\in \Sigma_1$.

\subsection{The space of actions}

For a compact metric space $X$, we can endow the group of homeomorphisms with the topology of uniform convergence. This makes $\homeo(X)$ a Polish group.\footnote{A technical point: $\homeo(X)$ is $G_\delta$ in the space of continuous function on $X$. So, it is a Polish space, but we need to choose a slightly different metric from the usual one to witness this.} The space of actions naturally lives in $\homeo(X)^\Gamma$:

\begin{dfn}
    For a compact metric space $X$, $\homeo(X)$ is the Polish group of homeomorphisms of $X$ with the topology induced by the complete metric \[d(\rho,\tau)=\sup_{x\in X} d(\rho(x),\tau(x))+d(\rho\inv(x),\tau\inv(x)).\] 

    The\textbf{ space of actions }of a countable group $\Gamma$ on a compact metric space $X$ is the space
    \[\Axn(\Gamma, X)=\bigl\{\ax\in \homeo(X)^\Gamma: \ax(\id)=\id\;\&\;(\forall \gamma,\delta\in \Gamma) \;\ax(\gamma\delta)=\ax(\gamma)\ax(\delta)\bigr\}.\]

    The group $\homeo(X)$ acts on $\Axn(\Gamma, X)$ by \textbf{conjugacy}:
    \[(\rho\cdot \ax)(\gamma)=\rho\circ \ax(\gamma)\circ\rho\inv.\] We call the orbits of the conjugacy action \textbf{conjugacy classes}.
\end{dfn}

There are alternative parametrizations of actions: as subshifts of $X^\Gamma$, as actions by isometry on $C(X)$, as actions on Boolean algebras in the case where $X$ is a zero-dimensional space. For most purposes, these turn out to be equivalent. See, for instance,\cite{hochman}.

Several authors have studied the generic behavior of flows in $\Axn(\Gamma, \s C)$, especially in recent years. For instance, Doucha has characterized when there is a generic conjugacy class, and Iyer and Shinko have shown that a generic action is hyperfinite when $\Gamma$ has finite asymptotic dimension \cite{DouchaGeneric, IyerShinko}.

The space of actions $\Axn(\Gamma, X)$ is closed in $\homeo(X)^\Gamma$, and so it is a Polish space in itself. Equipped with the conjugacy action this becomes a (non-compact) Polish dynamical system. The topological Scott analysis gives a general method for extracting useful invariants in this situation \cite{GaoIDST}. The first step to computing these invariants (which is the only step we'll take in this article) is to understand the closure of orbits.   

\begin{dfn}
    Write $[\ax]$ for the conjugacy class of $\ax\in \Axn(\Gamma, X)$. Say that $\ax$ is \textbf{approximately conjugate} to $\bx$ if $\bx\in \overline{[\ax]}$.
\end{dfn}

Be careful! This is not a symmetric relation. It is possible that $\bx$ is approximately conjugate to $\ax$ but not vice-versa; $\ax$ could have a strictly larger ambit than $\bx$. As an example, every order preserving action in $\Axn(\Z, [0,1])$ is approximately conjugate to the trivial action, but the trivial action is only approximately conjugate to itself.

Unlike weak containment, approximate conjugacy only makes sense as a relation between flows on the same underlying space. And, even for simple spaces like $[0,1]$ approximate conjugacy is not equivalent to weak containment. Nonetheless the two notions are intimately related.

For Cantor actions, weak equivalence is the same as approximate conjugacy.

\begin{thm}
    For $\ax,\bx\in \Axn(\Gamma,\s C)$, $\ax$ is approximately conjugate to $\bx$ if and only if $\bx\preccurlyeq \ax$.
\end{thm}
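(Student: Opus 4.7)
The plan is to prove the two directions separately. For $\bx \in \overline{[\ax]} \Rightarrow \bx \preccurlyeq \ax$, suppose $\rho_n \cdot \ax \to \bx$ in $\Axn(\Gamma, \s C)$. Given a clopen partition $\vec C = (C_1,\dots,C_k)$ of $\s C$ (sufficient for generating patterns by the zero-dimensional reduction noted after the first cover-based definition) and a window $W \Subset \Gamma$, compactness gives $\epsilon_0 := \min_i d(C_i, \bigcup_{j\neq i} C_j) > 0$. For $n$ large enough that $\sup_{x\in\s C} d(\gamma \cdot_{\rho_n \cdot \ax} x, \gamma \cdot_\bx x) < \epsilon_0$ for every $\gamma \in W$, no such perturbation can move a point across the partition, so $p_W(\vec C, \bx) = p_W(\vec C, \rho_n\cdot\ax) = p_W(\rho_n(\vec C), \ax)$. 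Hence the clopen partition $\rho_n(\vec C)$ of $\s C$ realizes the pattern set for $\ax$, witnessing $\bx \preccurlyeq \ax$.

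For the converse, fix $\epsilon > 0$ and a symmetric $W_0 \Subset \Gamma$ containing $\id$. The plan is to construct $\rho$ by matching refined clopen partitions of $\s C$ adapted to $\bx$ and to $\ax$. Choose a clopen partition $\vec C = (C_1,\dots,C_k)$ of $\s C$ with $\diam(C_i) < \epsilon$, and set $W_1 = W_0$, $W_2 = W_0 \cup W_0^{-1} W_0$, so that $\gamma_0^{-1}\gamma \in W_2$ whenever $\gamma_0 \in W_0$ and $\gamma \in W_1$. Define $\pi_\bx(x) \in [k]^{W_2}$ by $\pi_\bx(x)(\gamma) = i$ iff $\gamma^{-1} \cdot_\bx x \in C_i$, and let $C^*_\pi := \pi_\bx^{-1}(\pi)$; these fibers form a clopen refinement of $\vec C$ indexed by the set $\Pi$ of maximal $W_2$-patterns of $\vec C$ in $\bx$. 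Using $\bx \preccurlyeq \ax$, obtain $\vec D = (D_1,\dots,D_k)$ with $p_{W_2}(\vec D, \ax) = p_{W_2}(\vec C, \bx)$; since the maximal patterns of a partition are graphs of functions $W_2 \to [k]$, the same holds for $\vec D$, forcing $\vec D$ itself to be a clopen partition. Form $D^*_\pi := \pi_\ax^{-1}(\pi)$ analogously, indexed by the same $\Pi$. Since $\s C$ has no isolated points, every nonempty clopen subset is homeomorphic to $\s C$; pick homeomorphisms $\rho_\pi : D^*_\pi \to C^*_\pi$ for each $\pi \in \Pi$ and glue them to obtain $\rho \in \homeo(\s C)$.

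To verify closeness, for $\gamma_0 \in W_0$ and $y \in D^*_\pi$, a direct computation using $\pi_\bx(\rho(y)) = \pi_\ax(y) = \pi$ and $\gamma_0^{-1}\gamma \in W_2$ yields, for every $\gamma \in W_1$,
\[ \pi_\bx(\bx(\gamma_0)\rho(y))(\gamma) \;=\; \pi(\gamma_0^{-1}\gamma) \;=\; \pi_\ax(\ax(\gamma_0)(y))(\gamma), \]
so $\bx(\gamma_0)\rho(y)$ and $\rho\ax(\gamma_0)(y)$ share the same $W_1$-restricted pattern. Taking $\gamma = \id \in W_1$ places both points in the single $\vec C$-piece $C_{\pi(\gamma_0^{-1})}$, which has diameter less than $\epsilon$. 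Thus $d(\bx(\gamma_0)\rho(y), \rho\ax(\gamma_0)(y)) < \epsilon$ uniformly in $y$, and combining this with the analogous estimate at $\gamma_0^{-1} \in W_0$ gives $\bx \in \overline{[\ax]}$. The main obstacle is choosing the nested windows $W_1 \subseteq W_2$ so that matched $W_2$-patterns force both actions to send refined pieces into a common $W_1$-piece that still refines $\vec C$; once this bookkeeping is right, the homogeneity of Cantor space delivers the conjugating homeomorphism.
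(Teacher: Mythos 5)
Your proof is correct, and the overall strategy matches the paper's: use continuity for the forward direction, and ultrahomogeneity of Cantor space together with pattern-matching for the converse. But your implementation of the converse differs from the paper's in a way worth noting. The paper first establishes, as a separate proposition, that the sets $U(f,W,\bx)$ form a neighborhood basis of $\bx$; the proof of that proposition requires choosing a labelling $f$ whose fibers \emph{and} whose $\gamma\cdot_\bx$-translates are all small (a dynamical constraint). Then, given $U(f,W,\bx)$, it extracts from weak containment a single labelling $g$ of $\ax$ with matching $W$-patterns and invokes ultrahomogeneity once to get $\rho$ with $g=f\circ\rho^{-1}$. You instead start from a purely metric choice (a diameter-$\epsilon$ partition $\vec C$ with no constraint on translates), pass to the finer clopen partition by $W_2$-pattern fibers $C^*_\pi$, match at window $W_2$ to get $\vec D$ (correctly deducing that $\vec D$ is forced to be a clopen partition), and glue piecewise homeomorphisms $D^*_\pi\to C^*_\pi$. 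Your closeness verification then runs directly: both $\bx(\gamma_0)\rho(y)$ and $\rho\ax(\gamma_0)(y)$ land in the same original $\vec C$-piece $C_{\pi(\gamma_0^{-1})}$, which has small diameter. This avoids the paper's lemma entirely and is more self-contained; the paper's route is more modular (the $U(f,W,\bx)$ basis is reusable and is indeed used elsewhere). Two small points: the $W_1$-restricted pattern claim is true but more than you use (you only need $\gamma=\id$), and $p_W(\vec C,\rho_n\cdot\ax)$ equals $p_W(\rho_n^{-1}(\vec C),\ax)$ rather than $p_W(\rho_n(\vec C),\ax)$ under the paper's stated conjugacy convention, though this is inconsequential since either is a clopen partition.
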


We will delay the proof until the next section. For actions on general compact metric spaces, we only have one implication. Approximate conjugacy is a finer relation.

\begin{thm} \label{thm: one direction approximate conjugacy}
    For any compact metric space $X$ and $\ax,\bx\in \Axn(\Gamma, X)$, if $\ax$ is approximately conjugate to $\bx$, then $\bx\preccurlyeq\ax$.
\end{thm}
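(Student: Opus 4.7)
The plan is to unwind approximate conjugacy and show that any local pattern seen by a cover of the $\bx$-space can be reproduced, to arbitrary precision, by the $\rho_n$-image of a slightly thickened cover inside the $\ax$-space. Fix a sequence $\rho_n \in \homeo(X)$ with $\bx_n := \rho_n \cdot \ax \to \bx$ in $\Axn(\Gamma, X)$. By the definition of the metric on $\homeo(X)$, this entails that $\bx_n(\gamma) \to \bx(\gamma)$ and $\bx_n(\gamma)\inv \to \bx(\gamma)\inv$ uniformly for every $\gamma \in \Gamma$. Fix $W \Subset \Gamma$ and a closed cover $\vec C = (C_1, \ldots, C_k)$ of $X$; the goal is to produce a closed cover $\vec D$ of $X$ with $p_W(\vec D, \ax) = p_W(\vec C, \bx)$. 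Write $C_i^\epsilon := \{x : d(x, C_i) \leq \epsilon\}$ for the closed $\epsilon$-neighborhood of $C_i$, so that $\vec C^\epsilon = (C_1^\epsilon, \ldots, C_k^\epsilon)$ is again a closed cover.

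The heart of the argument compares $p_W(\vec C, \bx)$ with $p_W(\vec C^\epsilon, \bx_n)$. For the easy inclusion $p_W(\vec C, \bx) \subseteq p_W(\vec C^\epsilon, \bx_n)$, take a witness $x \in \bigcap_{(i,\gamma) \in \sigma} \bx(\gamma)(C_i)$ and choose $n$ large enough that $\sup_y d(\bx_n(\gamma)\inv(y), \bx(\gamma)\inv(y)) < \epsilon$ for every $\gamma \in W$; then $\bx_n(\gamma)\inv(x)$ lies within $\epsilon$ of $\bx(\gamma)\inv(x) \in C_i$, hence in $C_i^\epsilon$, giving $x \in \bx_n(\gamma)(C_i^\epsilon)$. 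For the reverse inclusion, I claim that for each $\sigma \notin p_W(\vec C, \bx)$ there exist $\epsilon_\sigma > 0$ and $N_\sigma \in \N$ such that $\sigma \notin p_W(\vec C^\epsilon, \bx_n)$ whenever $\epsilon < \epsilon_\sigma$ and $n > N_\sigma$. Otherwise one extracts sequences $\epsilon_j \to 0$, $n_j \to \infty$ and witnesses $x_j \in \bigcap_{(i,\gamma) \in \sigma} \bx_{n_j}(\gamma)(C_i^{\epsilon_j})$; by compactness, pass to a convergent subsequence $x_j \to x$. Uniform convergence of inverses yields $\bx_{n_j}(\gamma)\inv(x_j) \to \bx(\gamma)\inv(x)$, and the memberships $\bx_{n_j}(\gamma)\inv(x_j) \in C_i^{\epsilon_j}$ force $\bx(\gamma)\inv(x) \in C_i$ by closedness of $C_i$ and $\epsilon_j \to 0$. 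This makes $x$ a witness for $\sigma \in p_W(\vec C, \bx)$, contradicting the hypothesis. Since only finitely many $\sigma \subseteq [k] \times W$ are possible, set $\epsilon := \tfrac{1}{2}\min_\sigma \epsilon_\sigma$ and take $n$ past every threshold to conclude $p_W(\vec C^\epsilon, \bx_n) = p_W(\vec C, \bx)$.

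Finally, set $\vec D := (\rho_n(C_1^\epsilon), \ldots, \rho_n(C_k^\epsilon))$, a closed cover of $X$. The identity $\ax(\gamma) = \rho_n \circ \bx_n(\gamma) \circ \rho_n\inv$ yields $\ax(\gamma)(\rho_n(C_i^\epsilon)) = \rho_n(\bx_n(\gamma)(C_i^\epsilon))$, so the intersections occurring in the definition of $p_W(\vec D, \ax)$ are exactly the $\rho_n$-images of those occurring in the definition of $p_W(\vec C^\epsilon, \bx_n)$, and non-emptiness is preserved under the bijection $\rho_n$. Therefore $p_W(\vec D, \ax) = p_W(\vec C^\epsilon, \bx_n) = p_W(\vec C, \bx)$, as required. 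The main obstacle is the reverse inclusion in the previous paragraph: Hausdorff convergence of the sets $\bx_n(\gamma)(C_i)$ does not preserve non-emptiness of intersections in general, and without the thickening $C_i \subseteq C_i^\epsilon$ together with the diagonal compactness argument above, a pattern $\sigma \in p_W(\vec C, \bx)$ can fail to appear in any $p_W(\vec C, \bx_n)$.
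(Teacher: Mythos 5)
Your proof is correct and takes essentially the same route as the paper: thicken the closed cover by a small $\epsilon$, use a compactness argument to control which patterns can appear, and transport the cover back through the conjugating homeomorphism. The paper packages the same content as the statement ``$U(P)=\{\ax : (\exists \vec F)\; p_W(\vec F,\ax)=P\}$ is open'' and then reads off the theorem, but the underlying estimates are identical to yours.
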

\begin{proof}

Let $d_H$ be the Hausdorff distance on closed subsets of $X$. For $W\Subset \Gamma, n\in \N$, and a collection of $(W,n)$-patterns $P\subseteq \s P([n]\times W)$, define
    \[U(P):=\{\ax\in \Axn(\Gamma, X): (\exists \vec F)\; p_W(\vec F,\ax)=P\}.\] It suffices to show that $U(P)$ is open for $P\subseteq \s P([n]\times W).$

    Suppose $\ax \in U(P)$ as witnessed by $\vec F=(F_1,...,F_n)$. Fix some separation $\delta>0$ so that, for any $\sigma\in \s P([n]\times W)$ with $\sigma\not\in P$ and for any $x\in X$ 
    \[\max_{(i,\gamma)\in \sigma} d(\gamma\cdot x, F_i) >10\delta.\] And, suppose $d(\bx(\gamma),\ax(\gamma))<\delta$ for all $\gamma\in W$. 
    
    Then, for any $\vec G$ close enough to $\vec F$, the there are no new local patterns in $p_W(\vec G, \bx)$ outside of $P$. More precisely, if $d_H(\vec G,\vec F)<\delta$, then for any $\sigma\subseteq \s P([n]\times W)$ with $\sigma\not\in P$
    \[\bigcap_{(i,\gamma)\in \sigma} \gamma\cdot_{\ax} F_i=\emptyset \;  \Rightarrow \;  \bigcap_{(i,\gamma)\in \sigma} \gamma\cdot_{\bx} G_i=\emptyset.\] 
    
   Now, we can find some $\vec G$ with $d_H(\vec G, \vec H)$ that witnesses all of the local patterns of $P$ in $\bx$ as follows. Define \[G_i:=\{x\in X: d(x, F_i)\leq\delta/2\}.\] Then, for any $\sigma\in P$ \[\emptyset\not=\bigcap_{(i,\gamma)\in \sigma} \gamma\cdot_{\ax} F_i\subseteq \bigcap_{(i,\gamma)\in \sigma} \gamma\cdot_{\bx} G_i.\] (Note that if $\vec F$ is a clopen cover, we don't need to expand $\vec F$ to $\vec G$ since the set of witnesses to $\bigcap_{(i,\gamma)\in \sigma} \gamma\cdot F_i\not=\emptyset$ is open.) So, $\sigma\in P$. Thus, $\vec G$ witnesses that $\bx\in U(P)$.
\end{proof}

However, the converse can fail. In fact, even if $\ax$ factors onto $\bx$, we cannot conclude that there is an approximate conjugacy:

\begin{prop} \label{prop:badfactor}
    Let $\sigma\in \homeo([0,1])$ be the involution $\sigma(x)=1-x$. Then, the $\Z$-flow $\ax(n)=\sigma^n$ factors onto the trivial $\Z$-flow $\bx(n)=\id$. But, $\bx\not\in\overline{[\ax]}$.    
\end{prop}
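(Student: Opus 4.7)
The plan has two parts, corresponding to the two claims.

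For the factor map, I would simply exhibit a continuous surjection $f:[0,1]\to[0,1]$ that intertwines $\sigma$ with the identity, i.e.\ satisfies $f(1-x)=f(x)$. The tent map $f(x)=1-|2x-1|$ does the job: it is continuous, surjective (since $f(0)=f(1)=0$ and $f(1/2)=1$), and manifestly symmetric about $1/2$. Equivariance with respect to $\ax(n)=\sigma^n$ and $\bx(n)=\id$ reduces to checking the generator $n=1$, which is exactly the symmetry $f\circ\sigma=f$.

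For the second part, the key observation is that the property ``orientation-reversing'' is preserved under conjugacy in $\homeo([0,1])$. Any homeomorphism of $[0,1]$ either fixes both endpoints (if increasing) or swaps them (if decreasing). Since $\sigma$ is decreasing and a case analysis on whether $\rho$ is increasing or decreasing shows $\rho\inv\circ\sigma\circ\rho$ is again decreasing, we conclude that for every $\rho\in\homeo([0,1])$,
\[(\rho\cdot\ax)(1)(0)=(\rho\inv\sigma\rho)(0)=1\quad\text{and}\quad(\rho\cdot\ax)(1)(1)=0.\]

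Now suppose for contradiction that $\bx\in\overline{[\ax]}$, witnessed by a sequence $\rho_n$ with $\rho_n\cdot\ax\to\bx$ in $\Axn(\Z,[0,1])$. Convergence in the space of actions in particular requires uniform convergence of $(\rho_n\cdot\ax)(1)$ to $\bx(1)=\id$. But evaluating at $0$ gives a sequence constantly equal to $1$, which cannot converge to $\id(0)=0$. Hence the uniform distance is bounded below by $1$, contradicting approximate conjugacy.

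There is no real obstacle here; the content is just the orientation rigidity of $\homeo([0,1])$, which forces every conjugate of $\sigma$ to swap the endpoints and thereby be uniformly far from the identity. The example illustrates that the converse of Theorem~\ref{thm: one direction approximate conjugacy} can fail: although the trivial action is certainly weakly contained in $\ax$ (indeed it factors through $\ax$ via the tent map), it lies outside the closure of the conjugacy orbit.
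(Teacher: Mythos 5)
Your proof is correct and follows essentially the same approach as the paper: the tent map gives the factor onto the trivial flow, and the observation that every conjugate of $\sigma$ in $\homeo([0,1])$ is orientation-reversing (hence sends $0\mapsto 1$) shows the conjugacy class stays uniformly far from the identity. Your write-up is more detailed than the paper's, which states the same two facts telegraphically (and even has a small typo in its formula for the tent map).
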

\begin{proof}
    The tent map \[f(x)=\left\{\begin{array}{cc} 2x & x\leq 1/2 \\
    1-2x & x\geq 1/2\end{array}\right.\] is a factor map. And, any conjugate of $\sigma$ satisfies $\rho\cdot\sigma(0)=1$, so $\id\not\in \overline{[\ax]}$.
\end{proof}

\subsection{Equivalences} \label{sec: equivalences}

Let us at last prove that all of these notions coincide.

\begin{thm}
    Fix flows $\ax:\Gamma\curvearrowright X$ and $\bx:\gamma\curvearrowright Y$. The following are all equivalent:
    \begin{enumerate}
        \item $\bx\preccurlyeq\ax$
        \item $(\forall \phi\in \Sigma_1)\; \phi^\bx\geq \phi^\ax$
        \item For some ultrafilter $\s U$, $\ax^\s U$ factors onto $\bx$.
    \end{enumerate}
\end{thm}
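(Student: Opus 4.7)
The plan is to prove the cycle $(3) \Rightarrow (1) \Rightarrow (2) \Rightarrow (3)$. The equivalence $(2) \Leftrightarrow (3)$ is the standard continuous-logic characterization of $\Sigma_1$-theory domination by ultrapower embeddings (combined with Gelfand duality, Proposition \ref{prop:factor duality}); the real work lies in relating the pattern-based condition (1) to either of the others.

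For $(3) \Rightarrow (1)$, fix a factor $F : \ax^{\s U} \to \bx$ and a closed cover $\vec C$ of $Y$ realizing a pattern set $P \in \s P_{W,n}(\bx)$. The pullback $F\inv(\vec C)$ is a closed cover of $\ax^{\s U}$ with the same pattern set $P$. By normality of $X^{\s U}$, represent each $F\inv(C_i) = \{[f_i] = \bf 0\}$ with $\bf 0 \leq [f_i] \leq \bf 1$, and lift via $[f_i] = \lim_{\s U} f_i(j)$. The non-emptiness of $\bigcap_{(i,\gamma) \in \sigma} \gamma \cdot \{f_i = \bf 0\}$ is encoded by the predicate $\inf_x \max_{(i,\gamma) \in \sigma} (\gamma \cdot f_i)(x)$, which vanishes for $\sigma \in P$ and equals some $\delta_\sigma > 0$ for $\sigma \notin P$ (by compactness and the finiteness of $\s P([n] \times W)$); the cover condition is encoded by $\sup_x \min_i f_i(x) = 0$. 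By \L o\'s (Theorem \ref{thm:los}) these $\sup/\inf$ predicates pass through ultralimits, so for a $\s U$-large set of indices $j$ the same separation holds up to $\epsilon := \min_{\sigma \notin P} \delta_\sigma / 2$, and the sets $E_i(j) := \{x : f_i(j)(x) \leq \epsilon\}$ form a closed cover of $\ax$ with pattern set exactly $P$.

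For $(1) \Rightarrow (2)$, fix $\phi = \inf_{\vec z} \psi(\vec z) \in \Sigma_1$ with $\psi$ quantifier-free, and $\vec g = (g_1, \ldots, g_k) \in C(\bx)^k$ realizing $\psi^\bx(\vec g) \leq \phi^\bx + \epsilon$. Let $W \Subset \Gamma$ collect the group elements appearing in $\psi$ and set $M = \max_i |g_i|_\infty$. Partition $[-M, M]^k$ into cubes $R_\ell$ of diameter less than $\eta$ with representatives $v_\ell \in R_\ell$, and let $C_\ell := \vec g\inv[\overline{R_\ell}]$, a closed cover of $Y$. By (1), obtain a closed cover $\vec D$ of $X$ with $p_W(\vec D, \ax) = p_W(\vec C, \bx)$. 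Pick a partition of unity $(\rho_\ell)$ on $X$ subordinate to a slight open enlargement of $\vec D$ and set $\vec f := \sum_\ell v_\ell \rho_\ell \in C(\ax)^k$. For each $x \in X$ and $\gamma \in W$, $\vec f(\gamma\inv x)$ is an $\eta$-approximate convex combination of $\{v_\ell : (\ell, \gamma) \in \sigma_x\}$ where $\sigma_x := \{(\ell, \gamma) : x \in \gamma \cdot D_\ell\} \in p_W(\vec D, \ax)$; on the other side, $(\vec g(\gamma\inv y))_{\gamma \in W}$ for $y$ of pattern $\sigma \in p_W(\vec C, \bx)$ takes values $\eta$-close to the same pure representatives $v_\ell$. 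Since $\psi$ is a uniformly continuous combination of polynomial evaluations of these shifted tuples, $|\psi^\ax(\vec f) - \psi^\bx(\vec g)| = O(\eta)$; letting $\eta \to 0$ gives $\phi^\ax \leq \phi^\bx$. The main obstacle here is verifying that the joint sup-norms of terms $\tau(\vec f)$ that involve products across several shifts $\gamma_1, \ldots, \gamma_r \in W$ really are determined (up to $O(\eta)$) by the single pattern set $p_W(\vec D, \ax)$ — this works because the pattern records exactly the joint incidence data over the full window $W$, not just coordinate-wise information.

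For $(2) \Rightarrow (3)$, apply L\"owenheim--Skolem to reduce to $C(\bx)$ separable, and fix a countable dense equivariant family $(g_n)_{n \in \N}$. For each $n$ and $\delta > 0$, the existential sentence
\[\inf_{\vec z \in C(\bx)^n} \max_{\tau \in \Phi_n} \bigl| |\tau(\vec z)|_\infty - |\tau(g_1, \ldots, g_n)|_\infty \bigr|\]
(with $\Phi_n$ a large finite set of terms in $n$ variables) vanishes in $\bx$, so by (2) is $\leq \delta$ in $\ax$, providing $\vec f^{(n, \delta)} \in C(\ax)^n$ matching the quantifier-free type of $(g_1, \ldots, g_n)$ on $\Phi_n$ to within $\delta$. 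Taking ultralimits along a nonprincipal ultrafilter $\s U$ on the directed set of $(n, \delta)$-pairs yields $[h_m] \in C(\ax^{\s U})$ whose quantifier-free types (including $\Gamma$-action) match those of $(g_m)$; by continuity the map $g_m \mapsto [h_m]$ extends to a $\Gamma$-equivariant $C^*$-embedding $C(\bx) \hookrightarrow C(\ax^{\s U})$, which dualizes via Gelfand to the desired factor map $\ax^{\s U} \to \bx$.
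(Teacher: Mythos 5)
Your argument takes a genuinely different cycle from the paper's. The paper shows $(2)\Rightarrow(1)$ via level sets of a $\Sigma_1$-definable set of ``representing'' functions, obtains $(3)\Rightarrow(2)$ as a one-liner from \L o\'s's theorem plus monotonicity of existentials under factor maps (Proposition~\ref{prop:embedding}), and proves $(1)\Rightarrow(3)$ by assembling a $C^*$-embedding out of $(W,\epsilon)$-isomorphisms built from a per-function layer-cake decomposition. You instead prove $(3)\Rightarrow(1)$ directly by a \L o\'s transfer of (approximate) level-set covers, $(1)\Rightarrow(2)$ by a $k$-dimensional cube discretization plus a subordinate partition of unity, and $(2)\Rightarrow(3)$ by matching quantifier-free types along the ultrafilter, which is in spirit the paper's $(W,\epsilon)$-isomorphism argument. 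The layer-cake is a little leaner for building the embedding, but your cube argument gives the two-sided estimate $|\psi^{\ax}(\vec f)-\psi^{\bx}(\vec g)|=O(\eta)$ cleanly; note this two-sidedness really is required, since the connective in $\psi$ need not be monotone in the term norms.

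Two technical points need repair, though both are fixable by standard compactness. In $(3)\Rightarrow(1)$ you write $F\inv(C_i)=\{[f_i]=\bf 0\}$ by appeal to normality, but closed subsets of the non-metrizable $X^{\s U}$ need not be zero sets. Instead, by compactness and normality find open $U_i\supseteq F\inv(C_i)$ whose closures still have pattern set $P$, take Urysohn functions $f_i$ with $f_i=0$ on $F\inv(C_i)$ and $f_i=1$ off $U_i$, and run your transfer on the sublevel sets $\{f_i\le 1/2\}$, which are sandwiched between $F\inv(C_i)$ and $\overline{U_i}$ and hence still have pattern $P$; the rest of your argument with the $E_i(j)$ then goes through unchanged. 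In $(1)\Rightarrow(2)$, the partition of unity is subordinate to an open enlargement $\vec V$ of $\vec D$, and enlarging a closed cover can only grow its pattern set; at a point $x$ whose $\vec V$-pattern lies outside $P$, the convex combination $\vec f(\gamma\inv x)$ need not be $\eta$-close to any $\vec g(\gamma\inv y)$. Since the constraints $\bigcap_{(\ell,\gamma)\in\sigma}\gamma\cdot D_\ell=\emptyset$ for $\sigma\notin P$ form a finite list, compactness and normality let you choose the enlargement so that $p_W(\vec V,\ax)=P$; once this is made explicit your estimate is correct.
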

\begin{proof}
    $2\Rightarrow 1)\;$ We use level sets to extract closed covers from $\Sigma_1$ predicates. Fix $W\Subset \Gamma$ and say that a list of functions $\vec f=(f_1,...,f_n)$ represents a set of $W$-local patterns $P\subseteq \s P([n]\times W)$ if 
    \begin{itemize}
        \item $\bf 0\leq f_i\leq \bf 1$, and 
        \item For any $W\Subset \Gamma$ and $\sigma\in \s P([n]\times W),$ \[\bigl|\prod_{(i,\gamma)\in \sigma} \gamma\cdot f_i\bigr|=1\;\lra\;\bigl|\prod_{(i,\gamma)\in \sigma} \gamma\cdot f_i\bigr|\geq \epsilon\;\lra\; \sigma\in P.\]
    \end{itemize} A set of local patterns $P$ is represented by a list of functions in $C(\bx)$ if and only if $P\in P_{W,n}(\ax)$: if $P=p_W(\vec C, \ax)$, $f_i\inv[\{1\}]=C_i$, and $\im f_i\subseteq[0,1]$, then $\vec f=(f_1^N, ..., f_n^N)$ represents $\vec C$ for large enough $N$.

    The set of functions that represent $P$ is $\Sigma_1$-definable since pointwise inequalities and norm inequalities are all $\Sigma_1$-definable. So, if $\phi^\ax\leq \phi^\bx$ for all $\Sigma_1$ sentences $\phi$, then for any closed cover $\vec C$ of $Y$ there is a list of functions $f_1,...,f_n\in C(\ax)$ which represents $p_W(\vec C, \bx)$. 

    Define $B_i=f_i\inv[\{1\}].$ Then, for $\sigma\in \s P([n]\times W)$
    \begin{align*} 
        \bigcap_{(i,\gamma)\in \sigma} \gamma\cdot B_i\not=\emptyset &\lra \bigl|\prod_{(i,\gamma)\in \sigma} \gamma\cdot f_i\bigr|>\epsilon\\
        &\lra \bigcap_{(i,\gamma)\in \sigma} \gamma\cdot C_i\not=\emptyset.
    \end{align*} Thus $p_W(\vec B,\ax)=p_W(\vec C,\bx)$. So $\bx\preccurlyeq\ax$.

    $3\Rightarrow 2)\;$ This follows immediately from \L o\'s's theorem (Theorem \ref{thm:los}) and Proposition \ref{prop:embedding}.

    $1\Rightarrow 3)\;$ We use the layer-cake decomposition to extract functions from closed covers. First, note that, by the L\"owenheim--Skolem theorem, we can assume that $Y$ is separable.
    
    Now, say that $\phi: A\rightarrow C(\ax)$ is an $(W,\epsilon)$-isomorphism if, for any $f,g,h\in A$
    \[|\phi(f)-\phi(g)|\approx_\epsilon |f-g|,\quad \quad |\phi(f)+\phi(g)-\phi(h)|\approx_\epsilon |f+g-h|\]
    \[|\gamma\cdot \phi(f)-\phi(g)|\approx_\epsilon |\gamma\cdot f-g|,\quad |\phi(fg)-\phi(h)|\approx_\epsilon |fg-h|.\] (Here, $x\approx_\epsilon y$ means $|x-y|<\epsilon)$. A standard compactness argument gives the following:

    I claim that if, for any $W\Subset\Gamma$, $\epsilon>0$, and  $D\Subset C(\bx)$, there is a $(W,\epsilon)$-isomorphism from $D$ to $C(\ax)$, then there is an embedding of $C(\bx)$ into $C(\ax^{\s U})$.

       Fix $D_n\Subset C(\bx)$ so that $D_n\subseteq D_{n+1}$ and $\bigcup_n D_n$ is dense in $C(\bx)$ (using the separability of $C(\bx)$), and fix $W_n\Subset \Gamma$ with $\bigcup_n W_n=\Gamma$. For each $n$, fix a $(W_n,2^{-n})$-isomorphism $\phi_i: D_i\rightarrow C(\ax)$. Define $\Phi: C(\bx)\rightarrow C(\ax^{\s U})$ as follows: given $f\in C(\bx)$, fix $f(i)\in D_i$ and let 
    \[\Phi(f)=\lim_{\s U} \phi_i(f(i)).\] Then, for any $f,g$,
    \[|\Phi(f)-\Phi(g)|=\lim_{\s U} |\phi_i(f(i))-\phi_i(g(i))|=\lim_{\s U} |f(i)-g(i)|=|f-g|.\] Similarly, $|\gamma\cdot \Phi(f)-\Phi(g)|=|\gamma\cdot f-g|.$ In particular,
    \[|\gamma\cdot \Phi(f)-\Phi(\gamma\cdot f)|=|\gamma\cdot f-\gamma\cdot f|=0.\] So, $\gamma\cdot \Phi(f)=\Phi(\gamma\cdot f)$. By the same argument $\Phi(fg)=\Phi(f)\Phi(g)$ and $\Phi(f+g)=\Phi(f)+\Phi(g)$. So, $\Phi$ is an equivariant isometric embedding as desired.

    So, suppose $\bx\preccurlyeq\ax$, and fix $D\Subset C(\bx)$, $W\Subset \Gamma$ and $\epsilon>0$; we want a $(W,\epsilon)$-isomorphism from $D$ to $C(\ax)$. Define $\phi:D\rightarrow C(\ax)$ as follows: Pick $N\in \N$ with $N>\max_{f\in D} |f|/\epsilon$. For $f\in D$ and $j\in \{-N,...,N\}$, let $C_{f,j}=f\inv[(j-1)/N,j/N]$, and pick $\vec D$ a closed cover of $X$ with $p_W(\vec D,\ax)=p_W(\vec C,\bx).$ Pick $[0,1]$-valued functions $\ell_{f,j}$ with $\ell_{f,j}\equiv 1$ on $\bigcup_{i<j} D_{f,i}$ and $\ell_{f,j}\equiv 0$ on $\bigcup_{i>j} D_{f,i}$. Finally, set \[\phi(f)=-N\bf 1+\sum_{j=-N}^{N} \ell_{f,j}.\]
    Now, we have to compute some norms. Fix $f,g\in D$. We have
    \[|\phi(f)-\phi(g)|\approx_{\epsilon} \max\left\{\frac{|i-j|}N: C_{f,i}\cap C_{g,j}\not=\emptyset\right\} \approx_{\epsilon} |f-g| \] and \[|\gamma\cdot \phi(f)-\phi(g)|\approx_{\epsilon} \max\left\{\frac{|i-j|}N: \gamma\cdot_{\bx} C_{f,i}\cap C_{g,i}\not=\emptyset\right\}\approx_{\epsilon} |\gamma\cdot f-g|.\] The other approximations follow similarly.
\end{proof}

And for approximate conjugacy in the setting of Cantor flows, we have:

\begin{dfn}
    For any continuous labelling $f:\s C\rightarrow [k]$ and $W\subseteq \Gamma$ finite, define a set
    \[U(f, W, \bx):=\bigl\{\ax: \s P(f, W, \ax)=\s P(f, W, \bx)\bigr\}\subseteq \Axn(\Gamma, \s C).\]
\end{dfn}

\begin{prop}
    The sets $U(f, W, \bx)$, as $f$ ranges over continuous labellings of $\s C$ and $W\Subset\Gamma$ over finite windows, form a neighborhood basis of $\bx\in \Axn(\Gamma, \s C)$.
\end{prop}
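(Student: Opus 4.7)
My plan is to prove the two defining conditions: each $U(f, W, \bx)$ is a uniform neighborhood of $\bx$, and every basic uniform neighborhood of $\bx$ contains some $U(f, W, \bx)$.

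For openness, observe that $p_W(f, \ax)$ is determined by which of the finitely many clopen intersections $\bigcap_{\delta \in W} \ax(\delta)^{-1}(f^{-1}(p(\delta)))$, one for each $p \in [k]^W$, are nonempty. Each clopen set $f^{-1}(i)$ lies at strictly positive distance from its complement in $\s C$, so if $d(\ax(\delta), \bx(\delta))$ is small enough for all $\delta \in W$, then $\ax(\delta)^{-1}(f^{-1}(i)) = \bx(\delta)^{-1}(f^{-1}(i))$ for every $i \in [k]$. Hence $p_W(f, \ax) = p_W(f, \bx)$ on a uniform neighborhood of $\bx$. The same argument applied at any $\ax_0 \in U(f, W, \bx)$ shows $U(f, W, \bx)$ is uniformly open.

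For the basis property, fix a basic uniform neighborhood $V = \{\ax : d(\ax(\gamma), \bx(\gamma)) < \epsilon \text{ for all } \gamma \in W_0\}$; replacing $W_0$ with $W_0 \cup W_0^{-1}$ we may assume $W_0$ is symmetric. Using uniform continuity of each $\bx(\gamma)$ on the compact space $\s C$, pick $\eta \in (0, \epsilon/2)$ so that $d(x, y) < \eta$ forces $d(\bx(\gamma)(x), \bx(\gamma)(y)) < \epsilon/2$ for every $\gamma \in W_0$. Let $f: \s C \to [k]$ be the labelling of a clopen partition whose cells all have diameter less than $\eta$, and set $W = W_0 \cup \{\id\}$. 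Given $\ax \in U(f, W, \bx)$, fix $\gamma \in W_0$ and $x \in \s C$. Pattern equality produces a witness $y \in \s C$ with $f(\delta \cdot_\bx y) = f(\delta \cdot_\ax x)$ for all $\delta \in W$. The $\id$ coordinate puts $x$ and $y$ in a common cell, so $d(x, y) < \eta$ and hence $d(\bx(\gamma)(x), \bx(\gamma)(y)) < \epsilon/2$. The $\gamma$ coordinate puts $\bx(\gamma)(y)$ and $\ax(\gamma)(x)$ in a common cell, so $d(\bx(\gamma)(y), \ax(\gamma)(x)) < \eta < \epsilon/2$. The triangle inequality yields $d(\bx(\gamma)(x), \ax(\gamma)(x)) < \epsilon$, and taking the supremum over $x$ shows $\ax \in V$.

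The main subtlety in the basis direction is that pattern equality only tells us the same collection of patterns is realized under $\ax$ and $\bx$, not that any specific $x$ realizes the same pattern under both actions. The workaround is the shadow witness $y$, which the $\id$-coordinate in $W$ pins to the same cell as $x$; uniform continuity of $\bx(\gamma)$ then bridges the shadow and the actual point, and the symmetrization of $W_0$ absorbs the inverse-distance term in the paper's metric on $\homeo(\s C)$.
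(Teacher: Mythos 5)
Your proof is correct and takes essentially the same approach as the paper: use a fine clopen partition, include $\id$ in the window, and bridge $\ax(\gamma)(x)$ to $\bx(\gamma)(x)$ via the shadow witness $y$ that the pattern-equality supplies. The only nitpick is that the paper's metric on $\homeo(\s C)$ is $\sup_x\bigl[d(\rho(x),\tau(x))+d(\rho\inv(x),\tau\inv(x))\bigr]$, so after symmetrizing $W_0$ your bound of $\epsilon$ per coordinate gives $d(\ax(\gamma),\bx(\gamma))<2\epsilon$ rather than $<\epsilon$; replacing $\epsilon$ by $\epsilon/2$ at the outset fixes this.
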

\begin{proof}
    That $U(f, W, \bx)$ is open follows from the proof of Theorem \ref{thm: one direction approximate conjugacy} about approximate conjugacy in general. So, we just have show that each neighborhood of $\bx$ contains some $U(f, W,\bx)$.

    A basic open set in $\Axn(\Gamma, \s C)$ determines, up to uniform tolerance $\epsilon$, the functions $\bx(\gamma)$ for $\gamma$ in some finite window $W\Subset\Gamma$. So, fix a window $W$ and a tolerance $\epsilon>0$. Let $f$ be a labelling of $\s C$ so that all the sets of the form $f\inv[i]$ or $\gamma\cdot_{\bx} f\inv[i]$ for $\gamma\in W$ have diameter at most $\epsilon/10$.  
    
    Suppose $\ax\in U(f, W, \bx)$, and fix $x\in \s C$ and $\gamma\in W$. We want to show that $d(\gamma\cdot_{\ax} x,\gamma\cdot_{\bx} x)<\epsilon.$ There is some $y\in \s C$ so that $f(y)=f(x)$ and $f(\gamma\cdot_{\ax} x)=f(\gamma\cdot_{\bx} y)$ since these actions have the same local patterns in $f$. Since $f(\gamma\cdot_{\ax} x)=f(\gamma\cdot_{\bx} y)$, $d(\gamma\cdot_{\ax} x, \gamma\cdot_{\bx} y)\leq \epsilon/10. $  And, since $f(x)=f(y),$ $d(\gamma\cdot_{\bx} x,\gamma\cdot_\bx y)\leq \epsilon/10$. So, 
    \[d(\gamma\cdot_{\ax} x, \gamma\cdot_{\bx} x)\leq d(\gamma\cdot_{\ax} x, \gamma\cdot_{\bx} y)+d(\gamma\cdot_{\bx} y,\gamma\cdot_{\bx} x)<\epsilon\]
\end{proof}

Our last characterization of weak containment follows easily.

\begin{thm}\label{thm: approximate conjugacy}
    For any Cantor flows $\ax,\bx\in \Axn(\Gamma, \s C)$, 
    \[\bx\in \overline{[\ax]}\;\lra \; \bx\preccurlyeq\ax.\] (In words: $\ax$ is approximately conjugate to $\bx$ iff $\bx$ is weakly contained in $\ax$.)
\end{thm}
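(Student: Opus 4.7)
The forward direction is Theorem \ref{thm: one direction approximate conjugacy}. For the reverse, assume $\bx\preccurlyeq\ax$; the plan is to show that $\bx$ lies in the closure of $[\ax]$ by verifying that every basic open neighborhood of $\bx$ meets the conjugacy class. By the preceding proposition, the sets $U(f,W,\bx)$ with $f:\s C\to[k]$ continuous and $W\Subset\Gamma$ form a neighborhood basis at $\bx$, so it suffices to find, for each such $(f,W)$, some $\rho\in\homeo(\s C)$ with $\rho\cdot\ax\in U(f,W,\bx)$. Without loss of generality I can assume $\id\in W$.

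The first step is to turn the condition $\rho\cdot\ax\in U(f,W,\bx)$ into a condition purely about labellings of $\ax$. A direct computation with $(\rho\cdot\ax)(\gamma)=\rho\circ\ax(\gamma)\circ\rho\inv$ and the substitution $y=\rho\inv(x)$ yields
\[
p_W(f,\rho\cdot\ax)=p_W(f\circ\rho,\ax).
\]
So I need a homeomorphism $\rho$ of $\s C$ such that $p_W(f\circ\rho,\ax)=p_W(f,\bx)$. Using $\bx\preccurlyeq\ax$, I can choose some continuous $g:\s C\to[k]$ with $p_W(g,\ax)=p_W(f,\bx)$, and the goal reduces to finding $\rho\in\homeo(\s C)$ with $f\circ\rho=g$.

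This is a purely topological question about matching clopen partitions of $\s C$. The labellings $f$ and $g$ correspond to clopen partitions $\s C=\bigsqcup_{i\in[k]}f\inv(i)$ and $\s C=\bigsqcup_{i\in[k]}g\inv(i)$, and an equation $f\circ\rho=g$ amounts to asking for a homeomorphism sending $g\inv(i)$ onto $f\inv(i)$ for every $i\in[k]$. Since every nonempty clopen subset of $\s C$ is itself homeomorphic to $\s C$, such a piecewise-defined $\rho$ exists as soon as $g\inv(i)=\emptyset\lra f\inv(i)=\emptyset$ for all $i$. This is the one compatibility condition I must verify, and it is automatic: because $\id\in W$, the singleton pattern $\{(i,\id)\}$ belongs to $p_W(f,\bx)$ iff $f\inv(i)\not=\emptyset$, and likewise for $g$ and $\ax$; the equality $p_W(g,\ax)=p_W(f,\bx)$ then forces the empty/nonempty pattern of the two partitions to coincide.

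The only potential obstacle is this last compatibility of the partitions, and it is handled cleanly by the $\id\in W$ observation. Assembling the pieces, $\rho$ exists, so $\rho\cdot\ax\in U(f,W,\bx)$, and since $(f,W)$ was arbitrary this shows $\bx\in\overline{[\ax]}$.
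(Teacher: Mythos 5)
Your proof is correct and follows essentially the same route as the paper: reduce to meeting basic neighborhoods $U(f,W,\bx)$, use weak containment to produce a matching labelling $g$ of $\ax$, and conjugate by a homeomorphism sending the $g$-partition to the $f$-partition. The paper compresses the final step into an appeal to ultrahomogeneity of $\s C$; you correctly identify that the homeomorphism exists only if the two clopen partitions have the same empty/nonempty pattern, and you supply the clean observation that assuming $\id\in W$ forces $\im(f)=\im(g)$, which the paper leaves implicit.
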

\begin{proof}
    The left to right direction is a special case of Theorem \ref{thm: one direction approximate conjugacy}. So, suppose that $\bx\preccurlyeq\ax$.

    Consider any open neighborhood $U=U(f, W, \bx)$ of $\bx$; we want to find some conjugate of $\ax$ in $U$. By definition of weak containment, there is some continuous labelling $g$ so that $\s P(g, W, \ax)=\s P(f, W, \bx).$ And, since $\s C$ is ultrahomogenous, there is some $\rho\in \homeo(\s C)$ so that $g=f\circ\rho\inv.$ Thus, $\s P(f, W,\bx)=\s P(f, W,\rho\cdot\ax).$ So, $\rho\cdot \ax\in U$ as desired.
\end{proof}

\section{Tops, bottoms, and anti-chains}

So, what can topological weak containment tell us about groups and their actions? We'll give some suggestive partial answers in the section. In particular we'll investigate structural features of the weak containment order and see that they correspond to a diverse array of dynamical, combinatorial, and group-theoretic phenomena.

We will focus our attention on zero-dimensional flows, and especially on $4$ subclasses:
\begin{dfn}
    For a fixed group $\Gamma$,
    \[\kfree(\Gamma) :=\{\ax\in \Axn(\Gamma,\s C): \ax\mbox{ is free}\}\]
    \[\kmix(\Gamma): = \{\ax\in \Axn(\Gamma, \s C): \ax\mbox{ is mixing}\}\]
    \[\kmin(\Gamma):= \{\ax\in \Axn(\Gamma,\s C): \ax\mbox{ is minimal}\}\]
    \[\kfin(\Gamma):=\{\ax: \ax\mbox{ is free, and }(\exists \ax_i\mbox{ actions on finite set})\;\ax=\colim_{\s U}\ax_i\}.\] We'll supress $\Gamma$ in notation when it is clear from context.
\end{dfn}
Note that $\kmix$, $\kmin$, and $\kfin$ are not invariant under weak equivalence. Also, by Proposition \ref{prop: covering dimension}, $\kfin$ contains only actions on zero-dimensional spaces. 

 Let us point out that the analogous class to $\kfin$ in the context of ergodic theory is well studied. It corresponds to so-called local-global limits of finite graphs. Many deep questions remain open even for $\Gamma=F_2$: Is every free pmp action a measure theoretic local-global limit of finite actions? is the Bernoulli shift? Positive or negative answers to these questions would have deep ramifications for combinatorics. For instance, if the Bernoulli shift is a local-global limit, then we would have a powerful machine for producing counterexamples in extremal combinatorics. And if not, then there is some labelling that can be simulated on almost every finite $4$-regular finite graph of large enough girth but that cannot be approximated with a local algorithm. We will return to these questions in the topological setting later.

We consider three of the most basic order-theoretic questions: Is there a top? Is there a bottom? How big can antichains be?

\subsection{Top actions}

Which of these classes have top elements? And, what are these top elements like-- what dynamical properties do they have, and what local patterns do they admit? First, let's recall the definition of top:

\begin{dfn}
    For a class $\s K$ of actions, say that $\ax$ is a \textbf{top} element of $\s K$ if $\ax\in \s K$ and, for all $\bx\in \s K$, $\bx\preccurlyeq\ax$. If $\s K$ has a top element, we write $\ax_{\top}^\s K$ for a top element (which will only be unique up to weak equivalence). We abbreviate $\ax_\top=\ax^{\Axn(\Gamma, \s C)}_\top$.
\end{dfn}

These might also be called maximum elements, but then we would run into ambiguity about the word ``minimal" when we discuss bottom actions. All of the classes we have in our sights admit top elements:

\begin{thm}
    The classes $\Axn(\Gamma, \s C)$, $\kfree$, $\kmix$, $\kfin$, and $\kmin$ all have top elements.
\end{thm}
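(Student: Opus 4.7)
The plan is to split the five classes into two groups. For $\Axn(\Gamma, \s C)$, $\kfree$, and $\kmix$, I would give a uniform construction via countable dense products; for $\kmin$, where products do not preserve the class, I would appeal to a genericity argument; and for $\kfin$, which does not sit inside $\Axn(\Gamma, \s C)$, I would build a top directly as an ultra-co-product of finite actions chosen to exhaust all achievable local patterns.

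For the first three classes, each $\s K$ is a separable subspace of the Polish space $\Axn(\Gamma, \s C)$, so it admits a countable dense subset $\{\ax_i : i \in \N\} \subseteq \s K$. Put $\ax^* := \prod_i \ax_i$, viewed as an action on $\prod_i \s C \cong \s C$. Each coordinate projection is a factor map, so $\ax_i \preccurlyeq \ax^*$ for every $i$. By Theorem \ref{thm: approximate conjugacy}, the set $\{\bx : \bx \preccurlyeq \ax^*\}$ equals $\overline{[\ax^*]}$ and is therefore closed in $\Axn(\Gamma, \s C)$; since it contains the dense set $\{\ax_i\}$, it contains all of $\s K$. It remains to check $\ax^* \in \s K$. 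For $\Axn(\Gamma, \s C)$ this is trivial; for $\kfree$, a fixed point of $\gamma$ on $\prod_i X_i$ projects to a fixed point on each $X_i$, so freeness is preserved; for $\kmix$, a basic open in $\prod_i X_i$ depends on finitely many coordinates, so the mixing condition reduces to a finite conjunction of coordinate mixing conditions, each excluding only finitely many $\gamma \in \Gamma$.

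Products of minimal actions are typically not minimal (e.g.\ the diagonal in $\ax \times \ax$ is invariant), so this argument fails for $\kmin$. Instead I would invoke the earlier Proposition that a $G_\delta$ conjugacy-invariant subclass of $\Axn(\Gamma, \s C)$ has a top element if and only if the conjugacy action on it is generically ergodic. Minimality is a $G_\delta$ condition on $\Axn(\Gamma, \s C)$, and the folklore Corollary supplies generic ergodicity on $\kmin$, yielding a top.

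For $\kfin$, assume the class is nonempty (else the statement is vacuous). Enumerate pairs $(W_n, k_n)$ and nonidentity elements $\gamma_n \in \Gamma$. For each $(W, k)$, the set $\s P^*_{W, k}$ of $(W, k)$-patterns realized by \emph{some} finite $\Gamma$-action is a finite collection, and disjoint unions of finite actions realize unions of their patterns. I would build $\ax^*_n$ as a disjoint union of (i) finite actions witnessing every pattern in $\s P^*_{W_j, k_j}$ for $j \leq n$, together with (ii) for each $j \leq n$, a finite action on which $\gamma_j$ is fixed-point-free -- such actions exist because nonemptiness of $\kfin$ forces each $\gamma \neq \id$ to be fixed-point-free on some finite action appearing in a free ultra-co-product. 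Setting $\bx^* := \colim_{\s U} \ax^*_n$, the earlier identity $\chi(\gamma, \bx^*) = \lim_{\s U} \chi(\gamma, \ax^*_n)$ together with the bound $\chi(\gamma_j, \ax^*_n) \leq 3$ for $n \geq j$ shows $\bx^*$ is free; and every $(W_j, k_j)$-pattern of an element of $\kfin$ is an ultralimit of patterns of finite actions, hence lies in $\s P^*_{W_j, k_j} \subseteq \s P_{W_j, k_j}(\bx^*)$. The main obstacle is verifying that local patterns of $\colim_{\s V} \bx_i$ are ultralimits of local patterns of the $\bx_i$, which I expect to obtain via the Gelfand-dual correspondence between closed sets in $\colim X_i$ and suitable ideals in $\lim_{\s U} C(X_i)$.
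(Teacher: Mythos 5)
Your argument agrees with the paper for $\Axn(\Gamma, \s C)$, $\kfree$, and $\kmix$, but diverges from the paper for the other two classes, and those divergences deserve scrutiny.

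For $\kmin$, the paper takes a separable elementarily equivalent factor of the universal minimal $\Gamma$-flow; you instead invoke the ``folklore'' generic ergodicity of conjugacy on $\kmin$ together with the $G_\delta$ criterion. Be careful: in the body of the paper the generic ergodicity of $\kmin$ is \emph{derived} from the existence of a top element via exactly that $G_\delta$ proposition, so if you take the paper's internal development as the source of that Corollary, your argument is circular. It is rescued only if you treat generic ergodicity on minimal Cantor flows of an arbitrary countable $\Gamma$ as an independently known result (the paper does tag it ``Folklore'' in the introduction, though without citation). You also silently use that $\kmin$ is $G_\delta$ and conjugacy-invariant in $\Axn(\Gamma,\s C)$; this is true but should be stated, since it is a hypothesis of the proposition you apply. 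The paper's universal-minimal-flow route avoids all of this and is self-contained.

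For $\kfin$, the paper folds it into the same dense-countable-product argument as the first three classes, asserting that $\kfin$ is closed under countable products and has a countable dense subset. You are right to be uneasy here: $\kfin$ is a class of flows on generally non-separable zero-dimensional compacta, not a subspace of $\Axn(\Gamma,\s C)$, so ``dense'' must be read in the topology of pattern spectra (a compact metrizable space), and closure of $\kfin$ under countable products is not spelled out. Your direct construction --- a diagonal sequence of finite disjoint unions exhausting $\s P^*_{W_j,k_j}$, padded with finite actions making each $\gamma_j$ fixed-point free so that $\chi(\gamma_j,\ax^*_n)\leq 3$, then take $\colim_{\s U}$ --- is a reasonable and more explicit alternative. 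The step you flag, that local patterns of $\colim_{\s U}\bx_i$ are exactly the patterns realized on $\s U$-many $\bx_i$, is indeed needed in both directions (to see $\s P_{W,k}(\bx)\subseteq\s P^*_{W,k}$ for $\bx\in\kfin$, and to see $\s P^*_{W,k}\subseteq\s P_{W,k}(\bx^*)$); it follows from the $\Sigma_1$-definability of the set of function-representations of a given pattern, which is established inside the paper's proof that the three characterizations of weak containment agree, so you should cite that rather than Gelfand duality directly. Overall your $\kfin$ argument buys concreteness and sidesteps the ambiguity in the paper's ``dense'' for a non-Polish class, at the cost of carrying out the \L o\'s-for-patterns step explicitly.

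Minor: in the dense-product paragraph, note that $\{\bx:\bx\preccurlyeq\ax^*\}=\overline{[\ax^*]}$ is only available for Cantor flows; that is fine here since you apply it only to the three $\Axn(\Gamma,\s C)$ subclasses.
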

\begin{proof}
     The classes $\Axn(\Gamma, \s C)$, $\kfree$, $\kmix$, and $\kfin$ are all closed under countable products (up to isomorphism). So, if $\s K$ is any of these, we can let $\{\ax_i: i\in \omega\}$ be a countable dense set of actions in $\s K$. Then, $\prod_i\ax_i$ is (up to isomorphism) a top element of $\s K$.

     For $\kmin$, we can take $\ax_{\top}^{\kmin}$ to be a separable elementary equivalent factor of the universal minimal $\Gamma$-flow.
\end{proof}

Now, what are these top elements like? One answer is that they are generic:

\begin{prop}
    If $\s K\subseteq \Axn(\Gamma, \s C)$ is conjugacy-invariant and $G_\delta$, then $\s K$ has a top element if and only if the conjugacy action on $\s K$ is generically ergodic. 

    Further, $\ax\approx \ax_{\top}^\s K$ if and only if $\ax$ has a dense orbit.
\end{prop}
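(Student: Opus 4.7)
The key observation is that Theorem \ref{thm: approximate conjugacy} translates the weak containment preorder on $\Axn(\Gamma,\s C)$ into the specialization order for orbit closures under the conjugacy action: $\bx \preccurlyeq \ax$ iff $\bx \in \overline{[\ax]}$. So an element $\ax \in \s K$ is a top element of $\s K$ in the weak containment order iff $[\ax]$ is dense in $\s K$. With this reformulation in hand, the proposition reduces to the standard correspondence in topological dynamics between ``having a dense orbit'' and ``generic ergodicity'' for continuous Polish group actions on Polish spaces.

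For the forward direction, suppose $\s K$ has a top element $\ax_\top^\s K$. By Theorem \ref{thm: approximate conjugacy}, every $\bx \in \s K$ lies in $\overline{[\ax_\top^\s K]}$, so $[\ax_\top^\s K]$ is dense in $\s K$. Since $\s K$ is Polish (being $G_\delta$ in the Polish space $\Axn(\Gamma,\s C)$), the existence of a dense orbit under the continuous action of $\homeo(\s C)$ implies topological transitivity, and thus generic ergodicity by the usual Baire category argument: every nonempty open $U \subseteq \s K$ satisfies $\homeo(\s C)\cdot U \supseteq [\ax_\top^\s K]$, hence is dense, so any invariant subset with the Baire property is either meager or comeager.

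For the reverse direction, assume the conjugacy action on $\s K$ is generically ergodic. The set $\{\ax\in \s K : [\ax] \text{ is dense in }\s K\}$ is $G_\delta$ and invariant, and it is nonempty (in fact comeager) whenever generic ergodicity holds on a Polish $G$-space---this is standard and uses generic ergodicity applied to basic open sets of $\s K$. Pick such an $\ax$; then by Theorem \ref{thm: approximate conjugacy} every $\bx\in\s K$ satisfies $\bx\preccurlyeq\ax$, exhibiting $\ax$ as a top element.

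For the ``further'' clause, the forward implication is immediate: if $\ax\in\s K$ has dense orbit, then $\bx \in \overline{[\ax]}$ for all $\bx \in \s K$, so $\bx\preccurlyeq\ax$; in particular $\ax_\top^\s K\preccurlyeq\ax$, and $\ax\preccurlyeq\ax_\top^\s K$ by definition of top, giving $\ax\approx\ax_\top^\s K$. Conversely, if $\ax\in \s K$ and $\ax\approx\ax_\top^\s K$, then for any $\bx\in\s K$ we have $\bx\preccurlyeq\ax_\top^\s K\preccurlyeq\ax$, so $\bx\in\overline{[\ax]}$ by Theorem \ref{thm: approximate conjugacy}, meaning $[\ax]$ is dense in $\s K$. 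The only real content beyond Theorem \ref{thm: approximate conjugacy} is the standard Polish-$G$-space fact that generic ergodicity is equivalent to topological transitivity, which is the main (but routine) structural input; no new obstacle beyond that.
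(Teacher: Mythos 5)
Your proof is correct and takes essentially the same route as the paper, which simply cites Theorem \ref{thm: approximate conjugacy} (approximate conjugacy equals weak containment for Cantor flows) and leaves the routine translation implicit; you have supplied exactly that translation, namely that top elements correspond to dense conjugacy classes in $\s K$, together with the standard Polish $G$-space equivalences among existence of a dense orbit, topological transitivity, and generic ergodicity, using that $\s K$ is Polish because it is $G_\delta$.
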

\begin{proof}
    This follows directly from Theorem \ref{thm: approximate conjugacy}
\end{proof}
\begin{cor}
    The conjugacy of $\homeo(\s C)$ is generically ergodic on 
    \begin{enumerate}
        \item $\Axn(\Gamma, \s C)$
        \item $\kfree(\Gamma)$, and
        \item $\kmin(\Gamma)$.
    \end{enumerate}
\end{cor}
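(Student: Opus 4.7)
The plan is to reduce the corollary directly to the preceding proposition: since the theorem just above that proposition tells us each of $\Axn(\Gamma,\s C)$, $\kfree(\Gamma)$, and $\kmin(\Gamma)$ has a top element, it suffices to check that each of these three classes is conjugacy-invariant and $G_\delta$ in $\Axn(\Gamma,\s C)$. Conjugacy-invariance is immediate. The whole class $\Axn(\Gamma,\s C)$ is trivially so; for $\kfree$, the map $\ax\mapsto\rho\cdot\ax$ conjugates $\ax(\gamma)$ to $\rho\circ\ax(\gamma)\circ\rho\inv$, which has a fixed point iff $\ax(\gamma)$ does; for $\kmin$, minimality is a property of the collection of orbit closures, which is preserved by any homeomorphism.

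For the $G_\delta$ condition, $\Axn(\Gamma,\s C)$ is closed in $\homeo(\s C)^\Gamma$ by definition, so it is $G_\delta$ in itself. For $\kfree$, the key point is that the set $F:=\{\sigma\in\homeo(\s C):\sigma\mbox{ has no fixed point}\}$ is open in $\homeo(\s C)$: if $\sigma\in F$, then by compactness of $\s C$, $\inf_x d(x,\sigma(x))>0$, which persists under small uniform perturbations. Hence
\[\kfree(\Gamma)=\bigcap_{\gamma\in\Gamma\setminus\{\id\}}\{\ax\in\Axn(\Gamma,\s C):\ax(\gamma)\in F\}\]
is a countable intersection of open sets.

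For $\kmin$, I would use the countable base of clopen sets for $\s C$. Compactness of $\s C$ turns minimality into
\[\ax\in\kmin(\Gamma)\;\lra\;(\forall\,\text{nonempty clopen }U)(\exists F\Subset\Gamma)\;\bigcup_{\gamma\in F}\ax(\gamma)\inv[U]=\s C.\]
The main (but minor) point to check is that for fixed nonempty clopen $U$ and finite $F$, the condition $\bigcup_{\gamma\in F}\ax(\gamma)\inv[U]=\s C$ is open in $\ax$. This uses the clopen structure: since $U$ and $\s C\setminus U$ are disjoint compact sets, they are separated by some $\delta>0$, so any homeomorphism $\sigma'$ within uniform distance $\delta/2$ of $\ax(\gamma)$ satisfies $(\sigma')\inv[U]=\ax(\gamma)\inv[U]$, hence the preimages are locally constant in $\ax$. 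The existential quantifier over finite $F$ then yields an open condition, and intersecting over the countably many clopen $U$ exhibits $\kmin$ as $G_\delta$.

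With all three classes verified as conjugacy-invariant $G_\delta$ subsets with top elements, the preceding proposition applies and yields generic ergodicity in each case. The only subtlety, as noted, is the openness argument for $\kmin$, which is really just a bookkeeping observation rather than a genuine obstacle.
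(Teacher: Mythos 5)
Your argument is correct and matches the paper's approach: the corollary is stated as an immediate consequence of the preceding proposition (top element in a conjugacy-invariant $G_\delta$ class iff generically ergodic) together with the theorem that each of these classes has a top element. The paper leaves the conjugacy-invariance and $G_\delta$ verifications implicit; your details for $\kfree$ (openness of fixed-point-freeness via compactness) and $\kmin$ (compactness turning minimality into a countable conjunction of open conditions, with preimages of clopen sets locally constant in the action) are exactly the right bookkeeping.
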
 It turns out point $(2)$ is a little redundant since generic actions are free. The previous proposition is fairly trivial. Nonetheless, it suggests a useful approach to studying genericity in the space of actions. For many groups, there is not a comeager conjugacy class in $\Axn(\Gamma, \s C)$. Even when there is a comeager conjugacy class, it can be difficult to describe. But, weak and elementary equivalence classes are tractable approximations to conjugacy classes. The above proposition tells us there is always a comeager weak equivalence class and implies there is always a comeager elementary equivalence class (since elementary equivalence is smooth). Existential and first order theories are relatively easy to understand, and can give us a handle on the behavior of generic actions.

Another answer answer to the question ``What is $\ax_\top^\s K$ like?" is that top elements are easy to decorate. For instance, we showed earlier that being free is equivalent to admitting $3$-colorings of a certain kind. So, we have:

\begin{prop}
    If $\s K$ has a free element, then $\ax_{\top}^{\s K}$ is free.
\end{prop}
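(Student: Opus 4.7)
The plan is to invoke Theorem \ref{thm: free closed up} directly. Suppose $\bx \in \s K$ is a free flow. Since $\ax_{\top}^{\s K}$ is a top element of $\s K$, by definition $\bx \preccurlyeq \ax_{\top}^{\s K}$. Theorem \ref{thm: free closed up} tells us that freeness is upward closed in the weak containment order: any flow that weakly contains a free flow is itself free. Applying this with $\ax = \ax_{\top}^{\s K}$ yields that $\ax_{\top}^{\s K}$ is free.

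There is no real obstacle here; the work was already done in establishing that freeness passes upward under $\preccurlyeq$ via the closed-cover argument (using a finite subcover of $\{\overline{U_x}\}$ separating $x$ from $\gamma \cdot x$ and transferring the corresponding local pattern to $\ax$). The statement of the proposition is essentially a packaging of that fact together with the definition of a top element.
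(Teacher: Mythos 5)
Your proof is correct and is exactly the argument the paper gives: cite Theorem \ref{thm: free closed up} together with the definition of a top element. No further comment is needed.
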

\begin{proof}
    Follows from Theorem \ref{thm: free closed up}.
\end{proof}
\begin{cor}
 A generic action, or generic minimal action, in $\Axn(\Gamma, \s C)$ is free.
\end{cor}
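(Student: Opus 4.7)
The plan is to combine the preceding proposition (which identifies weak equivalence to $\ax_\top^\s K$ with having a dense conjugacy orbit) with Theorem \ref{thm: free closed up} (which says freeness is upward closed in the weak containment order). Since generic ergodicity of conjugacy on $\Axn(\Gamma,\s C)$ and on $\kmin(\Gamma)$ has already been established, the set of actions weakly equivalent to $\ax_\top^\s K$ is comeager in each of these classes. If we can show the respective top elements are free, then Theorem \ref{thm: free closed up} promotes this to a generic statement.

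First, I would verify that each class contains some free element, so that the previous proposition applies to give freeness of $\ax_\top^{\s K}$. For $\s K=\Axn(\Gamma,\s C)$, the Bernoulli shift $\mathbf{s}:\Gamma\curvearrowright 2^\Gamma$ restricted to its free part, or more simply any faithful free Cantor action, provides a free element. For $\s K=\kmin(\Gamma)$, we invoke the (nontrivial) fact that every countable discrete group admits a free minimal action on Cantor space; this is the main input I would need to cite, as it is not immediate from anything in the paper.

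With a free element secured in each class, the proposition preceding this one gives that $\ax_\top^{\Axn(\Gamma,\s C)}$ and $\ax_\top^{\kmin(\Gamma)}$ are both free. Then, applying the proposition that characterizes weak equivalence to the top as having dense conjugacy orbit, the set
\[\bigl\{\ax\in \s K: \ax\approx \ax_\top^\s K\bigr\}\]
is comeager in $\s K$ for $\s K\in\{\Axn(\Gamma,\s C),\kmin(\Gamma)\}$. By Theorem \ref{thm: free closed up}, every such $\ax$ is free, so a generic action and a generic minimal action are free.

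The only real obstacle is confirming the existence of a free minimal Cantor $\Gamma$-action for arbitrary countable $\Gamma$; once that is in hand, the rest is a short bookkeeping argument assembling results already proved in the paper.
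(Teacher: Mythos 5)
Your proof is correct and follows the same overall logic the paper intends: use the preceding proposition (top of a class with a free element is free) together with Theorem \ref{thm: free closed up} and the generic ergodicity of conjugacy. The one thing I'd adjust is your assessment of the ``main obstacle.'' You don't actually need to import Hjorth--Molberg as an external citation: the paper already constructs $\ax_\top^{\kmin}$ as a separable elementary-equivalent factor of the universal minimal flow $M(\Gamma)$, and this object is itself free. Indeed, for a discrete group the action on $\beta\Gamma$ extends the free left-translation action on $\Gamma$ and is free, so every minimal subflow --- in particular $M(\Gamma)$ --- is free; moreover $M(\Gamma)$ is extremally disconnected, hence zero-dimensional, so by Lemma \ref{lem: finite dimensional coloring} freeness of $M(\Gamma)$ is witnessed by a $\Sigma_1$ sentence (saying $\chi(\gamma,\cdot)\leq 3$ for each $\gamma\neq\id$), and this sentence is preserved under elementary equivalence. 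Thus $\ax_\top^{\kmin}$ is a free minimal Cantor action supplied directly by the paper's own construction, and the ``existence of a free minimal Cantor action'' is not a genuinely independent prerequisite. For $\Axn(\Gamma,\s C)$ the free element is even easier --- e.g.\ the shift on proper colorings $\bx_W$ discussed in the bottom-element section --- so your bookkeeping goes through verbatim in both cases.
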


It follows that $\ax_\top\approx\ax_{\top}^\kfree$. We can also rule out topological transitivity easily:

\begin{prop} \label{prop: generic not top transitive}
   If $\Gamma$ is finitely generated, the actions $\ax_{\top}$ and $\ax_{\top}^{\kfin}$ cannot be topologically transitive.
\end{prop}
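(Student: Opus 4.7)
The plan is to find, in each of $\Axn(\Gamma, \s C)$ and $\kfin$, a flow with a proper clopen $\Gamma$-invariant subset, and then use weak containment to transfer this property to the top element, which precludes topological transitivity. Fix a finite symmetric generating set $S$ of $\Gamma$ with $\id \in S$, and set $W = S$.

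The first step I would carry out is a general lemma: if $\bx: \Gamma \curvearrowright Y$ admits a proper clopen $\Gamma$-invariant partition $Y = Y_0 \sqcup Y_1$, then any zero-dimensional $\Gamma$-flow $\ax$ weakly containing $\bx$ is not topologically transitive. The argument is that the $\{0,1\}$-valued labelling $1_{Y_0}$ of $\bx$ produces only the two constant $W$-patterns (both appearing, by nonemptiness of $Y_0, Y_1$), so by weak containment there is a continuous $g: X \to \{0,1\}$ whose $W$-patterns are exactly these two constants; this forces $g(s \cdot x) = g(x)$ for every $x \in X$ and $s \in S$, hence $g^{-1}(0)$ is a proper clopen subset of $X$ that is $S$-invariant, and thus $\Gamma$-invariant since $S$ generates. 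The existence of a nontrivial clopen invariant subset is incompatible with topological transitivity.

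For $\ax_\top$, the lemma applies directly to the trivial $\Gamma$-action on $\s C$ together with any clopen bipartition. For $\ax_\top^\kfin$, assuming $\kfin$ is nonempty (else the claim is vacuous), I would pick $\ax = \colim_{\s U} \ax_n \in \kfin$ with each $\ax_n$ on a finite set $F_n$, and work with the finite-set sequence $\ax_n \sqcup \ax_n$ on $F_n \sqcup F_n$. Two things need verification for $\bx := \colim_{\s U}(\ax_n \sqcup \ax_n)$: that $\bx$ is free (so that $\bx \in \kfin$) and that $\bx$ admits a proper clopen $\Gamma$-invariant subset. Freeness follows from the earlier proposition equating $\chi(\gamma, \bx)$ with $\lim_{\s U} \chi(\gamma, \ax_n \sqcup \ax_n)$, combined with the bound $\chi(\gamma, \ax_n \sqcup \ax_n) \leq \chi(\gamma, \ax_n)$ obtained by duplicating any witnessing cover across the two copies. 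The invariant subset comes from the ultralimit $[1_{F_n}]$ of the indicators of the first copies: this is a $\Gamma$-invariant nontrivial idempotent in $\lim_{\s U} C(F_n \sqcup F_n)$, corresponding by Gelfand duality to a proper clopen $\Gamma$-invariant subset of $\bx$. Applying the lemma finishes the argument.

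I don't foresee any substantial obstacle; the only point requiring care is checking that $\colim_{\s U}(\ax_n \sqcup \ax_n)$ remains free and retains its invariant partition in the ultra-co-product, both of which are routine once one unwinds the definitions of $\chi$ and the ultralimit.
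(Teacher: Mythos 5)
Your proposal is correct and takes essentially the same approach as the paper: find an action in the class admitting a non-constant labelling invariant under a generating set (equivalently, a proper clopen $\Gamma$-invariant subset), then use weak containment to pull this labelling over to the top element and contradict topological transitivity. The paper's proof is quite terse and simply asserts the existence of such an action in $\s K$; you fill in the detail the paper glosses over in the $\kfin$ case by passing to $\colim_{\s U}(\ax_n\sqcup\ax_n)$, verifying freeness via $\chi$ and exhibiting the invariant clopen set via the idempotent $[1_{F_n}]$, which is a genuine and worthwhile clarification.
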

\begin{proof}
    Let $\s K=\Axn(\Gamma, \s C)$ or $\kfin$, and suppose $\Gamma=\ip{E}$ with $E$ finite. There is some action $\ax\in \s K$ with a labelling $f$ so that $f(\gamma\cdot_{\ax} x)=f(x)$ for $\gamma\in E$ and $0,1\in \im(f)$. So, there is such a labelling for $\ax_{\top}^\s K$, call it $g$. But then, $g\inv[0]$ and $g\inv[1]$ witness that $\ax_\top^\s K $ is not topologically transitive.
\end{proof}
\begin{cor}
    A generic action in $\Axn(\Gamma, \s C)$ is not topologically transitive when $\Gamma$ is finitely generated.
\end{cor}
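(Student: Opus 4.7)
The plan is to combine the preceding Proposition (that $\ax_\top$ fails to be topologically transitive) with the standard dynamical fact that the weak equivalence class of a top element is comeager.

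First, I would invoke the proposition characterizing top elements: since $\ax_\top$ is a top for the $G_\delta$ class $\Axn(\Gamma,\s C)$, the set of actions weakly equivalent to $\ax_\top$ coincides with the set of actions whose conjugacy class is dense in $\Axn(\Gamma,\s C)$. For any continuous action of a Polish group on a Polish space, the points with dense orbit, whenever nonempty, form a dense $G_\delta$: for each basic open $U$ the set $\{\bx:(\exists\rho)\;\rho\cdot\bx\in U\}$ is open and (by density of $[\ax_\top]$) dense, so intersecting over a countable basis gives a comeager set. Hence $\{\bx:\bx\approx\ax_\top\}$ is comeager.

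Next, I would upgrade the preceding Proposition to show that the entire weak equivalence class of $\ax_\top$ consists of non-topologically-transitive actions. Fix a finite generating set $E\Subset\Gamma$ and set $W=E\cup\{\id\}$. The proof of the preceding Proposition produces, in $\ax_\top$, a continuous labelling $f:\s C\to\{0,1\}$ that is $E$-invariant and takes both values; in particular $p_W(f,\ax_\top)$ consists of exactly the two constant patterns $p\equiv 0$ and $p\equiv 1$. For any $\bx$ with $\ax_\top\preccurlyeq\bx$, there is a labelling $g:\s C\to\{0,1\}$ with $p_W(g,\bx)=p_W(f,\ax_\top)$. Then $g(\gamma\cdot_\bx x)=g(x)$ for every $\gamma\in E$ and every $x$, and since $E$ generates $\Gamma$ the set $g^{-1}[\{0\}]$ is a nonempty proper clopen $\Gamma$-invariant subset of $\s C$, blocking topological transitivity.

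Combining the two steps, the comeager set $\{\bx:\bx\approx\ax_\top\}$ is contained in the non-topologically-transitive actions, so a generic action in $\Axn(\Gamma,\s C)$ fails to be topologically transitive. The only point requiring care is confirming that a top element of a $G_\delta$ class has a dense (not merely existent) conjugacy class, but this is immediate from the ``further'' clause of the proposition characterizing tops, applied to $\ax_\top$ itself. No other substantive obstacle arises; the argument is essentially bookkeeping connecting the genericity criterion to the local-pattern obstruction already produced for $\ax_\top$.
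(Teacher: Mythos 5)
Your proof is correct and follows the same route the paper intends: the "further" clause of the top-element proposition gives that the weak equivalence class of $\ax_\top$ is comeager, and the local-pattern obstruction from Proposition~\ref{prop: generic not top transitive} transfers to every $\bx\succcurlyeq\ax_\top$ since topological transitivity is blocked by the existence of a non-constant $E$-invariant clopen labelling, which is a weak containment invariant. The one point you correctly flag as needing care---that a top for a $G_\delta$ class actually has a dense conjugacy class---is exactly the content the paper packages into the top-element proposition, so there is no gap.
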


Iyer and Shinko's proof of hyperfiniteness for generic actions of finite asymptotic dimension groups follows this pattern as well \cite{IyerShinko}.

\begin{prop}[Ess. Iyer--Shinko]
    If $\Gamma$ has finite asymptotic dimension, then $\ax_\top^{\kmin}$, $\ax_{\top}^{\kmix}$, and $\ax_{\top}$ are all hyperfinite.
\end{prop}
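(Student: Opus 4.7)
The plan is to follow the template of Proposition \ref{prop: generic not top transitive}: exhibit a family of finite local patterns that imply hyperfiniteness, produce an action in each of the three classes $\s K\in\{\Axn(\Gamma,\s C),\kmix,\kmin\}$ realizing those patterns, and then transfer them to $\ax_{\top}^{\s K}$ via weak containment. For a finite $F\Subset\Gamma$, the relevant pattern is that of an $F$-\emph{toast}: a continuous labeling $f\colon \s C\to[k]$ whose fibers intersect each orbit in $F$-saturated pieces of uniformly bounded word-diameter. Because $\mathrm{asdim}(\Gamma)<\infty$, both $k$ and the diameter bound can be chosen depending only on $\mathrm{asdim}(\Gamma)$, so ``$f$ is an $F$-toast'' becomes a single finite local pattern $P_F\in\s P_{W_F,k_F}$ for an appropriate window $W_F\Subset\Gamma$. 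A \emph{nested} sequence of toasts at scales $F_1\subseteq\cdots\subseteq F_n$ is likewise encodable as a single finite local pattern on a larger alphabet and window.

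In each class we would exhibit a witness $\bx_{\s K}\in\s K$ admitting continuous $F$-toasts for every $F$. For $\s K=\Axn(\Gamma,\s C)$, the free part of a Bernoulli shift $\s C^\Gamma$ works by the standard continuous-combinatorics construction available for finite-asdim groups (cf.\ \cite{ContinuousCombo, IyerShinko}). For $\s K=\kmin$ we would use a free minimal subshift admitting continuous toasts at every scale, and for $\s K=\kmix$ a free mixing subshift of finite type with the same property; both can be extracted from the Bernshteyn--Seward line of work on continuous combinatorics for finite-asdim groups. Since $\bx_{\s K}\preccurlyeq\ax_{\top}^{\s K}$, every nested-toast pattern realized in $\bx_{\s K}$ also appears in $\ax_{\top}^{\s K}$, yielding continuous labelings $f_n\colon\s C\to[k_n]$ with $f_{n+1}$ refining $f_n$ and each $f_n$ an $F_n$-toast for an exhausting chain $F_n\uparrow\Gamma$. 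The induced clopen-defined finite equivalence relations $E_n$ increase and their union is the full orbit equivalence relation of $\ax_{\top}^{\s K}$; hence $\ax_{\top}^{\s K}$ is hyperfinite.

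The main obstacle is producing the witnesses $\bx_{\s K}$ for $\s K=\kmin$ and $\s K=\kmix$, since the continuous toast structure must coexist with minimality or mixing; this is where the finite asymptotic dimension hypothesis does the substantive work, essentially as in Iyer--Shinko \cite{IyerShinko}. The subsequent transfer of nested-toast patterns via weak containment and the assembly of an increasing chain of finite Borel equivalence relations saturating the orbit equivalence relation are then routine.
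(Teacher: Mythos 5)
Your plan is close in spirit to the paper's but has a genuine gap at the assembly step. Weak containment transfers one finite local pattern at a time: for each $n$ you obtain \emph{some} continuous labeling $g_n$ of $\ax_\top^{\s K}$ realizing the level-$n$ nested-toast pattern, but a fresh invocation of weak containment for level $n+1$ gives a labeling $g_{n+1}$ that bears no relation to $g_n$. There is no mechanism forcing $g_{n+1}$ to refine $g_n$, so the induced clopen equivalence relations $E_n$ need not be increasing, and the final sentence of your argument does not go through. Encoding a length-$N$ nested toast as a single SFT $T_N$ doesn't help: you can map $\ax_\top^{\s K}$ into each $T_N$, and even into $\prod_N T_N$, but nothing makes the level-$n$ coordinate of the $T_{N+1}$-factor agree with the $T_N$-factor, and a genuinely coherent infinite toast is not a finite local pattern.

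The paper avoids this by not trying to build the chain of finite equivalence relations directly. It instead observes that the SFT $\bx_{r,k}$ of $k$-bounded equivalence relations on $\Gamma$ in which every $r$-ball meets at most $n+1$ classes is mixing and has a minimal subflow, so $\ax_\top^{\s K}$ maps into $\bx_{r,k}$ for every $r$. That already certifies Borel asymptotic dimension at most $n$ for the orbit equivalence relation, because $\mathrm{BAD}\leq n$ only asks for a single bounded cover per scale $r$ --- no coherence between scales is needed. Hyperfiniteness then follows from the theorem of Conley--Jackson--Marks--Seward--Tucker-Drob that $\mathrm{BAD}\leq n$ implies hyperfinite, which is precisely the nontrivial glue your proposal is implicitly trying to reprove. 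Your idea of producing mixing and minimal witnesses $\bx_{\s K}$ and pushing patterns into $\ax_\top^{\s K}$ is the right move and matches the paper; the fix is to target $\mathrm{BAD}\leq n$ rather than a nested chain, and then cite \cite{BAD}.
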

\begin{proof}[Proof sketch]
    Say $\operatorname{asdim}(\Gamma)=n$, and let $\bx_{r,k}$ be the space of $k$-bounded equivalence relations on $\Gamma$ so that each $r$-ball meets at most $(n+1)$-equivalence classes. Then, for all $r$ and all large enough $k$, $\bx_{r,k}$ is a shift of finite type, is mixing, and has some minimal subflow. So, for $\s K$ any class from the theorem statement, $\ax_{\top}^{\s K}$ has a map into $\bx_{r,n}$. Thus $\ax_{\top}^{\s K}$ has Borel asymptotic dimension at most $n$, so is hyperfinite by Conley--Jackson--Marks--Seward--Tucker-Drob \cite{BAD}.
\end{proof}
\begin{cor}
    For $\Gamma$ with finite asymptotic dimension, a generic action on $\s C$ and generic minimal action on $\s C$ are hyperfinite.
\end{cor}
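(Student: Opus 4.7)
The plan is to deduce the corollary directly from the previous proposition together with the generic-ergodicity characterization of top elements. Fix $\s K$ to be either $\Axn(\Gamma,\s C)$ or $\kmin(\Gamma)$; since $\s K$ is Polish and the existence of the top element $\ax_\top^\s K$ implies that conjugacy is generically ergodic on $\s K$, a standard Baire category argument shows that the set of actions with dense $\homeo(\s C)$-orbit is a comeager $G_\delta$. By the proposition characterizing top elements on $G_\delta$ classes, this comeager set coincides with the weak equivalence class of $\ax_\top^\s K$. So a generic $\ax\in \s K$ satisfies $\ax\approx \ax_\top^\s K$.

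Next I would transfer hyperfiniteness from $\ax_\top^\s K$ to $\ax$. In the previous proposition the hyperfiniteness of $\ax_\top^\s K$ is established by producing, for all $r$ and all sufficiently large $k$, a continuous equivariant map $\ax_\top^\s K\to \bx_{r,k}$, where $\bx_{r,k}$ is a shift of finite type encoding $k$-bounded equivalence relations whose $r$-balls meet at most $n+1$ classes. Existence of such a map into a shift of finite type is a property phrased entirely in terms of local patterns; more precisely, by Proposition \ref{prop:SFT containment}, the existence of an equivariant continuous map from an action into a given shift of finite type depends only on the weak equivalence class. Since $\ax\approx \ax_\top^\s K$, the generic action $\ax$ likewise admits equivariant continuous maps into $\bx_{r,k}$ for all $r$ and large $k$.

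Pulling these maps back along $\ax$, we obtain, for each $r$, a clopen equivalence relation on $\s C$ refining the $\ax$-orbit equivalence relation, whose classes have diameter at most $k$ in the word metric on each orbit and whose $r$-neighborhoods meet at most $n+1$ classes. This is exactly the witness that $\ax$ has Borel asymptotic dimension at most $n$ in the sense of Conley--Jackson--Marks--Seward--Tucker-Drob, and their theorem then yields hyperfiniteness of the orbit equivalence relation of $\ax$.

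The only genuinely content-bearing step is the invocation of Proposition \ref{prop:SFT containment} to move the SFT-mapping property across weak equivalence; everything else is bookkeeping built from results already in the excerpt. I expect no serious obstacle: one only needs to verify that the shift $\bx_{r,k}$ really is an SFT (which is immediate from the finite-window description of $k$-boundedness and of the $r$-ball constraint) and that it has a free factor when $\Gamma$ is infinite so the pullback retains the asymptotic dimension estimate on $\ax$-orbits rather than on possibly-collapsed images.
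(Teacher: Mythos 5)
Your argument is correct and follows what the paper intends: since hyperfiniteness is not known to be a weak-equivalence invariant (indeed the paper lists this as an open problem), the corollary cannot be deduced merely from the proposition's \emph{statement}; one has to re-run the proposition's \emph{proof} on the generic action. You do exactly that: the generic $\ax$ is weakly equivalent to $\ax_\top^{\s K}$, the map-into-$\bx_{r,k}$ property is a local-pattern (hence weak-equivalence) invariant by Proposition \ref{prop:SFT containment}, and CJMSTD then gives hyperfiniteness. One small correction to your closing caveat: what you actually need is not that $\bx_{r,k}$ have a free factor, but that the generic $\ax$ itself be free, so that its Schreier graph has the Cayley-graph geometry and the pulled-back bounded equivalence relation witnesses Borel asymptotic dimension $\le n$; this freeness is established earlier in the paper for generic actions and generic minimal actions, so the point is already in hand.
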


For $\Gamma=\Z^2$ or $F_n$, the same argument gives that $\ax_{\top}^{\kfin}$ is hyperfinite.


As easy as these actions are to decorate, the general problem of determining what patterns they admit can be intractable. The following observation is essentially true by definition:

\begin{prop}
    For any $W\Subset\Gamma$ and $P\subseteq [k]^W$, $P=p_W(f, \ax_\top)$ if and only if $\Gamma$ admits labelling $f$ with $p_W(f, \rho)=P$ (here, $\rho:\Gamma\curvearrowright\Gamma$ is right translation).
\end{prop}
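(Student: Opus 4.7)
The plan is to prove both implications, translating between labelings of $\ax_\top$ and labelings of $\Gamma$, using freeness of $\ax_\top$ crucially in one direction and subshift compactness in the other.

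For the ``if'' direction, suppose $f:\Gamma\to[k]$ satisfies $p_W(f,\rho)=P$. I would form the orbit closure $X_f:=\overline{\Gamma\cdot f}\subseteq[k]^\Gamma$ under the shift action. This is a compact zero-dimensional metrizable $\Gamma$-flow, and the coordinate labeling $g:X_f\to[k]$ defined by $g(y)=y(\id)$ has $p_W(g,X_f)=P$: the orbit of $f$ already realizes exactly $p_W(f,\rho)=P$, and since $[k]^W$ is finite and therefore discrete, the pattern sets of limit points of the orbit in $X_f$ also lie in $P$. After crossing with a trivial Cantor action to ensure the underlying space is Cantor if necessary, $X_f\in\Axn(\Gamma,\s C)$, so $X_f\preccurlyeq \ax_\top$ and $\ax_\top$ realizes $P$.

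For the ``only if'' direction, suppose $g:X\to[k]$ is continuous with $p_W(g,\ax_\top)=P$. Since $\ax_\top$ weakly contains free Cantor actions, $\ax_\top$ is itself free by Theorem \ref{thm: free closed up}. Freeness provides for each orbit $\Gamma\cdot x\subseteq X$ a canonical $\Gamma$-equivariant identification with $\Gamma$ under $\rho$ via $\gamma\mapsto\gamma\cdot x$. For each $p\in P$, I would choose a witness $x_p\in X$ with $g(\gamma\cdot x_p)=p(\gamma)$ for $\gamma\in W$; pulling $g$ back along the orbit identification gives $f_{x_p}:\Gamma\to[k]$, $f_{x_p}(\delta)=g(\delta\cdot x_p)$, satisfying $p\in p_W(f_{x_p},\rho)\subseteq P$. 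To assemble these finitely many labelings into a single $f:\Gamma\to[k]$, I would partition $\Gamma$ into finitely many pairwise $W$-separated infinite pieces (one per $p\in P$) with buffer regions wider than $W$, and set $f$ on the $p$-th piece to equal a suitable translate of $f_{x_p}$ chosen so the pattern $p$ is realized there.

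The main obstacle is the combining step: arranging that neither the transitions between pieces nor the choice of buffer values introduce any $W$-pattern outside $P$. The key is to exploit the infinitude of $\Gamma$ to take the pieces with mutually $W$-disjoint neighborhoods, so every $W$-window of the final $f$ lies inside a single piece together with its buffer; within such a window, all values come from one $f_{x_p}$, which has all its patterns in $P$. This is the delicate portion of the argument, but the ``essentially by definition'' comment suggests the author views the orbit-by-orbit identification coming from freeness of $\ax_\top$ as the heart of the matter, with the interleaving as routine bookkeeping enabled by the infinite cardinality of $\Gamma$ and the finiteness of $P$.
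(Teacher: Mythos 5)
Your ``if'' direction is correct: the orbit closure of $f$ in $[k]^\Gamma$ (fattened by a trivial Cantor factor if needed) is a Cantor flow whose coordinate labelling has pattern set exactly $P$, since the pattern-at-a-point map is locally constant and so limit points of the orbit contribute no new patterns; being an element of $\Axn(\Gamma,\s C)$, this flow is weakly below $\ax_\top$, which therefore realizes $P$. Up to the usual left/right bookkeeping this is surely the intended argument, and the paper gives no further proof beyond the remark that the proposition is ``essentially true by definition.''

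The ``only if'' direction, however, has a genuine gap at exactly the step you flag as delicate, and the gap cannot be closed: that implication is false as literally stated. Take $\Gamma=\Z$, $W=\{0,1\}$, $k=2$, and $P=\{00,11\}$. The proof of Proposition \ref{prop: generic not top transitive} produces a continuous $g:\ax_\top\to\{0,1\}$ with $g(1\cdot x)=g(x)$ everywhere and both values attained (equivalently, $\ax_\top$ weakly contains a disjoint union of two Cantor $\Z$-flows, labelled by which piece you are in), so $p_W(g,\ax_\top)=\{00,11\}$. But any $f:\Z\to\{0,1\}$ with $p_W(f,\rho)\subseteq\{00,11\}$ satisfies $f(n)=f(n+1)$ for all $n$, hence is constant and realizes only one pattern. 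This pinpoints why your stitching must fail: here the orbit labellings $f_{x_p}$ are the constant $0$ and constant $1$ labellings, and any partition of $\Z$ into two nonempty regions has a $W$-window $\{n,n+1\}$ straddling the boundary; ``pairwise $W$-separated pieces plus buffers'' covering a connected Cayley graph cannot avoid windows that see values from two different $f_{x_p}$'s, and no choice of buffer values repairs this. What your orbit-restriction argument does establish (and all that the undecidability corollary needs) is the corrected statement: $P=p_W(g,\ax_\top)$ for some $g$ if and only if $P=\bigcup_{j\le m}p_W(f_j,\rho)$ for finitely many labellings $f_1,\dots,f_m$ of $\Gamma$ --- one witness per $p\in P$ suffices in the forward direction, with no interleaving, and for the converse one takes a disjoint union of the orbit closures.
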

\begin{cor}
    For $\Gamma=\Z^2$, it is not computable to check if $P=p_W(f, \ax_\top)$.
\end{cor}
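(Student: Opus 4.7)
The plan is to reduce Berger's domino problem for $\Z^2$ to the decision problem in question. By the preceding proposition, ``$P=p_W(f,\ax_\top)$ for some labelling $f$'' is equivalent to ``there exists $g:\Z^2\to [k]$ whose set of $W$-patterns under the shift equals exactly $P$'', so it suffices to prove that this latter problem is not computable.

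First I would recall Berger's theorem in the following form: given $W\Subset \Z^2$, an alphabet $[k]$, and a set of allowed patterns $A\subseteq [k]^W$, it is undecidable whether the associated shift of finite type is nonempty, i.e., whether some $g:\Z^2\to [k]$ satisfies $p_W(g,\rho)\subseteq A$ (where $\rho$ is right translation of $\Z^2$ on itself).

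Next I would perform a finite search reduction. Suppose toward contradiction that one had an algorithm deciding membership in $\{P: P=p_W(g,\ax_\top)\text{ for some }g\}$. Given any instance $(W,k,A)$ of Berger's problem, there are only $2^{|A|}$ subsets $P\subseteq A$, all of which can be enumerated uniformly from the input. The shift of finite type cut out by $A$ is nonempty if and only if at least one such $P\subseteq A$ is realized as $p_W(g,\rho)$ for some $g$: the ``if'' direction is immediate since any such $g$ certifies nonemptiness, and the ``only if'' direction follows from taking $P:=p_W(g,\rho)\subseteq A$ for any witness $g$ to nonemptiness. Running the hypothetical decision procedure on each $P\subseteq A$ and returning ``yes'' iff at least one call returns ``yes'' would therefore decide Berger's problem, a contradiction.

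The hard part is essentially nothing beyond careful bookkeeping: the enumeration of subsets $P\subseteq A$ is clearly uniform in the input, and the translation between local patterns in $\ax_\top$ and labellings of $\Z^2$ is exactly the content of the preceding proposition. The corollary is thus a direct packaging of Berger's theorem through that proposition, with the only nontrivial ingredient being the undecidability of $\Z^2$ tiling itself.
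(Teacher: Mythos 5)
Your proposal is correct and matches the paper's one-line sketch (``one can easily reduce the Wang tiling problem to set of local patterns of $\Z^2$ labellings''): you have simply spelled out the finite-search reduction over subsets $P\subseteq A$ that the paper leaves implicit.
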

\begin{proof}
    One can easily reduce the Wang tiling problem to set of local patterns of $\Z^2$ labellings.
\end{proof}
\subsection{Bottom classes}

What about bottom elements? The definition of bottom is, of course, dual to the definition of top:

\begin{dfn}
    For a class of actions $\s K$, a \textbf{bottom} is an action $\bx_{\bot}^{\s K}\in \s K$ so that, for all $\ax\in \s K$, $\bx_{\bot}^{\s K}\preccurlyeq \ax$. We abbreviate $\bx_\bot^{\kfree}=\bx_\bot$
\end{dfn}

Constructing bottom elements is not as easy as constructing top elements. The product of two flows is an easy upper bound, but there is no easy lower bound. In terms of the dynamics of conjugacy, bottom elements correspond to minimal closed subflows. But, since $\Axn(\Gamma, \s C)$ isn't compact, we aren't guaranteed to have such subflows. 

Nonetheless, Elek showed that there is a bottom element among the free flows for any finitely generated group \cite[Theorem 2]{ElekQW}. We give a slight improvement of Elek's theorem below.

\begin{dfn}[Elek \cite{ElekQW}]
    For a group $\Gamma$, and $W\Subset \Gamma$ symmetric, let $\bx_W$ be the shift action on proper $|W|+1$-colorings of $\cay(\Gamma, W)$. That is,
    \[\bx^\Gamma_n:\Gamma\curvearrowright C_W\subseteq (|W|+1)^\Gamma\]
    \[C_W:=\bigl\{x\in(|W|+1)^\Gamma: (\forall \gamma\in \Gamma,\delta\in W\setminus\{\id\})\; x(\gamma)\not=x(\gamma\delta)\bigr\}.\] And, set $\bx_\bot$ to be any action on $\s C$ which is weakly equivalent to $\prod_{W\Subset\Gamma} \bx_W$.
\end{dfn}
Technically, we have overloaded the symbol $\bx_\bot$, but this action will turn out to be a bottom for $\kfree(\Gamma)$.

For any $W$ and any partial $(|W|+1)$-coloring $p$ of $\cay(\Gamma, W)$, a simple greedy algorithm can extend $p$ to a total coloring. It follows that $\bx_W$ has no isolated points, and we can identify $\bx_W$ with an element of $\Axn(\Gamma,\s C)$.

A standard continuous coloring argument says that any free flow has a map into $\bx_W$. Elek showed that, further, we make this map hit any desired clopen set, so $\bx_\bot$ is indeed a bottom element for $\kfree(\Gamma)$. We will argue further still that we can make $f[C]$ meet $D$ for any clopen $C,D$.

\begin{thm}
    For any finitely generated group $\Gamma$ and any free flow $\ax\in \Axn(\Gamma, \s C)$, $\ax\times\bx_\bot\approx \ax$. (So, in particular, $\bx_\bot\preccurlyeq \ax$.)
\end{thm}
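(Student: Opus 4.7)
The direction $\ax\preccurlyeq\ax\times\bx_\bot$ is immediate: the projection $\ax\times\bx_\bot\to\ax$ is a factor map, so any closed cover of $\ax$ pulls back to a cover of $\ax\times\bx_\bot$ with identical local patterns. For the reverse direction, I would invoke the ultra-co-product characterization of weak containment together with Proposition \ref{prop:evocative}: since the spaces are separable, it suffices to show that for every finite list of basic clopens $A_1\times B_1,\ldots,A_n\times B_n\subseteq \s C\times\s C$ there is an equivariant map in $\hom(\ax,\ax\times\bx_\bot)$ whose image meets each $A_i\times B_i$. Taking the map in the form $x\mapsto(x,g(x))$ reduces the problem to producing $g\in\hom(\ax,\bx_\bot)$ with $g[A_i]\cap B_i\neq\emptyset$ for all $i$.

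Since $\bx_\bot\approx\prod_W\bx_W$ and each $B_i$ depends on only finitely many factors, I fix a finite collection $\s F$ of finite symmetric windows and build coordinate maps $g_W\in\hom(\ax,\bx_W)$ for $W\in\s F$ independently. For a single such $W$, each $B_i$ can be written as a finite union of cylinders $\{y\in\bx_W:y\res V_i=p_i\}$ where $p_i:V_i\to\{1,\ldots,|W|+1\}$ is a proper partial coloring of $\cay(\Gamma,W)$ and $V_i\Subset\Gamma$; non-emptiness of the cylinder in $\bx_W$ guarantees that $p_i$ extends to a full proper coloring. Using that $\ax$ is free on $\s C$, I pick pairwise disjoint clopen seeds $A'_i\subseteq A_i$ whose $WV_i$-translates are pairwise disjoint in $\s C$ and such that each $V_i\times A'_i\to\s C$ is injective. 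Defining $g_W(v\cdot x):=p_i(v)$ for $x\in A'_i,v\in V_i$ gives a continuous partial proper $(|W|+1)$-coloring of $\sch(\ax,W)$, which I extend to all of $\s C$ using the standard clopen greedy algorithm behind Elek's construction of $\bx_W$.

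The main obstacle is verifying that Elek's clopen greedy extension can be adapted to respect an arbitrary clopen partial prescription rather than being built from scratch. The freeness-based separation of the seeds ensures that the prescription is already a genuine proper partial $(|W|+1)$-coloring of $\sch(\ax,W)$ --- no $W$-edge crosses between distinct seeds, and within a seed conflicts are ruled out by injectivity of $V_i\times A'_i\to\s C$ together with properness of $p_i$ --- so at each greedy stage the new color can be deployed on a clopen independent set in the uncolored region without conflict at the boundary with the prescription. Once all the $g_W$ are built, their product, padded with arbitrary equivariant coordinates in the remaining factors, gives the required $g\in\hom(\ax,\bx_\bot)$, completing the proof.
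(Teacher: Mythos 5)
Your proposal is correct and takes essentially the same approach as the paper: reduce to hitting finitely many clopen rectangles $A_i\times B_i$ with an equivariant map $x\mapsto(x,g(x))$, use freeness to plant well-separated clopen seeds inside the $A_i$, prescribe a proper partial coloring near the seeds, and extend it to all of $\s C$ by the clopen greedy algorithm. You route through the ultra-co-product characterization (Proposition~\ref{prop:evocative}) where the paper cites Proposition~\ref{prop:SFT containment}, but the construction is the same, and the one point you leave slightly informal (sharing a single family of seeds across all windows $W\in\s F$ so that one point of $A_i$ works for every coordinate of $\bx_\bot$ at once) is glossed over in the paper's proof in exactly the same way.
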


\begin{proof}
    For the sake of readability, we'll show that $\ax\times \bx_W\preccurlyeq \ax$ for all $W\Subset\Gamma$ symmetric. The theorem follows by repeating this argument simultaneously for any finite number of $W$'s. 
    
    We'll use Proposition \ref{prop:SFT containment}. Fix a free Cantor flow $\ax$ and consider nonempty open rectangles $A_1\times B_1,..., A_m\times B_m\subseteq \ax\times \bx_W$. First, we'll build a map  $F\in \hom(\ax, \bx_W)$ so that $F[A_i]\cap B_i\not=\emptyset$. 
    
    We can find some finite window $W\Subset\Gamma$ and patterns $\sigma_1,...,\sigma_k\in (k_n)^{W}$ defining neighborhoods in the $B_i$. That is, if $x\res W=\sigma_i$ then $x\in B_i$. We'll define a local rule $f$ for the map $F$, meaning we will define $f$ and set $F(x)(\gamma)=f(\gamma\inv x).$ We want that, for all $x\in \s C$, $\gamma\in \Gamma$, and $\delta\in \Delta$, $F(x)(\gamma\delta)\not=F(x)(\gamma)$, and we want that, for each $i$, there is some $x\in A_i$ so that $F(x)(\gamma)=\sigma_i(\gamma)$ for each $\gamma\in W$. So, our requirements on $f$ are:
    \[(\forall \gamma\in W, x\in X)\; f(x)\not=f(\gamma\inv x)\]
    \[(\exists x\in A_i)(\forall \gamma\in W)\; f(\gamma\inv\cdot x)=\sigma_i(x).\]

     Since $\s C$ has no isolated points, we can shrink that $A_i$'s so that $\bigl(W\inv\cdot A_i\bigr)\cap A_j=\emptyset$ for $i\not=j.$ Define a partial coloring $f_0:\s C\rightarrow(|W|+1)$ by setting $f_0(x)=\sigma_i(\gamma)$ if $\gamma\cdot x\in A_i$ (i.e.~if $x=\gamma\inv\cdot y$ for some $y\in A_i$). Then $f_0$ is well-defined since we shrank the $A_i$'s, $f_0$ is clearly continuous on its domain, and $\dom(f_0)=\bigcup_{\gamma\in W} \gamma\cdot A_i$ is clopen.
    
     Now, we'll extend $f_0$ to a continuous map $f: \s C\rightarrow \bigl(|W|+1\bigr)$. Let $U_0,..., U_n\subseteq\s C\setminus \dom(f_0)$ be disjoint clopen sets so that $\bigl(\gamma\cdot_{\ax} U_i\bigr)\cap U_i=\emptyset$ for each $i\in[n]$ and $\gamma\in W\inv$ (such a family of clopen sets exists by free-ness of $\ax$ and by compactness). Then, define $f\supseteq f_0$ on $U_i$ by induction as follows: Given $f\res \dom(f_0)\cup U_0\cup...\cup U_{k-1}$ and $x\in U_k$, let \[f(x)=\min\{c\in |W|+1: (\forall \gamma\in W)\;f(\gamma\inv\cdot x)\not=c\}.\] Then $f$ is well-defined since there are $|W|+1$ options for $c$; and $f$ is continuous since $f\inv(c)$ is a Boolean combination of clopen sets.

     By construction, $f(x)\not= f(\gamma\inv\cdot x)$ for all $x\in \s C$ and $\gamma\in W$. And, if $x\in A_i$, then $f(\gamma\inv \cdot x)=f_0(\gamma\inv \cdot x)=\sigma_i(\gamma)$ as desired. So, $F(x)(\gamma)=f(\gamma\inv\cdot x)$ defines a map in $\hom(\ax, \bx_W)$ with $F[A_i]\cap B_i\not=\emptyset$.
     
    Finally, $x\mapsto (x,F(x))$ is an equivariant map from $\ax$ to $\ax\times\bx$ so that $f[\s C]$ meets each $A_i\times B_i$. Thus $\ax\times \bx_n\preccurlyeq \ax$. The other weak containment is clear.
\end{proof}

One of Elek's motivations for studying this actions was to find a so-called ``qualitative" analogue of a statistical theory of graph limits. Here is one simple example of this kind of phenomenon. 

\begin{dfn}
    For $m,n\in \Z$, let $\mathbf{c}_{m,n}$ be the translation action on the finite $m\times n$ torus:
    \[\mathbf{c}_{m,n}:\Z^2\curvearrowright \Z^2/(m\Z\times n\Z)\]
    \[(a,b)\cdot (x,y)=(x+a \;\mathrm{ mod }\;m,y+b\;\mathrm{mod}\;n).\]
\end{dfn}
\begin{prop}\label{prop: graph limit}
    Suppose $\bx_\bot$ maps into a shift of finite type $X$. Then, there is some $M,N\in \N$ so that, if $m>M$ and $n>N$, then $\mathbf{c}_{m,n}$ maps into $X$.
\end{prop}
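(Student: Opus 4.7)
The plan is to extract a local rule from the given map $f \in \hom(\bx_\bot, X)$ and apply it to periodic colorings of the torus $\Z^2/(m\Z \times n\Z)$. Since whether a flow admits a factor map into an SFT is determined by its local patterns (namely, an $A$-valued continuous labelling whose $W_0$-patterns all lie in the set of allowed patterns), this property is invariant under weak equivalence, so I may replace $\bx_\bot$ by the actual product $\prod_{W \Subset \Z^2} \bx_W$. Let $W_0$ and $A$ be the window of definition and alphabet of $X$. The map $f : \prod_W \bx_W \to X$ has finite-alphabet codomain, so each level set $\{x : f(x)(\id) = a\}$ is clopen in the product, and hence depends only on finitely many coordinates of finitely many factors. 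A Curtis--Hedlund--Lyndon-style argument then yields finite sets $W_1, \ldots, W_k \Subset \Z^2$, a window $V \Subset \Z^2$, and a local rule $\phi : \prod_{i=1}^k (|W_i|+1)^V \to A$ such that
\[ f(c_1,\ldots,c_k, \ast)(v) = \phi\bigl(( c_i \res (v + V))_{i=1}^k \bigr) \]
for every choice of $c_i \in \bx_{W_i}$, with the coordinates outside $\{W_1,\ldots,W_k\}$ irrelevant.

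Next I would like each $\mathbf{c}_{m,n}$ to admit, for all $i$ simultaneously, a proper $(|W_i|+1)$-coloring of its Cayley graph with generators $W_i \setminus \{\id\}$. Choose $M, N$ larger than the maximum absolute value of any coordinate appearing in $W_1 \cup \cdots \cup W_k$, so that for $m > M$ and $n > N$ no nontrivial element of any $W_i$ collapses to the identity in $\Z^2/(m\Z \times n\Z)$. Then $\cay(\Z^2/(m\Z \times n\Z), W_i \setminus \{\id\})$ is a finite simple graph of degree at most $|W_i \setminus \{\id\}|$, so the greedy bound produces a proper $(|W_i|+1)$-coloring $c_i$. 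Lifting $c_i$ periodically to $\Z^2$ gives $\tilde c_i \in \bx_{W_i}$ with period contained in $m\Z \times n\Z$.

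Finally, assemble the tuple $(\tilde c_1, \ldots, \tilde c_k)$, extended arbitrarily on the remaining factors, to obtain a point of $\prod_W \bx_W = \bx_\bot$. Its image $y := f(\tilde c_1, \ldots, \tilde c_k, \ldots)$ lies in $X$, and by the local rule $y(v) = \phi((\tilde c_i \res (v+V))_i)$ is $(m\Z \times n\Z)$-periodic, so corresponds to an equivariant map $\mathbf{c}_{m,n} \to X$. The main obstacle is the first step: extracting the local rule $\phi$ from the abstract factor map $f$. This rests on the finiteness of the alphabet of $X$ and the zero-dimensionality of $\bx_\bot$, together with a Curtis--Hedlund--Lyndon-type observation for maps out of a product of subshifts, giving the finite-support statement that drives the rest.
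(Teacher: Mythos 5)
Your proof is correct, but it takes a genuinely different route from the paper's. The paper argues by contradiction using the ultraproduct machinery: assuming there is a sequence $m(i),n(i)\to\infty$ with $\mathbf{c}_{m(i),n(i)}$ not mapping to $X$, it forms $\ax=\colim_{\s U}\mathbf{c}_{m(i),n(i)}$, observes by \L o\'s that $\ax$ (hence $\ax\times\mathrm{(trivial)}$, a free perfect zero-dimensional flow) cannot map to $X$, and then derives a contradiction from $\bx_\bot\preccurlyeq$ that flow together with $\bx_\bot\to X$. Your argument is direct and constructive: after replacing $\bx_\bot$ by the concrete model $\prod_W\bx_W$ (justified, since mapping into an SFT is determined by local patterns and so is weak-equivalence invariant), you use the finiteness of the alphabet of $X$ together with compactness of the product to extract a finite local rule $\phi$ depending only on coordinates in $W_1,\dots,W_k$ over a window $V$; you then take $M,N$ large enough that no nonzero element of $\bigcup_i W_i$ dies in $\Z^2/(m\Z\times n\Z)$, produce proper colorings $c_i$ greedily, lift them $(m\Z\times n\Z)$-periodically to elements of $\bx_{W_i}$, and push through $\phi$ to obtain a periodic point of $X$, i.e., an equivariant map $\mathbf{c}_{m,n}\to X$. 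The one place that needs care -- that a clopen subset of $\prod_W\bx_W$ depends on only finitely many of the coordinate pairs $(W,v)$, so that $\phi$ really is a finite rule -- you flag explicitly, and it does hold by zero-dimensionality and compactness. Your approach buys a concrete, uniform bound for $M,N$ (the size of the support of the local rule) and makes visible the ``local algorithm'' content of the statement, which fits the Bernshteyn--Seward theme the paper alludes to; the paper's approach is shorter given the \L o\'s machinery already in place but is non-constructive.
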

\begin{proof}
    Suppose otherwise. Then, there is a pair of sequences $m(i),n(i)$ both tending to infinity so that $\mathbf{c}_{m(i),n(i)}$. Fix a nonprincipal ultrafilter $\s U$,  and consider $\ax=\colim_{\s U}\mathbf c_{m(i), n(i)}$. By \L o\'s's theorem, Theorem \ref{thm: approximate conjugacy}, $\ax$ does not map into $X$ does not map into $X$. This means $\ax\times \bx$ does not map into $X$ where $\bx$ is the trivial action on $\s C$. But, $\ax^\N$ is a free action on a perfect zero-dimensional space, so $\bx_\bot\preccurlyeq \ax^\N$. But this contradicts the fact that $\bx_\bot$ maps into $X$.
\end{proof}

Seward and Bernshteyn (independently) gave a somewhat more natural presentation of this bottom element as a non-compact action. They prove the following:

\begin{thm}[{\cite[Theorem 1.11]{ContinuousCombo}}]
    For any countable $\Gamma$, every free flow weakly contains the free part of the Bernoulli shift $F(2^{\Gamma})$.
\end{thm}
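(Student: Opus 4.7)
The plan is to reduce to the construction of a continuous equivariant map $h: X \to 2^\Gamma$ whose image lies in $F(2^\Gamma)$ and is rich enough to realize any prescribed local pattern. Given such $h$, for any continuous labelling $f: F(2^\Gamma) \to [k]$ and window $W \Subset \Gamma$, the composition $g = f \circ h$ is a continuous equivariant labelling of $\ax$ with $p_W(g, \ax) \subseteq p_W(f, F(2^\Gamma))$, and richness gives equality. Concretely, I would list $p_W(f, F(2^\Gamma)) = \{q_1, \ldots, q_m\}$, pick witnesses $x_j \in F(2^\Gamma)$, and use continuity of $f$ to obtain finite certificates $V_j \Subset \Gamma$ containing $W$ such that every $y \in F(2^\Gamma)$ with $y \res V_j = x_j \res V_j$ realizes $q_j$. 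Then it suffices to ensure $h(X) \cap [x_j \res V_j] \neq \emptyset$ for each $j$, where $[\,\cdot\,]$ denotes the associated cylinder in $2^\Gamma$.

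For the construction of $h$, I would draw on two tools already in hand. First, the previous theorem ($\ax \times \bx_\bot \approx \ax$ for free $\ax$) lets us freely plant finite patterns on sparse families of disjoint clopen ``probe regions'' of $X$ without disturbing the rest of the structure. Second, freeness of $\ax$ together with the continuous coloring construction underlying $\bx_\bot$ supplies, for each $\gamma \neq \id$, a clopen $A_\gamma \subseteq X$ with $\gamma A_\gamma \cap A_\gamma = \emptyset$ (via the Schreier graph 2- or 3-coloring); the induced equivariant map $h_\gamma: X \to 2^\Gamma$ then has image avoiding $P_\gamma := \{y \in 2^\Gamma : \gamma y = y\}$. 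Interleaving all of these coordinates through an injection $\Gamma \times (\Gamma \setminus \{\id\}) \hookrightarrow \Gamma$ yields a single continuous equivariant map $h: X \to 2^\Gamma$ whose image avoids every $P_\gamma$, so lies in $F(2^\Gamma) = 2^\Gamma \setminus \bigcup_{\gamma \neq \id} P_\gamma$.

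The main obstacle is reconciling the two ingredients in one map: the freeness-witness data must cover essentially every orbit coordinate (to produce non-periodicity witnesses for every $\gamma \neq \id$), while the probe cylinders demand specific values on finite prescribed sets. I would resolve this by a marker-and-filler construction: using the $\ax \times \bx_\bot \approx \ax$ strengthening, choose a very sparse clopen ``marker'' set whose $\Gamma$-translates form disjoint tiles covering $X$, so that at each tile center we can freely prescribe one of the certificate patterns $x_j \res V_j$, while on the complementary ``filler'' coordinates we run the interleaved freeness-witness construction described above. Sparseness of the markers guarantees that, after installing the probe patterns, every orbit still contains, for each $\gamma \neq \id$, a filler coordinate where the $A_\gamma$-witness separates the configuration from its $\gamma$-shift. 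The bookkeeping for simultaneous sparse tilings and interleaved witnesses is the technical heart of the proof, but the dynamical content is carried entirely by the freeness of $\ax$ and the strengthened $\bx_\bot$ theorem.
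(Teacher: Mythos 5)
The paper does not supply a proof of this statement: it is imported with a citation to Bernshteyn and Seward's work \cite{ContinuousCombo}, so there is no ``paper's own proof'' to compare against. Judging your proposal on its own terms, the reduction is sound: it does suffice to produce a continuous equivariant $h:X\to F(2^\Gamma)$ whose image meets finitely many prescribed cylinders, and your extraction of certificates $V_j$ from continuity of $f$ is correct.

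The construction of $h$, however, has a genuine flaw at the interleaving step. A continuous equivariant map $h:X\to 2^\Gamma$ is the same data as a single clopen set $B=\{x: h(x)(\id)=1\}\subseteq X$, since $h(x)(\gamma)=\mathbf 1_B(\gamma\inv\cdot x)$. There is simply no ``room'' in $2^\Gamma$ to interleave the family $(h_\gamma)_{\gamma\neq\id}$: an injection $\Gamma\times(\Gamma\setminus\{\id\})\hookrightarrow \Gamma$ compatible with both shift structures would have to restrict, for each fixed $\gamma$, to a $\Gamma$-equivariant bijection $\Gamma\to\Gamma$, and infinitely many such bijections cannot have pairwise disjoint images. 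What your interleaving actually produces is an equivariant map into $F\bigl((2^{\Gamma\setminus\{\id\}})^\Gamma\bigr)$, i.e.\ into the free part of a Bernoulli shift over a Cantor alphabet --- and collapsing that back to a two-letter alphabet while preserving freeness and continuity is precisely the content of the theorem, not a free move. (Also note: a single independent clopen $A_\gamma$ does not by itself force $h_\gamma(x)$ off $P_\gamma$; you need the full proper coloring so that some colour changes across the $\gamma$-edge at every point, not just on the part of the orbit that happens to meet $A_\gamma$.)

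The ``marker-and-filler'' paragraph correctly identifies where the real work lives, but leaves it unresolved. For general (in particular non-amenable) countable $\Gamma$ one cannot choose a clopen set whose translates tile $X$; at best one has a maximal $W$-separated clopen set, whose Vorono\"i cells can be unbounded and need not be uniformly describable, so ``at each tile center prescribe a pattern'' does not go through as stated. And even granting markers, the delicate part --- encoding, in a single clopen $B$, both (i) a witness for each orbit and each $\gamma\neq\id$ that the $B$-pattern is not $\gamma$-periodic, and (ii) the probe patterns $x_j\res V_j$, without the two requirements colliding --- is handled in \cite{ContinuousCombo} by a recursive, multi-scale construction. Your sketch names the tension but does not resolve it, so the proposal has a real gap at its acknowledged crux rather than a complete argument.
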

\begin{cor}
    Any large enough metrizable factor of the Cech--Stone compactification, $\hat F(2^\Gamma)$, is a bottom for $\kfree$.
\end{cor}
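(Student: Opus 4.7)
The plan is to combine the Bernshteyn--Seward theorem $F(2^\Gamma) \preccurlyeq \ax$ with the continuous L\"owenheim--Skolem theorem. First I would verify that $\hat F(2^\Gamma)$ itself is a (non-metrizable) free $\Gamma$-flow. Because $F(2^\Gamma)$ is zero-dimensional and the $\Gamma$-action on it is free, for each $\gamma \neq \id$ the Krawczyk--Steprans coloring argument cited after Lemma \ref{lem: finite dimensional coloring} produces a finite clopen $\gamma$-independent cover of $F(2^\Gamma)$; this cover extends through Stone duality to a finite clopen $\gamma$-independent cover of $\hat F(2^\Gamma)$, witnessing that $\gamma$ has no fixed points on the \v{C}ech--Stone compactification. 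So the extended action is free.

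Next I would observe that the weak containment $F(2^\Gamma) \preccurlyeq \ax$ lifts to $\hat F(2^\Gamma) \preccurlyeq \ax$ in the compact-flow sense. Since $F(2^\Gamma)$ is a dense Tychonoff subspace of the zero-dimensional compact space $\hat F(2^\Gamma)$, restriction and extension give a bijection between clopen subsets of the two spaces that preserves finite intersections and commutes with the $\Gamma$-action. Consequently $\s P_{W,n}(\hat F(2^\Gamma)) = \s P_{W,n}(F(2^\Gamma))$ for every $W \Subset \Gamma$ and $n \in \N$, and so $\hat F(2^\Gamma) \preccurlyeq \ax$ for every free $\ax \in \kfree$.

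Finally, apply the continuous L\"owenheim--Skolem theorem to $\hat F(2^\Gamma)$ to obtain a metrizable factor $\ax_0$ with factor map $q$ satisfying $\phi^{\ax_0}(f_1,\ldots,f_n) = \phi^{\hat F(2^\Gamma)}(f_1\circ q,\ldots,f_n\circ q)$ for every predicate $\phi$. Then $\ax_0$ and $\hat F(2^\Gamma)$ share the same $\Sigma_1$-theory, so by the $\Sigma_1$-characterization of weak containment $\ax_0 \approx \hat F(2^\Gamma)$; and since each bound $\chi(\gamma,\,\cdot\,) \leq n$ is $\Sigma_1$-definable, freeness transfers across equal theories so that $\ax_0$ is free. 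Chaining gives $\ax_0 \preccurlyeq \hat F(2^\Gamma) \preccurlyeq \ax$ for every free $\ax$, so any such L\"owenheim--Skolem factor serves as the advertised ``large enough'' metrizable bottom for $\kfree$.

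The main obstacle is the freeness of $\hat F(2^\Gamma)$: the coloring argument must produce finite clopen $\gamma$-independent covers of the non-compact space $F(2^\Gamma)$ before Stone duality can transport them up to the compactification. Once this point is settled via Krawczyk--Steprans (or a direct coloring exploiting the shift structure on $F(2^\Gamma)$), the remaining steps are bookkeeping with the model-theoretic machinery already in place.
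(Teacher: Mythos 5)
Your proposal captures the intended argument: push the Bernshteyn--Seward theorem $F(2^\Gamma)\preccurlyeq\ax$ through Stone duality to $\hat F(2^\Gamma)$, then cut down to a metrizable factor via L\"owenheim--Skolem while preserving the $\Sigma_1$-theory. The paper gives no proof of this corollary, and this is the natural one, so the route is essentially the same.

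One small point to tighten: the paper defines $\kfree(\Gamma)$ as a subset of $\Axn(\Gamma,\s C)$, so to conclude that the L\"owenheim--Skolem factor $\ax_0$ is literally a \emph{bottom} for $\kfree$ (not merely weakly below everything in $\kfree$) you need $\ax_0$ to live on Cantor space, i.e.\ to be perfect as well as compact, metrizable, and zero-dimensional. You verify zero-dimensionality via Lemma \ref{lem: dimension lemma}, but you do not address perfectness. It does hold --- $F(2^\Gamma)$ is a dense $G_\delta$ in $2^\Gamma$ with no isolated points, so $\hat F(2^\Gamma)$ has none, and the absence of isolated points can be expressed as a first-order condition on $C(X)$ (no norm-one idempotent $e$ with $fe\in\R e$ for all $f$, uniformly), so it transfers to any elementary substructure --- but it should be said. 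Alternatively, one can sidestep the issue by replacing $\ax_0$ with $\ax_0\times\mathbf t$, where $\mathbf t$ is the trivial Cantor flow: this is weakly equivalent to $\ax_0$, free, and on $\s C$. Either way the gap is cosmetic and easily patched; the structure of your argument is sound.
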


Combining this with a connection between descriptive combinatorics and local algorithms, they exactly characterized the local patterns of $\bx_\bot$. 

\begin{cor}[{\cite[Theorem 1.15]{ContinuousCombo}}]
    The following are equivalent for a shift of finite type $S$:
    \begin{enumerate}
        \item There is a some $f\in \hom(F(2^\Gamma), S)$
        \item For any free $\ax\in \Axn(\Gamma, \s C)$, there is a homomorphism $f\in \hom(\ax, S)$
        \item (if $\Gamma$ is locally finite) there is an $O(\log^*(n))$-round local algorithm for $S$-labelling large enough graphs which locally resemble $\Gamma$.
    \end{enumerate}
\end{cor}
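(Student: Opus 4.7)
The plan is to separate $(1) \Leftrightarrow (2)$, which is a short consequence of the preceding theorem and corollary together with Proposition \ref{prop:SFT containment} and the universal property of the \v Cech--Stone compactification, from the genuinely combinatorial equivalence $(1) \Leftrightarrow (3)$, which is the content of \cite{ContinuousCombo}.

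For $(1) \Rightarrow (2)$: given $f \in \hom(F(2^\Gamma), S)$, the universal property of $\hat F(2^\Gamma)$ applied to the compact Hausdorff target $S$ yields an extension $\hat f \in \hom(\hat F(2^\Gamma), S)$. Since any factor map gives weak containment of the target by the source, $S \preccurlyeq \hat F(2^\Gamma)$. By the preceding theorem, every free $\ax \in \Axn(\Gamma, \s C)$ weakly contains $\hat F(2^\Gamma)$, so transitivity of $\preccurlyeq$ gives $S \preccurlyeq \ax$. As $S$ is a shift of finite type, Proposition \ref{prop:SFT containment} with the empty list of auxiliary clopens delivers a map $\ax \to S$.

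For $(2) \Rightarrow (1)$: let $\hat F_0$ be a large enough metrizable factor of $\hat F(2^\Gamma)$ so that $\hat F_0 \in \kfree$ by the preceding corollary. Applying $(2)$ to $\hat F_0$ yields $g \in \hom(\hat F_0, S)$. Composing with the quotient $\hat F(2^\Gamma) \to \hat F_0$ gives a map $\hat F(2^\Gamma) \to S$, and restricting along the dense embedding $F(2^\Gamma) \hookrightarrow \hat F(2^\Gamma)$ yields the required $f$.

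For $(1) \Leftrightarrow (3)$: this is the substantive combinatorial content, and I would invoke Bernshteyn--Seward. The easier direction $(3) \Rightarrow (1)$ feeds the coordinates of $2^\Gamma$ as i.i.d.\ symmetry-breaking bits into the $O(\log^*(n))$-round algorithm; the bounded radius of the algorithm makes the induced map continuous and equivariant on $F(2^\Gamma)$. The harder direction $(1) \Rightarrow (3)$ must extract a $\log^*$-round local rule from a merely continuous equivariant map on the non-compact space $F(2^\Gamma)$, where continuity alone gives only unbounded-radius rules; I expect this to be the main obstacle, and my plan is to appeal directly to the iterative symmetry-breaking argument in \cite{ContinuousCombo} (in the tradition of Cole--Vishkin and Naor--Stockmeyer--Linial) rather than reconstruct it.
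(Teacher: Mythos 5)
The paper does not give its own proof of this corollary; it attributes the whole statement to Bernshteyn--Seward \cite[Theorem 1.15]{ContinuousCombo} and frames it as following from the preceding theorem (every free flow weakly contains $F(2^\Gamma)$) together with their connection to local algorithms. So you are not competing with a proof in the paper, and your decomposition---handle $(1)\Leftrightarrow(2)$ via the bottom-element machinery and defer $(1)\Leftrightarrow(3)$ to the cited source---is exactly the decomposition the paper is implicitly relying on.

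Your $(2)\Rightarrow(1)$ is clean. Your $(1)\Rightarrow(2)$ has one slip: extending $f$ to $\hat f\in\hom(\hat F(2^\Gamma),S)$ via the universal property is fine, but $\hat f$ need not be onto $S$, so you cannot directly conclude $S\preccurlyeq \hat F(2^\Gamma)$. What you actually get is $S'\preccurlyeq\hat F(2^\Gamma)$ for $S':=\im\hat f$, a subshift of $S$ that need not itself be of finite type. The fix is harmless and comes straight from Proposition \ref{prop:SFT containment}: the relevant statement there quantifies over shifts of finite type $\tilde X\supset X$, so $S'\preccurlyeq\ax$ together with $S\supset S'$ an SFT (and no auxiliary clopens) yields $\hom(\ax,S)\neq\emptyset$. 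Equivalently, one can bypass $\hat F$ entirely: read off from $f$ the labelling $g(x)=f(x)(\id)$ with $W$-local patterns contained in the allowed pattern set $P$ of $S$; weak containment $F(2^\Gamma)\preccurlyeq\ax$ furnishes a labelling $g'$ of $\ax$ with the same local pattern set, hence a map $\ax\to S$. Either way the argument goes through. Your treatment of $(1)\Leftrightarrow(3)$---the easy feeding of i.i.d.\ bits into a $\log^*$-round algorithm for one direction, and the iterated symmetry-breaking for the other---is the right attribution and the right assessment of where the difficulty sits; there is nothing to reconstruct here beyond what \cite{ContinuousCombo} already does.
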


For example, this means that labellings of finite tori given by  Proposition \ref{prop: graph limit} can all be constructed by running an efficient local algorithm on finite tori. There is a similar correspondence between large girth $2n$-regular graphs and $F_{2n}$. The notion of local resemblence is spelled out in detail in Bernshteyn's paper.



What about the other classes mentioned in the introduction to this section? Elek showed that his bottom element can be chosen to be minimal and mixing, so \[\bx_{\bot}^{\kfree}\approx \bx_{\bot}^{\kmin\cap\kfree}\approx \bx_{\bot}^{\kmix\cap\kfree}.\] The story with $\kfin$ is more complicated.

If $\bx_{\bot}^{\kfree(\Gamma)}\approx \bx_{\bot}^{\kfin(\Gamma)}$, then, for instance, every set of local patterns than can be realized on all large enough finite tori can be constructed by an efficient local algorithm. For $\Z^2$, this isn't the case: every sequence of finite actions of $\Z^2$ admits some local pattern which cannot be produced locally.

\begin{thm}
    There is no bottom element for $\kfin(\Z^2)$.
\end{thm}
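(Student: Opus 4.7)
The plan is to find, for any given sequence $\ax_i$ of finite free $\Z^2$-actions with $\ax := \colim_{\s U} \ax_i$ free, a shift of finite type $S$ that $\ax$ admits but that some carefully chosen $\bx\in \kfin(\Z^2)$ does not, witnessing $\ax\not\preccurlyeq \bx$.

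The primary candidate obstruction is the SFT $S_{\mathrm{PM}}$ encoding perfect matchings in $\cay(\Z^2,\{e_1,e_2\})$. Each orbit of a finite free $\Z^2$-action is a torus $\Z^2/L$, and a standard parity argument shows such a torus admits $S_{\mathrm{PM}}$ iff $[\Z^2:L]$ is even. By the $\Sigma_1$ characterization of weak containment together with \L o\'s's theorem, an element of $\kfin(\Z^2)$ maps into $S_{\mathrm{PM}}$ iff $\s U$-many of its defining finite actions have every orbit of even size. Suppose first, after refining $\s U$, that this holds for $\ax_i$. Then I would take $\bx_i := \Z^2/(2k_i+1)\Z^2$ for any sequence $k_i\to \infty$: these are single-orbit square tori of odd size $(2k_i+1)^2$, which do not admit $S_{\mathrm{PM}}$. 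Their lattice systoles $2k_i+1$ tend to $\infty$, so by the formula $\chi(\gamma,\colim_{\s U}\bx_i)=\lim_{\s U}\chi(\gamma,\bx_i)$ established earlier, $\bx:=\colim_{\s U} \bx_i$ is free; and since no $\bx_i$ admits $S_{\mathrm{PM}}$, neither does $\bx$. Thus $\ax\not\preccurlyeq \bx$ in this case.

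The complementary case, in which $\s U$-many $\ax_i$ contain an odd-size orbit, is the main difficulty, since now $\ax$ itself does not admit $S_{\mathrm{PM}}$ and I need a different obstruction. I would extend the analysis to modular invariants of the orbit lattices: for each prime $p$ and each subgroup $H\leq (\Z/p\Z)^2$, the condition $L\bmod p\Z^2 = H$ corresponds to the existence of labellings equivariant under $(\Z/p\Z)^2/H$, which is again encoded by an SFT. By iterated refinement of $\s U$ across finitely many primes and their subgroup lattices, one can stabilize the modular profile of $\ax_i$ and then choose $\bx_i$ as single-orbit actions with lattices $L_i$ of a different modular profile, whose systoles still tend to infinity so that $\bx$ remains free but fails some SFT that $\ax$ satisfies.

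The hardest part will be executing this modular case analysis uniformly across all possible orbit structures of $\ax$, since the sublattice structure of $\Z^2$ is considerably richer than bare parity. An alternative route that I would pursue in parallel is to prove directly that $\bx_\bot(\Z^2)\notin \kfin(\Z^2)$: by the Bernshteyn--Seward correspondence between $\bx_\bot$-labellings and $O(\log^* n)$-local algorithms, it should suffice to exhibit a specific locally constructible labelling of $\bx_\bot$ whose periodicity obstructions are incompatible with every finite torus, since the nonexistence of $\bx_\bot$ in $\kfin(\Z^2)$ immediately precludes any bottom element of the class.
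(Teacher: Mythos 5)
Your overall strategy---find an arithmetic/periodicity obstruction, witnessed by a shift of finite type or a set of local patterns that the given ultraproduct of finite actions realizes but a carefully chosen competing ultraproduct of tori does not---is the right direction and matches the spirit of the paper's argument. Your Case 1 (all orbits of $\s U$-many $\ax_i$ have even size) is correct: parity of orbit size controls perfect matchings in each orbit torus, and odd square tori give a free ultraproduct failing $S_{\mathrm{PM}}$.

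The genuine gap is Case 2, which you acknowledge but which is worse than you suggest. The modular-profile extension does not in fact cover all remaining cases. Two problems: first, the existence of an SFT labelling equivariant under $(\Z/p\Z)^2/H$ detects only that the orbit lattices satisfy $L \bmod p \subseteq H$, not equality, and equality is not an SFT condition (it involves a negation). Second and more seriously, there are free $\ax\in\kfin(\Z^2)$ whose defining lattices are ``modularly generic'': if $\ax_i = \mathbf c_{p_i, q_i}$ with $p_i, q_i$ distinct growing primes, then for every fixed prime $p$, eventually $L_i \bmod p\Z^2 = (\Z/p\Z)^2$. Such an $\ax$ admits no nontrivial modular SFT at any prime, so there is nothing for a competitor $\bx$ to fail. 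Since $p_i q_i$ is odd, this $\ax$ lands squarely in your Case 2 with nothing to distinguish it. Your plan must somehow produce a new obstruction for this $\ax$, which the modular framework cannot.

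Your fallback is also logically flawed: showing $\bx_\bot(\Z^2)\notin\kfin(\Z^2)$ would \emph{not} preclude a bottom for $\kfin(\Z^2)$. A bottom $\bx^*$ for $\kfin$ would satisfy $\bx_\bot \preccurlyeq \bx^*$ and $\bx^* \preccurlyeq \ax$ for all $\ax\in\kfin$, but nothing forces $\bx^*\approx \bx_\bot$. Since $\kfin$ is not invariant under weak equivalence (as noted in the paper), $\bx^*$ could sit strictly between $\bx_\bot$ and everything in $\kfin$.

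The paper's route is different and handles all cases at once. Starting from the would-be bottom $\bx=\lim_{\s U}\bx_i$, it first forces (by comparing against a transitive competitor like $\ax_3=\colim_{\s U}\mathbf c_{3^n,3^n}$, which cannot support a nonconstant locally constant label) that $\s U$-almost every $\bx_i$ is a single finite torus $\mathbf c_{n_i,m_i}$. After passing to a $\s U$-large set where $n_i\equiv j \pmod 3$ and the aspect profile stabilizes, it constructs an explicit labelling (the figure) whose $W$-local pattern forces horizontal period $\equiv j\pmod 3$. That pattern is realized on $\bx$ but cannot be replicated on $\ax_3$ (if $j\neq 0$, since every labelling of a $3^n\times 3^n$ torus has period a power of $3$) or on $\ax_7$ (if $j=0$). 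The single mod-$3$ invariant of the torus dimensions, rather than a case split by orbit parity, is what makes the argument close.
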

\begin{proof}
    For a prime $p$, let $\ax_p=\colim_{U} \mathbf{c}_{p^n,p^n}.$

    Suppose towards contradiction that $\bx=\lim_{\s U} \bx_i$ is a bottom element for $\kfin$. By considering local patterns like those in Proposition \ref{prop: generic not top transitive}, we can see that $\s U$-almost every $\bx_i$ is transitive. So, without loss of generality, $\bx_i$ is a finite torus $\mathbf{c}_{n_i,m_i}$ for some sequence of integers $n_i,m_i\rightarrow\infty$. 
    
    Passing to a $\s U$-large subset, we may assume that there is some $0\leq j<3$ so that, for all $i$, $n_i\equiv j\mod 3$ and $m_i>3$. Further, we may assume $n_i<m_i$ for all $i$ and $\max(m_i-n_i, 2)$ is constant. Let $W=\{\id,\vec e_1, \vec e_2,\vec e_1+\vec e_2, -\vec e_1, -\vec e_2,-\vec e_1-\vec e_2\}$. Consider $f$ as pictured in Figure \ref{pic: tiling}. For any $i$, $p_{W}(f,\mathbf{c}_{n_i,m_i})=P$ is the same. And, any periodic labelling with $W$-pattern $P$ has horizontal period divisible by $3k+j$ for some $k$.
    
    If $j\not=3$ $P_{A, W}(f, \bx)$ cannot be replicated by a labelling of $\ax_3$ since the horizontal period of any labelling of $\ax_3$ must be a power of $3$. So $\ax_3\not \preccurlyeq\bx$. And, if $j=3$, then $f$ can't be replicated by a labelling of $\ax_7$. So, in either case $\bx$ is not a bottom.

    \begin{figure}
        \centering
        \includegraphics[width=0.75\linewidth]{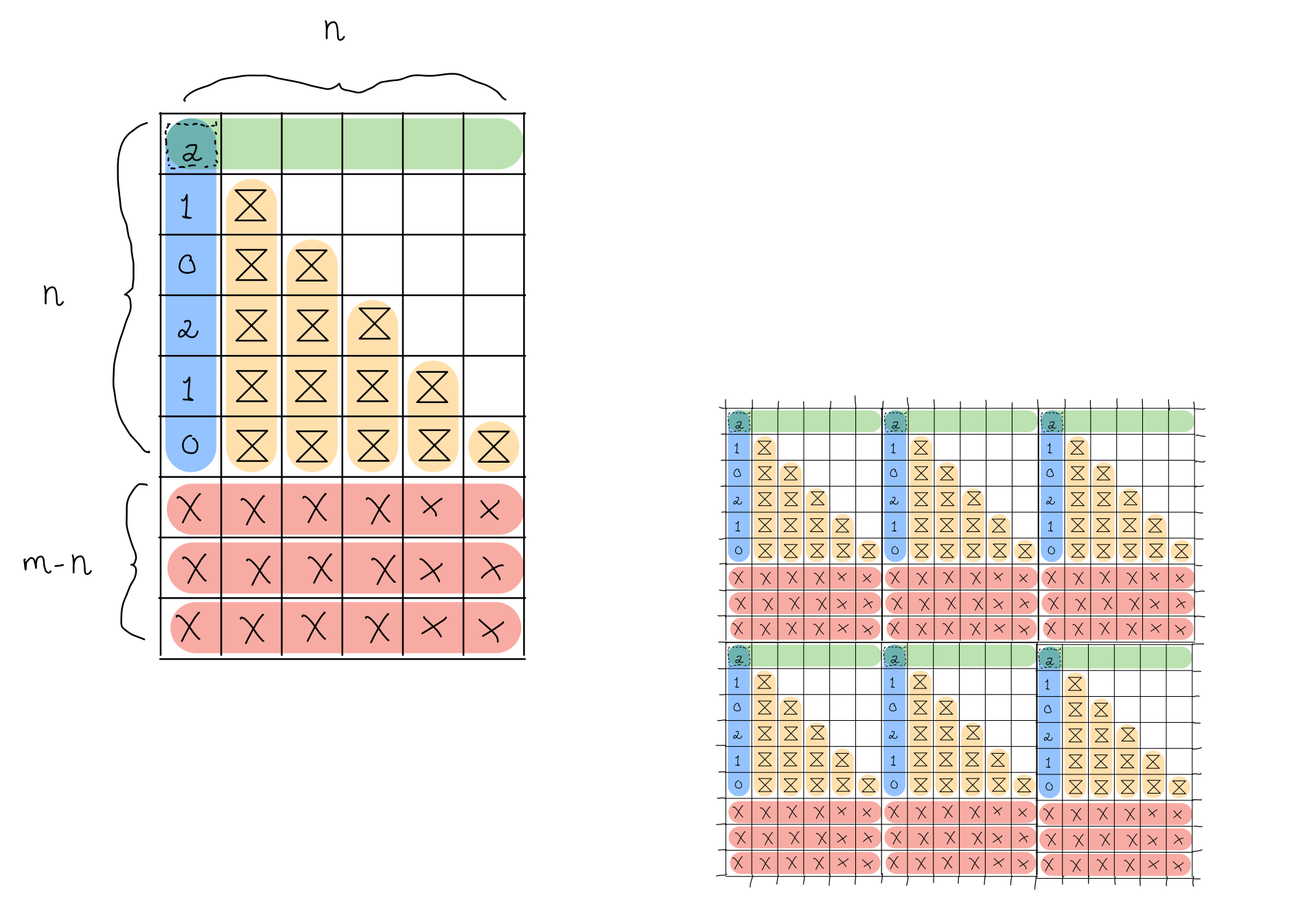}
        \caption{Any periodic tiling with the same local pattern as the above must have period $2$ mod $3$ in the $x$-direction.}
        \label{pic: tiling}
    \end{figure}
    
    \end{proof}

\subsection{Antichains}

It's natural to ask how large we can make antichains in the weak containment order for a given group. The answer seems to reflect some large-scale geometry of the group.

\begin{dfn}
    Say that a set of actions, $A$, is an antichain if 
    \[(\forall \ax,\bx\in A) \;\ax\not\preccurlyeq\bx\mbox{ and }\bx\not\preccurlyeq\ax.\]

    For a class of actions $\s K$, the \textbf{width} of $\s K$ is
    \[\wdth(\s K):=\sup\bigl\{|A|:A\subseteq \s K, \;A \mbox{ an antichain}\}.\]
\end{dfn}

Since the relation $\preccurlyeq$ is Borel, a theorem of Harrington, Marker, and Shelah implies that when $\s K$ is Borel $\wdth(\s K)$ is either countable or $|\R|$ \cite{HarringtonMarkerShelah}.

\begin{prop} \label{prop: z antichain}
    $\wdth\bigl(\kfree(\Z)\bigr)=|\R|$.
\end{prop}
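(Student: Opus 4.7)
The plan is to realize a continuum-sized antichain as a family of odometers $\ax_S$ indexed by sets of primes $S$, designed so that $\ax_S$ admits a continuous factor map onto a certain family of SFTs determined precisely by $S$. For each nonempty set of primes $S$, let $X_S := \prod_{q \in S}\Z_q$ (homeomorphic to Cantor space when $S$ is infinite) and let $\ax_S: \Z \curvearrowright X_S$ be translation by the constant element $(1)_{q\in S}$. The action is free, since the diagonal embedding $\Z \hookrightarrow X_S$ is injective.

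To distinguish the $\ax_S$'s under $\preccurlyeq$, for each prime $p$ let $\Pi_p \subseteq (\Z/p\Z)^\Z$ be the shift of finite type of sequences satisfying $x(n+1)=x(n)+1 \pmod p$. Admitting a factor map to $\Pi_p$ is monotone under weak containment: if $\bx \preccurlyeq \ax$ and $\bx$ factors onto $\Pi_p$, then a witnessing labelling has $\{0,1\}$-local patterns contained in the defining constraint of $\Pi_p$, and by definition of $\preccurlyeq$ the same pattern set is realized by a labelling of $\ax$, yielding a factor map $\ax \to \Pi_p$.

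The crux of the proof is to show $\ax_S$ factors onto $\Pi_p$ if and only if $p \in S$. The forward direction is immediate from the projection $X_S \to \Z_p$ composed with reduction mod $p$. For the converse, let $f: X_S \to \Z/p\Z$ be continuous and equivariant (with $\Z$ acting on $\Z/p\Z$ by $n\cdot a := a+n$), and set $g(x):=f(x)-f(0)$. Then $g$ is continuous with $g(n\cdot x) = g(x) + n$ for all $n \in \Z$; the continuous map $(x,y) \mapsto g(x+y)-g(x)-g(y)$ therefore vanishes whenever $y \in \Z$, and by density of $\Z$ in $X_S$ it vanishes identically, so $g$ is a continuous group homomorphism with $g(1)=1$. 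Since $X_S$ is profinite abelian, $\ker g$ is open, and $X_S/\ker g$ is a finite quotient of the form $\prod_{q \in F}\Z/q^{n_q}\Z$ with $F \Subset S$; the existence of a surjection onto $\Z/p\Z$ forces $p \mid \prod_{q \in F} q^{n_q}$, whence $p \in F \subseteq S$.

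Combining these, $\ax_S \preccurlyeq \ax_T$ implies $S \subseteq T$, so choosing any antichain $\{S_\alpha : \alpha < 2^{\aleph_0}\}$ in $(\s P(\text{primes}), \subseteq)$ of size continuum --- for instance an almost disjoint family of infinite subsets of primes --- produces an antichain $\{\ax_{S_\alpha}\}$ in $\kfree(\Z)$ of the required size. The main obstacle is the ``only if'' half of the characterization, where I must promote the equivariant continuous map to a group homomorphism by exploiting the topological group structure on $X_S$ together with density of the $\Z$-orbit, and then read off the permissible primes from the classification of finite quotients of profinite abelian groups.
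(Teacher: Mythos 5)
Your proof is correct and follows essentially the same strategy as the paper: a free profinite odometer-style action $\ax_S$ for each infinite set of primes $S$, distinguished by which cyclic $\Z$-systems they admit equivariant maps onto. The paper takes $\ax_S=\prod_{p\in S}\mathbf c_p$ with $\mathbf c_p$ translation on $\Z/p\Z$, whereas you take translation by $(1)_{q\in S}$ on $\prod_{q\in S}\Z_q$ with the $q$-adic integers; these differ only cosmetically, and $\mathbf c_p$ is conjugate to your $\Pi_p$. The one substantive divergence is in the key converse, that $\hom(\ax_S,\mathbf c_p)=\emptyset$ when $p\notin S$: the paper observes that any continuous map out of the profinite product depends on only finitely many coordinates, hence factors through a finite subproduct whose order is coprime to $p$, while you instead upgrade the continuous equivariant map to a continuous group homomorphism via density of the $\Z$-orbit and then invoke the classification of finite quotients of a profinite abelian group. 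Both are sound; the paper's is a bare compactness argument, yours leans on the ambient topological group structure and is perhaps more conceptual but not shorter. You are also right that one should restrict to an almost disjoint family of infinite prime sets to get an honest antichain of size $|\R|$; the paper's proof only establishes $\ax_S\preccurlyeq\ax_T\Rightarrow S\subseteq T$ and somewhat hastily declares the whole family $\{\ax_S\}$ an antichain, which it is not, though the width conclusion still follows.
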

\begin{proof}
For $n\in \N$, let $\mathbf c_n$ be the translation action of $\Z$ on $\Z/n\Z$. For $S$ an infinite set of primes, define \[\ax_S:= \prod_{p\in S} \mathbf c_p\quad \mbox{ and }\quad A:=\{\ax_S: S\mbox{ an infinite set of primes}\} .\] Clearly each $\ax_S$ is free and conjugate to an action on $\s C$. It remains to check that $A$ is an antichain.

If $p\in S$, then $\mathbf c_p\preccurlyeq\ax_S$. Suppose $p\not\in S$. I claim that $\hom(\ax_S,\mathbf c_p)=\emptyset.$ Since $\mathbf c_p$ is a shift of finite type, this implies $\mathbf c_p\not\preccurlyeq \ax_S$, and the main result follows. Suppose toward contradiction that $f\in \hom(\ax_S, \mathbf c_p)$. By continuity, $f$ only depends on finitely many coordinates, so induces a map from $\prod_{q\in F} \mathbf c_q$ to $\mathbf c_p$ for some $F\Subset S$. But this is impossible as $p$ does not divide $\prod F$.
\end{proof}

Note that what we produce is stronger than an antichain: for each $\ax,\bx\in A$, there is a shift of finite type $X$ so that $X\preccurlyeq \ax$ but $\bx$ has no maps whatsoever into $X$.

What about more general groups? If we have an action of a subgroup $\Delta\subseteq \Gamma$, we can co-induce an action of $\Gamma$.

\begin{dfn}
    For $\Delta\subseteq\Gamma$ a subgroup and $\ax\in \Axn(\Delta, X)$, the \textbf{co-induced action} is the subshift 
     \[\coind_\Delta^\Gamma(\ax): \Gamma\curvearrowright X^\Gamma_\ax\subseteq X^\Gamma\] where
    \[X^\Gamma_\ax=\biggl\{x\in X^\Gamma: (\forall\delta\in \Delta,\gamma\in \Gamma) \;x(\gamma\delta)=\delta\inv\cdot_{\ax}\bigl (x(\gamma)\bigr)\biggr\}.\]
\end{dfn}

So, if $\ax:\Gamma\curvearrowright X$ is a subshift of $A^\Delta$, then $\coind_\Delta^\Gamma(\ax)$ is conjugate the subshift of $A^\Gamma$ where each $\Delta$ coset is labelled with an element of $X$. That is,
\begin{align*}\coind_\Delta^\Gamma(X) &= \{x\in X^\Gamma: (\forall \gamma\in \Gamma, \delta\in \Delta)\; x(\gamma\delta)=\delta\inv\cdot (x(\gamma))\} \\ & \cong \{ x\in A^{\Gamma\times \Delta}: (\forall \gamma\in \Gamma, \alpha,\beta\in \Delta)\; x(\gamma\beta, \alpha)=x(\gamma,\beta\alpha)\mbox{ and } x(\gamma,\cdot)\in X\} \\ & \cong \{ x\in A^\Gamma: (\forall \gamma\in \Gamma) \; (\delta\mapsto x(\gamma\delta))\in X\}.\end{align*}Co-induction is has the universal property of being adjoint to restricting an action (i.e.~inducing an action on a subgroup):
\[\hom(\ax\res \Delta, \bx)\cong\hom(\ax,\coind_{\Delta}^\Gamma(\bx)).\] This gives a recipe for mapping out of a coinduced action. We'll also need a lemma for mapping into one.

\begin{lem} \label{lemma:coinduction map}
    Fix $\Delta\subseteq\Gamma$ a subgroup. Then, any $\Delta$-action $\ax\in \Axn(\Gamma, \s C)$ maps into the restriction of its coinduction to $\Gamma$:
    \[\hom(\ax,\coind_\Delta^\Gamma(\ax)\res \Delta)\not=\emptyset.\] Moreover, if we want to map into the restriction of the free part, we have
    \[\hom(\ax\times \bx_\bot^2, F(\coind(\ax\times \bx_\bot^2))\res \Delta)\not=\emptyset.\]
\end{lem}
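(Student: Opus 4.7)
The plan is to use the orbit embedding $\phi(y)(\gamma) := \gamma\inv\cdot_\ax y$, exploiting the fact that $\ax\in\Axn(\Gamma,\s C)$ carries a full $\Gamma$-action even though the left-hand side of the hom treats it (by restriction) as a $\Delta$-space and the notation $\coind_\Delta^\Gamma(\ax)$ must be read as $\coind_\Delta^\Gamma(\ax\res\Delta)$. First, I would verify $\phi(y)\in X^\Gamma_{\ax\res\Delta}$ by direct calculation: $\phi(y)(\gamma\delta) = (\gamma\delta)\inv\cdot y = \delta\inv\cdot(\gamma\inv\cdot y) = \delta\inv\cdot\phi(y)(\gamma)$ for all $\gamma\in\Gamma$ and $\delta\in\Delta$. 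Continuity is coordinatewise since each map $y\mapsto\gamma\inv\cdot y$ is a homeomorphism of $X$, and $\phi$ is in fact $\Gamma$-equivariant: $\phi(\gamma_0\cdot y)(\gamma) = \gamma\inv\gamma_0\cdot y = \phi(y)(\gamma_0\inv\gamma) = (\gamma_0\cdot \phi(y))(\gamma)$. Restricting the equivariance to $\Delta$ then proves $\hom(\ax,\coind_\Delta^\Gamma(\ax)\res\Delta)\neq\emptyset$.

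For the ``moreover'' clause, I would apply the same orbit embedding to the $\Gamma$-flow $\ax\times\bx_\bot^2$. Since $\bx_\bot$ is a free $\Gamma$-action (being a bottom of $\kfree(\Gamma)$), the product $\ax\times\bx_\bot^2$ is $\Gamma$-free. The orbit embedding preserves freeness: if $\gamma_0$ stabilizes $\phi(y)$, then evaluating $\gamma_0\cdot\phi(y) = \phi(y)$ at $\gamma = \id$ gives $\phi(y)(\gamma_0\inv) = \phi(y)(\id)$, i.e.\ $\gamma_0\cdot y = y$, which forces $\gamma_0 = \id$. Thus $\phi$ takes values in $F\bigl(\coind_\Delta^\Gamma((\ax\times\bx_\bot^2)\res\Delta)\bigr)$, and restricting to $\Delta$-equivariance yields the second claim.

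The main obstacle here is really just parsing the statement. Once one recognizes that $\coind_\Delta^\Gamma$ is implicitly applied to the $\Delta$-restriction and that we still have access to the ambient $\Gamma$-action on the source, the orbit map is essentially forced---it is the unit of the restriction/coinduction adjunction applied to a $\Gamma$-action---and both claims follow mechanically from the homomorphism property of $\ax$ together with the freeness of $\bx_\bot$.
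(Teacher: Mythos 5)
Your proof is based on a misreading of the statement. The ``$\ax\in\Axn(\Gamma,\s C)$'' in the lemma is a typo for ``$\ax\in\Axn(\Delta,\s C)$''---note the phrase is ``any $\Delta$-action $\ax$,'' the paper's own proof immediately identifies $\ax$ with a subshift of $\s C^\Delta$ and never invokes a $\Gamma$-action on the source, the $\bx_\bot$ in the ``moreover'' clause is the $\Delta$-bottom (compare the application that follows, where it appears as $\bx_\bot^\Z$), and the lemma is applied to genuine $\Delta$-actions such as $\bx\times(\bx_\bot^\Z)^2$ that come with no given extension to $\Gamma$. Under that reading your orbit map $\phi(y)(\gamma)=\gamma^{-1}\cdot_\ax y$ is simply undefined for $\gamma\notin\Delta$, so the argument does not get off the ground. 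What you have proved is the unit of the restriction/coinduction adjunction evaluated on a $\Gamma$-action, which is the trivial special case the lemma is designed to go beyond: the whole point is to produce an equivariant map into $\coind_\Delta^\Gamma(\ax)\res\Delta$ \emph{without} a $\Gamma$-action available on $\ax$.

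The missing idea is the paper's use of coset representatives. Fixing a transversal $R$ with $R\cap\Delta=\{\id\}$ so that each $\gamma\in\Gamma$ decomposes as $\gamma=\alpha r\beta$ with $\alpha,\beta\in\Delta$, $r\in R$, the paper sets $f(x)(\alpha r\beta,\delta)=x(\alpha\beta\delta)$; the $R$-coordinate is forgotten and only the $\Delta$-parts of $\gamma$ are fed into the $\Delta$-subshift $x$. Likewise the ``moreover'' clause cannot fall out of ``the source is $\Gamma$-free,'' since there is no $\Gamma$-action on the source. The paper instead twists $f$ on the non-identity cosets by a fixed-point-free automorphism $\pi$ of $\bx_\bot^2$ (obtained by permuting colors in the proper-coloring shifts) and extracts freeness of the image from that twist together with $\Delta$-freeness of $\bx_\bot$; this is why $\bx_\bot^2$ appears rather than a single copy of $\bx_\bot$. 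Your construction would only recover the case where the given $\Delta$-action extends to a free $\Gamma$-action on the same space, which is neither assumed nor true in the applications.
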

So, for example, if $\Delta$ is the $x$-axis in $\Z^2$, this lemma implies that we can take an element of $\bx_\bot^2$ and, in a $\Z$-equivariant way, populate every horizontal row with elements of $\bx_\bot^2$ so that no translation in $\Z^2$ fixes the output. 
\begin{proof}
    First fix any $\ax\in \Axn(\Gamma, \s C)$. Without loss of generality, we can identify $\ax$ with a subshift $X$ of $\s C^\Delta$. Let $R$ be a set of double coset representatives for $\Delta$ in $\Gamma$ so that $R\cap \Delta=\{\id\}$. This means that any $\gamma\in \Gamma$ can be written uniquely as $\gamma=\alpha r\beta$ with $r\in R$. Define $f: X\rightarrow \s C^{\Gamma\times \Delta}$ by 
    \[f(x)(\alpha r\beta,\delta)=x(\alpha\beta\delta).\] This lands in the correct space as, for any $\alpha,\beta,\eta,\delta\in \Delta$ and $r\in R$, \[f(x)(\alpha r \beta \eta,\delta)=x(\alpha\beta\eta\delta)=f(x)(\alpha r\beta, \eta\delta)\] and for any $x\in X$ \[f(x)(\alpha r\beta,\cdot)=x(\alpha\beta\cdot)=(\alpha\beta)\inv \cdot x\in X.\]

    Now we want to use $\bx_\bot^2$ to get a map into the free part of $\coind_\Delta^\Gamma(\ax\times \bx_\bot^2)\res \Delta$. It suffices to give a map in 
    \[\hom\bigl(\bx_\bot^2,F(\coind_{\Delta}^\Gamma(\bx_\bot^2))\res \Delta\bigr).\] 

    Let $\pi$ be an automorphism of $\bx_\bot^2$ so that $\pi(y)\not=y$ for any $y$. (Recall that $\bx_\bot$ a product of shifts of proper colorings, so we can build $\pi$ by permuting colors, for instance).

    For $(x,y)\in \bx_\bot^2$, define $f(x,y)=\bigl(f_0(x),f_1(y)\bigr)\in \coind_{\Delta}^\Gamma(\bx_\bot^2)$ by
    \[f_0(x)(\alpha r \beta,\delta)=x(\alpha\beta\delta)\] and \[f_1(y)(\alpha r\beta,\delta)=\left\{\begin{array} {cc} \pi(y)(\alpha\beta\delta)  & r\not=\id \\  y(\alpha\beta\delta) & r\not=\id\end{array} \right.\] for $\alpha,\beta,\delta\in \Delta$ and $r\in R$. 

    By the same algebra as before, this is indeed a map from $\bx_\bot^2$ into $\coind_\Delta^\Gamma(\bx_\bot^2)\res \Delta.$ We just have to check that $f$ lands in the free part of coinduced action. Suppose towards contradiction that $\gamma\cdot f(x,y)=f(x,y)$ for some $\gamma\not=\id.$ By free-ness of $\bx_\bot$, we must have $\gamma\not\in \Delta$, so this means that, if $\gamma\inv=\alpha r\beta$, then
    \[\bigl((\alpha\beta)\inv\cdot x,(\alpha\beta)\inv\cdot\pi(y)\bigr)=(x, y).\]
    There are two cases: if $\alpha\beta=\id$, then $\pi(y)=y$, but this is impossible by choice of $\pi$; if $\alpha\beta\not=\id$, then $(\alpha\beta)^2\cdot x=x$, but this is impossible since $x\in \bx_\bot$.
\end{proof}

\begin{prop}
    If $\Z$ is a subgroup of $\Gamma$, then $\wdth(\kfree(\Gamma))=|\R|$.
\end{prop}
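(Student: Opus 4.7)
The plan is to lift the antichain of Proposition \ref{prop: z antichain} via coinduction. For each infinite set of primes $S$, set $\bx_S := \coind_\Z^\Gamma(\ax_S) \times \bx_\bot^\Gamma$, a free $\Gamma$-flow on Cantor space (free thanks to the $\bx_\bot^\Gamma$ factor). I claim that the $\Gamma$-SFTs $Y_p := \coind_\Z^\Gamma(\mathbf c_p)$ separate these flows: $Y_p \preccurlyeq \bx_S$ iff $p \in S$. Given this, an almost-disjoint family $\{S_\xi\}_{\xi \in 2^\N}$ of infinite sets of primes produces an antichain of size $|\R|$: for distinct $\xi, \eta$ pick $p \in S_\xi \setminus S_\eta$, giving $Y_p \preccurlyeq \bx_{S_\xi}$ but $Y_p \not\preccurlyeq \bx_{S_\eta}$, so $\bx_{S_\xi} \not\preccurlyeq \bx_{S_\eta}$.

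The direction $p \in S \Rightarrow Y_p \preccurlyeq \bx_S$ is immediate: apply the coinduction functor to the factor map $\ax_S \to \mathbf c_p$ and precompose with the projection $\bx_S \to \coind_\Z^\Gamma(\ax_S)$. For the converse, suppose $p \notin T$ and $Y_p \preccurlyeq \bx_T$. By Proposition \ref{prop:SFT containment} there is a $\Gamma$-equivariant $\bx_T \to Y_p$, equivalently (by the coinduction--restriction adjunction $\hom_\Gamma(-, \coind_\Z^\Gamma(\mathbf c_p)) \cong \hom_\Z((-)\res \Z, \mathbf c_p)$) a $\Z$-equivariant continuous $f \colon \bx_T \res \Z \to \mathbf c_p$. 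I identify $\bx_T \res \Z \cong X_T^R \times \bx_\bot^\Gamma \res \Z$ for $R$ a right transversal of $\Z$ in $\Gamma$; by compactness, $f$ depends on only finitely many coordinates: a finite $F \subseteq R \times T$ on the coinducted side, and finitely many components $\bx_{W_j}^\Gamma$ at finite windows $V_j \Subset \Gamma$ on the bottom side.

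The key observation is that for any $z$ whose relevant coordinates are fixed by $a^q$, $\Z$-equivariance gives $f(z) = f(a^q \cdot z) = f(z) + q \pmod p$, forcing $p \mid q$; I derive a contradiction by producing such a $z$ with $q$ coprime to $p$. On the $X_T^R$-side, $a^q$-invariance of the $F$-coordinates requires only that $q$ be divisible by the finitely many primes of $T$ appearing in $F$, all coprime to $p$. On the bottom-action side, for each $\bx_{W_j}^\Gamma$ I construct a proper $(|W_j|+1)$-coloring of $\cay(\Gamma, W_j)$ which agrees with its $a^q$-shift on $V_j$; for $q$ large enough that $V_j$ and $a^{-q} V_j$ are disjoint in $\cay(\Gamma, W_j)$, this is a finite partial-coloring problem on a bounded-degree graph, extendable greedily using $|W_j|+1$ colors. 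Taking $q$ to be the lcm of all these requirements (and coprime to $p$) gives the needed $z$, hence the contradiction. The main technical care is in this combinatorial construction; it is cleanest in generic $\Gamma$ (where for $q$ large the quotient $\langle a^q\rangle \backslash \cay(\Gamma, W_j)$ is loop-free and greedily $(|W_j|+1)$-colorable, giving globally $a^q$-periodic elements of $\bx_{W_j}^\Gamma$), but the local-periodicity formulation above always suffices.
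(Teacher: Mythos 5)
Your overall strategy — lift the $\Z$-antichain of Proposition \ref{prop: z antichain} by coinduction and separate the lifted actions by the coinduced SFTs $Y_p=\coind_\Z^\Gamma(\mathbf c_p)$, reducing via the coinduction--restriction adjunction to ruling out a $\Z$-equivariant map $\bx_T\res\Z\to\mathbf c_p$ — is the same as the paper's, and the positive direction is correct. Where you diverge is in how you rule out that $\Z$-map. The paper does it abstractly: Lemma \ref{lemma:coinduction map} supplies a $\Z$-equivariant map $\ax_T\times(\bx_\bot^\Z)^2\to \bx_T\res\Z$ (this is why the paper places $(\bx_\bot^\Z)^2$ \emph{inside} the coinduction), so a map $\bx_T\res\Z\to\mathbf c_p$ would compose to give $\ax_T\times(\bx_\bot^\Z)^2\to\mathbf c_p$, hence $\ax_T\to\mathbf c_p$ since $\ax_T\times(\bx_\bot^\Z)^2\approx\ax_T$, contradicting the $\Z$-case. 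You instead attempt a direct combinatorial argument, and this is where there is a real gap.

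The problem is your description of the $\Z$-action on the restricted coinduction. You write $\bx_T\res\Z\cong X_T^R\times\bx_\bot^\Gamma\res\Z$ and then claim that $a^q$-invariance on the relevant $X_T^R$-coordinates ``requires only that $q$ be divisible by the finitely many primes of $T$ appearing in $F$.'' This is only true when $\Z$ acts trivially on the coset space $R\cong\Z\backslash\Gamma$ and conjugation by every $r$ is trivial on $\Z$, i.e.\ essentially when $\Z$ is central. In general, $a^q$ permutes the cosets $r\Z$, and even on a fixed coset the induced map on $X_T$ is twisted by $r^{-1}a^{q}r$, which need not equal $a^{\pm q}$. Concretely, the constraint at a coset $r\Z$ with stabilizer $\{n:r^{-1}a^nr\in\Z\}=m_r\Z$ and twist $r^{-1}a^{m_r}r=a^{\ell_r}$ becomes ``$q'$ divides $\ell_r\cdot q/\gcd(m_r,q)$,'' not ``$q'$ divides $q$.'' (Already for $\Gamma=BS(1,2)$ and $\Z=\langle a\rangle$ one gets nontrivial twists.) One can likely still engineer a suitable $q$ coprime to $p$ by choosing the $q'$-adic valuation of $q$ larger than $v_{q'}(m_r)$ for each of the finitely many relevant pairs $(r,q')$ — but this bookkeeping, including the cycle-consistency of the resulting difference constraints across orbits, is exactly what needs to be done and is not done; you acknowledge the issue only with the phrase ``cleanest in generic $\Gamma$.'' This is the missing content, and it is precisely what the paper's Lemma \ref{lemma:coinduction map} is designed to sidestep.

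Two smaller points. First, ``$Y_p\preccurlyeq\bx_S$'' is stronger than what you actually establish (you produce a map $\bx_S\to Y_p$); what you need and use is that $\hom(\bx_S,Y_p)\ne\emptyset$ and that this survives under weak containment, which is fine but should be said as such. Second, restricting to an almost-disjoint family of infinite prime sets, rather than all infinite sets of primes, is a good move: nested prime sets give comparable products, so the family really does need to be pairwise incomparable for ``antichain'' to be literally correct.
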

\begin{proof}
    Let $A$ be the antichain of $\Z$-actions from Proposition \ref{prop: z antichain}. For $\ax\in A$, let $\ax'=\coind_\Z^\Gamma\bigl(\ax\times (\bx_{\bot}^\Z)^2\bigr)$. Fix $\ax,\bx\in A$, we want to show that $\ax'\not\preccurlyeq\bx'$.

    By the comments following Proposition \ref{prop: z antichain}, there is a shift of finite type $X$ so that ${\hom(\ax,X)\not=\emptyset}$ and $\hom(\bx, X)=\emptyset$. We will show that $X'=\coind_\Z^\Gamma(X)$ similarly serves as a witness to $\ax'\not\preccurlyeq\bx$. Note that $X'$ is a shift of finite type.
    
    The restriction $\ax'\res \Z$, factors onto $\ax$ by projection to the coordinates in $\Z$. In particular, by the adjoint property, $\hom(\ax', \coind_\Z^\Gamma(X))\not=\emptyset.$ 

    Now, suppose towards contradiction that $\bx'$ has a map to $X'$. Then, $\bx'\res \Z$ has a map to $X$. However, by the previous lemma, there is a map $f\in \hom\bigl(\bx \times (\bx_\bot^\Z)^2,\bx'\res\Z\bigr)$. But since $(\bx_\bot^\Z)^2\times\bx\approx \bx$ this means $\hom(\bx, X)\not=\emptyset$, which is a contradiction.
\end{proof}

Likely, $\wdth(\kfree(\Gamma))=|\R|$ for any infinite, finitely generated group. The proposition above reduces the question to studying monsters like Burnside groups. 

If we strip away finite noise by focusing on mixing actions, $\Z$ has only one free mixing action up to weak equivalence. More generally:

\begin{prop}
    If $\Gamma$ is 2-ended, then $\bx_{\bot}\approx \ax$ whenever $\ax$ is free and mixing.
\end{prop}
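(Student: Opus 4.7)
The first direction, $\bx_\bot \preccurlyeq \ax$, is immediate from the preceding theorem since $\ax$ is free. The task is to prove $\ax \preccurlyeq \bx_\bot$. My plan is to use the Bernshteyn--Seward characterization of $\bx_\bot$-maps to shifts of finite type, combined with the 1D structure of 2-ended groups.

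Fix a continuous labelling $f : X \to [k]$ and window $W \Subset \Gamma$; let $P := p_W(f, \ax)$ be the $W$-pattern set and let $Y \subseteq [k]^\Gamma$ be the image subshift of the associated equivariant map. Then $Y$ is a mixing $\Gamma$-subshift (being a factor of $\ax$) and $L_W(Y) = P$. I would then consider the SFT $\tilde Y := \{z \in [k]^\Gamma : (\forall \gamma)\; (\gamma \cdot z)\res W \in P\}$. Since $Y$ is shift-invariant with $W$-language $P$, we have $Y \subseteq \tilde Y$; and since any $z \in \tilde Y$ satisfies $z \res W = (\id \cdot z) \res W \in P$, we have $L_W(\tilde Y) = P$. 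It then suffices to produce a continuous equivariant $g : \bx_\bot^\Gamma \to \tilde Y$ whose image meets the cylinder set of each pattern in $P$, since such a $g$ will witness $P \in \s P_{W,k}(\bx_\bot^\Gamma)$.

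By the Bernshteyn--Seward characterization, the existence of such a $g$ reduces to an $O(\log^*(n))$-round distributed local algorithm producing $\tilde Y$-labellings of large finite graphs locally resembling $\Gamma$, while realizing every pattern in $P$ somewhere on the graph. For a 2-ended group $\Gamma$, such finite graphs are (up to bounded perturbation) long cycles, so we are reduced to the classical problem of distributedly labelling cycles by a $\Z$-SFT.

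The algorithm is standard for mixing $\Z$-SFTs: apply Cole--Vishkin's distributed $3$-coloring to carve each cycle into short intervals, and on each interval use the mixing (block-pasting) property of $Y \subseteq \tilde Y$ to greedily insert a valid labelling; by reserving intervals we can force each pattern of $P$ to appear at some position. The hard part will be verifying this coverage -- that the block-pasting constant of $Y$ (inherited from mixing of $\ax$) fits inside the intervals produced by the distributed $3$-coloring, and that the SFT definition of $\tilde Y$ successfully prevents stray $W$-patterns outside $P$ from appearing in the greedy fill-in.
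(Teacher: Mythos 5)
Your plan routes through the Bernshteyn--Seward local-algorithm characterization, which is a genuinely different approach from the paper's, and it captures the right core idea---carve the group into intervals using well-spaced markers and glue across gaps using mixing---but it has gaps at exactly the points you flag, and they are not minor. The paper never leaves the continuous world: given any free $\bx$ (not only $\bx_\bot$), it uses compactness and freeness to build a clopen family of cut sets $C_\sigma\subseteq\s C$, \emph{one for each $\sigma\in P$}, so that $F$-neighborhoods of distinct cuts are disjoint and the $\sch(\bx)$-components between consecutive cuts are uniformly bounded; it pastes a fixed extension $\sigma'\in A^F$ around each $x\in C_\sigma$, and fills each finite gap by choosing (via mixing) a point of $X_P$ that glues the two flanking $\sigma'$ patterns. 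Having a \emph{dedicated} cut family for every $\sigma\in P$ is precisely what forces all of $P$ to be realized.

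That last point is where your argument is incomplete. The Bernshteyn--Seward corollary as you cite it only gives $\hom(\bx_\bot,\tilde Y)\neq\emptyset$; what you actually need is the clopen-hitting condition of Proposition \ref{prop:SFT containment}, i.e.\ the stronger characterization of the \emph{local patterns} of $\bx_\bot$, and your ``reserving intervals'' step is exactly the unproved piece that must do that work. Separately, ``for a 2-ended group the finite models are long cycles'' is only morally true---virtually-$\Z$ gives you thickened cycles, Cole--Vishkin gives a $3$-coloring but you need a ruling set to get well-spaced markers, and both steps must respect the thickening---and you would then need to translate the finite algorithm back into a continuous equivariant map, a round-trip the paper avoids. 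The paper does that bookkeeping once and for all by fixing a finite $F\supseteq W$ such that $\Gamma\setminus F$ has exactly two infinite components and any two patterns in $P$ can be glued outside $F$ (this is where mixing enters), and by building the cuts inside $\bx$ itself using freeness and compactness rather than in finite approximants.
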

\begin{proof}
    The basic idea is that we simulate any labelling of $\ax$ on a free action $\bx$ by taking a well-spaced family of finite cuts, defining a continuous labelling however we like on these cuts in $\sch(\bx)$, then filling in the gaps using the fact that $\ax$ is mixing.

    Fix a generating set $E$ for $\Gamma$ (just so that we can speak unambiguously of components and ends in $\cay(\Gamma, E)$). Fix a continuous labelling $f:\s C\rightarrow A$ and window $W\Subset\Gamma$. And, fix a free action $\bx\in \kfree(\Gamma)$. We want to show that there is a continuous labelling $g: \s C\rightarrow A$ so that $p_W(f,\ax)=p_W(g,\ax).$
    
    Let $P=p_W(f,\ax)$, and let $X_P$ be the shift of finite type generated by $P$, i.e.
    \[X_P=\{x\in A^\Gamma: (\forall \gamma\in \Gamma)\; (\gamma\cdot x)\res W\in P\}.\] Note that, since $\ax$ is mixing, for any $F\Subset \Gamma$ and $\sigma,\tau\in p_F(f,\ax)$ and for all but finitely many $\gamma$, there is $x\in X_p$ so that $x\res W=\sigma$ and $(\gamma\cdot x)\res W=\tau$.

    Let $F\supseteq W$ be a finite subset of $\Gamma$ so that $\Gamma\setminus F$ has exactly $2$-components and so that if $\gamma, \delta$ are in different $\Gamma\setminus F$-components, then $\gamma\cdot W\cap \delta\cdot W=\emptyset.$ For each $\sigma\in P$, let $\sigma'\in A^F$ be some element of $p_F(\ax, f)$ extending $\sigma$. 

    Now we'll start building our labelling $g$. First, using free-ness of $\bx$ we can find a set $C=\bigcup_{\sigma\in P}C_\sigma$ so that 
    \begin{enumerate}
        \item The $C_\sigma$'s are clopen, disjoint, and nonempty
        \item $F\cdot x\cap F\cdot y=\emptyset$ for $x,y\in \bigcup_{\sigma\in P} C_\sigma$
        \item The connected components of $\s C\setminus F\cdot C$ as a subgraph of $\sch(\bx, E)$ are uniformly bounded in size. 
        \item Any path in $\sch(\bx, E)$ between distinct elements of $C_\sigma$ passes through $F\cdot C\setminus C_\sigma$
    \end{enumerate} Note that, if $K$ is a component of $\s C\setminus F\cdot C$, then $K$ is adjacent to exactly two cuts $F\cdot x$ and $F\cdot y$ with $x,y\in C$, and (by $(4)$) $x,y$ cannot both be in the same $C_\sigma$. 
    
    Now we'll start building our labelling of $\bx$. Fix any linear order $\sqsubseteq$ on $P$ and define $g$ as follows:
    \begin{enumerate}
        \item Suppose $y\in F\cdot C_\sigma$. Say $y=\gamma\cdot x$ for $x\in C$ and $\gamma\in F$. Then, set $g(y)=\sigma'(\gamma)$.
        \item Suppose $K$ is a component of $\s C\setminus F\cdot  C$. Say $K$ is adjacent to $F\cdot x, F_\cdot y$, $x\in C_\sigma$, $y\in C_\tau$, and $\sigma\sqsubseteq \tau$, suppose $y=\gamma\cdot x$ and let $z\in X_p$ be some labelling extending $\sigma\cup \gamma\cdot \tau$. For $\delta\cdot x\in K$, set $g(x)=z(\delta).$
    \end{enumerate}

    For any $x\in \s C$, $W\cdot x$ is either contained in $F\cdot y$ for some $y\in C$ or $W\cdot x$ is contained in a components of $\s C\setminus C$ and one of its two cuts. In either case, $g(\delta\cdot x)$ agrees with $z(\delta)$ for some $z\in X_p$. Thus, $p_W(g,\bx)\subseteq P$. And if $x\in C_\sigma$, then $g(\delta\cdot x)=\sigma(x)$, so $P\subseteq p_W(g,\bx).$
    
\end{proof}

This proof relied heavily on 2-ended-ness. Indeed, it seems likely (modulo some monstrosities) that the space of free mixing actions of $\Gamma$ is wide exactly when $\Gamma$ is not 2-ended. We can prove this for any group containing $F_2$ or $\Z^2$. 

First, we'll show that, for $\Gamma=\Z^2$ or $F_2$, $\wdth(\kfree\cap\kmix)=|\R|$. In fact, we have strong witnesses as above.

\begin{thm}
There is an antichain $A\subseteq \kfree(\Z^2)\cap\kmix(\Z^2)$ so that $|A|=|\R|$ and for any $\ax,\bx\in A$ there is a shift of finite type $X$ with $\hom(\ax,X)\not=\emptyset$ and $\hom(\bx,X)=\emptyset$.
\end{thm}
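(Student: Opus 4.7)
The plan is to adapt the construction of Proposition~\ref{prop: z antichain}, where an antichain of size continuum in $\kfree(\Z)$ was indexed by infinite sets of primes $S$ via the actions $\ax_S = \prod_{p\in S}\mathbf{c}_p$ and distinguished by the finite SFTs $\mathbf{c}_p$ themselves. In the $\Z^2$-setting the members must additionally be mixing; the difficulty is that topological mixing is preserved under factor maps, so the obvious analogues $\mathbf{c}_{p,p}$ on $(\Z/p)^2$ are not available as SFT targets for mixing sources.

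\emph{Step 1: Prime-indexed distinguishing pair.} For each prime $p$ I would construct a pair $(Y_p, X_p)$ of $\Z^2$-SFTs, with $Y_p$ free and mixing and $X_p$ a companion distinguishing SFT, arranged so that $\hom(Y_p, X_p)\neq\emptyset$ but no free mixing action built only from the $Y_q$ with $q\neq p$ admits a factor map onto $X_p$. The natural candidate for the ``$p$-signature'' is the finite transitive $\Z^2$-action $\mathbf{c}_{p,p}$; because this is not mixing, $X_p$ must be a richer SFT (for example, a sofic completion or a disjoint-union-style SFT) that still encodes the $p$-periodicity but admits factor maps from at least one mixing source. The family $\{(Y_p, X_p)\}$ must be mutually incomparable in the sense that $\hom(Y_q, X_p) = \emptyset$ for $q \neq p$, by analogy with the divisibility obstruction used for $\mathbf c_p$ in the $\Z$-case.

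\emph{Step 2: Antichain definition.} For each infinite set of primes $S$, set
\[
\ax_S \;:=\; \prod_{p \in S} Y_p.
\]
A countable product of free mixing $\Z^2$-SFTs is again free and topologically mixing, and by the continuous L\"owenheim--Skolem theorem it has a separable weakly equivalent model in $\Axn(\Z^2, \s C)$.

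\emph{Step 3: Antichain verification.} For $p \in S_1 \sm S_2$ I would verify $\hom(\ax_{S_1}, X_p)\neq\emptyset$ by projecting $\ax_{S_1}$ onto $Y_p$ and composing with the map $Y_p \to X_p$ from Step 1. For the negative direction $\hom(\ax_{S_2}, X_p)=\emptyset$, I would argue by a rigidity property of $X_p$: any factor map from a finite sub-product $\prod_{q\in F}Y_q$ (with $F\Subset S_2$, $p\notin F$) into $X_p$ must ``localize'' to a single $Y_q$-factor, contradicting the incomparability in Step 1. Continuous model theory and Proposition~\ref{prop:evocative} reduce the infinite case to finite sub-products.

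\textbf{Main obstacle.} The technical heart is Step 1: the construction of the pair $(Y_p, X_p)$. Unlike in the $\Z$-setting, where mixing is vacuous for finite actions and one has direct access to prime-specific distinguishers via finite cyclic groups, the $\Z^2$-setting forces mixing-compatible distinguishers that still encode prime-specific combinatorial features. The secondary obstacle is the localization argument in Step 3 --- that factor maps from products into $X_p$ must factor through an individual $Y_q$-summand. Both steps likely rely on the $\Sigma_1$-characterization of weak containment (Theorem on $\Sigma_1$ theories in Section~\ref{sec: equivalences}) together with the combinatorial rigidity built into the $X_p$'s.
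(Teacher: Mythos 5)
Your high-level plan (prime-indexed distinguishing SFTs) is on the right track, but two of the three steps have genuine gaps that the paper resolves with ideas you have not identified.

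First, the choice of $X_p$. You leave this open as the ``technical heart,'' but the whole proof hinges on it. The paper uses $T_p$, the SFT of tilings of $\Z^2$ by connected pieces of size $p$. This single object serves simultaneously as your $Y_p$ and your $X_p$: it is mixing, it is an SFT, and it carries a divisibility obstruction (a finite $\Z^2$-torus of order coprime to $p$ admits no $p$-tiling). The worry you raise---that mixing targets are incompatible with $\mathbf c_{p,p}$-style distinguishers---evaporates once you replace the finite torus by the tiling shift. However, $T_p$ is \emph{not} free (e.g.\ an aligned $1\times p$-strip tiling is periodic), so $\prod_{p\in S} T_p$ is not free either. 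Your $\ax_S = \prod_{p\in S} Y_p$ would therefore fail to be in $\kfree$; the paper fixes this by taking $\ax_S = \bx_\bot \times \prod_{p\in S} T_p$.

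Second, the negative direction $\hom(\ax_{S}, X_p) = \emptyset$ for $p\notin S$. Your proposed ``localization'' argument---that a factor map $\prod_{q\in F} Y_q \to X_p$ must factor through a single $Y_q$---is not a claim the paper uses, and it is false in general: maps out of a product need not factor through a coordinate. Instead the paper introduces a \emph{surrogate}: an ultra-co-product $\bx_S = \colim_{\s U}\bigl(\Z^2 \curvearrowright \Z^2/n_i\Z^2\bigr)$ with $n_i = p_1\cdots p_i$. One shows $\bx_S$ factors onto $\ax_S$ (it is free, hence maps to $\bx_\bot$, and it maps to each $T_q$, $q\in S$, since almost every $\Z^2/n_i\Z^2$ is $q$-tileable), while no finite torus in the sequence maps to $T_p$ by the divisibility obstruction, so $\hom(\bx_S, T_p)=\emptyset$ by \L o\'s. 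Then $\hom(\ax_S, T_p)=\emptyset$ follows by composing with the surjection $\bx_S\to\ax_S$. This ultrapower surrogate is the piece of the argument your Step 3 is missing, and without it I do not see how to close the gap.
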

\begin{proof}
    For $p$ a prime, let $T_p$ be the subshift of tilings of $\Z^2$ with connected pieces of size $p$. For $S$ an infinite set of primes, let $\ax_S=\bx_\bot\times \prod_{p\in S} T_p.$ Since each $T_p$ is mixing, so is $\ax_S$. Since $\bx_\bot$ is free, so is $\ax_S.$ Of course, $\ax_S$ is an action on a perfect compact metric space, so we may identify $\ax_S$ with an action in $\s C$. For any $p\in S$, $\ax_S$ has a map into $T_p$. Our antichain is \[A=\{\ax_S: S\mbox{ an infinite set of primes}\}.\] All that remains to show is that $\ax_S$ has no map into $T_p$ when $p\not\in S$.

   Fix an ultrafilter $\s U$ on $\N$, and say $S=\{p_i:i\in\N\}$. Define $n_i=\prod_{i=1}^n p_i$, and let $\bx_i:\Z^2\curvearrowright \Z^2/n_i\Z^2$. Define
    \[\bx_S=\lim_{\s U} \bx_i.\] By freeness, $\bx_S$ maps to $\bx_\bot$. For $p\in S$, almost every $\bx_i$ maps to $T_p$. So, by \L o\'s's theorem (Theorem \ref{thm:los}) $\bx_S$ maps to $T_p$ when $p\in S$. Thus, $\hom(\bx_S, \ax_S)\not=\emptyset.$ 
    
    But, if $p\not\in S$, $n_i\times n_i$ is not divisible by $p$, so no $\bx_i$ has a map to $T_p$. Again by \L o\'s's theorem, $\hom(\bx_S, T_p)=\emptyset.$ Thus, when $p\not\in S$, $\hom(\ax_S, T_p)=\emptyset$ as desired.
\end{proof}

Almost the same proof gives an antichain in $F_2$, but tiling finite actions of $F_2$ is not so simple. We will use a combinatorial lemma about large girth graphs to cook up an appropriate sequence of actions and then modify the tiling problem slightly.

\begin{lem}\label{lemma:girth}
    For any $g\in \N$ and any large enough tree $T$ with all degrees at most $4$, there is a $4$-regular graph $G$ of girth at least $g$ obtained by adding edges to $T$. 
\end{lem}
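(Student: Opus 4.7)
The plan is to use the \emph{configuration model} viewpoint. Writing $n=|V(T)|$, for each vertex $v$ with $\deg_T(v)<4$ introduce $4-\deg_T(v)$ ``deficient half-edges'' at $v$; let $H$ be the resulting multiset. A handshake count gives $|H|=4n-2(n-1)=2n+2$, which is even, so $H$ admits perfect matchings $M$, and any such $M$ yields a $4$-regular multigraph $T\cup M$. The task is to choose $M$ so that $T\cup M$ has no cycle of length $<g$ (which also forces simplicity, since short cycles include self-loops and parallel edges).

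First I would estimate, for a uniformly random perfect matching $M$ on $H$, the expected number of cycles of length $\ell<g$ in $T\cup M$. Such a cycle has some number $k\in\{1,\dots,\ell\}$ of matching edges interleaved with tree arcs of total length $\ell-k$. Because $T$ has maximum degree $4$, the number of vertex sequences realizing such a cycle is $O_\ell(n^k)$ (choose a starting vertex; each tree arc contributes $O_\ell(1)$ extensions, and each matching-edge ``jump'' contributes $O(n)$ choices for its other endpoint), while the probability that $k$ prescribed disjoint pairs all appear in a uniform matching on $H$ is $O(n^{-k})$. The resulting contribution is $O_\ell(1)$ for each $(\ell,k)$, and summing over $\ell<g$ and $k\le\ell$ gives a bound $C(g)$, independent of $n$, on the expected total number of short cycles. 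By Markov's inequality, some matching $M_0$ has at most $C(g)$ short cycles.

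Next I would repair $M_0$ by iterated swaps. Given a short cycle $\gamma$ in $T\cup M_0$, choose a matching edge $e=\{u,v\}$ on $\gamma$ and a second matching edge $e'=\{u',v'\}\in M_0$, then replace the pair $\{e,e'\}$ by either $\{\{u,u'\},\{v,v'\}\}$ or $\{\{u,v'\},\{v,u'\}\}$, whichever is safer. The new matching $M_0'$ no longer contains $e$, so $\gamma$ is destroyed; any new short cycle must use one of the two new edges. A short cycle through, say, $\{u,u'\}$ forces $u'$ into the $(g-2)$-ball of $u$ in the current graph, and that ball has size $O_g(1)$ because all degrees stay bounded by $4$ throughout. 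Hence only $O_g(1)$ choices of $e'$ are bad, while $|M_0|=n+1$, so for $n$ large enough a good swap exists. Each successful swap strictly reduces the number of short cycles, so iterating at most $C(g)$ times eliminates them all.

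The main obstacle is keeping the swap step local: the bad-swap count $O_g(1)$ must depend only on $g$ and the degree bound, not on $n$ or on the shape of $T$. This is where the maximum-degree-$4$ hypothesis is essential--every ball of bounded radius in any subgraph of a $4$-regular graph has bounded size--and it is also why we need the ``large enough'' hypothesis on $T$: taking $|V(T)|\ge N(g)$ for a threshold depending only on $g$ ensures that throughout the repair process, the pool of available partners $e'$ dominates the $O_g(1)$ bad ones, producing the desired $4$-regular graph $G=T\cup M$ of girth at least $g$.
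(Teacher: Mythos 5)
Your approach differs from the paper's. The paper's treatment of this lemma is a one-line sketch of a \emph{constrained} random pairing---match deficient half-edges only between vertices at distance at least $g$---and defers the details to a cited reference. You instead run the bare configuration model on all deficient half-edges with no distance constraint, and then repair the resulting multigraph by local switches. The plan is viable: the half-edge parity $|H|=2n+2$ is correct, and a first-moment count of short cycles does give a bound $C(g)$ depending only on $g$ and the degree cap, uniformly over $T$. (One small point: your stated tally ``$n$ starting vertices, $O(n)$ per jump'' gives $O_\ell(n^{k+1})$, not $O_\ell(n^k)$; the extra power of $n$ disappears only after observing that the last matching jump is forced to close the cycle. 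The conclusion is right but the justification as written overshoots.) What your route buys is a fully self-contained argument; what the paper's route buys is brevity by outsourcing the computation.

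The step that needs repair is the switch analysis. You argue that a new short cycle through $\{u,u'\}$ forces $u'$ into the $(g-2)$-ball of $u$, but that is only true for a new cycle that uses exactly \emph{one} of the two new edges. A new short cycle could use \emph{both} new edges, in the shape $u$--$u'$--$P$--$v'$--$v$--$Q$--$u$ with $|P|+|Q|<g-2$. Since $d(u,v)$ in the graph without $e$ is already $<g-1$ (because $e$ lies on the short cycle you are destroying), this configuration arises whenever $u'$ and $v'$ are close in the graph minus $e'$, i.e.\ whenever $e'$ itself lies on a short cycle --- and in that case $u'$ need not be anywhere near $u$. The fix is easy and stays within your framework: there are at most $(g-1)\,C(g)$ matching edges lying on short cycles, so add these to your forbidden list of partners $e'$, together with any edge having an endpoint in the $(g-2)$-ball of $u$ \emph{or} of $v$. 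That keeps the forbidden count at $O_g(1)$ and the switch argument then goes through.
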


In fact, if we add edges to $T$ at random between vertices which are at least distance $g$, we obtain a $4$-regular graph with high probability. See \cite{LinialGirth}.

\begin{lem}\label{lemma:action making}
    Every $2n$-regular graph is the Schreier graph of an action of $F_n$.
\end{lem}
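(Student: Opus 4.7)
The plan is to apply Petersen's 2-factor theorem to decompose $G$ into oriented cycles, use the orientations to produce $n$ bijections of $V(G)$, and then let these bijections be the images of the free generators of $F_n$.

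In more detail, I would first invoke Petersen's theorem: any $2n$-regular (multi)graph $G$ admits a decomposition into $n$ edge-disjoint $2$-factors $F_1, \ldots, F_n$. (The standard proof takes an Eulerian circuit in each component, splits each vertex into an in- and out-copy to form a $n$-regular bipartite graph, applies K\"onig's theorem to extract a perfect matching, and reads it back as a $2$-factor of $G$.) Each $F_i$ is a disjoint union of cycles covering $V(G)$, so fixing a cyclic orientation of each cycle of $F_i$ produces a permutation $\sigma_i \in S_{V(G)}$ whose orbits are exactly these cycles. Setting $\ax(a_i) = \sigma_i$ for the free generators $a_1,\ldots,a_n$ of $F_n$ extends uniquely to a homomorphism $\ax: F_n \to S_{V(G)}$, i.e.~to an action $\ax: F_n \curvearrowright V(G)$.

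It remains to check that $\sch(\ax, \{a_1,\ldots,a_n\}) = G$. By definition, the edges of $\sch(\ax, \{a_1,\ldots,a_n\})$ are the sets $\{v, \sigma_i(v)\}$ for $v \in V(G)$ and $i \leq n$, and these are precisely the edges of $F_1 \cup \cdots \cup F_n = G$, as traversing a cycle of $F_i$ in its chosen orientation recovers every edge of that cycle exactly once.

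The one subtlety I expect to have to address is the possibility of length-$1$ or length-$2$ orbits of some $\sigma_i$ (loops or doubled edges), which would under- or over-represent edges when passing from the oriented $2$-factor to the unordered Schreier edge set. These correspond to loops and multi-edges in $G$, which do not occur in the simple $4$-regular graphs produced by Lemma \ref{lemma:girth}; more generally for multigraphs one can arrange the decomposition so that these are handled correctly (e.g.~by pairing multi-edges into $2$-cycles inside some $F_i$, which contribute the correct edge of the multigraph with multiplicity), but for the intended application the simple case is all that is needed.
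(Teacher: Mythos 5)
The paper does not actually prove this lemma; it only cites \cite{SchreierGraphs} and remarks that even the $4$-regular case is ``a tough exercise,'' so there is no internal proof to compare against. Your argument is correct and is the standard one the citation almost certainly contains: apply Petersen's $2$-factorization to split $E(G)$ into $n$ edge-disjoint $2$-factors $F_1,\dots,F_n$, orient the cycles of each $F_i$ to obtain a permutation $\sigma_i$ of $V(G)$, and send the $i$-th free generator of $F_n$ to $\sigma_i$; the resulting Schreier graph on $\{a_1,\dots,a_n\}$ then has edge set $\bigcup_i\{\{v,\sigma_i(v)\}:v\}=\bigcup_i E(F_i)=E(G)$. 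You also correctly isolate the one genuine subtlety -- loops and parallel edges, i.e.~$1$- and $2$-cycles in some $F_i$, which would be undercounted by the paper's set-valued definition of $\sch(\ax,E)$ -- and correctly observe that it does not arise for the finite simple $4$-regular high-girth graphs $G_n$ produced by Lemma \ref{lemma:girth}, which is the only place this lemma is invoked. The only thing I would add is an explicit finiteness hypothesis: Petersen's theorem and your Euler-circuit/K\"onig sketch of it are for finite graphs, and although the lemma as stated is for arbitrary $2n$-regular graphs, the infinite locally finite case would need a separate compactness argument; since the $G_n$ are finite, it is cleanest to just state the lemma for finite graphs.
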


The case of $4$-regular graphs is a tough exercise. For a general statement about $2d$-regular graphs see  \cite{SchreierGraphs}.

\begin{thm}
There is an antichain $A\subseteq \kfree(F_2)\cap\kmix(F_2)$ so that $|A|=|\R|$ and for any $\ax,\bx\in A$ there is a shift of finite type $X$ with $\hom(\ax,X)\not=\emptyset$ and $\hom(\bx,X)=\emptyset$.
\end{thm}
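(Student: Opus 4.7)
The plan is to mirror the $\Z^2$ antichain construction, replacing finite tori by finite 4-regular Schreier graphs of $F_2$-actions built via Lemmas~\ref{lemma:girth} and~\ref{lemma:action making}. For each prime $p$, let $T_p$ be the mixing shift of finite type on $F_2$ whose elements encode tilings of the Cayley graph by vertex-disjoint simple paths of length exactly $p$ (each vertex is labeled by a position in $\{0,\dots,p-1\}$ together with successor and predecessor direction pointers, subject to the obvious local consistency constraints). This is the ``slight modification'' of the tiling SFT the author alludes to: the local rules only inspect windows of radius at most $p$, so in a Schreier graph of girth greater than $2p$ they correctly certify a global $p$-path tiling, and any finite action admitting a $T_p$-labeling must have size divisible by $p$ by vertex-counting. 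For an infinite set of primes $S$, put $\ax_S := \bx_\bot \times \prod_{p\in S} T_p$; then $\ax_S \in \kfree(F_2)\cap\kmix(F_2)$ and $\ax_S$ factors onto $T_p$ for each $p\in S$.

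To distinguish the $\ax_S$'s I construct witness actions $\bx_S$ as ultra-co-limits of finite actions. Enumerate $S=\{p_1,p_2,\dots\}$, set $n_i = p_1p_2\cdots p_i$, and take a path $P_i$ with $n_i$ vertices realized as a finite subtree of the 4-regular tree. By Lemma~\ref{lemma:girth}, close $P_i$ to a 4-regular graph $G_i$ whose girth tends to infinity with $i$; by Lemma~\ref{lemma:action making}, realize $G_i$ as the Schreier graph of an $F_2$-action $\bx_i$. The path $P_i$ decomposes into consecutive $p$-blocks iff $p \mid n_i$, and since the $T_p$ local rules only inspect radius-$p$ neighborhoods (which remain trees in $G_i$ by the girth condition), this decomposition extends to a valid $T_p$-labeling of $G_i$ exactly when $p \mid n_i$. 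Fix a nonprincipal ultrafilter $\s U$ on $\N$ and set $\bx_S := \colim_{\s U} \bx_i$. Since each nontrivial $\gamma \in F_2$ acts without fixed points on $\bx_i$ once the girth exceeds the word length of $\gamma$, the $\chi$-argument analogous to Proposition~\ref{prop:freeness} applied element-by-element shows $\bx_S$ is free; by \L o\'s's theorem (Theorem~\ref{thm:los}) combined with Proposition~\ref{prop:SFT containment}, $\bx_S$ factors onto $T_p$ iff $\bx_i$ does for $\s U$-almost every $i$ iff $p \in S$. In particular $\hom(\bx_S, \ax_S) \ne \emptyset$.

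The antichain property then follows as in the $\Z^2$ case. For $S \ne T$, pick $p \in S \setminus T$: projection gives $T_p \preccurlyeq \ax_S$, while any map $\ax_T \to T_p$ would compose with $\bx_T \to \ax_T$ to yield a map $\bx_T \to T_p$, contradicting $p \notin T$. Since $T_p$ is an SFT, Proposition~\ref{prop:SFT containment} upgrades this to $T_p \not\preccurlyeq \ax_T$, so $\ax_S \not\preccurlyeq \ax_T$. Taking $A = \{\ax_S : S\text{ an infinite set of primes}\}$ produces an antichain of size $|\R|$, and the $T_p$'s are the required SFT witnesses. The main technical hurdle is the tiling step: verifying that the path-decomposition of $P_i$ remains a valid $T_p$-labeling after completing $P_i$ to $G_i$ (handled by the girth-window locality argument, which ensures the added edges lie outside every $T_p$-inspection window) and that $T_p$ is genuinely mixing on $F_2$ (a routine splicing argument once tiles can be padded with enough separator paths).
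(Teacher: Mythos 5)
Your proof follows the paper's argument essentially step for step: the same $T_p$-family, the same products $\ax_S = \bx_\bot \times \prod_{p\in S} T_p$, the same witness flows built as ultra-co-limits of finite Schreier-graph actions obtained from long paths via Lemmas~\ref{lemma:girth} and~\ref{lemma:action making}, and the same \L o\'s/ultraproduct argument to separate the $\ax_S$'s by the SFTs $T_p$. Your explicit realization of $T_p$ as position-and-pointer-labeled $p$-path tilings is a perfectly reasonable instantiation of the paper's terser ``tilings by connected pieces of size $p$,'' and spelling out the $\chi$-argument for freeness of $\bx_S$ is a useful addition. One small correction on the locality step: the new edges produced by Lemma~\ref{lemma:girth} are \emph{not} outside the $T_p$-inspection windows --- since $V(G_i)=V(P_i)$, every added edge is incident to some vertex of $P_i$ and so appears in its window. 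What actually preserves the validity of the $P_i$-tiling after closing up to $G_i$ is that the pointers refer to specific generator labels of the Schreier graph: each added edge at a vertex carries a generator distinct from the labels of the two path edges incident there, so an added edge can never coincide with a claimed successor or predecessor direction and hence cannot trigger a forbidden pattern. Girth is the right tool for freeness of the ultra-co-limit (and, if you like, for ruling out cycle-shaped tiles), but it plays no role in the validity check; the vertex-count divisibility argument for the negative direction is unaffected either way.
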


\begin{proof}
    As before, let $T_p$ be the subshift of tilings of $F_2$ by connected pieces of size $p$. And, for an infinite set of primes $S$, let \[\ax_S=\bx_\bot\times \prod_{p\in S} T_p,\quad A=\{\ax_S: S\mbox{ is an infinite set of primes}\}.\] Also as before, each $\ax_S$ is free, mixing, and conjugate to an action on $\s C$. 
    
    We need to verify that every pair of actions in $A$ is separated by a shift of finite type. Of course, if $p\in S$, $\hom(\ax_S, T_p)\not=\emptyset$. It remains to show that if $p\not\in S$ then $\hom(\ax_S, T_p)\not=\emptyset$. As before, we will construct an ultrapower which has a map to $\ax_S$, but no map to $T_p$.

    We need a sequence of finite graphs to play the role of the finite tori $\Z^2/p\Z^2$ from the previous argument. First, let $P_n$ be a path graph so that $p_1p_2...p_n$ divides $|P_n|$ and no other prime smaller than $p_n$ divides $|P_n|$, and so that $P_n$ is large enough to apply Lemma \ref{lemma:girth} with girth $n$. Let $G_n$ be the $4$-regular graph of girth at least $n$ extending $P_n$ guaranteed by the lemma.
    
    Since $G_n$ is $4$-regular, Lemma \ref{lemma:action making} says there is an action $\bx_n$ of $F_2$ with $\sch(\bx_n)=G_n$. Set $\bx=\colim_{\s U} \bx_n$. Since $\operatorname{girth}(G_n)\rightarrow\infty,$ $\bx$ is free. By saturation of ultraproducts, this means $\bx$ factors onto $\bx_\bot$. For $q\in S$, $P_n$ (and thus $G_n$) can eventually be tiled by pieces of size $q$; so $\bx$ maps into $T_q$. Thus $\hom(\bx, \ax_S)\not=\emptyset$. And, since $p\not\in S$, eventually $|G_n|$ is not divisible by $p$, so $\bx_n$ has no map into $T_p$. Thus $\hom(\bx, T_p)=\emptyset$. 

    It follows that $\hom(\ax_S, T_p)=\emptyset$ as desired.
    
\end{proof}

And, these antichains pass to supergroups.

\begin{cor}
    If $\Gamma$ contains $F_2$ or $\Z^2$ as a subgroup, then \[\wdth(\kmix(\Gamma)\cap\kfree(\Gamma))=|\R|.\]
\end{cor}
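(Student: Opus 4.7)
The plan is to closely follow the proof of $\wdth(\kfree(\Gamma)) = |\R|$ for groups containing $\Z$, simply replacing the $\Z$-antichain of free actions with the $F_2$ or $\Z^2$ antichain of free mixing actions provided by the preceding two theorems and pushing it through the coinduction machinery.

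Let $\Delta$ be a copy of $F_2$ or $\Z^2$ inside $\Gamma$, and fix the size-$|\R|$ antichain $\{\ax_S\}_S$ (indexed by infinite sets $S$ of primes) in $\kfree(\Delta)\cap\kmix(\Delta)$, with separating SFTs $T_p$ satisfying $\hom(\ax_S, T_p)\neq\emptyset$ iff $p\in S$. For each $S$, let $Y_S=\ax_S\times(\bx_\bot^\Delta)^2$ and let $\ax'_S$ be (a Cantor realization of) $\coind_\Delta^\Gamma(Y_S)$, passed to its free part if necessary; this is the exact construction used in the $\kfree$ proof. I need to verify three things: (a) $\ax'_S$ is free, (b) $\ax'_S$ is mixing, and (c) $\{\ax'_S\}_S$ remains an antichain, witnessed by the coinduced SFTs $X'_p:=\coind_\Delta^\Gamma(T_p)$.

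Freeness and the antichain property are verbatim the arguments from the $\kfree$ proof. Freeness uses Lemma \ref{lemma:coinduction map} to locate a dense free part. For the antichain, fix $S\neq S'$ and pick $p\in S\setminus S'$. The map $\ax'_S\to X'_p$ comes from the coinduction--restriction adjunction applied to the composition $\ax'_S\res\Delta\to Y_S\to\ax_S\to T_p$ (evaluation at the identity coset, projection, and the hypothesis $p\in S$). Conversely, a map $\ax'_{S'}\to X'_p$ would give a $\Delta$-equivariant map $\ax'_{S'}\res\Delta\to T_p$; composing with the $\Delta$-equivariant embedding $Y_{S'}\hookrightarrow F(\coind(Y_{S'}))\res\Delta$ from Lemma \ref{lemma:coinduction map} yields $Y_{S'}\to T_p$. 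Since $\bx_\bot^\Delta$ is a bottom for $\kfree(\Delta)$, we have $Y_{S'}\approx\ax_{S'}$; as $T_p$ is an SFT, this weak equivalence forces $\hom(\ax_{S'},T_p)\neq\emptyset$, contradicting $p\notin S'$.

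The new ingredient is (b): coinduction of a topologically mixing $\Delta$-action is a topologically mixing $\Gamma$-action. For basic clopens $U,V\subseteq\coind(Y_S)$ with finite windows $F_U,F_V\subseteq\Gamma$, the translate $\eta\cdot U$ depends on coordinates in $\eta F_U$. If $\eta F_U$ and $F_V$ lie in pairwise disjoint $\Delta$-cosets, the constraints defining $\eta\cdot U$ and $V$ involve different fibers and are jointly satisfiable by freely choosing fiber values. If some pair $(f_u,f_v)\in F_U\times F_V$ has $\eta f_u=f_v\delta$ for some $\delta\in\Delta$, the joint constraint on the shared fiber reduces to finding $y(f_v)\in V_{f_v}\cap\delta\cdot U_{f_u}$ in $Y_S$; by mixing of $Y_S$ (which is mixing as a product of the mixing $\ax_S$ and the mixing $(\bx_\bot^\Delta)^2$) this holds for all but finitely many $\delta$. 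Since only finitely many pairs of window elements contribute, only finitely many $\eta$ are bad, so $\coind(Y_S)$ is mixing, and mixing descends to the open invariant free part.

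The main obstacle is the bookkeeping in (b), and in particular extracting a free Cantor subflow of $F(\coind(Y_S))$ that still inherits mixing; mixing is preserved by open invariant subsets but may fail on closed invariant subflows, so care is needed to land inside $\Axn(\Gamma,\s C)$ while retaining both freeness and mixing. Once a suitable realization is produced the cardinality $|\R|$ of the antichain is immediate, and we are done.
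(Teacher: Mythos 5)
Your proposal follows the paper's own strategy exactly: take the size-$|\R|$ antichain in $\kfree(\Delta)\cap\kmix(\Delta)$ for $\Delta=F_2$ or $\Z^2$, coinduce to $\Gamma$ with a $\bx_\bot$-padding factor, and use coinduced SFTs together with Lemma \ref{lemma:coinduction map} to verify the antichain property. Your cylinder-set argument for part (b), that $\coind_\Delta^\Gamma$ of a mixing $\Delta$-flow is a mixing $\Gamma$-flow, is sound and is in fact more explicit than anything the paper gives: the paper's proof of this corollary is a single short paragraph that never verifies mixing of $\coind_\Delta^\Gamma(\ax\times\bx_\bot)$.

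On part (a), however, there is a genuine issue that you flag but do not resolve, and it is worth being precise about why it is not just bookkeeping. The coinduction $\coind_\Delta^\Gamma(Y_S)$ of a free $\Delta$-action is essentially never a free $\Gamma$-action when $\Delta$ is a proper subgroup: if $\gamma\in\Gamma\setminus\Delta$ acts on the coset space $\Gamma/\Delta$ by left multiplication with only infinite orbits (which happens, e.g., for $\gamma=(0,1)$ when $\Gamma=\Z^2$, $\Delta=\Z\times\{0\}$, or for $\gamma=b$ when $\Gamma=F_2=\langle a,b\rangle$, $\Delta=\langle a\rangle$), then one can freely propagate fiber values along those orbits to build a $\gamma$-fixed point of the coinduction. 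The free part $F(\coind(Y_S))$ is an open $\Gamma$-invariant set, but it is not compact, and neither its closure in $\coind(Y_S)$ nor the $\Gamma$-orbit closure of the image of the map from Lemma \ref{lemma:coinduction map} is automatically free, so producing the required element of $\Axn(\Gamma,\s C)$ that is simultaneously free, mixing, and still receives a $\Delta$-equivariant map from $Y_S$ needs an additional argument (for instance via the Stone--\v Cech compactification of $F(\coind(Y_S))$ followed by a L\"owenheim--Skolem reduction, carefully checking that mixing survives). You acknowledge this as the main obstacle; be aware that the paper's one-paragraph proof does not address it either, so you are not overlooking something the author silently handled---you have correctly located the place where the argument requires more care than either you or the printed proof supply.
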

\begin{proof}
    Suppose $\Delta\subseteq \Gamma$ is either $F_2$ or $\Z^2$. Let $A$ be the antichain from the corresponding theorem above. For $\ax\in A$ let $\ax'=\coind_{\Delta}^\Gamma(\ax\times \bx_\bot)$, and set \[A'=\{\ax': \ax\in A\}.\] For $\ax, \bx\in A$, let $X$ be a shift of finite type witnessing that $\ax\not\preccurlyeq\ax$ and let $X'=\coind_{\Delta}^\Gamma(X)$. Then, by Lemma \ref{lemma:coinduction map}, $X'$ witnesses that $\ax'\not\preccurlyeq\bx'$. So $A'$ is antichain.
\end{proof}
\begin{cor}
    If $\Gamma$ has infinitely many ends, then $\wdth(\kmix(\Gamma)\cap\kfree(\Gamma))=|\R|$.
\end{cor}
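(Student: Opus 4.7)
The plan is to reduce to the previous corollary by producing a copy of $F_2$ inside $\Gamma$. A finitely generated group with infinitely many ends has, by Stallings' theorem on ends of groups, a nontrivial splitting as an amalgamated product or HNN extension over a finite subgroup. Iterating/Bass--Serre theory (or more directly the classical consequence of Stallings) shows that such a group contains a non-abelian free subgroup, in particular a copy of $F_2$. So the first step is simply to cite Stallings to produce $F_2 \leq \Gamma$.

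Once $F_2 \leq \Gamma$, the previous corollary immediately gives $\wdth\bigl(\kmix(\Gamma) \cap \kfree(\Gamma)\bigr) = |\R|$, since that corollary was stated precisely for groups containing $F_2$ or $\Z^2$ as a subgroup. So the whole proof is a one-line appeal to Stallings plus citation of the previous corollary.

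The only subtlety to flag is the standing hypothesis: ``infinitely many ends'' is a notion that requires $\Gamma$ to be finitely generated (or at least to come equipped with a locally finite Cayley graph), so one should either include finite generation in the hypothesis or note that otherwise the statement is vacuous/ill-posed. No real obstacle arises beyond invoking Stallings; the content of the corollary is entirely carried by the $F_2$ case of the preceding result.

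\begin{proof}
Since $\Gamma$ has infinitely many ends, it is finitely generated and, by Stallings' theorem on ends of groups, contains a non-abelian free subgroup; in particular $F_2 \leq \Gamma$. The conclusion now follows from the previous corollary.
\end{proof}
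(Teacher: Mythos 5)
Your proof takes essentially the same approach as the paper's: reduce to the preceding corollary by producing $F_2 \leq \Gamma$ via Stallings' theorem. The only difference is that the paper, observing that the fact ``infinitely-ended implies $F_2 \leq \Gamma$'' is folklore, actually includes a short sketch (splitting into the amalgamated-product and HNN cases and exhibiting explicit free generators), whereas you cite it as known. One small quibble: you assert that having infinitely many ends \emph{implies} finite generation, but the standard situation is rather that one \emph{assumes} finite generation in order to make the notion of ends (and Stallings' theorem) apply; infinitely generated groups can also have infinitely many ends under the cohomological definition, and Stallings in its usual form does not cover them. Since you flag this issue in your discussion and the paper likewise leaves it implicit, this is a presentational point rather than a gap.
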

\begin{proof}
    It suffices to show that any infinitely ended group has $F_2$ as a subgroup. This seems to be well-known folklore, but we'll include a sketch below.

    By Stalling's theorem, an infinitely ended group is either an amalgamated free product over a finite subgroup--$\Gamma\star_H\Delta$ with $[\Gamma:H]\geq 3$ and $[\Delta:H]\geq 2$--or an HNN extension over a finite subgroup--$\Gamma\star_\alpha=\ip{\Gamma, t: \{tht\inv\alpha(h): h\in H\}}$ for some $H<\Gamma$ with $[\Gamma:H]\geq 3$ and some isomorphic embedding $\alpha: H\rightarrow \Gamma$. 
    
    In the amalgamated free product case, we can find nontrivial coset representatives $\gamma_0,\gamma_1\in \Gamma$ and $\delta\in \Delta\setminus H$. One can check that $\gamma_0\delta$ and $\gamma_1\delta$ generate a free group.

    In the HNN extension case, we can pick $a,b\not\in H$ with $ab\inv\not\in H$. Then, $ta$ and $tb$ generate a free group.
\end{proof}

This argument can be extended a little to show that for groups like the lamplighter $\Z\wr(\Z/2\Z)$ the space of mixing actions is wide. But, for groups with no infinite finite quotients, a different method will be needed to build anti-chains in the weak-containment order (or rule them out).

\section{Open problems}

We'll end by collecting some open problems. Many of these are inspired by analogous problems and theorems for measure theoretic weak containment, and we will comment on these correspondences.

Hyperfiniteness plays a central role in the theory of measure theoretic weak containment. Hyperfiniteness is a measure weak equivalence invariant; all hyperfinite free actions of a group are measure weakly equivalent; and, a group is amenable if and only if it admits a free hyperfinite pmp action. 

It is unclear what the exact relation between topological weak equivalence and hyperfiniteness is, or if there is some topological version of hyperfiniteness which can play a similar role for topological actions.

\begin{prb}
    If $\ax$ is hyperfinite and $\ax\preccurlyeq\bx$, does it follow that $\bx$ is hyperfinite? 
\end{prb}
\begin{prb}
    Is the generic action hyperfinite for any amenable group? Any exact group? Any residually finite group?
\end{prb}
\begin{prb}
    Is almost-finiteness (in the sense of \cite{AlmostFinitness}) invariant under weak equivalence? 
\end{prb}

As mentioned in the discussion of $\kfin$, measure theoretic ultra-co-products of finite actions are essentially equivalent to so-called local-global limits of bounded degree graphs. Whether the Bernoulli shift $[0,1]^{F_2}$ is a local-global limit more or less amounts to determining if every problem that can be approximately solved on large girth degree $4$-graphs can be solved by a randomized local algorithm. We have seen that, for $\Gamma=\Z^2$, there is a list of problems $P$ so that every large enough finite torus can solve a problem in $P$, but there is no deterministic local algorithm to solve $P$. We might ask for a nice minimal list of such problems. More generally, we can ask for a basis for $\kfin$.

\begin{prb}
    Is it the case that, for every $\ax\in \kfin$, there is some $\bx\preccurlyeq\ax$ so that there is no $\mathbf c$ with $\mathbf c\preccurlyeq\bx$? For some $\Gamma$? For all $\Gamma$?
\end{prb}
\begin{prb}
    Is there a bottom element for $\kfin(F_2)$?
\end{prb}

In the measure theoretic setting, $\wdth(\kfree)$ detects amenability (it is $1$ iff $\Gamma$ is amenable). It is an open question if it is always $|\R|$ for nonamenable groups, and known to be $|\R|$ when $\Gamma$ contains a nonamenable free group. In the topological setting, it appears that $\wdth(\kfree)$ is always $|\R|$ when $\Gamma$ is infinite and finitely generated, and that $\wdth(\kfree\cap\kmix)$ picks out 2-endedness.

\begin{prb}
    Is there a finitely generated 1-ended group with $\wdth(\kfree\cap\kmix)<|\R|$?
\end{prb}
\begin{prb}
    Is there a finitely generated infinite group with $\wdth(\kfree)<|\R|$?
\end{prb}

Lastly, there are some phenomena which are genuinely unique to the topological setting. In the measure theoretic setting there is only one standard probability space, but there is a vast zoo of compact Hausdorff spaces.

\begin{prb}
    Which spaces admit a top $\Z$-action? a top $\Z^2$-action? A top $F_2$-action?
\end{prb}
\begin{prb}
    Which spaces admit a bottom free $\Z$-action? A bottom free $\Z^2$-action? A bottom free $F_2$-action?
\end{prb}

More generally, what happens in higher dimensional spaces? Proposition \ref{prop:nonfree limit} suggests the answer may have some relation to Bourgin--Yang theory.

\bibliographystyle{plain}
\bibliography{refs}

\begin{thebibliography}{10}

\bibitem{ContinuousCombo}
Anton Bernshteyn.
\newblock Probabilistic constructions in continuous combinatorics and a bridge to distributed algorithms.
\newblock {\em Advances in Mathematics}, 415:108895, 2023.

\bibitem{BurtonKechrisSurvey}
Peter~J. Burton and Alexander~S. Kechris.
\newblock Weak containment of measure-preserving group actions.
\newblock {\em Ergodic Theory and Dynamical Systems}, 40(10):2681–2733, 2020.

\bibitem{GaoCamerlo}
Riccardo Camerlo and Su~Gao.
\newblock The completeness of the isomorphism relation for countable boolean algebras.
\newblock {\em Transactions of the AMS}, 2000.

\bibitem{BAD}
Clinton Conley, Steve Jackson, Andrew Marks, Brandon Seward, and Robin Tucker-Drob.
\newblock Borel asymptotic dimension and hyperfinite equivalence relations.
\newblock {\em Duke Mathematical Journal}, 172:3175–3226, 2023.

\bibitem{DouchaGeneric}
Michal Doucha.
\newblock Strong topological rokhlin property, shadowing, and symbolic dynamics of countable groups.
\newblock {\em Journal of the European Mathematical Society}, 2025.

\bibitem{ElekQW}
Gabor Elek.
\newblock Qualitativegraph limit theory. cantor dynamical systems and constant-time distributed algorithms.
\newblock {\em preprint. ArXiv: 1812.07511}, 2019.

\bibitem{Engelking}
R.~Engelking.
\newblock {\em Dimension Theory}.
\newblock Mathematical Studies. North-Holland Publishing Company, 1978.

\bibitem{GelfandDuality}
G.B. Folland.
\newblock {\em A Course in Abstract Harmonic Analysis}.
\newblock Studies in Advanced Mathematics. Taylor \& Francis, 1994.

\bibitem{GaoIDST}
Su~Gao.
\newblock {\em Invariant Descriptive Set Theory}.
\newblock Chapman \& Hall/CRC Pure and Applied Mathematics. CRC Press, 2008.

\bibitem{SchreierGraphs}
Jonathan~L Gross.
\newblock Every connected regular graph of even degree is a schreier coset graph.
\newblock {\em Journal of Combinatorial Theory, Series B}, 22(3):227--232, 1977.

\bibitem{HarringtonMarkerShelah}
Leo Harrington, David Marker, and Saharon Shelah.
\newblock Borel orderings.
\newblock {\em Trans. Amer. Math. Soc.}, 310(1):293--302, 1988.

\bibitem{hjorth}
G.~Hjorth.
\newblock {\em Classification and Orbit Equivalence Relations}.
\newblock Mathematical surveys and monographs. American Mathematical Society, 2000.

\bibitem{hochman}
Michael Hochman.
\newblock Genericity in topological dynamics.
\newblock {\em Ergodic Theory and Dynamical Systems}, 28(1):125–165, 2008.

\bibitem{GlobalAspects}
Alexander Kechris.
\newblock {\em Global Aspects of Ergodic Group Actions}, volume 160.
\newblock AMS Mathematical Surveys and Monographs, 2010.

\bibitem{AlmostFinitness}
David Kerr.
\newblock Dimension, comparison, and almost finiteness.
\newblock {\em Journal of the European Mathematical Society}, 11:3697--3745, 2020.

\bibitem{KrawczykSteprans}
Adam Krawczyk and J.~Steprāns.
\newblock Continuous colourings of closed graphs.
\newblock {\em Topology and its Applications}, 51(1):13--26, 1993.

\bibitem{LinialGirth}
Nati Linial and Michael Simkin.
\newblock A randomized construction of high girth regular graphs.
\newblock {\em Random Structures and Algorithms}, 2020.

\bibitem{PercolationWeakContainment}
RUSSELL LYONS.
\newblock Fixed price of groups and percolation.
\newblock {\em Ergodic Theory and Dynamical Systems}, 33(1):183–185, 2013.

\bibitem{Marker}
D.~Marker.
\newblock {\em Model Theory : An Introduction}.
\newblock Graduate Texts in Mathematics. Springer New York, 2002.

\bibitem{BorsukUlam}
Jiri Matousek.
\newblock {\em Using the Borsuk-Ulam Theorem: Lectures on Topological Methods in Combinatorics and Geometry}.
\newblock Universitext (Berlin. Print). Springer, 2003.

\bibitem{IyerShinko}
Forte Shinko and Sumun Iyer.
\newblock Asymptotic dimension and hyperfiniteness of generic cantor actions.
\newblock {\em Groups, Geometry, and Dynamics}, 2025.

\bibitem{HypergraphWeakContainment}
Riley Thornton.
\newblock Limits of sparse hypergraphs.
\newblock {\em preprint. ArXiv: 2410.17483}, 2025.

\bibitem{TodorTomas}
Todor Tsankov and Tomas Ibarlucia.
\newblock A model-theoretic approach to rigity of strongly ergodic, distal actions.
\newblock {\em Ann. Sci. Ec. Norm. Super.}, 4:751--780, 2021.

\bibitem{Survey}
Itaï~Ben Yaacov, Alexander Berenstein, C.~Ward Henson, and Alexander Usvyatsov.
\newblock {\em Model theory for metric structures}, page 315–427.
\newblock London Mathematical Society Lecture Note Series. Cambridge University Press, 2008.

\bibitem{AndyTopWeak}
Andrew Zucker.
\newblock Ultracoproducts and weak containment for flows of topological groups.
\newblock {\em preprint. ArXiv:2401.08000}, 2024.

\end{thebibliography}

\end{document}